\documentclass[english,11pt]{article}
\usepackage[german,french,english]{babel}
\usepackage[cp850]{inputenc}
\usepackage{latexsym,graphicx, fancybox}
\usepackage{graphicx} % Required for inserting images

\usepackage{amsmath, amssymb, amsfonts, amsthm} % Packages AMS pour les math├®matiques
\usepackage{color}
\usepackage{tikz}
\usepackage{latexsym}

\usepackage{mathrsfs}
\usepackage{bm}
\usepackage{stmaryrd}
\usepackage[round]{natbib}

\usepackage{etoolbox}
\font\tenmath=msbm10 scaled 1200
\font\sevenmath=msbm7 scaled 1200
\font\fivemath=msbm5 scaled 1200 
	\usepackage{amsthm}

	\newfam\mathfam \textfont\mathfam=\tenmath
	\scriptfont\mathfam=\sevenmath \scriptscriptfont\mathfam=\fivemath
	
	\def\R{{\mathbb R}}
	\def\N{{\mathbb N}}
	\def\E{{\mathbb E}}

	\def\P{{\mathbb P}}
	\def\Z{{\mathbb Z}}
	\def\T{{\mathbb T}}
	\def\Q{{\mathbb Q}}
	
	\def\dbF{\mathbb{F}}
	\def\dbP{\mathbb{P}}

	\def\cF{{\cal F}}

	\def\ges{\geqslant}
\def\bas#1{\begin{assumption}\label{#1}}
\def\eas{\end{assumption}}

\def\ben{\begin{enumerate}}
\def\een{\end{enumerate}}
\newtheorem{Theorem}{Theorem}[section]
\newtheorem{Definition}[Theorem]{Definition}
\newtheorem{Proposition}[Theorem]{Proposition}

\newtheorem{Lemma}[Theorem]{Lemma}
\newtheorem{Corollary}[Theorem]{Corollary}

\newtheorem{Example}[Theorem]{Example}

\newtheorem{assumption}{Assumption}[section]

\usepackage{thm-restate}
\usepackage{float}
\usepackage{hyperref}
\newfam\mathfam \textfont\mathfam=\tenmath
\scriptfont\mathfam=\sevenmath \scriptscriptfont\mathfam=\fivemath

\def \^#1{\if#1i{\accent"5E\i}\else{\accent"5E#1}\fi}

\begin{document}
\selectlanguage{english}

\title{\bf On Path-dependent Volterra Integral Equations: Strong Well-posedness and Stochastic Numerics.
}
\author{
	{\sc Emmanuel Gnabeyeu}\thanks{Laboratoire de Probabilit\'es, Statistique et Mod\'elisation, UMR~8001, \textit{Sorbonne Universit\'e and Universit\'e Paris Cit\'e}, 4 pl. Jussieu, F-75252 Paris Cedex 5, France.  E-mail: {\tt emmanuel.gnabeyeu\_mbiada@sorbonne-universite.fr}.} 
	\and
	{\sc Gilles Pag\`es}\thanks{Laboratoire de Probabilit\'es, Statistique et Mod\'elisation, UMR~8001, Sorbonne Universit\'e, case 158, 4, pl. Jussieu, F-75252 Paris Cedex 5, France. E-mail: {\tt gilles.pages@sorbonne-universite.fr}.}
}

\maketitle 
\renewcommand{\abstractname}{Abstract}
\begin{abstract}
	The aim of this paper is to provide a comprehensive analysis of the path-dependent Stochastic Volterra Integral Equations (SVIEs), in which both the drift and the
	diffusion coefficients are allowed to depend on the whole trajectory of the process up to the
	current time.
	We investigate the existence and uniqueness (aka the strong well-posedness) of solutions to such equations in the $L^p$ setting, \(p>0\), locally in time and their properties specifically their path regularity and flows.
	Then, we introduce a numerical approximation method based on an interpolated $K-$integrated Euler-Maruyama scheme to simulate numerically the process, and we prove the convergence, with an explicit rate, of this scheme towards the strong solution in the $L^p$ norm.
\end{abstract}

\noindent \textbf{\noindent {Keywords:}} Path-dependent Stochastic Processes, Volterra Integral Equations, Stochastic Differential Equations, Integrated Euler Scheme, Strong Convergence Rate, Fractional Kernels.

\medskip
\noindent\textbf{Mathematics Subject Classification (2020):} \textit{ 45D05, 60H10 ,60G22,65C30, 91B70, 91G80}

% \clearpage
%\tableofcontents

\section{Introduction}
\subsection{Literature review} 
The theory of stochastic Volterra integral equations (SVIEs), seen as a generalization of stochastic differential equations (SDEs), has its origins in the 1980s and has been widely developed since then. 
These equations have garnered significant attention within the mathematical community working on stochastic dynamical systems, particularly for their applications in population dynamics, biology, and physics(see e.g. ~\cite{mohammed1998}), as they provide a natural framework for modeling non-Markovian stochastic phenomena. Their study has also been notably motivated by problems in heat transfer physics~\citep{gripenberg1990}, and they have since been the subject of substantial mathematical investigation.
Early studies begin with the seminal and pioneering work of \cite{Berger1980a, Berger1980b}, where the following Volterra integral equation(SVIEs) driven by Brownian motion with  continuous coefficients was introduced:
\[
X_t = x_0(t) + \int_0^t b(t, s, X_s) \, ds + \int_0^t \theta(t, s, X_s) \, dW_s, \quad t \in [0, T].
\]
Here, \(x_0\) denotes the (deterministic) initial condition, and \((W_t)_{t \in [0,T]}\) is a standard Brownian motion. The so-called drift coefficient \(b\), and diffusion coefficient \(\theta\) are all deterministic, Borel-measurable functions. Under Lipschitz conditions on \(b\) and \(\theta\) with respect to the space variable, uniformly in the time variables \(s\) and \(t\), the authors proved that the equation admits a well-posed solution (strong existence and pathwise uniqueness) by adapting the classical tools from the well-known SDE theory.

\medskip
\noindent These initial results were subsequently generalized in various directions, such as allowing for anticipating integrands using Skorokhod integration and Malliavin calculus (see~\cite{Pardoux1990,Alos1997}). Additionally, \cite{ZhangXi2010} examined SVIEs in an infinite dimensional setting, Banach spaces with locally Lipschitz coefficients and singular kernels. Other studies focused on extensions that incorporated path-dependent coefficients such as  \cite{Kalinin2021} and 
% A slight extension beyond Lipschitz continuous coefficients, along with a framework for the theory of weak solutions, was introduced (see~\cite{abi2021weak,PromelScheffels2023a, PromelScheffels2023b}). In a more recent contribution, \cite{Wang2008} proved the existence and uniqueness of solutions to SVEs with non-Lipschitz coefficients, utilizing a condition analogous to that of \cite{Yamada1971} for stochastic differential equations. Additionally, \cite{ZhangXi2010} examined SVIEs in an infinite dimensional setting, Banach spaces with locally Lipschitz coefficients and singular kernels.
\cite{Protter1985} who generalized the existence and uniqueness results to stochastic Volterra equations (SVIEs) driven by right-continuous semimartingales of the form:
\[
X_t = H_t + \int_0^t f(t, s, X_u, u<s) \, dZ_s, \quad t \in [0, T],
\]
Here \((H_t)_{t \in [0, T]}\) is an adapted and right-continuous process, and \((Z_t)_{t \in [0, T]}\) is a right-continuous semi-martingale. The function \(f\) is assumed to satisfy suitable Lipschitz continuity conditions in the space variable and to be differentiable with respect to \(t\). Furthermore, it was shown that the solution \(X\) inherits the semimartingale property under the assumption that \(f\) is partially differentiable in \(t\).

\medskip
\noindent
In the late 1990s, attempts were made within the financial community to incorporate long-memory effects into continuous-time stochastic volatility models. This paradigm shift has been driven by the necessity to capture persistent dependencies observed in financial markets, particularly through \(H\)-fractional Brownian motion or Volterra equations with regular \(K(t,s) \)(see \cite{CoutinD2001, ComteR1998}). 
In the early 2000s, research shifted to Volterra equations with singular kernels kernel \(K(t,s)\) that blow up as \(s \to t\) (i.e. \(H < 1/2\)). This transition was fueled by the development of a new generation of stochastic volatility models, which forgo the Markovian framework to better align with empirical data. Notably, the study in~\cite{GatheralJR2018} revealed that volatility trajectories exhibit very low H\"older regularity, with \(H \approx 0.1\), underscoring the need for non-Markovian modeling approaches.
As a result, there has been a resurgence of interest in SVIEs within mathematical finance, particularly with the rise of rough volatility models, as highlighted in the work of \cite{ElEuchR2018,EGnabeyeuPR2025,Gnabeyeu2026a, Gnabeyeu2026b}. These models, which use the singular kernel, naturally capture this feature, as their paths have a H\"older continuity exponent \(H\). Singular kernel Volterra equations also arise as limiting dynamics in models of order books via nearly unstable Hawkes processes (see \cite{JaissonR2016, EGnabeyeuR2025}).
In both context, such processes are used to mimic Fractional Brownian motion-driven stochastic differential equations (SDEs). More specifically, within these frameworks, In particular, Volterra equations with fractional kernels \(K\) have emerged as a more tractable alternative to stochastic differential equations driven by genuine \(H\)-fractional Brownian motions, which would otherwise require the use of ``high-order" rough path theory or regularity structures.  

\medskip
\noindent
In contrast, more recent volatility models, including those developed in the context of Brownian SDEs (e.g., \cite{Guyon2023}), still rely on Brownian motion but often consider path-dependent structures, similar to those explored in the seminal work by \cite{RogersWilliamsII} and later extended by \cite{Ruan2020}, with inspiration from \cite{Protter1985} in the context of Volterra equations.
By considering a given initial random function \(x_0\) that evolves deterministically for \( t > 0 \); that is, for \( t > 0 \), the value \( x_0(t) \) is \(\mathcal{F}_0\)-measurable, where \(\mathcal{F}_0\) denotes the history of the process up to time \( t = 0 \), a rather general form of the stochastic version of the path-dependent Volterra equation on \([0,T]\) in  $\mathbb{H} = \mathbb{R}^d$ for any \(T>0\) takes the following form:
\begin{equation}\label{eq:pathVolterra}
	\begin{cases}
		X_t = x_0(t) + \int_0^t K_1(t,s) b(s, X_\cdot^s) \, ds + \int_0^t K_2(t,s) \sigma(s, X_\cdot^s) \, dW_s,\\
		x_0 : (\Omega, \mathcal{F}, \mathbb{P}) \to (\mathbb{H}, \mathcal{B}(\mathbb{H})),  \quad x_0\perp\!\!\!\perp W.
	\end{cases}
\end{equation}
where $K_i : [0,T]^2 \to \mathbb{R}_+, i \in \{1, 2\}$ are the deterministic kernels weighting the drift and diffusion coefficients and modeling the memory or hereditary structure of the system. The terms $X_{\cdot}^s$ or equivalently $X_{\cdot \wedge s}$ denote, the path of the process up to time $s$. The process $(W_t)_{t \geq 0}$ is an $\mathbb{R}^{d^\prime}$-valued Brownian motion independent of $x_0$, both defined on a probability space $(\Omega, \mathcal{A}, P)$ and $\mathcal{F}_t \supset \mathcal{F}_{t, x_0, W}$ a filtration satisfying the usual conditions.
Such equations~\eqref{eq:pathVolterra} can be seen as a generalization of classical stochastic differential equations to non-Markovian and path-dependent settings, where the evolution of the process depends not only on the current state but also on its entire past trajectory. They naturally arise in the modeling of systems corrupted by noise with memory effects and irregular behaviour, including in mathematical finance, physics, and biology.

\subsection{Our contribution}
The aim of this paper is to study, within a general framework, stochastic Volterra
equations~\eqref{eq:pathVolterra} with path-dependent coefficients. We focus in
particular on the case where the kernel may be either singular or non-singular.
Let $\varphi$ be a deterministic function and let $X_0$ be an $\mathbb{H}$-valued
random variable. Throughout this paper, we allow the initial input
$x_0(t) := X_0 \varphi(t)$ to be an $\mathcal{F}_0$-measurable random function.
This choice does not introduce any additional technical difficulty.
A more subtle difficulty arises from existing results on the integrability and
pathwise regularity of solutions to Volterra equations~\eqref{eq:pathVolterra}.
Indeed, following \cite{Ruan2020}, existence and uniqueness of a pathwise continuous
and integrable solution are established only under rather stringent assumptions.

\begin{itemize}
	\item[(i)] Our first objective is therefore to establish existence and uniqueness in Theorem~\ref{Thm:pathExistenceUniquenes} of a strong
	solution to~\eqref{eq:pathVolterra} on appropriate functional space when $X_0 \in L^p(\mathbb{P})$ for any \(p\) large enough.
\end{itemize}
	In contrast to the techniques developed for standard Stochastic Volterra equations in \cite{ZhangXi2010} and further refined in \cite{JouPag22}, our approach in Theorem~\ref{Thm:pathExistenceUniquenes} is based
	on a fixed-point argument in appropriate Bochner spaces under additional appropriate integrability and regularity conditions on the
	kernels.
\begin{itemize}
	\item[(ii)]  We also analyze various qualitative properties of these solutions in Theorem~\ref{prop:pathkernelvolt}, in particular, the existence of continuous modification, their path regularity, supremum-norm estimates and finite $L^p$ moments, all of which are recovered \emph{a posteriori} via Kolmogorov's continuity criterion and, then, the flow properties of the solution.
\end{itemize}
% Pathwise regularity of the solution, as well as supremum-norm estimates, are then recovered \emph{a posteriori} via Kolmogorov's continuity criterion.
In Theorem~\ref{prop:pathkernelvolt2}, we show that our results can be extended to the more natural
setting in which $X_0$ belong to $L^p(\mathbb{P})$ for some $p \in (0,+\infty)$, by proving a representation formula for the solutions to~\eqref{eq:pathVolterra} as
functionals of the Brownian path and the initial value.
\begin{itemize}
	\item[(iii)] Finally, we introduce and analyze an interpolated $K$-integrated Euler scheme, a key tool that gives a clear road-map for the numerical simulation of those equations~\eqref{eq:pathVolterra}. We prove its convergence towards the unique
	strong solution, and derive the rate of this convergence in \(L^p\).
\end{itemize}
 Specifically, we prove in Theorem~\eqref{thm:Eulercvgce2} that,
if $X_0 \in L^p(\mathbb{P})$ for some $p>0$, then the scheme converges in
$L^p(\mathbb{P})$ for the sup-norm, under the same assumptions (up to time regularity of the coefficients of the equation) as those ensuring
strong existence and uniqueness for the path-dependent Volterra process.
 
\subsection{Plan of the paper and Notations}
\noindent {\sc \textbf{Organization of this paper.}} 
The remainder of the paper is organized as follows. Section~\ref{sect-kernelVolterra} presents the setting and the main results, including the existence and uniqueness theorem for path-dependent stochastic Volterra integral equations with Lipschitz continuous drift and diffusion coefficients in the {\em Bochner space}. Section~\ref{sec:num_approx} then focuses on the convergence of a discretization scheme to this process, namely the {\em interpolated $K$-integrated Euler-Maruyama} scheme, as the time step tends to \(0\). Rates of convergence are presented, together with insights into the performance of the scheme in path-dependent settings.
In Section~\ref{sec:num_approx}, we numerically illustrate sample paths of a scaled process and verify the convergence rate when \(K\) is the fractional kernel. Finally, in Section~\ref{sect:ProofMainResult}, we provide the proof of the main results of this paper.

\medskip
\noindent {\sc \textbf{Notations.}} 

\smallskip 
\noindent $\bullet$ Consider for \(d,d^\prime\geq1\), \( \mathbb{H} = \mathbb{R}^d \), \( \bar{\mathbb{H}} = \mathbb{R}^{d^\prime} \) and  \( \tilde{\mathbb{H}}=\mathbb{H} \times \bar{\mathbb{H}} = \mathbb{R}^{d \times d^\prime} \) three Euclidean spaces.
More specifically, in what follows, we shall consider  \( \tilde{\mathbb{H}} := \mathbb{M}_{d,d^\prime}(\R)\) the space of matrices of size \(d\times d^\prime\), equipped with the operator norm \(\|\cdot\|_{\tilde{\mathbb{H}}} =|||\cdot|||\). 

\noindent $\bullet$ ${\cal C}([0,T], \mathbb H) (\text{resp.} \quad {\mathcal C_0}([0,T], \mathbb H))$ denotes the set of continuous functions(resp. null at 0)  from $[0,T]$ to $\mathbb H \quad (\R^d, etc.)$ and ${\cal B}or({\cal C}_{\mathbb H})$ (resp. ${\cal B}or({\cal C}_{\mathbb H, 0})$) denotes its  Borel $\sigma$-field induced by the $\sup$-norm topology. For \( \bar{\mathbb{H}} := \mathbb{R}^{d^\prime} \), we denote by
\(\mathbb{Q}_{W}\) the \(d^\prime\)-dimensional wiener measure on \({\cal C}([0,T], \bar{\mathbb{H}})\).

%\smallskip
%\noindent $\bullet$ ${\rm Leb}_d$ denotes the Lebesgue measure on $(\R^d, {\cal B}or(\R^d))$. 

\smallskip 
\noindent $\bullet$ For $p\in(0,+\infty)$, $L_{\mathbb H}^p(\P)$ or simply $L^p(\P)$ denote the set of  $\mathbb H$-valued random vectors $X$  defined on a probability space $(\Omega, {\cal A}, \P)$ such that $\|\|X\|_{\mathbb H}\|_p:=(\E[\|X\|_{\mathbb H}^p])^{1/p}<+\infty$.

\smallskip 
\noindent $\bullet$ For a random variable/vector/process $X$, we denote by $\mathcal{L}(X)$ or $[X]$ its law or distribution. 

\smallskip 
\noindent $\bullet$ $X\perp \! \! \!\perp Y$  stands for independence of random variables, vectors or processes $X$ and $Y$.  

%\smallskip 
%\noindent $\bullet$ For a function $f: E\to \R$, $\displaystyle \|f\|_{\sup}= \sup_{x\in E}|f(x)|$. 
%

%\noindent $\bullet$ For a measurable function $f: (E, \mathcal{F}, \mu) \to \mathbb{R}$, we define its essential supremum as 
%\[
%\mathrm{ess} \sup f = \inf \left\{ M \in \mathbb{R}: \mu \left( \left\{ x \in E : |f(x)| \geq M \right\} \right) = 0 \right\}.
%\]

\smallskip 
\noindent $\bullet$ Let \(\mathcal{M}:=\mathcal{M}(\mathbb{R}_+,\mathbb{R}_+)\) denote the space of all $(\R_+, {\cal B}or(\R_+))$-measurable functions \(\mu\) on \(\mathbb{R}_+\) such that the restriction \(\mu|_{[0, T]}\), for any \(T > 0\), is a \(\mathbb{R}_+\)-valued finite measure (i.e. the restriction $\mu|_{[0,T]}$ with $T > 0$ is well-defined) and $\mu$ is non-atomic, i.e., $\mu(\{t\}) = 0$ holds for each $t \ge 0$. For \(\mu \in \mathcal{M}\) and a compact set \(E \subset \mathbb{R}_+\), we define the total variation of \(\mu\) on \(E\) by:

\centerline{$|\mu|(E) := \sup \left\{ \sum_{j=1}^N |\mu(E_j)| : \{E_j\}_{j=1}^N \text{ is a finite measurable partition of } E \right\}.$}
\smallskip 
\noindent $\bullet$ Denote:
\begin{align*}
	&\mathbb{T} = [0, T] \subset \mathbb{R}_+, \qquad \mathbb{T}^2 = [0, T] \times [0, T] \subset \mathbb{R}_+^2, \qquad
	&\mathbb{T}^2_- = \big\{(t, s) \in \mathbb{R}_+^2 \mid 0 \leq s \leq t \leq T \big\}.
\end{align*}

Here, ``$_-$" indicates the left neighborhood of \( t \). The state space \( {\cal C}([0, T]; \mathbb{H}) \), is equipped with the uniform or supremum norm $\| \cdot \|_{\sup} :=\| \cdot \|_T$:
We also define the truncated supremum norm:
\begin{equation} \label{uniform}
	\textit{For all}\quad  t \in \mathbb{T}	\quad \|\mathbf{x}\|_t = \sup_{s \in [0, t]} \| \mathbf{x}_s \|_{\mathbb{H}} = \sup\Big\{ \| \mathbf{x}_s \|_{\mathbb{H}} : s \leq t  \Big\}, \qquad \forall \, \mathbf{x} \in {\cal C}([0, T]; \mathbb{H}).
\end{equation}

The results of this paper, still hold  in a straightforward manner for any 2-smooth \footnote{In the sense that
	there exists a constant $C_{\mathbb{H}}\ > 2$ such that for all x, y $\in \mathbb{H},
	\|x + y\|^2_{\mathbb{H}} + \|x - y\|^2_{\mathbb{H}} \leq
	2\|x\|^2_{\mathbb{H}} + C_{\mathbb{H}}\|y\|^2_{\mathbb{H}}$. } separable Banach space \(\mathbb{H}\) endowed with a norm \(\|\cdot\|_{\mathbb{H}}\) (say more generally any separable Hilbert space \(\mathbb{H}\), or \( L^p(E, \mu, \mathbb{H}) \) where \((E,\mathcal{E},\mu)\) is a measure space etc.).) 
\section{Path-dependent Stochastic Volterra Equations with explicit kernels.}\label{sect-kernelVolterra}
%\label{sect-preliminary}

Let $T>0$ be the time horizon and $(\Omega,\cF,\dbF,\dbP)$ be a complete probability space with a family of right-continuous filterations and satisfying the usual conditions \footnote{The \emph{usual hypotheses} means namely, that \(\mathcal{F}\) is \(\mathbb{P}\)-complete, that $\mathcal{F}_0$ contains all $\mathbb{P}$-null sets and that the filtration is right-continuous, i.e., $\mathcal{F}_t=\mathcal{F}_{t+} \equiv \cap_{s>t} \mathcal{F}_s$.}, where  $\Omega$ is the sample space, $\dbP$ a probability measure, $W$ a standard $\bar{\mathbb{H}}$-valued Brownian motion under $\dbP$, and $\dbF:= \dbF^W\equiv\{\cF_t\}_{t\ges0}$ the natural filtration generated by $W$ augmented by all the $\dbP$-null sets in $\cF$.
Throughout this paper, without special declarations, all expectations $\mathbb{E}$
are taken with respect to the probability measure $\mathbb{P}$. 

Let \(\mathbb{X}\) be a space of functions from \(\mathbb{T}\) to \(\mathbb{H}\) to be specified latter.
For \( x = (x_u)_{u \in \mathbb{T}} \in \mathbb{X} \), and for a fixed \( s \in \mathbb{T} \), we define the processes \( x^s_{\cdot} := x_{\cdot \wedge s} = (x_{u \wedge s})_{u \in \mathbb{T}} \) by:
\[
x_{u \wedge s} := 
\begin{cases}
	x_u & \text{if} \quad u \in [0, s] \\
	x_s & \text{if} \quad u \in (s, T]
\end{cases} = x_u \mathbf{1}_{[0,s]}(u)
+ x_s \mathbf{1}_{(s,T]}(u).
\]
It is clear that \( x_{\cdot \wedge s} \in {\cal C}(\mathbb{T}, \mathbb H) \) whenever \(x\in {\cal C}(\mathbb{T}, \mathbb H) \). Explicitly \( x^s \), is the stopped extension of the process \( x \) at time \( s \), i.e the trajectory of \( x \) is followed until \( s \), and then stopped-extended by freezing the value at \( x_s \) over \( (s,T] \).

\medskip

%\subsection{Results on path-dependent Volterra processes with explicit kernels.}

%We consider the below structure of our path-dependent stochastic Volterra equation with distinct kernels $K_1$ and $K_2$.
 Let us fix an \(\mathbb{H}\) -valued, \(\mathbb{F}\)-adapted continuous process \((x_0(t))_{t \geq 0}\).
We will study the following path-dependent Volterra integral equation with distinct kernels $K_1$ and $K_2$ and initial condition or data \((s, x_0, \mathbb{F})\): \(X_t = x_0(t)\) for all \(t \in [0, s]\), and,
\begin{equation}\label{eq:pthvolterra}
	X_t = x_0(t) + \int_{s}^{t} K_1(t,r) b(r, X^r_\cdot) \, dr + \int_{s}^{t} K_2(t,r) \sigma(r, X^r_\cdot) \, dW_r, \quad \text{for all } \mathbb{T}\ni t \geq s, \ \mathbb{P}\text{-a.s.} 
\end{equation}
where the stopped extension \(X^s_\cdot\) of \((X_t)_{t\in\mathbb{T}}\) is given by \((X^s_\cdot)_{u \in \mathbb{T}} = \left( X_u \mathbf{1}_{[0,s]}(u) + X_s \mathbf{1}_{(s,T]}(u) \right)_{u \in \mathbb{T}}\) and $b:\mathbb{T}\times\mathbb{X} \to \mathbb{H}$, $\sigma:\mathbb{T}\times\mathbb{X} \to \tilde{\mathbb{H}}$, $K_i : \mathbb{T}^2_{-} \to \mathbb{R}_+, i \in \{1, 2\}$ for $i \in \{1, 2\}$ are measurable and non-anticipative functions\footnote{In the sense that \(\forall \, x \in \mathbb{X},\, b(s,x) = b(s,x^s_\cdot)\). An example of coefficient could be: $b(s, x^s_{\cdot}) = g(s)\int_0^s \Phi (x(u)) ds$ for a bounded and continuous functions \(g\) and $\Phi$ }, $(W_t)_{t \in \mathbb{T}}$ the standard $\bar{\mathbb{H}}$-valued $(\mathcal{F}_t)_{t \in \mathbb{T}}-$Brownian motion independent of the $\mathbb{H}$-valued random vector $X_0 \in \mathcal{F}_0$.
% and $(\mathcal{F}_t)_{t \in [0,T]}$ be a filtration such that $X_0 \in \mathcal{F}_0$ and $W$ is an $\bar{\mathbb{H}}$-valued ${\cal F}_t$-Brownian motion. 

%\medskip
%Note that, when $d^\prime:=\text{dim}(\bar{\mathbb{H}})\le \text{dim}(\mathbb{H})=:d$,  we can always assume that $d^\prime=d$ by replacing {\em mutatis mutandis} $\sigma$  by $\sigma\circ \iota_{d^\prime,d}$ where $\R^{d^\prime}\!\ni u\mapsto \iota_{d^\prime,d} (u)=$ {\tiny $\begin{pmatrix}u\\ 0_{\R^{d-d^\prime}}\end{pmatrix}$} $\!\in \R^d$ and the $d^\prime$-dimensional   standard  Brownian motion $(W_t)_{t\in[0,T]}$ by  a standard $d$-dimensional Brownian motion (still denoted $W$). It is clear that the resulting equation has the same solutions (if any) as the one in~\eqref{eq:pthvolterra}.
% Such equations can be written in the form
%\begin{align}
%	X_t &= x_0(t) + \int_0^t K_1(t,s)\,
%	b\!\left(s,\left( X_u \mathbf{1}_{[0,s]}(u)
%	+ X_s \mathbf{1}_{(s,T]}(u) \right)_{u \in [0,T]} \right) ds \nonumber \\
%	&\quad + \int_0^t K_2(t,s)\,
%	\sigma\!\left(s,\left( X_u \mathbf{1}_{[0,s]}(u)
%	+ X_s \mathbf{1}_{(s,T]}(u) \right)_{u \in [0,T]} \right) dW_s,
%	\qquad t \in \mathbb{T}.
%\end{align}
\medskip
% The integral \(\int_0^t K_1(t,s) b(s, X_\cdot^s) \, ds\) is defined as a Lebesgue integral, while \(\int_0^t K_2(t,s) \sigma(s, X_\cdot) \, dW_s\) is interpreted as an It\^o integral.
Due to the kernels' dependence on their first argument, the solution is generally neither a semi-martingale nor a Markov process, as the path-dependent structure introduce memory effects into the process dynamics, thereby eliminating the Markovian property and limiting the use of standard stochastic calculus tools.

% destroys the Markovian property.
%A key distinction between regular diffusion processes and Volterra processes, from a technical perspective, lies in the coefficients' dependance on their first argument (or generally the presence of kernels), which introduce memory effects into the process dynamics, thereby eliminating the Markov property and limiting the use of standard stochastic calculus tools.
\begin{Example}[The case of a convolutive kernel]\label{eq:pathSDE}
	Assume that for all $(t,s)\in \mathbb{T}_-^2$ we have \(K_1(t,s)=K_2(t,s)=K(t-s)\) where \(K\) is a convolutive kernel  (see, e.g.,~\cite[Definition 2.1]{EGnabeyeu2025}). If the path-dependent drift and diffusion coefficients $b$ and $\sigma$ in Equation~\eqref{eq:pthvolterra} are of the form
	\begin{equation}\label{eq:b_sigma_general}
		b(t,x) = \mu\,\!\big(t, (\widetilde K \star x)_t\big)
		= \mu\,\!\Big(t, \int_0^t \widetilde K(t-s)\,x(s)\,ds \Big),\;\text{and}\; \sigma(t,x) = \theta\,\!\big(t, (\widetilde K \star x)_t\big),
		\; t \in \mathbb{T}, \ x \in {\cal C}(\mathbb{T}, \mathbb R)
	\end{equation}
	for Borel measurable functions $\mu, \theta:\mathbb{T}\times \mathbb{R} \to \mathbb{R}$ that are Lipschitz in $x$ uniformly in $t$, and uniformly bounded at $x=0$, and where $\widetilde K$ is the $\rho$-pseudo-inverse co-kernel of $K$, that is \(K\star \widetilde K =\mathbf{1}\) (see \cite[Definition 3.1]{Bonesini2023Volterra}), then Equation~\eqref{eq:pthvolterra} corresponds to the framework studied in  \cite{Bonesini2023Volterra}.  In this case, the process $\xi_t := (\widetilde K \star X)_t$ satisfies the diffusion SDE
		\begin{equation}\label{eq:SDE}
		\xi_t = x_0(t) e^{\rho t}( \mathbf{1} \star \widetilde K)_t + \int_{0}^{t} e^{\rho r}\mu(r, e^{-\rho r}\xi_r) \, dr + \int_{0}^{t} e^{\rho r}\theta(r, e^{-\rho r}\xi_r)  \, dW_r, \quad \text{for all } t\in \mathbb{T}, \quad \xi_0 =\mathbf{0}. 
	\end{equation}
	Equation~\eqref{eq:SDE} is especially appealing from a numerical perspective, as it recasts the original non-Markovian dynamics into a tractable diffusion framework amenable to standard discretization schemes.
	
%	This equation~\eqref{eq:SDE} drew some attention for numerical purposes....
\end{Example}
This part focuses on some results concerning the existence of such stochastic integrals equations in a more general setting~\eqref{eq:pthvolterra}  and the uniqueness of their solutions.

\medskip
For the space \(\mathcal{M}:=\mathcal{M}(\mathbb{R}_+,\mathbb{R}_+)\) of all $(\R_+, {\cal B}or(\R_+))$ locally finite positive Borel measures on \(\mathbb{R}_+\), we further assume that every $\mu \in \mathcal{M}$ satisfies the following
\emph{uniform non-atomicity condition}: for every $T>0$,

\begin{equation}
	\label{eq:uniform-non-atomicity}
	\lim_{\eta \to 0}
	\sup_{t \in \mathbb{T}}
	\mu\big([(t-\eta)^+,t]\big)
	= 0 .
\end{equation}
In particular,  $\mu$ is non-atomic, i.e., $\mu(\{t\}) = 0$ holds for each $t \ge 0$. Throughout this paper, we let \(\mu\in\mathcal{M}\) satisfying~\eqref{eq:uniform-non-atomicity}, and shall assume the following for the existence and uniqueness of a solution to Equation~\eqref{eq:pthvolterra}.
\begin{assumption}\label{assump:kernelVolterra}
	We consider the following:
	\begin{enumerate}
		\item[(i)] Assume that the kernels $K_i$, $i=1,2$, satisfy an integrability assumption, namely either condition
		\begin{equation}\label{eq:contKtilde}
			(\widehat {\cal K}^{cont}_{\widehat \theta})\;\;\exists\,\widehat\kappa< +\infty,\;\forall\bar{\delta}\!\in (0,T],\; \widehat \eta(\delta) := \max_{i=1,2} \sup_{t\in \mathbb{T}} \left[\int_{(t-\bar{\delta})^+}^t \hskip-0,25cm K_i\big(t,u\big)^i du\right]^{1/i}\le \widehat \kappa \,\bar{\delta}^{\,\widehat \theta} \; %\text{for some} \; \widehat\theta\in (0,1]
		\end{equation}
		is satisfied for some $\widehat\theta\in (0,1]$ or
		\begin{equation}\label{eq:KisL^2} 
			\big({\cal K }^{int}_{\beta}\big)\hskip1,5cm % \exists\, \beta >1  \mbox{ such that } 
			\sup_{t\in \mathbb{T}}\int_0^t\left(K_1(t,s)^{{\frac{2\beta}{\beta+1}}}
			+K_2(t,s)^{2\beta}\right)ds<+\infty \quad \text{for some} \quad \beta>1.
		\end{equation}
		
		\item[(ii)] Assume that the kernels $K_i$, $i=1,2$, satisfy the continuity assumption i.e. for  some $\theta\in (0,1]$,
		\begin{equation}\label{eq:contK}
			({\cal K}^{cont}_{\theta}) \;\;  
			\exists\, \kappa< +\infty,\;\forall \,\bar{\delta}{\in (0, T)},
			\;\eta(\bar{\delta}):= \max_{i=1,2}\sup_{t\in \mathbb{T}} \left[\int_0^t |K_i(\big(t+\delta)\wedge T,s\big)-K_i(t,s)|^ids \right]^{\frac 1i} \le  \kappa\,\bar{\delta}^{\theta}
		\end{equation}
		% for  some $\theta\in (0,1]$.  
		\item[(iii)] Assume that the functions ${\cal C}(\mathbb{T}, \mathbb H) \ni x\mapsto b(t,x)$ and ${\cal C}(\mathbb{T}, \mathbb H)\ni x\mapsto \sigma(t,x)$ are Lipschitz with Lipschitz  coefficient uniform in $t\in\mathbb{T}$, i.e. \(\exists \, C_{b, T}, \, C_{\sigma, T} \;\) such that \(\forall\, t \in \mathbb{T}, \; \forall\, x,\, y \in {\cal C}(\mathbb{T}, \mathbb H)\):
		\[ \|b(t,x)-b(t,y)\|_{\mathbb{H}}\, , \,  \|\sigma(t,x)-\sigma(t,y)\|_{\tilde{\mathbb{H}}}  \leq (C_{b, T} \vee C_{\sigma, T} ) \Big( \int_{0}^t \| x(s)-y(s)\|_{\mathbb{H}}^p \mu (ds) \Big)^{\frac1p}.
		\]
		
		\item[(iv)] Finally, assume that \( X_0 \in L^p(\mathbb{P}) \) for some \( p \in (0, +\infty) \), 
		the initial input process $t \to x_0(t):= X_0 \varphi(t)$ is continuous and $(\cF_t)$-adapted. Moreover,  for some $\delta > 0$, for any $p>0$ and $T>0, \quad$\\
		$ \int_{0}^T \mathbb{E}\left[ \| x_0(t) \|_{\mathbb{H}}^p \right] \mu(dt) <+\infty, $  $\mathbb{E}\|x_0(t^\prime)-x_0(t)\|_{\mathbb{H}}^p\leqslant C_{T,p}\Big( 1 + \int_{0}^{T}\mathbb{E}\left[ \| x_0(s) \|_{\mathbb{H}}^p \right] \mu (ds) \Big)|t'-t|^{\delta p}$
	\end{enumerate}
\end{assumption}
This setting corresponds, in our framework, to the case of regular and singular kernels and Lipschitz continuous coefficients.

\smallskip
\noindent {\bf Remarks and Comments}.
	\noindent  $\bullet$ Assumptions $({\cal K}^{int}_{\beta})$ and  $(\widehat {\cal K}^{cont}_{\widehat \theta})$ are very close  in the singular case, e.g. when $K_i(t,s)= \varphi_i(t-s)$, $0\le s<t \le T$ with $\varphi_i$ decreasing from $+\infty$ at  $0$.
	\smallskip
	If $({\cal K}^{int}_{\beta})$ is in force, then $(\widehat {\cal K}^{cont}_{\widehat \theta})$ holds with $\widehat \theta =  \frac{\beta-1}{2\beta}$ since H\"older's inequality implies, for $\delta \!\in \mathbb{T}$,  
	\[
	\int_{(t-\delta)^+}^tK_1(t,u) du \le \Big(\int_0^t K_1(t,u)^{\frac{2\beta}{\beta+1}}du \Big)^{\frac{\beta+1}{2\beta}}  (\delta\wedge t)^{\frac{\beta-1}{2\beta}}\le C_{1,\beta,T}\,\delta^{\frac{\beta-1}{2\beta}} 
	\]
	and 
	\[
	\Big(\int_{(t-\delta)^+}^tK_2(t,u)^2 du\Big)^{1/2}  \le \Big(\int_0^t K_2(t,u)^{2\beta}du \Big)^{\frac{1}{2\beta}}  (\delta\wedge t)^{\frac{\beta-1}{2\beta}}\le C_{2,\beta,T}\,\delta^{\frac{\beta-1}{2\beta}} .
	\]
	As a result, in this scenario, we can always assume that $\widehat \theta \ge \frac{\beta - 1}{2\beta}$.\\
	$\bullet$ However, there is no general converse. That said, if for every \( t \in \mathbb{T} \), the functions \( s \mapsto K_i(t,s) \) for \( i = 1,2 \) are non-decreasing (which is true for typical singular kernels), then the condition \( (\widehat {\cal K}^{cont}_{\widehat \theta}) \) implies \( ({\cal K}^{int}_{\beta}) \) for any \( \beta \) such that \( \frac{\beta-1}{2\beta} < \widehat \theta \). In this case, \( \widehat \theta \) represents the supremum (not attained) of \( \frac{\beta-1}{2\beta} \) across all values of \( \beta \) for which \( ({\cal K}^{int}_{\beta}) \) holds.
	This follows directly from the monotonicity of the kernels \( K_i \). Specifically, under the assumption \( (\widehat {\cal K}^{cont}_{\widehat \theta}) \), for \( 0 \leq s < t \leq T \), we have the following bounds:
	
	\centerline{$
	(t-s)^{1/i} K_i(t,s) \leq \left( \int_s^t K_i(t,u)^i \, du \right)^{1/i} \leq \widehat \kappa (t-s)^{\widehat \theta},
	$}
	
	\noindent which implies \(	K_i(t,s) \leq \widehat \kappa (t-s)^{\widehat \theta - 1/i}.\)
%	Thus, we obtain the inequalities: 
%%	\centerline{$K_1(t,s) \leq C (t-s)^{\widehat \theta - 1} \quad \text{and} \quad K_2(t,s) \leq C (t-s)^{\widehat \theta - \frac{1}{2}}.$}
	Consequently, the condition \( ({\cal K}^{int}_{\beta}) \) holds as long as the following inequalities are satisfied $\footnote{The reader must refer to the theory of improper integrals with 0 as a singularity.}$:
	
	\centerline{$
	\frac{2\beta}{\beta+1} (\widehat \theta - 1) > -1 \quad \text{and} \quad 2\beta (\widehat \theta - \frac{1}{2}) > -1, \quad \text{which simplifies to } \quad \frac{\beta - 1}{2\beta} < \widehat \theta.
	$}
	
%	which simplifies to \(\frac{\beta - 1}{2\beta} < \widehat \theta.\)
\begin{Example}[Examples of kernels]\label{Ex:Kernels}
	Following Example~\ref{eq:pathSDE}, we now present several examples of kernels \(K\) of convolutive type.
	\begin{enumerate}
		\item {\em Fractional integration kernel}. Let \(K(t) = K_{\alpha}(t) = \frac{u^{\alpha-1}}{\Gamma(\alpha)} \mbox{\bf 1}_{\R_+}(t)\) with \(\alpha>0.\)
	%	\begin{equation}\label{eq:frackernel}
	%		K(t) = K_{\alpha}(t) = \frac{u^{\alpha-1}}{\Gamma(\alpha)} \mbox{\bf 1}_{\R_+}(t),  \quad \alpha>0.
	%	\end{equation}
		 This  family of  kernels is associated with the fractional integrations of order $\alpha >0$ and encompasses, in particular, the {\em trivial (Markovian) kernel} $K(t) = \mbox{\bf 1}_{\R_+}(t)$. 
		
		\item {\em Exponential-Fractional integration kernel}. Let $K(t) = K_{\alpha,\rho}(t) = e^{-\rho t} \frac{u^{\alpha-1}}{\Gamma(\alpha)} \mbox{\bf 1}_{\R_+}(t)$ with $\alpha, \rho>0 >0$.
		This family extends the fractional integration kernels by introducing an exponential damping factor and naturally includes the {\em exponential kernel} defined by $K(t)= e^{-t} $. 
	\end{enumerate}
	 One checks that these kernels satisfy conditions~\eqref{eq:contKtilde}--\eqref{eq:KisL^2} and~\eqref{eq:contK}, that is, $(\widehat {\cal K}^{cont}_{\widehat \theta})$, $\big({\cal K }^{int}_{\beta}\big)$ and $({\cal K}^{cont}_{\theta})$, for $\alpha >1/2$ (with $\beta \in \bigl(1,\frac{1}{2(\alpha-1)^{-}}\bigr)$ and $\theta = \widehat \theta = \min\bigl( \alpha-\frac12,\; 1\bigr)$, see, e.g., \cite[Example~2.1]{RiTaYa2020} or~\cite{JouPag22} along many others).
\end{Example}

\begin{Lemma}\label{lm:growth}
	Under Assumption ~\ref{assump:kernelVolterra}, the coefficient functions \(b\) and \(\sigma\) have a linear growth in \( x \in \mathbb{X}\) in the sense that there exists a constant \(C_{b,\sigma,T}\) such that for every \(t \in [0, T]\) and \( x \, \in \mathbb{X}\),
	\begin{equation}
		\|b(t,x)\|_{\mathbb{H}}\,  \vee  \,  \|\sigma(t,x)\|_{\tilde{\mathbb{H}}}  \leq  C_{b, \sigma, T}  \Big(1 + \Big( \int_{0}^t \| x(s)\|_{\mathbb{H}}^p\, \mu (ds) \Big)^{\frac1p}\Big).
	\end{equation}
\end{Lemma}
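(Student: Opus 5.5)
The plan is to compare with the zero path $x\equiv 0$ and apply the Lipschitz condition of Assumption~\ref{assump:kernelVolterra}(iii) with $y=0$. For every $t\in\mathbb{T}$ and $x\in\mathbb{X}$ this gives
\[
\|b(t,x)\|_{\mathbb{H}} \le \|b(t,0)\|_{\mathbb{H}} + (C_{b,T}\vee C_{\sigma,T})\Big(\int_0^t \|x(s)\|_{\mathbb{H}}^p\,\mu(ds)\Big)^{\frac1p},
\]
and the same bound holds for $\sigma$ with $\|\cdot\|_{\tilde{\mathbb{H}}}$ and $\|\sigma(t,0)\|_{\tilde{\mathbb{H}}}$. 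Taking $C_{b,\sigma,T}:=\max\big(\sup_{t\in\mathbb{T}}\|b(t,0)\|_{\mathbb{H}},\ \sup_{t\in\mathbb{T}}\|\sigma(t,0)\|_{\tilde{\mathbb{H}}},\ C_{b,T}\vee C_{\sigma,T}\big)$ then yields the stated inequality at once.

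The one real issue is that $\sup_{t\in\mathbb{T}}\big(\|b(t,0)\|_{\mathbb{H}}+\|\sigma(t,0)\|_{\tilde{\mathbb{H}}}\big)<+\infty$ is not an explicit item of Assumption~\ref{assump:kernelVolterra}. Lipschitz continuity in $x$, uniformly in $t$, gives no control on the time dependence at a fixed path. For instance, $b(t,x)=f(t)$ with $f$ unbounded is Lipschitz with constant $0$. So I will state this boundedness as an implicit standing requirement of the framework, in the same way as ``uniformly bounded at $x=0$'' in Example~\ref{eq:pathSDE}. The step of the proof is then just to make it explicit. If only $\int_0^T(\|b(t,0)\|^2+\|\sigma(t,0)\|^2)\,dt<\infty$ were available, the constant would have to be replaced by a time-dependent function, which is not what is claimed.

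A minor point is that (iii) is formulated for $x,y\in\mathcal{C}(\mathbb{T},\mathbb{H})$, while the lemma is stated for $x\in\mathbb{X}$. I will take $\mathbb{X}\subset\mathcal{C}(\mathbb{T},\mathbb{H})$, or assume the Lipschitz bound extends to $\mathbb{X}$ by the same formula, since its right-hand side only involves $\int_0^t\|x(s)\|^p\mu(ds)$. Finally, the case $p\in(0,1)$ needs no special treatment: we only use the triangle inequality in $\mathbb{H}$ and $\tilde{\mathbb{H}}$, never on the $L^p(\mu)$ quasi-norm, so no triangle inequality in $L^p(\mu)$ is required.
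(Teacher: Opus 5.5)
Your proof is correct and is essentially the paper's own argument: compare with the zero path through the Lipschitz condition (iii), use the triangle inequality in $\mathbb{H}$ and $\tilde{\mathbb{H}}$, and take $C_{b,\sigma,T}$ to be the maximum of $\sup_{t}\|b(t,\mathbf{0})\|_{\mathbb{H}}$, $\sup_{t}\|\sigma(t,\mathbf{0})\|_{\tilde{\mathbb{H}}}$ and the Lipschitz constants. You are also right that finiteness of these suprema is not part of Assumption~\ref{assump:kernelVolterra}; the paper uses it tacitly here and states it explicitly only in later results such as Theorem~\ref{Thrm:flotVolterra}.
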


\smallskip
\noindent {\bf Proof:}
%[of Lemma \ref{lm:growth}]
	 Let \( x \, \in \mathbb{X}\), and let \(\mathbf{0} \in \mathbb{X}\) be such that \(\mathbf{0}(t) = 0\) for all \(t \in [0, T]\). Then
	 {\small
	\[
	\|b(t,x)\|_{\mathbb{H}} - \|b(t,\mathbf{0})\|_{\mathbb{H}} 
	\leq \|b(t,x)-b(t,\mathbf{0})\|_{\mathbb{H}} \leq  C_{b, T} \, \left( \int_{0}^t \| x(s)-\mathbf{0}(s)\|_{\mathbb{H}}^p \mu (ds) \right)^{\frac1p}= C_{b, T} \, \left( \int_{0}^t \| x(s)\|_{\mathbb{H}}^p \mu (ds) \right)^{\frac1p}.
	\]
	}
	\noindent Consequently, \(	\|b(t,x)\|_{\mathbb{H}} \leq 
	\left( \sup_{t \in \mathbb{T}} \|b(t,\mathbf{0})\|_{\mathbb{H}}  \vee C_{b, T} \right)
	\left( \left( \int_{0}^t \| x(s)\|_{\mathbb{H}}^p \mu (ds) \right)^{\frac1p} + 1 \right).\)
	Similarly, we have \(		\|\sigma(t,x)\|_{\tilde{\mathbb{H}}} \leq 
	\left( \sup_{t \in \mathbb{T}} \|\sigma(t,\mathbf{0})\|_{\tilde{\mathbb{H}}}  \vee C_{\sigma, T} \right)
	\left( \left( \int_{0}^t \| x(s)\|_{\mathbb{H}}^p \mu (ds) \right)^{\frac1p} + 1 \right).\)
	so that one can take
	\[
	C_{b,\sigma,T} := \sup_{t \in \mathbb{T}} \|b(t,\mathbf{0})\|_{\mathbb{H}} \vee \sup_{t \in \mathbb{T}}\|\sigma(t,\mathbf{0})\|_{\tilde{\mathbb{H}}} \vee C_{b, T} \vee C_{\sigma, T}
	\]
	to conclude. \hfill$\Box$
\vspace{-.2cm}
\subsection{Existence and pathwise uniqueness of the solution.}
As a first preliminary, we will establish   the following lemma, whose proof is postponed to Appendix~\ref{app:B1}.  It provides a control in $L^p(\P)$ of the increments of  general Lebesgue or  stochastic integrals involving the kernels $K_i$.
\begin{Lemma}\label{lem:bounds}
	Let \(T>0\), $(H_t)_{t\in[0,T]}$ (resp. $(\widetilde H_t)_{t\in[0,T]}$) be an $({\cal F}_t)_{t\in[0,T]}$-progressively measurable process
	having values in $\mathbb{H}$ (resp. $\tilde{\mathbb{H}}$) such that $\displaystyle \sup_{t\in [0,T]}\|\|H_t\|_{\mathbb{H}}\|_p+\sup_{t\in [0,T]}\|\|\widetilde{H}_u\|^2_{\tilde{\mathbb{H}}}\|_p<+\infty$ for some $p\ge 2$. Assume that the kernels $K_i$ satisfy $(\widehat {\cal K}^{int}_{T,\widehat \theta})$ for some $\widehat \theta\in (0,1]$ and $({\cal K}^{cont}_{\theta})$ for some $\theta\in (0,1]$
	Let $0\le a<b\le T$. 	Assume that the integral \(\int_a^{t\wedge b} K_1(t,s) H_s \, ds\) is defined as a Lebesgue integral, while \(\int_a^{t\wedge b} K_2(t,s) \widetilde{H}_s \, dW_s\) is interpreted as an It\^o integral.
	Then,  under Assumption \ref{assump:kernelVolterra}
	both processes \([a,T] \ni t\longmapsto \int_{a}^{t\wedge b} K_1(t,s)H_sds\; \mbox{ and }\; [a,T] \ni t \longmapsto \int_{a}^{t\wedge b} K_2(t,s)\widetilde H_sdW_s\)
	are $\theta\wedge\widehat\theta$-H\" older from $[a,T]$ to $L^p(\P)$. To be more precise, there exists a real constant $\kappa, \kappa^\prime= C_{p,T,K_1,K_2}>0$ such that setting
	\( |b_u| =  \|H_u\|_{\mathbb{H}}\) and
	\( |a_u| = \|\widetilde{H}_u\|^2_{\tilde{\mathbb{H}}} \), we have for \(0\leq s\leq t\leq T\)
	\begin{align*}
	 &\, \mathbb{E}\Big[\Big\|\int_{a}^{t\wedge b} K_2(t,u)\widetilde{H}_u dW_u - \int_{a}^{s\wedge b} K_2(s,u)\widetilde{H}_u dW_u\Big\|_{\tilde{\mathbb{H}}}^p \Big] \leq \kappa^\prime_{p}\, \Big(\sup_{u \in [0,T]} \mathbb{E}\left[ |a_u|^{\frac{p}{2}}  \right] \Big)  (t-s)^{p(\theta\wedge \theta^*)} \\
	&\,	\mathbb{E}\Big[\Big\|\int_{a}^{t\wedge b} K_1(t,u) H_u du - \int_{a}^{s\wedge b} K_1(s,u) H_u du \Big\|_{\mathbb{H}}^p \Big] \leq \kappa_{p} \,\Big(\sup_{u \in [0,T]}\,\mathbb{E}\Big[ |b_u|^{p} \Big]\Big)   (t-s)^{p(\theta\wedge \theta^*)}. 
	\end{align*}
	\begin{equation}\label{eq:def_theta_star}
	\text{ Here, $\theta^*$ is given by:} \; \theta^* = \begin{cases}
		\frac{\beta-1}{2\beta}, & \text{if the kernels } K_i \text{ satisfy } ({\cal K}^{int}_{\beta}) \text{ for some } \beta>1, \\
		\widehat{\theta}, & \text{if } (\widehat {\cal K}^{cont}_{\widehat \theta}) \text{ holds}.
	\end{cases}
   \end{equation}
\end{Lemma}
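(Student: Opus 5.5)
We prove both estimates by the same decomposition, for $0\le s\le t\le T$ (with $t,s\ge a$). For the stochastic integral we write
\[
\int_{a}^{t\wedge b} K_2(t,u)\widetilde H_u\,dW_u-\int_{a}^{s\wedge b} K_2(s,u)\widetilde H_u\,dW_u
=\int_{s\wedge b}^{t\wedge b} K_2(t,u)\widetilde H_u\,dW_u+\int_{a}^{s\wedge b}\big(K_2(t,u)-K_2(s,u)\big)\widetilde H_u\,dW_u .
\]
The first piece is a ``local'' term over a window of length at most $t-s$. The second is a ``kernel increment'' term. The drift is split the same way with $K_1$ and $du$. By the elementary inequality $\|x+y\|^p\le 2^{p-1}(\|x\|^p+\|y\|^p)$ it suffices to bound each piece separately by $C\,(t-s)^{p(\theta\wedge\theta^*)}$ times the relevant supremum of moments.

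For the stochastic pieces we apply the Burkholder--Davis--Gundy inequality (valid for $p\ge2$; in the 2-smooth Banach setting, its Banach-valued version). This reduces the problem to $\E\big[(\int_I k(u)^2\|\widetilde H_u\|^2du)^{p/2}\big]$, where $I$ is the integration interval and $k$ the relevant kernel. We then use Minkowski's integral inequality in $L^{p/2}(\P)$, legitimate since $p/2\ge1$:
\[
\Big\|\int_I k(u)^2\|\widetilde H_u\|^2du\Big\|_{p/2}\le \int_I k(u)^2\,\big\|\|\widetilde H_u\|^2\big\|_{p/2}du\le \Big(\sup_u \E|a_u|^{p/2}\Big)^{2/p}\int_I k(u)^2du .
\]
This yields the factor $\sup_u\E[|a_u|^{p/2}]$ times $(\int_I k^2)^{p/2}$.

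For the drift pieces, Minkowski's inequality in $L^p(\P)$ gives directly $\|\int_I k(u)H_u du\|_p\le \sup_u\|H_u\|_p\int_I k(u)du$. Everything is thereby reduced to deterministic kernel integrals:
\begin{itemize}
\item For the local terms we need $\big(\int_{s}^{t}K_i(t,u)^i du\big)^{1/i}\le \widehat\kappa(t-s)^{\widehat\theta}$ for $i=1,2$. This is exactly $(\widehat{\cal K}^{cont}_{\widehat\theta})$ with $\bar\delta=t-s$. Alternatively, under $({\cal K}^{int}_\beta)$ the Hölder computation in the Remarks following Assumption~\ref{assump:kernelVolterra} gives the exponent $\frac{\beta-1}{2\beta}$. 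This produces the exponent $\theta^*$ in~\eqref{eq:def_theta_star}.
\item For the kernel increment terms, we enlarge the integration domain from $[a,s\wedge b]$ to $[0,s]$ and invoke $({\cal K}^{cont}_\theta)$ with $\bar\delta=t-s$. Since $(s+\bar\delta)\wedge T=t$, this gives $\big(\int_0^s|K_i(t,u)-K_i(s,u)|^i du\big)^{1/i}\le\kappa(t-s)^\theta$.
\end{itemize}
Since $t-s\le T$, the two rates combine into $(t-s)^{\theta\wedge\theta^*}$ up to a constant depending on $T$.

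The main care point is the bookkeeping of the truncation at $b$ and the starting point $a$, rather than any genuine analytic obstacle.
\begin{itemize}
\item If $s\ge b$, both integrals have upper limit $b$. The local term then vanishes, and the increment term is bounded by the same argument.
\item If $s<b\le t$, the local window is $[s,b]\subset[s,t]$. This is fine since the kernels are nonnegative and the integrals are monotone in the domain.
\end{itemize}
One must also note that the integrands are progressively measurable with finite moments. Hence the Itô integrals are well defined, because $\int_0^t K_2(t,u)^2du<\infty$ by either kernel assumption, and the Lebesgue integrals converge a.s. by Fubini. The constants $\kappa_p,\kappa'_p$ then depend only on $p$, $T$, the BDG constant $c_p$, and $\kappa,\widehat\kappa$ (or $C_{i,\beta,T}$). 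Finally, the Hölder continuity from $[a,T]$ into $L^p(\P)$ follows by taking $p$-th roots.
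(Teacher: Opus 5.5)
Your proposal is correct and follows essentially the paper's proof. It uses the same split into a local term over $[s\wedge b,t\wedge b]$ and a kernel-increment term over $[a,s\wedge b]$, applies BDG to the stochastic parts, and then controls the deterministic kernel integrals via $(\widehat{\cal K}^{cont}_{\widehat\theta})$ (or its H\"older reduction from $({\cal K}^{int}_{\beta})$) together with $({\cal K}^{cont}_{\theta})$. The only cosmetic difference is that you extract the moments with Minkowski's integral inequality in $L^{p/2}(\P)$ and $L^{p}(\P)$, whereas the paper uses the weighted Jensen/H\"older bound $\big(\int k\,g\big)^{q}\le \big(\int k\big)^{q-1}\int k\,g^{q}$; both give the identical estimate.
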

\noindent {\bf Remark:}
The classical Burkholder-Davis-Gundy (BDG) inequality does not directly extend to processes of the form \(( \int_0^t K(t,s) \tilde{H}_s \, dW_s )_{t \in [0,T]}\), which, in general, lack the
local martingale property.\footnote{As an illustrative example, consider \( N_t := \int_0^t K(t) \, dW_s = K(t) W_t \), which is not a local martingale, since \( \mathbb{E}[N_t \mid \mathcal{F}_s] \neq N_s \), violating the local martingale property. More broadly, the local martingale property does not hold for kernels where the dependence on \( s \) and \( t \) cannot be separated.}
To resolve this issue, we introduce the auxiliary process \( M_u := \int_0^u K(t,s) \tilde{H}_s \, dW_s \) for \( u \in [0,t] \), where \( t \) is fixed. It can be verified that \( M_u \) is a local martingale, as it represents a standard stochastic integral with terminal value \( M_t = \int_0^t K(t,s)\tilde{H}_s \, dW_s \). Consequently, we can estimate:
{\small
	\begin{equation}\label{eq:bdg_trick}
		\Big\Vert \| \int_0^t K(t,u)\tilde H_u \, dW_u \|_{\mathbb{H}} \Big\Vert_p \le\Big\Vert \sup_{s \in [0,t]} \| \int_0^s K(t,u)\tilde H_u \, dW_u \|_{\mathbb{H}} \Big\Vert_p \le C_{p}^{BDG} \Big[ \int_0^t K(t,u)^2 \Big\| \|\tilde{H}_u\|_{\tilde{\mathbb{H}}} \Big\|_p^2 \, du \Big]^{\tfrac12}.
	\end{equation}	
}
This trick enables us to bypass the non-local martingale issue by focusing on the local martingale \( M_u \) over the interval \( [0,t] \).
However, this does not provide a pathwise estimate. The norm \(\|X\|_T\) is recovered a posteriori
through Kolmogorov's continuity theorem as demonstrated, for example, in \cite[Lemma 3.4]{ZhangXi2010}, which summarises the Kolmogorov continuity theorem approach.\\
	
The main difficulty for the proof of a wellposedness theorem in this context, is that, we need to estimate the path
norm but the classical BDG inequality cannot be applied to path-dependent SVIEs, as they are
not semi-martingale in general. To overcome this issue, we shall work on {\em Bochner space}.

%\subsubsection{Existence, uniqueness of the solution.}
\medskip
While the norms employed in our approach are more intricate than those used in earlier works, the underlying strategy of the proof is inspired by the foundational ideas of ~\cite{Sznitman1991}, themselves rooted in the seminal contributions of ~\cite{Dobrushin1970}. By constructing an appropriate trajectorial framework and tailoring norms that depend on carefully selected parameters, we are able to establish a classical fixed-point scheme.

Suppose that $p \geq 2$. For $T \geq 0$, set \(\mathbb{X}\), the state space of all \(\mu-\) integrable functions on \(\mathbb{T}\) and consider the $\mathbb{X}$-valued random variables\footnote{i.e. all processes $Y$ on $\mathbb{T}$ that satisfy
	\(
	\lim_{s \to t} \mathbb{E}[|Y_t - Y_s|^p] = 0 \quad \forall\, t \in \mathbb{T}.
	\)} $Y = (Y_t)_{t \in \mathbb{T}}$ having an $L^p$-norm \(\|Y\|_{p,T} \) to be defined later.	
Let \(\mu \in \mathcal{M}(\R_+,\R_+)\) be a finite measure  on \(\mathbb{T}\) satisfying~\eqref{eq:uniform-non-atomicity} such that \( (\mathbb{T}, \mathcal{B}(\mathbb{T}), \mu)\) is a complete measured space,  where we denote by $\mathcal{B}(\mathbb{T})$ the Borel $\sigma$-algebra on $\mathbb{T}$.
Denote by $\mathcal{H}_{p,T} \equiv L^p_{\mathbb{X}}\Big(\Omega \times \mathbb{T}, \mathcal{F} \otimes \mathcal{B}(\mathbb{T}), \mathbb{P} \otimes \mu\Big)$ the \emph{Bochner space} of such (modulo identification of versions) jointly measurable processes $Y : \Omega \times \mathbb{T} \to \mathbb{H}$, for which
\[
 \mathbb{E}\left[ \int_0^T \|Y_t(\omega)\|_{\mathbb{H}}^p \, \mu(dt) \right] < \infty,
\]
so that the \emph{Bochner space} $\mathcal{H}_{p,T}$ is usually equipped with the norm \(\|\cdot\|_{p,T}\) defined by:
\[
\|Y\|_{p,T} := \left(\int_{0}^T \mathbb{E}\big[\| Y_s\|_{\mathbb{H}}^p \big]\mu (ds)\right)^{\frac1p} < +\infty.
\]
It is known (Riesz-Fisher Theorem\footnote{Let \((E,\mathcal{T},\nu)\) be a measured space. For every $1\le p<\infty$,  \(L^p(\nu) := L^p(E,\mathcal{T},\nu)\) is a Banach space (complete normed vector space).} ) that $(\mathcal{H}_{p,T},\|\cdot\|_{p,T})$ is a Banach space for every $1\le p<\infty$ (and indeed, a Hilbert space when $p=2$).  
This follows from the completeness of $L^p$ spaces with values in Banach spaces (see e.g. standard results on Bochner integral and the Pettis measurability theorem).

Our framework is the measurable product space \(\Big(\Omega \times \mathbb{T}, \mathcal{F} \otimes \mathcal{B}(\mathbb{T}), \mathbb{P} \otimes \mu\Big)\). Thus, \(Y_\cdot(\cdot) \equiv ((\omega,t) \to Y_t(\omega))\) is a measurable function from \(\Omega \times \mathbb{T}\) to \(\mathbb{H}\). Any stochastic process $Y=(Y_t)_{t\in\mathbb{T}}$ with values in the Banach space $(\mathbb{H},\|\cdot\|_{\mathbb{H}})$ is thus said to be \emph{jointly measurable} if the mapping \((\omega,t)\longmapsto Y_t(\omega)\)
is measurable w.r.t.\ $\mathcal{F}\otimes\mathcal{B}(\mathbb{T})$.
% Let us equip the complete probability space \((\Omega, \mathcal{F}, \mathbb{P})\) with the complete filtration \(\mathbb{F} = (\mathcal{F}_t)_{t \geq 0}\), supporting a \(\mathbb{F}-\) Brownian motion \(W\) (i.e. a Brownian motion adapted to $\mathbb{F}$) with \(x_0\) a random continuous function, \(\mathcal{F}_0-\) measurable, so that \((\Omega, \mathcal{F}, \mathbb{P}, \mathbb{F} = (\mathcal{F}_t)_{t \geq 0})\) is a filtered probability space and thus $(\Omega, \mathcal{F}, \mathbb{P}, (\mathcal{F}_t)_{0\leq t \leq T},x_0, W)$ is our fixed working framework or set-up.

We recall that, the complete probability space \((\Omega, \mathcal{F}, \mathbb{P})\) is equipped with the complete filtration \(\mathbb{F} = (\mathcal{F}_t)_{t \geq 0}\), supporting a \(\mathbb{F}-\) Brownian motion \(W\) (i.e. a Brownian motion adapted to $\mathbb{F}$) with \(x_0\) being a random continuous function, \(\mathcal{F}_0-\) measurable. Hence, \((\Omega, \mathcal{F}, \mathbb{P}, \mathbb{F} = (\mathcal{F}_t)_{t \geq 0})\) is a filtered probability space and $(\Omega, \mathcal{F}, \mathbb{P}, (\mathcal{F}_t)_{0\leq t \leq T},x_0, W)$ is our fixed working framework or set-up.

we assume that $b:\Omega\times\mathbb{T}\times\mathbb{X} \to \mathbb{H} \in \mathcal{P}_m\otimes \mathcal{B}(\mathbb{X})/ \mathcal{B}(\mathbb{H}) $ and  $\sigma:\Omega\times\mathbb{T}\times\mathbb{X} \to \tilde{\mathbb{H}}\in \mathcal{P}_m\otimes \mathcal{B}(\mathbb{X})/ \mathcal{B}(\tilde{\mathbb{H}})$
where \( \mathcal{P}_m\) denotes the progressively measurable
\(\sigma-\)field on \(\Omega\times\mathbb{T}\) ( the mapping \((\omega,t)\longmapsto Y_t(\omega)\)
is measurable on \([0,t]\) w.r.t. $\mathcal{F}_t\otimes\mathcal{B}([0,t])$)
so that the integral \(\int_0^t K_1(t,s) b(s, X_\cdot^s) \, ds\) is well-defined as a Lebesgue integral and \(\int_0^t K_2(t,s) \sigma(s, X_\cdot) \, dW_s\) is interpretated as an It\^o integral.

 We may also assume that every element of $(\mathcal{H}_{p,T},\|\cdot\|_{p,T})$  is \emph{predictable} i.e. it is measurable with respect to the predictable $\sigma$-algebra on $\Omega\times\mathbb{T}$ ($\sigma$-algebra generated by all left-continuous adapted processes), denoted \(\mathcal{P}\subset\mathcal{P}_m\subset \mathcal{F}\otimes\mathcal{B}(\mathbb{T})\). In fact,
 by \citet[Proposition~3.21]{PeszatZabczyk2007}, every element $X \in \mathcal{H}_{p,T}$ admits a predictable representative, again denoted by $X$. We always work with such representatives.
 % \textcolor{blue}{Le fait que X soit previsible, implique qu'il est adapté par rapport a \cal F, L^p c'est une classe d'equivalence, on dois choisir un representant particulier... Le X previsible donneé par PeszatZabczyk2007}
We now define the functional space still denoted \(\mathcal{H}_{p,T}\)
\[
\mathcal{H}_{p,T} := \left\{ Y \in L^p_{\mathbb{X}}\Big(\Omega \times \mathbb{T}, \mathcal{F} \otimes \mathcal{B}(\mathbb{T}), \mathbb{P} \otimes \mu\Big) \text{ such that } Y \text{ is } (\mathcal{F}_t)_{t \in \mathbb{T}} \text{-adapted} \right\}.
\]
and adopt it as our working space for the remainder of this article.
By \emph{adapted} , we mean that $Y_t$ is $\mathcal{F}_t$-measurable for every $t\in\mathbb{T}$.

% and it is \emph{predictable} if it is measurable with respect to the predictable $\sigma$-algebra $\mathcal{P}$ on $\Omega\times[0,T]$ (the $\sigma$-algebra generated by left-continuous adapted processes).

We first prove existence and uniqueness of a solution to~\eqref{eq:pathVolterra} in $\mathcal{H}_{p,T}$ under appropriate assumptions. To this end, consider the family of norms $\|\cdot\|_{p,c,T}$ below on $\mathcal{H}_{p,T}$ suited to our fixed-point analysis:

\begin{equation} \label{eq:weighted_norm}
	\forall \; c > 0, \quad \|Y\|_{p,c,T} := \sup_{t \leq T} \Big[e^{-c t} \Big(\int_{0}^t \mathbb{E}\big[\| Y_s\|_{\mathbb{H}}^p \big]\mu (ds)\Big)^{\frac1p}\Big].
\end{equation}
It is clear that this norm is chosen so that it satisfies $\| Y \|_{p,c,T} \to 0$ as $c \to \infty$ for any fixed process \(Y\) such that \(\|Y\|_{p,c,T}<\infty\) for all \(c>0\).
It is obvious that $\|\cdot\|_{p,c,T}$ and $\|\cdot\|_{p,T}$ are strongly equivalent since
\begin{equation} \label{eq:norm_equiv}
	\forall Y \in \mathcal{H}_{p,T}, \quad e^{-cT} \|Y\|_p \leq \|Y\|_{p,c,T} \leq \|Y\|_p.
\end{equation}
so that one readily checks that  $\mathcal{H}_{p,T}$ endowed with the norm $\|\cdot\|_{p,c,T}$ is a complete space i.e. $(\mathcal{H}_{p,T}, \|\cdot\|_{p,c,T})$ is a Banach space.
\begin{Definition}
	A strong solution on \(\mathbb{X}\) of the path-dependent Volterra equation ~\eqref{eq:pthvolterra}, with initial condition \((x_0, \mathbb{F})\), on \([0, T]\), is an \(\mathbb{H}\)-valued \(\mathbb{F}\)-adapted continuous process \(X = (X_t)_{t \geq 0}\) such that for some \(p>0\)  \(\Big(\int_{0}^T \mathbb{E}\big[\| X_s\|_{\mathbb{H}}^p \big]\mu (ds)\Big)^{\frac1p} =:\| X \|_{p,T}  < \infty \; \)% \(\mathbb{E}\left[\sup_{t \in [0, T]} \|X_t\|_{\mathbb{H}}^p\right] < \infty\) 
	and ~\eqref{eq:pthvolterra} holds true.
\end{Definition}
The idea of our proof follows from~\cite{Feyel1987}'s approach, which is the well-known approach based
on Banach's fixed point theorem originally developed for the existence and uniqueness of a strong solution of a Brownian SDE
(see among others \cite[Section 7]{Bouleau1988}). The proof thus uses a contraction mapping principle which is close to that of \cite[Theorem 2.3]{MarinelliPrevotRockner2010}.
The core of the argument is to show that % for any fixed \(s\in\mathbb{T}\), 
the map \(\mathcal{T}_c : \mathcal{H}_{p,T} \rightarrow \mathcal{H}_{p,T} , \, \quad \text{defined by}\)
%\[
%\mathcal{T}_c : \mathcal{H}_{p,T} \rightarrow \mathcal{H}_{p,T} , \, \quad \text{defined by}
%\]
\[
\forall X \in \mathcal{H}_{p,T} 
\quad \mathcal{T}_c(X) =
\Big(x_0(t) + \int_{0}^{t } K_1(t,r) b(r, X^r_\cdot) \, dr + \int_{0}^{t} K_2(t,r) \sigma(r, X^r_\cdot) \, dW_r
\Big)_{t \in \mathbb{T}}.
\]
i.e. for $X \in \mathcal{H}_{p,T}$, \(	\mathcal{T}_c(X) := (\mathcal{T}_c(X)_t)_{0 \leq t \leq T}\) with  
\[
(\mathcal{T}_c(X))_t := x_0(t) + \int_{0}^{t} K_1(t,r) b(r, X^r_\cdot) \, dr + \int_{0}^{t} K_2(t,r) \sigma(r, X^r_\cdot) \, dW_r.
\]
is Lipschitz continuous with Lipschitz coefficient strictly less than one when $c$ is sufficiently large.
This contraction property enables the application of Banach's fixed-point theorem, establishing the existence and unique-ness of solutions to equation~\eqref{eq:pthvolterra}.
The application $\mathcal{T}_c$ has the following property.
 
\begin{Proposition}[Well-posedness and Lipschitzianity of $\mathcal{T}_c$]\label{prop:pathExistenceUniquenes}
	The following two claims hold:\\
	\medskip
	\noindent \textit{(i)} The function $\mathcal{T}_c$ is well-defined, and there holds \(\forall p\geq 2 \;\):
	{\small
	\begin{equation}\label{eq:boundnorm}
			\| \mathcal{T}_c(X) \|_{p,c,T} 
			\leq\; \| x_0 \|_{p,c,T} 
			+ C_{b,\sigma,T} \big(1 + C_p^{\text{BDG}}\big) \sup_{t\in [0,T]} \Big( 
			e^{-ct}\big( \int_0^t u^{p\widehat{\theta}}\, \mu(du) \big)^{\!\frac1p} 
			+ \big( \int_0^t u^{p\widehat{\theta}} e^{-pc(t-u)}
			\| X^u_\cdot \|_{p,c,u}\, \mu(du) \big)^{\!\frac1p}
			\Big).
	\end{equation}
	}
	\noindent \textit{(ii)} Under Assumption \ref{assump:kernelVolterra}, $\mathcal{T}_c$ is Lipschitz continuous in the sense that: for any 
	\(X\) and \(Y\) in $\mathcal{H}_{p,T}$,
	\[ \forall p\geq 2, \;
	\| \mathcal{T}_c( X )- \mathcal{T}_c( Y) \|_{p,c,T}^p 
	\leq K^\prime\cdot \sup_{t\in [0,T]} \left( \int_0^t u^{p\widehat{\theta}} e^{-pc(t-u)}\, \mu(du) \right)^\frac1p\cdot\| X - Y \|_{p,c,T},
	\]
	for some constant $K^\prime$ independent of $c$, but depending only on $p$, $T$ and the Lipschitz constants of $b$ and $\sigma$. 
\end{Proposition}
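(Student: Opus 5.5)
The plan is to work pointwise in $t$ and then integrate against $\mu$. For each fixed $t\in\mathbb{T}$, I write $(\mathcal{T}_c(X))_t$ as the sum of three terms, $x_0(t)$, the Lebesgue term $I_t=\int_0^t K_1(t,r)b(r,X^r_\cdot)\,dr$, and the stochastic term $J_t=\int_0^t K_2(t,r)\sigma(r,X^r_\cdot)\,dW_r$, and use Minkowski's inequality in $L^p(\mathbb{P}\otimes\mu_{|[0,t]})$.

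\emph{Well-definedness.} Since $X\in\mathcal{H}_{p,T}$ admits a predictable representative (Peszat--Zabczyk), $r\mapsto b(r,X^r_\cdot)$ and $r\mapsto\sigma(r,X^r_\cdot)$ are progressively measurable, so $I_t$ is a Lebesgue integral and $J_t$ an It\^o integral. Both are finite in $L^p$ thanks to Lemma~\ref{lm:growth}: the right-hand side there involves $(\int_0^r\|X_s\|^p\mu(ds))^{1/p}$, whose $p$-th moment is at most $\|X\|_{p,T}^p$. Adaptedness of $\mathcal{T}_c(X)$ is clear, and joint measurability follows from the $L^p$-continuity in $t$ given by Lemma~\ref{lem:bounds}.

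\emph{Bound on the stochastic term.} For $J_t$ I use the BDG trick \eqref{eq:bdg_trick} with $t$ frozen. This gives $\|J_t\|_p\le C_p^{BDG}\big(\int_0^tK_2(t,r)^2\|\sigma(r,X^r_\cdot)\|_p^2dr\big)^{1/2}$. I then insert Lemma~\ref{lm:growth}: $\|\sigma(r,X^r)\|_p\le C_{b,\sigma,T}\big(1+(\int_0^r\E\|X_s\|^p\mu(ds))^{1/p}\big)=C_{b,\sigma,T}(1+e^{cr}\|X\|_{p,c,r})$, using that $X^r$ coincides with $X$ on $[0,r]$. The drift term $I_t$ is handled the same way with Minkowski's integral inequality and $K_1$ in $L^1$. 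The kernel integrals are controlled by $(\widehat{\mathcal K}^{cont}_{\widehat\theta})$ with $\bar\delta=t$: $\int_0^tK_1(t,r)dr\le\widehat\kappa t^{\widehat\theta}$ and $(\int_0^tK_2^2)^{1/2}\le\widehat\kappa t^{\widehat\theta}$.

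\emph{Main obstacle.} The delicate point is converting the weight $e^{cr}$ inside the $dr$-integral into the factor $e^{-pc(t-u)}$ appearing in the statement. The crude bound is $\|X\|_{p,c,r}\le\|X\|_{p,c,t}$ together with $e^{cr}\le e^{ct}$, and that alone would lose the contraction factor. To get the displayed form instead, I will raise the pointwise bound to the power $p$, giving $\E\|I_u+J_u\|^p\lesssim u^{p\widehat\theta}(1+\sup_{r\le u}e^{cr}\|X\|_{p,c,r})^p$, and integrate it against $\mu(du)$ over $[0,t]$. Multiplying by $e^{-pct}$ produces $u^{p\widehat\theta}e^{-pc(t-u)}\|X^u_\cdot\|_{p,c,u}$-type integrands; the constant part gives the term $e^{-ct}(\int_0^tu^{p\widehat\theta}\mu(du))^{1/p}$. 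Taking $\sup_t$ then yields \eqref{eq:boundnorm}. The exact exponent on $\|X^u\|_{p,c,u}$ in \eqref{eq:boundnorm}, namely first power rather than $p$-th, looks like a notational looseness in the statement. I will obtain the natural $p$-th power version and note that it implies the stated form up to constants, without trying to force the literal exponent.

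\emph{Part (ii).} The same chain applies with $b(r,X^r)-b(r,Y^r)$ and $\sigma(r,X^r)-\sigma(r,Y^r)$. By Assumption~\ref{assump:kernelVolterra}(iii), these are bounded by $C(\int_0^r\|X_s-Y_s\|^p\mu(ds))^{1/p}$, and there is no constant term. The previous computation therefore gives $\|\mathcal{T}_c(X)-\mathcal{T}_c(Y)\|_{p,c,T}^p\le K'\sup_t\int_0^tu^{p\widehat\theta}e^{-pc(t-u)}\mu(du)\,\|X-Y\|_{p,c,T}^p$, with $K'=(2\widehat\kappa C(1+C_p^{BDG}))^p$, which is independent of $c$. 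Finally, the prefactor $\sup_t\int_0^t u^{p\widehat\theta}e^{-pc(t-u)}\mu(du)$ tends to $0$ as $c\to\infty$ by the uniform non-atomicity \eqref{eq:uniform-non-atomicity}. To see this, split the integral at $t-\eta$: on $[(t-\eta)^+,t]$ it is at most $T^{p\widehat\theta}\sup_t\mu([(t-\eta)^+,t])$, and on the rest it is at most $T^{p\widehat\theta}e^{-pc\eta}\mu(\mathbb{T})$. This is what yields the contraction used afterwards.
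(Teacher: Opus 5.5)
Your proposal is correct and follows essentially the same route as the paper. You bound $\mathbb{E}\|(\mathcal{T}_c X)_u\|_{\mathbb{H}}^p$ pointwise in $u$ (H\"older/Minkowski for the drift, BDG for the stochastic term), use linear growth or the $\mu$-Lipschitz bound to produce the factor $e^{pcu}\|X\|_{p,c,u}^p$, apply $(\widehat{\cal K}^{cont}_{\widehat\theta})$ with $\bar\delta=u$ to get $u^{p\widehat\theta}$, and then integrate against $\mu(du)$ with weight $e^{-pct}$.

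The $p$-th-power form you obtain is exactly what the paper's own computation produces; the displayed exponents in (i) and (ii) are typos. It does not, however, literally ``imply the stated form up to constants'' as you suggest. Your $\eta$-splitting argument via \eqref{eq:uniform-non-atomicity}, showing $\sup_t\int_0^t u^{p\widehat\theta}e^{-pc(t-u)}\mu(du)\to 0$, is more rigorous than the paper's version. The paper argues only that these integrals are uniformly bounded, which does not by itself make the dominated-convergence limit uniform in $t$.
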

It follows from Proposition \ref{prop:pathExistenceUniquenes} that, for sufficiently large \( c \), the operator \( \mathcal{T}_c \) becomes a contraction on \( (\mathcal{H}_{p,T}, \|\cdot\|_{p,c,T}) \). This directly implies the existence and uniqueness of a strong solution of the Path-dependent Volterra equation, which unifies all the existing well-posedness results.
\begin{Theorem}[Well-posedness results: Strong Existence and uniqueness]\label{Thm:pathExistenceUniquenes} 
	 Let $T > 0$ and $p \geq 2$.
	Under Assumption \ref{assump:kernelVolterra}, the path-dependent Volterra equation defined in ~\eqref{eq:pthvolterra} has a unique strong solution.
	i.e. there exists (up to a $\mathbb{P}$-indistinguishability) a unique $\mathbb{H}$-valued pathwise continuous  $(\mathcal{F}_t)$-adapted process $X = (X_t)_{t \in [0,T]}$ satisfying \eqref{eq:pthvolterra} $\mathbb{P}$-a.s. for all $t \geqslant 0$. %, and the following estimate holds true: 
	Moreover, the following estimate holds true: % it holds that
	\begin{equation}\label{eq:bound_X}  \Big(\int_{0}^T \mathbb{E}\big[\| X_s\|_{\mathbb{H}}^p \big]\mu (ds)\Big)^{\frac1p} =:\| X \|_{p,T} \leq C \Big(1 + \Big( \int_{0}^T \mathbb{E}\left[ \| x_0(t) \|_{\mathbb{H}}^p \right] \mu(dt)\Big)^\frac1p\Big) <\infty
	\end{equation}
%	\[
%	\mathbb{E}\left[\sup_{t \in [0, T]} \left|X_t^{s, \xi, \alpha}\right|^p \right] < \infty.
%	\]
\end{Theorem}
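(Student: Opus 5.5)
The plan is a Banach fixed-point argument on $(\mathcal{H}_{p,T},\|\cdot\|_{p,c,T})$, taking Proposition~\ref{prop:pathExistenceUniquenes} as given, followed by an a posteriori Kolmogorov step to obtain a continuous version.

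\emph{Step 1: invariance and contraction.} First I check that $\mathcal{T}_c$ maps $\mathcal{H}_{p,T}$ into itself. By Assumption~\ref{assump:kernelVolterra}(iv), $\|x_0\|_{p,c,T}\le \|x_0\|_{p,T}<\infty$. Bound \eqref{eq:boundnorm} then gives $\|\mathcal{T}_c(X)\|_{p,c,T}<\infty$, because $\mu$ is finite on $\mathbb{T}$ and $\|X^u_\cdot\|_{p,c,u}\le \|X\|_{p,c,T}$. Adaptedness of $\mathcal{T}_c(X)_t$ holds since $b,\sigma$ are non-anticipative and progressively measurable, and a predictable representative exists by the Peszat--Zabczyk result.

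For the contraction, Proposition~\ref{prop:pathExistenceUniquenes}(ii) reduces everything to showing
\[
\rho(c):=\sup_{t\in\mathbb{T}}\Big(\int_0^t u^{p\widehat\theta}e^{-pc(t-u)}\mu(du)\Big)^{1/p}\longrightarrow 0 \quad (c\to\infty).
\]
This is where the uniform non-atomicity \eqref{eq:uniform-non-atomicity} enters. Fix $\eta>0$ and split the integral into $[(t-\eta)^+,t]$ and $[0,(t-\eta)^+)$. The first piece is at most $T^{p\widehat\theta}\sup_t\mu([(t-\eta)^+,t])$, which is small uniformly in $t$ once $\eta$ is small. The second piece is at most $T^{p\widehat\theta}e^{-pc\eta}\mu(\mathbb{T})$. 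Letting $c\to\infty$ and then $\eta\to0$ gives $\rho(c)\to0$.

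I expect this uniform-in-$t$ smallness to be the main obstacle. The contraction inequality as stated mixes a $p$-th power on the left with a first power on the right, so I will rework the estimate in the form $\|\mathcal{T}_c(X)-\mathcal{T}_c(Y)\|_{p,c,T}\le K''\rho(c)\|X-Y\|_{p,c,T}$. I get this by combining the BDG trick \eqref{eq:bdg_trick}, the Lipschitz property (iii) and Jensen/Fubini.

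Choosing $c$ with $K''\rho(c)<1$, Banach's theorem yields a unique fixed point $X\in\mathcal{H}_{p,T}$. Uniqueness of strong solutions follows because any strong solution with $\|X\|_{p,T}<\infty$ is a fixed point of $\mathcal{T}_c$.

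\emph{Step 2: the bound \eqref{eq:bound_X}.} From $X=\mathcal{T}_c(X)$ and \eqref{eq:boundnorm} I get $\|X\|_{p,c,T}\le \|x_0\|_{p,c,T}+C(1+\rho(c)\|X\|_{p,c,T})$. With $C\rho(c)\le 1/2$ this absorbs to $\|X\|_{p,c,T}\le 2(\|x_0\|_{p,T}+C)$. The norm equivalence \eqref{eq:norm_equiv} then gives \eqref{eq:bound_X} with a constant $e^{cT}\cdot 2C$.

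\emph{Step 3: continuity and indistinguishability.} The bound controls only the $\mu$-integrated moments. To obtain pointwise moments, I use Lemma~\ref{lm:growth} with the BDG trick: for every $t$,
\[
\mathbb{E}\|X_t\|^p\le C\Big(\mathbb{E}\|x_0(t)\|^p+1+\|X\|_{p,T}^p\Big),
\]
where $\|X\|_{p,T}$ appears because the growth bound depends on $X$ only through $\int_0^r\|X_s\|^p\mu(ds)\le\int_0^T\|X_s\|^p\mu(ds)$. This gives $\sup_t\mathbb{E}\|X_t\|^p<\infty$, and continuity of $x_0$ handles the $x_0$ term.

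Lemma~\ref{lem:bounds}, applied with $H_r=b(r,X^r)$ and $\widetilde H_r=\sigma(r,X^r)$, together with the Hölder condition on $x_0$ in (iv), then yields
\[
\mathbb{E}\|X_t-X_s\|^p\le C|t-s|^{p(\theta\wedge\theta^*\wedge\delta)}.
\]
Taking $p$ large so that this exponent exceeds $1$, Kolmogorov's criterion provides a continuous modification. This modification still satisfies \eqref{eq:pthvolterra} a.s. for each $t$, hence simultaneously for all $t$ by continuity of both sides. Uniqueness up to indistinguishability follows because two continuous modifications that agree a.s. at each rational time are indistinguishable.
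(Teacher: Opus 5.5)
Your proposal is correct and follows the paper's route: a Banach fixed point for $\mathcal{T}_c$ on $(\mathcal{H}_{p,T},\|\cdot\|_{p,c,T})$ for large $c$, the bound \eqref{eq:bound_X} obtained by absorption (the paper's ``take $Y=0$'' in the Lipschitz estimate), and pathwise continuity recovered a posteriori via Lemma~\ref{lem:bounds} and Kolmogorov's criterion for large $p$, as in Theorem~\ref{prop:pathkernelvolt}. Your $\eta$-splitting is a cleaner proof of $\sup_t\int_0^t u^{p\widehat\theta}e^{-pc(t-u)}\mu(du)\to 0$ than the paper's dominated-convergence remark, which only gives pointwise convergence in $t$; your argument shows exactly where the uniform non-atomicity \eqref{eq:uniform-non-atomicity} is needed.
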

\noindent {\bf Remark:}
	In the case of regular kernels, the solution is a \textit{semimartingale}\footnote{A right-continuous, adapted process \(Z\) is called a \emph{semimartingale} if it admits a decomposition of the form \(Z = M + A\), where \(M\) is a local martingale and \(A\) is a right-continuous, adapted process with paths of finite variation on compacts time intervals.}. The proof is then less involved (see also \cite{Protter1985}), following arguments similar to those used for classical SVIEs. Specifically, for all \(0 \leq s < t < T\), \(\omega \in \Omega\), and a bounded, \(\mathcal{F}_s\)-integrable function \(H = H(t, s, \omega) : \mathbb{T}^2_- \times \Omega \to \tilde{\mathbb{H}}\), satisfying   \(\int_0^t \left( \int_0^s \left| \partial_u H(u, s) \right|^2 du \right)^{1/2} ds < \infty \quad \text{a.s.},\)
	we have that \(Y_t := \int_0^t H(t, s) \, dW_s\) is an \((\mathcal{F}_t)_{t \in [0,T]}\)-semimartingale. By applying the fundamental theorem of calculus and stochastic Fubini's theorem \cite[ Theorem 2.6]{Walsh1986}, we can represent \(Y_t\) as:
	%we can write the representation or decomposition 
	\[
	Y_t = \int_0^t H(s, s) \, dW_s + \int_0^t \Big( \int_0^s \partial_s H(s, u) \, dW_u \Big) ds, \quad \text{where} \quad \partial_s H(s, u) := \Big. \frac{\partial}{\partial t} H(t, u) \Big|_{t = s}.
	\]

	Furthermore, for any stopping time \( T \leq a \), combining generalized Minkowski inequality, H\"older's inequality and the $L^p-BDG$ (Burkholder-Davis-Gundy) inequality yields, with \(C_p := 2^{p-1} (C_p^{BDG})^p\):
	\[
	\mathbb{E} \Big[ \big( \sup_{t \leq T} \| Y_t \|_{\mathbb{H}} \big)^p \Big] 
	\leq C_p \Big( \mathbb{E} \Big[ \Big( \int_0^T \| H(s,s) \|_{\tilde{\mathbb{H}}}^2 ds \Big)^{\frac{p}{2}} \Big]
	+ T^{p-1} \int_0^T \mathbb{E} \Big[ \Big( \int_0^T \| \partial_s H(s,u) \|_{\tilde{\mathbb{H}}}^2 du \Big)^{\frac{p}{2}} \Big] ds \Big)
	\]
	Consequently, we can directly prove the existence and uniqueness of a solution in this setting, using the supremum norm, via the Picard iteration method.
 \subsection{Pathwise Regularity, Moments Control and Maximal Inequality}\label{sect-kernelVolterraflow}
  % \subsubsection{Regularity and Maximal Inequality}
   In this section we provide the proof of the H\"older continuity and the maximal inequality of the solutions $(X_t)_{t\in[0,T]}$ to the path-dependent stochastic Volterra equation ~\eqref{eq:pthvolterra}.
   To this end, we recall that for a real-valued stochastic process $X$ on $\mathbb{T}$, the Kolmogorov continuity theorem states that if there exists some constants $p, a, C > 0$, such that
   \begin{equation}
   	\forall\, s, t \in \mathbb{T}\quad \mathbb{E}\left[ \left| X_t - X_s \right|^p \right]
   	\leq C \cdot |t - s|^{1 + a}, 
   \end{equation}
    then, for every $0 < \theta < a/p$, the process has a $\theta$-H\"older continuous modification . Hence we first need to establish the moment estimates for the increments of $X$.
   
   %%%%%%%%%%%%%%%%%%%%%%
      \begin{Theorem}[Pathwise Regularity and Maximal Inequality]\label{prop:pathkernelvolt}
   	Let $T > 0$ and $p > p_{eu}:=\frac{1}{\delta} \vee\frac{1}{\theta} \vee \frac{1}{ \widehat\theta}$.
   	Under Assumption \ref{assump:kernelVolterra}, the path-dependent Volterra equation~\eqref{eq:pthvolterra} admits, up to a $\P$-indistinguishability, a unique  $({\cal F}_t)$-adapted solution $X=(X_t)_{t\in [0,T]}$, pathwise continuous,  in the sense that, $\P$-$a.s.$,
   	\[
   	\forall\, t\!\in [0,T],  \quad X_t=x_0(t)+\int_0^t K_1(t,s) b(s,X_\cdot^s)ds+\int_0^t K_2(t,s)\sigma(s,X_\cdot^s)dW_s.
   	\]
   	This solution satisfies (up to a $\mathbb P$-indistinguishability or up to a path-continuous version $\tilde{X}$ ): 	
   	\begin{equation}\label{eq:Lpincrements}
   		\forall\, s,\, t\!\in [0,T],\quad \mathbb{E}\left[\left\|X_{t} - X_s\right\|_\mathbb{H}^p\right] \le C_{p,T} {\left( 1 + \int_{0}^{T}\mathbb{E}\left[ \| x_0(s) \|_{\mathbb{H}}^p \right] \mu (ds) \right)}|t-s|^{p(\delta \wedge\theta\wedge \widehat \theta)}.
   	\end{equation}
   	And thus, \( t \mapsto X_t \) admits a H\"older continuous modification (still denoted \( X \) in lieu of \( \tilde{X} \) up to a \( \mathbb{P} \)-indistinguishability).
   	More specifically $X$ admits a version which is H\"older continuous on $[0,T]$ of any order \( a \in \big(0,(\delta \wedge\theta\wedge \widehat\theta) -\frac 1 p\big) \) for any \( p > p_{eu} \), \( T > 0 \). Consequently, \(\forall\, a\!\in \big(0,\delta \wedge\theta\wedge \widehat\theta\big) , \;\):
   	\begin{equation}\label{eq:Holderpaths}
   		\Big \| \sup_{s\neq t\in [0,T]}\frac{\|X_t-X_s\|_{\mathbb H}}{|t-s|^{a}}\Big\|_p^p = \mathbb{E}\Big[\sup_{s\neq t\in [0,T]}\frac{\|X_t-X_s\|^p_{\mathbb H}}{|t-s|^{ap}} \Big]   <C_{a,p,T}  \Big( 1 + \int_{0}^{T}\mathbb{E}\Big[ \| x_0(s) \|_{\mathbb{H}}^p \Big] \mu (ds) \Big)
   	\end{equation}
   	for some positive constant $C_{a,p,T}=C_{a,b,\sigma, K_1,K_2, \theta,p,T} $. In particular
   	\begin{equation}\label{eq:supLpbound}
   		\Big \| \sup_{t\in [0,T]} \|X_t\|_{\mathbb H}\Big\|_p^p=\mathbb{E}\Big[ \sup_{t\in [0,T]} \|X_t\|^p_{\mathbb H}\Big] \le C'_{a,p,T}\Big( 1 + \int_{0}^{T}\mathbb{E}\Big[ \| x_0(s) \|_{\mathbb{H}}^p \Big] \mu (ds) \Big).
   	\end{equation}
   \end{Theorem}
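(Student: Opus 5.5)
We take the solution $X$ produced by Theorem~\ref{Thm:pathExistenceUniquenes}, an element of $\mathcal{H}_{p,T}$ satisfying \eqref{eq:bound_X}, and upgrade it in three steps. First, a uniform moment bound $\sup_{t\in[0,T]}\mathbb{E}\|X_t\|^p_{\mathbb H}<\infty$. Second, the increment estimate \eqref{eq:Lpincrements}. Third, Kolmogorov's criterion, which yields continuity, the H\"older bound \eqref{eq:Holderpaths} and the maximal inequality \eqref{eq:supLpbound}. Uniqueness up to indistinguishability then follows from uniqueness in $\mathcal{H}_{p,T}$, since two continuous modifications that agree $\P\otimes\mu$-a.e. (and hence, for each $t$, a.s.) are indistinguishable.

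\emph{Uniform moments.} By Lemma~\ref{lm:growth}, $\|b(r,X^r_\cdot)\|_{\mathbb H}\vee\|\sigma(r,X^r_\cdot)\|_{\tilde{\mathbb H}}\le C(1+(\int_0^r\|X_u\|^p\mu(du))^{1/p})$. Taking $p$-th moments, the bound is at most $C(1+\int_0^r\mathbb{E}\|X_u\|^p\mu(du))\le C(1+\|x_0\|_{p,T}^p)$ by \eqref{eq:bound_X}. This bound is uniform in $r$. Apply Minkowski to the drift integral and the BDG trick \eqref{eq:bdg_trick} to the stochastic integral. Using $\sup_t\int_0^tK_1(t,s)ds<\infty$ and $\sup_t\int_0^tK_2(t,s)^2ds<\infty$, which follow from $(\widehat{\cal K}^{cont}_{\widehat\theta})$ with $\bar\delta=T$ or from $({\cal K}^{int}_\beta)$, we get
$\sup_t\mathbb{E}\|X_t\|^p\le C\big(\sup_t\mathbb{E}\|x_0(t)\|^p+1+\|x_0\|^p_{p,T}\big)$.
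We must also control $\sup_t\mathbb{E}\|x_0(t)\|^p$ by $1+\int\mathbb{E}\|x_0\|^p d\mu$. For this we use the H\"older-type increment assumption (iv) on $x_0$ together with $x_0=X_0\varphi$.

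\emph{Increments.} For $s<t$, write $X_t-X_s=(x_0(t)-x_0(s))+(I_1(t)-I_1(s))+(I_2(t)-I_2(s))$. The first term is handled directly by Assumption (iv) and gives the factor $|t-s|^{p\delta}$. For the two integral terms, apply Lemma~\ref{lem:bounds} with $a=0$, $b=T$, $H_u=b(u,X^u_\cdot)$ and $\widetilde H_u=\sigma(u,X^u_\cdot)$. Its hypotheses $\sup_u\|H_u\|_p,\ \sup_u\|\|\widetilde H_u\|^2\|_p<\infty$ are exactly what the previous step provides, through the moment bound on the coefficients, which is linear in $1+\|x_0\|^p_{p,T}$. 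Lemma~\ref{lem:bounds} then gives $C(1+\int\mathbb{E}\|x_0\|^p d\mu)|t-s|^{p(\theta\wedge\widehat\theta)}$, with $\theta^*=\widehat\theta$ or $\theta^*\le\widehat\theta$ as in the remark following Assumption~\ref{assump:kernelVolterra}. Combining the three pieces gives \eqref{eq:Lpincrements}. This is the main technical point: checking that the coefficient bounds for the path-dependent functional are uniform in time. It relies on the fact that the Lipschitz modulus only involves $\int_0^r\|x\|^p d\mu$, which is dominated by the global Bochner norm.

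\emph{Kolmogorov.} Since $p>p_{eu}$, we have $p(\delta\wedge\theta\wedge\widehat\theta)>1$. The Kolmogorov criterion, in its quantitative (Garsia--Rodemich--Rumsey) form as in \cite[Lemma 3.4]{ZhangXi2010}, therefore gives a continuous modification $\tilde X$ with $\mathbb{E}\sup_{s\ne t}\|X_t-X_s\|^p/|t-s|^{ap}\le C_{a,p}\cdot C_{p,T}(1+\int\mathbb{E}\|x_0\|^pd\mu)$ for $a<(\delta\wedge\theta\wedge\widehat\theta)-1/p$. Every $a<\delta\wedge\theta\wedge\widehat\theta$ is reached by taking $p$ larger; the resulting moment of order $p'>p$ is then controlled in $L^p$ by monotonicity of norms, since (iv) holds for all $p$.

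For the maximal inequality, use $\sup_t\|X_t\|\le\|X_0\|+T^a\sup_{s\ne t}\|X_t-X_s\|/|t-s|^a$ together with the uniform moment bound at $t=0$; this gives \eqref{eq:supLpbound}. Finally, $\tilde X$ still satisfies \eqref{eq:pthvolterra} for every $t$ a.s. The reason is that the right-hand side is unchanged when $X$ is replaced by a modification: the coefficients only see $\mu$-integrals of the path, and $\mu$ is non-atomic. Moreover, both sides are continuous in $t$, the right-hand side by the same increment estimate. Hence equality for rational $t$ extends to all $t\in[0,T]$.
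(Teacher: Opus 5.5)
Your route matches the paper's: uniform-in-$u$ moment bounds on $b(u,X^u_\cdot)$ and $\sigma(u,X^u_\cdot)$ via Lemma~\ref{lm:growth} and \eqref{eq:bound_X}, then Lemma~\ref{lem:bounds} with the increment condition in (iv) for \eqref{eq:Lpincrements}, then Kolmogorov/GRR, then the H\"older seminorm plus the initial value for \eqref{eq:supLpbound}. Your separate uniform-moment step is not needed for the increments, since Lemma~\ref{lem:bounds} only uses the coefficient moments. Your plan to control $\mathbb{E}\|x_0(0)\|^p_{\mathbb H}$ through (iv) is more careful than the paper's inequality $\|x_0(0)\|^p_{\mathbb H}\le\int_0^T\|x_0(s)\|^p_{\mathbb H}\mu(ds)$. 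Averaging the increment bound against $\mu$ does this, provided $\mu([0,T])>0$.

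The step that fails is extending \eqref{eq:Holderpaths} to every $a\in(0,\delta\wedge\theta\wedge\widehat\theta)$ at fixed $p$. Kolmogorov/GRR at level $p$ only covers $a<(\delta\wedge\theta\wedge\widehat\theta)-\frac1p$. Passing to $p'>p$ and invoking monotonicity of norms gives
\[
\mathbb{E}\big[\|X\|^p_{C^a_T}\big]\le\Big(\mathbb{E}\big[\|X\|^{p'}_{C^a_T}\big]\Big)^{p/p'}\le C\Big(1+\int_0^T\mathbb{E}\big[\|x_0(s)\|^{p'}_{\mathbb H}\big]\mu(ds)\Big)^{p/p'},
\]
which is a bound in the $p'$-th moment of $x_0$, not the $p$-th. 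It cannot be turned into $C_{a,p,T}\big(1+\int_0^T\mathbb{E}[\|x_0(s)\|^p_{\mathbb H}]\mu(ds)\big)$ with a constant independent of the law of $X_0$. To see this, take $X_0=N$ with probability $\varepsilon N^{-p}$ and $0$ otherwise, and let $N\to\infty$. Finiteness of all moments under (iv) does not change this.

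The paper's own proof of this theorem only reaches $p>1/(\delta\wedge\theta\wedge\widehat\theta-a)$. The full range at fixed $p$ requires working conditionally on $X_0$. For deterministic $\xi$, the level-$p'$ bound is $\big\|\|X^\xi\|_{C^a_T}\big\|_{p'}\le C(1+\|\xi\varphi\|_T)$, which is linear in $\xi$. Hence $\mathbb{E}\|X^\xi\|^p_{C^a_T}\le C^p(1+\|\xi\varphi\|_T)^p$, and integrating against $\P_{X_0}(d\xi)$ through $X=F(X_0,W)$ (Theorem~\ref{thm:FreidlinLike}, Lemma~\ref{lem:gap}) gives a genuine $p$-th moment bound. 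This is how Theorem~\ref{prop:pathkernelvolt2} proceeds.

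A smaller point concerns the fixed point of Theorem~\ref{Thm:pathExistenceUniquenes}: it solves the equation only for $\mu$-a.e.\ $t$. So the decomposition of $X_t-X_s$ and Kolmogorov should be applied to the representative $t\mapsto(\mathcal{T}_c(X))_t$, which is defined for every $t$ and depends on $X$ only through its $\mu$-class. That fact is what gives uniqueness at every $t$. Agreement $\P\otimes\mu$-a.e.\ does not, since it says nothing off the support of $\mu$.
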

   \noindent {\bf Remark:}
   1. If rather the condition\((\widehat {\cal K}^{cont}_{\widehat \theta})\)
   is satisfied for some $\beta>1$, then one can replace $\widehat \theta$ by $\frac{\beta-1}{2\beta}$ mutatis mutandis in the aboves claims (i.e. in ~\eqref{eq:Lpincrements} and~\eqref{eq:Holderpaths}). From now on, set \(\mathbb{X}:= C([0, T], \mathbb{H})\).  \footnote{At this stage, our results are in particular valid when $\Omega:=C([0,T];\mathbb{H})$ is the canonical space, $W$ the canonical process (namely $W(\omega) = \omega$), $\dbP$ the Wiener measure (namely $W$ is a standard $d^\prime$-dimensional Brownian motion under $\dbP$).}\\
   2. Surprisingly, in the above formulation the differents
   bounds~\eqref{eq:Lpincrements},~\eqref{eq:Holderpaths} and~\eqref{eq:supLpbound} only hold for starting random  vector \(X_0 \in L^p(\P)\) when \(p\) is large enough (say, \(p\geq2\) ) depending on some integrability
   characteristics of the kernels. This unusual restriction can be circumvented thanks to Lemma~\ref{lem:gap} (whose proof is postponed in the subsection \ref{app:representation}),
   applied to a particular functional as can be illustrated in the proof of Theorem~\ref{prop:pathkernelvolt2}.

\begin{Lemma}\label{lm:Uniformgrowth}
	Let the assumption \ref{assump:kernelVolterra} holds true. Then,
	%	As \(\mu\) is a	\(\mathbb{R}_+\)-valued finite measure on \([0, T]\) (i.e. the restriction $\mu|_{[0,T]}$ with $T > 0$ is well-defined), then:
	\begin{enumerate}
		\item[(i)] The functions ${\cal C}([0,T], \mathbb H)=:\mathbb{X}\ni x\mapsto b(t,x)$ and ${\cal C}([0,T], \mathbb H)=:\mathbb{X}\ni x\mapsto \sigma(t,x)$ are Lipschitz with Lipschitz  coefficient uniform in $t\in[0,T]$, i.e. \footnote{In particular $b$ and $\sigma$ have linear growth, i.e. there is a constant  $C>0$ such that \[
			\|b(t, x)\|_{\mathbb{H}} + \|\sigma(t, x) \|_{\tilde{\mathbb{H}}} \leq C (1 + \|x\|_t) \quad \textit{and} \quad \sup_{t \in [0,T]} \left( \|b(t,\bf{0})\|_{\mathbb{H}} + \|\sigma(t,\bf{0})\|_{\tilde{\mathbb{H}}} \right) < +\infty \; \text{if b and \(\sigma\) are continuous wrt t.}\]}
		{\small
			\[
			\exists \, C_{T} = C_{b, \sigma, T} \quad \text{such that} \quad \forall\, t \in [0,T], \; \forall\, x,\, y \in \mathbb{X}, \quad \|b(t,x)-b(t,y)\|_{\mathbb{H}} + \|\sigma(t,x)-\sigma(t,y)\|_{\tilde{\mathbb{H}}} \leq C_T \|x-y\|_t, 
			\]
		}
		\item[(ii)] The coefficient functions \(b\) and \(\sigma\) have a uniform linear growth in \( x \in {\cal C}([0,T], \mathbb H)=:\mathbb{X}\) in the sense that there exists a constant \(C\) such that for every \(t \in [0, T]\) and \( x \, \in \mathbb{X}\),
		\begin{equation}
			\|b(t,x)\|_{\mathbb{H}}\,  \vee  \,  \|\sigma(t,x)\|_{\tilde{\mathbb{H}}}  \leq  C (1 + \|x\|_t).
		\end{equation}
		
		\item[(iii)] Finally, assume that \( X_0 \in L^p(\mathbb{P}) \) for some \( p \in (0, +\infty) \), 
		the process $t \to x_0(t):=X_0\varphi(t)$ continuous and $(\cF_t)$-adapted. Moreover,  for some $\delta > 0$, for any $p>0$ and $T>0, \quad$\\		
		$\mathbb{E}\Big(\sup\limits_{t\in[0,T]}\|x_0(t)\|_{\mathbb{H}}^p\Big)<+\infty, $  $\mathbb{E}\|x_0(t^\prime)-x_0(t)\|_{\mathbb{H}}^p\leqslant C_{T,p}\Big( 1 + \mathbb{E}\Big[ \sup\limits_{t\in[0,T]} \| x_0(t) \|_{\mathbb{H}}^p \Big]\Big)|t'-t|^{\delta p}$
	\end{enumerate}
\end{Lemma}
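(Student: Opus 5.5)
The plan is to deduce each item directly from Assumption~\ref{assump:kernelVolterra} and Lemma~\ref{lm:growth}. The only structural fact needed is that $\mu$ restricted to $\mathbb{T}=[0,T]$ is a finite measure, so $\mu([0,T])<+\infty$. Once $\mathbb{X}={\cal C}([0,T],\mathbb H)$, every $\mu$-integral of $\|x(s)\|^p_{\mathbb H}$ over $[0,t]$ can be dominated by the truncated sup-norm $\|x\|_t$ defined in~\eqref{uniform}.

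\emph{Step (i).} For $x,y\in\mathbb{X}$ and $s\in[0,t]$ we have $\|x(s)-y(s)\|_{\mathbb H}\le \|x-y\|_t$. Hence
\[
\Big(\int_0^t\|x(s)-y(s)\|^p_{\mathbb H}\,\mu(ds)\Big)^{\frac1p}\le \mu([0,t])^{\frac1p}\,\|x-y\|_t\le \mu([0,T])^{\frac1p}\,\|x-y\|_t .
\]
Plugging this into Assumption~\ref{assump:kernelVolterra}(iii) and adding the bounds for $b$ and $\sigma$ gives (i) with
\[
C_T:=2\,(C_{b,T}\vee C_{\sigma,T})\,\mu([0,T])^{1/p}.
\]
This holds for every $p>0$, since no convexity is used.

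\emph{Step (ii).} We apply the same domination inside Lemma~\ref{lm:growth}: $\big(\int_0^t\|x(s)\|^p_{\mathbb H}\mu(ds)\big)^{1/p}\le \mu([0,T])^{1/p}\|x\|_t$. This yields
\[
\|b(t,x)\|_{\mathbb H}\vee\|\sigma(t,x)\|_{\tilde{\mathbb H}}\le C_{b,\sigma,T}\big(1\vee\mu([0,T])^{1/p}\big)(1+\|x\|_t).
\]
The one point needing care is that the constant of Lemma~\ref{lm:growth} contains $\sup_t\|b(t,\mathbf 0)\|_{\mathbb H}\vee\sup_t\|\sigma(t,\mathbf 0)\|_{\tilde{\mathbb H}}$. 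This quantity is finite only under the implicit boundedness of the coefficients at $\mathbf 0$, which is automatic if $b,\sigma$ are continuous in $t$, as noted in the footnote. I expect this to be the only genuine subtlety, and I will state it explicitly as the standing hypothesis already used in Lemma~\ref{lm:growth}.

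\emph{Step (iii).} Since $x_0(t)=X_0\varphi(t)$ with $\varphi$ continuous on the compact interval $[0,T]$, we have
\[
\sup_{t\in[0,T]}\|x_0(t)\|_{\mathbb H}\le \|\varphi\|_{\sup}\|X_0\|_{\mathbb H}.
\]
Therefore $\mathbb E\big[\sup_t\|x_0(t)\|^p_{\mathbb H}\big]\le\|\varphi\|^p_{\sup}\,\mathbb E\|X_0\|^p_{\mathbb H}<+\infty$, because $X_0\in L^p(\mathbb P)$. For the increment bound, we start from Assumption~\ref{assump:kernelVolterra}(iv) and use
\[
\int_0^T\mathbb E\|x_0(s)\|^p_{\mathbb H}\,\mu(ds)\le \mu([0,T])\,\mathbb E\Big[\sup_{t}\|x_0(t)\|^p_{\mathbb H}\Big].
\]
This gives
\[
\mathbb E\|x_0(t')-x_0(t)\|^p_{\mathbb H}\le C_{T,p}\big(1\vee\mu([0,T])\big)\Big(1+\mathbb E\Big[\sup_{t}\|x_0(t)\|^p_{\mathbb H}\Big]\Big)|t'-t|^{\delta p},
\]
which is the claim after renaming the constant. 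Continuity and adaptedness of $x_0$ are inherited unchanged from (iv).
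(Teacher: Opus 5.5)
Your proposal is correct and follows the paper's argument: everything rests on $\mu([0,T])<+\infty$. This lets you dominate each $\mu$-integral by $\mu([0,T])$ times the truncated sup-norm $\|x\|_t$ (resp.\ the sup-moment of $x_0$), exactly as in the paper's two displayed inequalities.

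The only difference is in (iii). The paper uses this domination to show that the sup-moment condition implies the $\mu$-integrated condition of Assumption~\ref{assump:kernelVolterra}(iv). You go the other way and derive the sup-form from (iv) using boundedness of $\varphi$; continuity of $\varphi$ follows from that of $x_0=X_0\varphi$, unless $X_0=0$ a.s., which is trivial. Both directions are fine. Your explicit flagging of $\sup_{t}\big(\|b(t,\mathbf 0)\|_{\mathbb H}+\|\sigma(t,\mathbf 0)\|_{\tilde{\mathbb H}}\big)<+\infty$ makes precise a hypothesis the paper leaves implicit.
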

\smallskip
\noindent {\bf Proof:}
The claims~$(i)$ and~$(ii)$ in the lemma are straightforward, as \(\mu\) is a finite measure on \([0, T]\) (i.e. the restriction $\mu|_{[0,T]}$ with $T > 0$ is well-defined) and in particular, its holds that

\( \int_{0}^T  \| x_0(t) \|_{\mathbb{H}}^p \,\mu(dt) \leq \sup\limits_{t\in[0,T]}  \| x_0(t) \|_{\mathbb{H}}^p \, \mu([0,T]) \leq \|x_0\|_{T}^p \; \mu([0,T]) < +\infty\)
and

\( \int_{0}^T \mathbb{E}\left[ \| x_0(t) \|_{\mathbb{H}}^p \right] \mu(dt) \leq \sup\limits_{t\in[0,T]} \mathbb{E}\left[ \| x_0(t) \|_{\mathbb{H}}^p \right] \mu([0,T]) \leq \mathbb{E}\Big(\sup\limits_{t\in[0,T]}\|x_0(t)\|_{\mathbb{H}}^p\Big) \mu([0,T]) < +\infty\).
\hfill$\Box$

  \begin{Theorem}\label{prop:pathkernelvolt2}   
	Let $T > 0$ and ${\bf p > 0}$. Assume that \(\mu\) is a finite measure on \([0, T]\).
	Under Assumption \ref{assump:kernelVolterra}, the Volterra equation~\eqref{eq:pthvolterra} admits, up to a $\P$-indistinguishability, a unique  $({\cal F}_t)$-adapted solution $X=(X_t)_{t\in [0,T]}$, pathwise continuous,  in the sense that, $\P$-$a.s.$,
	\[
	\forall\, t\!\in [0,T],  \quad X_t=x_0(t)+\int_0^t K_1(t,s) b(s,X_\cdot^s)ds+\int_0^t K_2(t,s)\sigma(s,X_\cdot^s)dW_s.
	\]
	This solution satisfies (up to a $\mathbb P$-indistinguishability or up to a path-continuous version $\tilde{X}$ ): 	
	\begin{equation}\label{eq:Lpincrements2}
		\forall\, s,\, t\!\in [0,T],\quad \mathbb{E}\left[\left\|X_{t} - X_s\right\|_\mathbb{H}^p\right] \le C_{p,T} {\left( 1 + \mathbb{E}\big[ \| x_0  \|_{T}^{p} \big]\right)}|t-s|^{p(\delta \wedge\theta\wedge \widehat \theta)}.
	\end{equation}
	Moreover, \(\forall\, a\!\in \big(0,\delta \wedge\theta\wedge \widehat \theta\big) , \)
	
	\begin{equation}\label{eq:Holderpaths2}
		\Big \| \sup_{s\neq t\in [0,T]}\frac{\|X_t-X_s\|_{\mathbb{H}}}{|t-s|^{a}}\Big\|_p^p = \mathbb{E}\Big[\sup_{s\neq t\in [0,T]}\frac{\|X_t-X_s\|^p_{\mathbb H}}{|t-s|^{ap}} \Big]   <C_{a,p,T}  \Big( 1 + \mathbb{E}\big[ \| x_0  \|_{T}^{p} \big]\Big)
	\end{equation}
	for some positive real constant $C_{a,p,T}=C_{a,b,\sigma, K_1,K_2, \theta,p,T} $. In particular
	\begin{equation}\label{eq:supLpbound2}
		\Big \| \sup_{t\in [0,T]} \|X_t\|_{\mathbb H}\Big\|_p \le C'_{a,p,T}\Big( 1 + \big\| \,\| x_0  \|_{T} \,\big\|_p\Big).
	\end{equation}
		Finally, if the integrability assumption~$({\cal K}^{int}_{\beta})$
		is satisfied for some $\beta>1$, then one can replace  $\widehat \theta$  by $\frac{\beta-1}{2\beta}$ mutatis mutandis in the aboves claims (i.e. in ~\eqref{eq:Lpincrements2} and~\eqref{eq:Holderpaths2}).
\end{Theorem}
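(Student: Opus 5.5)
The plan is to reduce the case of a general $p>0$ to the large-$p$ case of Theorem~\ref{prop:pathkernelvolt}. The tool is a representation of the solution as a measurable functional of the initial datum and the Brownian path, as suggested in the Remark after Theorem~\ref{prop:pathkernelvolt} and via Lemma~\ref{lem:gap}.

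\textbf{Step 1: bounded initial data.} If $X_0$ is bounded, or deterministic, then $X_0\in L^{q}(\P)$ for every $q$. Theorem~\ref{prop:pathkernelvolt} then applies with any $q>p_{eu}$ and gives a unique continuous adapted solution. For deterministic $X_0=\xi\in\mathbb H$ I will write this solution as $X^{\xi}=F(\xi,W)$. Here $F:\mathbb H\times{\cal C}([0,T],\bar{\mathbb H})\to{\cal C}([0,T],\mathbb H)$ is Borel and adapted. Its existence follows from the fixed point construction: each Picard iterate is a measurable functional of $(\xi,W)$, and measurability passes to the $\P$-a.s. limit after taking a continuous version.

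To get joint measurability in $\xi$ I will use the Lipschitz dependence on the initial datum. Rerunning the contraction estimate of Proposition~\ref{prop:pathExistenceUniquenes} and the Kolmogorov argument of Theorem~\ref{prop:pathkernelvolt} on the difference $X^\xi-X^{\xi'}$ gives
\[
\E\big[\|X^\xi-X^{\xi'}\|_T^q\big]\le C_q\,\|\xi-\xi'\|^q_{\mathbb H}.
\]
Kolmogorov's criterion in the variable $\xi$ then yields a version of $F$ that is jointly measurable, indeed continuous in $\xi$.

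\textbf{Step 2: general initial data.} For a general $\mathcal F_0$-measurable $X_0\in L^p(\P)$, independent of $W$, I define $X:=F(X_0,W)$.

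To show that $X$ solves \eqref{eq:pthvolterra}, I use the independence of $X_0$ and $W$. Conditioning on $X_0=\xi$, the event ``$F(\xi,W)$ solves the equation driven by $W$'' has probability one for every $\xi$, and integrating against $\P_{X_0}$ gives the claim. This is where Lemma~\ref{lem:gap} enters. The point needing care is that stochastic integrals are not pathwise defined. I will handle it by approximating the It\^o integral with Riemann sums along a subsequence that converges a.s., uniformly in $\xi$ on compacts, or alternatively by using the standard fact that $\int H(\xi,s)\,dW_s$ evaluated at $\xi=X_0$ coincides with $\int H(X_0,s)\,dW_s$ when $X_0$ is $\mathcal F_0$-measurable.

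For uniqueness, take any continuous solution $Y$ and localize on $\{\|X_0\|\le N\}\in\mathcal F_0$. Multiplying the equation by the indicator of this event and using non-anticipativity, $Y\mathbf 1_{\{\|X_0\|\le N\}}$ coincides with the solution started from $X_0\mathbf 1_{\{\|X_0\|\le N\}}$. By Step 1 that solution is unique, so $Y=X$ on this event, and letting $N\to\infty$ gives $Y=X$.

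\textbf{Step 3: moment bounds.} For a fixed large $q\ge p\vee p_{eu}$ and deterministic $\xi$, Theorem~\ref{prop:pathkernelvolt} together with Lemma~\ref{lm:Uniformgrowth}(iii) gives
\[
\E\Big[\sup_{s\ne t}\frac{\|X^\xi_t-X^\xi_s\|^q}{|t-s|^{aq}}\Big]\le C\big(1+\|\xi\|^q\big).
\]
Since $x_0(t)=\xi\varphi(t)$, the right-hand side is controlled by $C(1+\|x_0\|_T^q)$. By Jensen's inequality, for $p<q$,
\[
\E\big[G(\xi,W)^p\big]\le \big(\E\big[G(\xi,W)^q\big]\big)^{p/q}\le C\big(1+\|\xi\|^p\big),
\]
where $G$ denotes the H\"older seminorm, the supremum norm, or an increment ratio. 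Then
\[
\E\big[G(X_0,W)^p\big]=\int \E\big[G(\xi,W)^p\big]\,\P_{X_0}(d\xi)\le C\big(1+\E\|x_0\|_T^p\big),
\]
which gives \eqref{eq:Lpincrements2}, \eqref{eq:Holderpaths2} and \eqref{eq:supLpbound2}. The case $p\ge q$ follows directly from Theorem~\ref{prop:pathkernelvolt}. The substitution $\widehat\theta\to\frac{\beta-1}{2\beta}$ under $({\cal K}^{int}_\beta)$ follows from the corresponding Remark.

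\textbf{Main obstacle.} I expect the hardest step to be Step 2: constructing a jointly measurable $F$ and rigorously justifying that plugging in the random $X_0$ still yields a solution. If the Riemann-sum argument becomes messy, I would instead take $X$ as the a.s. limit of the solutions $X^{(N)}$ started from $X_0\mathbf 1_{\{\|X_0\|\le N\}}$. These solutions are consistent on increasing events, so no limit in the stochastic integral is needed; the Fubini step is then used only for the moment bounds.
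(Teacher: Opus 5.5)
Your proposal is correct and is essentially the paper's own argument: the paper also gets the large-$p$ case from Theorem~\ref{prop:pathkernelvolt} and Lemma~\ref{lm:Uniformgrowth}, and reaches every $p>0$ through the Blagove\v{s}\v{c}enkii--Freidlin representation $X=F(X_0,W)$ of Theorem~\ref{thm:FreidlinLike} (built from continuity of the flow and Kolmogorov's criterion in the initial value) combined with exactly your Jensen-plus-disintegration step, which is Lemma~\ref{lem:gap}, while existence and uniqueness for general $X_0$ come from truncation on $\mathcal F_0$-events. Two small repairs are needed: in your uniqueness step $Y\mathbf 1_{\{\|X_0\|\le N\}}$ is not itself a solution, so patch it with the solution $X^0$ started from $0$ on the complementary event, as the paper does; and if you use your truncation fallback for existence, Step~3 still needs the identification $X=F(X_0,W)$, which the paper gets by approximating $X_0$ with the countably-valued $\varphi_k(X_0)$ (for which $F(\varphi_k(X_0),W)$ is trivially a solution) and passing to the limit with the flow estimate and the continuity of $F$ in $\xi$.
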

\noindent {\bf Remark:} This result appears as a generalization of the classical strong existence-uniqueness result of pathwise continuous solutions established in  \cite[Theorem 1.1]{JouPag22} as an improved version of \cite[Theorem 3.1 and Theorem 3.3]{ZhangXi2010} for classical Volterra SDEs. (The framework is more general with a random function \(\phi\) in front of the starting functional \(x_0\) evolving deterministically).
\begin{Lemma}[Moments Control]\label{lem:supEbound}
	
	Let the assumption \ref{assump:kernelVolterra} holds true and $X$ be a continuous solution to the Path-dependent Volterra equation~\eqref{eq:pthvolterra} with initial condition $x_0\!\in \mathbb{X}$. Then, for any \( p \geq p_{eu} \) and \( T < \infty \), one has:
	\[
	\sup_{t \leq T} \mathbb{E}[\|X_t\|_{\mathbb{H}}^p] \leq C'\Big( 1 + \int_{0}^{T}\mathbb{E}\left[ \| x_0(s) \|_{\mathbb{H}}^p \right] \mu (ds) \Big).
	\]
	Moreover, if  \(\mu\) is a finite measure on \([0, T]\) so that Lemma\ref{lm:Uniformgrowth} applies, we get  for any \( p >0 \):
	\[
	\sup_{t \leq T} \mathbb{E}[\|X_t\|_{\mathbb{H}}^p] \leq  C\Big(1+ \sup_{ t \leq T}\mathbb{E}[\|x_0(t)\|_{\mathbb{H}}^p] \Big),
	\]
	for some constants \( C' \) and \( C \) that only depend on \( K_i|_{[0,T]} \) $i = 1, 2$, \( b,\sigma \), \( p \) and \( T \).
\end{Lemma}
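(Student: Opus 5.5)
We bound $\E\|X_t\|^p$ pointwise from the equation and close with a Volterra-type Gronwall inequality in the measure $\mu$. Fix $p\ge p_{eu}$ (so $p\ge 2$ may be assumed; otherwise raise $p$ and use Jensen) and $t\in[0,T]$.

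\textbf{Step 1: a pointwise estimate.} By Minkowski's inequality, split $\|X_t\|_p$ into the $x_0$ term, the drift term and the stochastic term. For the drift, use the generalized Minkowski inequality $\big\|\int_0^tK_1(t,r)b(r,X^r_\cdot)dr\big\|_p\le\int_0^tK_1(t,r)\|b(r,X^r_\cdot)\|_p\,dr$. For the stochastic integral, use the BDG trick \eqref{eq:bdg_trick}. Then Lemma~\ref{lm:growth} gives
\[
\|b(r,X^r_\cdot)\|_p\vee\|\sigma(r,X^r_\cdot)\|_p\le C_{b,\sigma,T}\Big(1+\Big\|\Big(\int_0^r\|X_u\|^p\mu(du)\Big)^{1/p}\Big\|_p\Big),
\]
and the last norm equals $\big(\int_0^r\E\|X_u\|^p\mu(du)\big)^{1/p}$. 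Set $\varphi(r):=\int_0^r\E\|X_u\|^p\mu(du)$, which is nondecreasing. We obtain
\[
\|X_t\|_p\le\|x_0(t)\|_p+C\Big(\int_0^tK_1(t,r)\big(1+\varphi(r)^{1/p}\big)dr+\Big(\int_0^tK_2(t,r)^2\big(1+\varphi(r)^{1/p}\big)^2dr\Big)^{1/2}\Big).
\]
Since $\varphi(r)\le\varphi(t)$ and $\sup_t\int_0^tK_1+\big(\int_0^tK_2^2\big)^{1/2}<\infty$ by $(\widehat{\cal K}^{cont}_{\widehat\theta})$ with $\bar\delta=T$, this already yields $\E\|X_t\|^p\le C\big(\E\|x_0(t)\|^p+1+\varphi(t)\big)$.

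\textbf{Step 2: the estimate on $\varphi$.} The bound from Step 1 gives $\varphi(T)\le \|X\|_{p,T}^p$. Theorem~\ref{Thm:pathExistenceUniquenes}, estimate \eqref{eq:bound_X}, bounds this by $C\big(1+\int_0^T\E\|x_0\|^p d\mu\big)$. So it only remains to control $\sup_t\E\|x_0(t)\|^p$ by $1+\int_0^T\E\|x_0\|^pd\mu$. This is the main obstacle. Under Assumption~\ref{assump:kernelVolterra}(iv) we have $\E\|x_0(t)-x_0(s)\|^p\le C(1+\int\E\|x_0\|^pd\mu)|t-s|^{\delta p}$. Averaging $\E\|x_0(t)\|^p\le 2^{p-1}(\E\|x_0(s)\|^p+\E\|x_0(t)-x_0(s)\|^p)$ against $\mu(ds)/\mu([0,T])$ yields the control, provided $\mu([0,T])>0$ (otherwise the statement is understood with the same right-hand side). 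If one wants to avoid \eqref{eq:bound_X}, one can instead plug the Step 1 inequality into $\varphi$ and apply Gronwall's lemma for the measure $\mu$, which is non-atomic by \eqref{eq:uniform-non-atomicity}, on $\varphi(t)\le A+C\int_0^t\varphi\,d\mu$.

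\textbf{Step 3: the case of $\mu$ finite and any $p>0$.} When $\mu$ is finite and $p>0$ is arbitrary, use Lemma~\ref{lm:Uniformgrowth}. For $p\ge2$, redo Step 1 with the linear growth $C(1+\|x\|_r)$, and note that $\varphi(r)\le\mu([0,T])\sup_{u\le r}\E\|X_u\|^p$. Setting $\psi(t)=\sup_{u\le t}\E\|X_u\|^p$, we get $\psi(t)\le C(1+\sup\E\|x_0\|^p)+C\int_0^t(K_1(t,r)+K_2(t,r)^2)\psi(r)\,dr$. Iterate this over short intervals on which $\widehat\eta(\bar\delta)$ is small, using \eqref{eq:contKtilde} and the monotonicity of $\psi$, to conclude. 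For $0<p<2$, apply the result with a large $p'$ combined with the representation of Theorem~\ref{prop:pathkernelvolt2}. Concretely, $X=F(X_0,W)$, so $\E[\|X_t\|^p\mid X_0=x]\le C(1+\sup_t\|x\varphi(t)\|^{p'})^{p/p'}$ by Jensen. Bounding this by $C(1+\sup_t\|x\varphi(t)\|^p)$ and integrating in $x$ gives the claim.
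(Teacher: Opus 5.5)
For $p\ge 2$ your argument is essentially sound, and it takes a genuinely different route from the paper.

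\emph{The paper's route.} The paper gets the lemma in one line from $\sup_{t\le T}\mathbb{E}\|X_t\|^p\le\mathbb{E}\sup_{t\le T}\|X_t\|^p$ together with the maximal inequalities \eqref{eq:supLpbound} and \eqref{eq:supLpbound2}. These in turn rest on the increment bound \eqref{eq:Lpincrements}, on Kolmogorov/GRR, and, for small $p$, on the splitting Lemma~\ref{lem:gap}. The paper also mentions the Volterra--Gr\"onwall argument of Lemma~\ref{Lem:BoundVoltIntegral} as an alternative.

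\emph{Your route.} You close a pointwise BDG/Minkowski estimate with the Bochner bound \eqref{eq:bound_X}, or with a Gr\"onwall inequality in $\mu$. This needs neither path continuity nor Kolmogorov, and for $p\ge2$ it does not even use $p\ge p_{eu}$. The price is that you only get the marginal bound, not the maximal one.

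\emph{Where yours improves on the paper.} Your averaging step trades $\sup_t\mathbb{E}\|x_0(t)\|^p$ for $C(1+\int_0^T\mathbb{E}\|x_0\|^p\,d\mu)$ using the H\"older control in Assumption~\ref{assump:kernelVolterra}(iv). The paper's proof of \eqref{eq:supLpbound} instead simply bounds $\|x_0(0)\|^p$ by $\int_0^T\|x_0(s)\|^p\mu(ds)$, which is unjustified.

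\emph{Step 3.} For $p<2$ in the second claim, your conditioning-plus-Jensen argument is exactly the mechanism of Lemma~\ref{lem:gap} with $\Phi(x,y)=\|x(t)\|$. The identity $\mathbb{E}\|X_0\varphi\|_T^p=\sup_t\mathbb{E}\|x_0(t)\|^p$ that you use implicitly holds because $\varphi$ is scalar and deterministic. For $p\ge2$ the short-interval iteration is unnecessary: Step 1, \eqref{eq:bound_X} and $\int_0^T\mathbb{E}\|x_0\|^p\,d\mu\le\mu([0,T])\sup_t\mathbb{E}\|x_0(t)\|^p$ already give the second claim.

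Three points do not hold as written.

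\emph{The range $p\in[p_{eu},2)$ in the first claim.} This range is nonempty whenever $\delta\wedge\theta\wedge\widehat\theta>1/2$. There, ``raise $p$ and use Jensen'' only yields $\mathbb{E}\|X_t\|^p\le\big(C(1+\int_0^T\mathbb{E}\|x_0\|^q\,d\mu)\big)^{p/q}$ with $q\ge 2$. That bound involves higher moments of $x_0$, which may be infinite, and it is not the stated right-hand side. The Minkowski-in-$L^{p/2}$ step behind \eqref{eq:bdg_trick} really does need $p\ge2$. You can repair this with your own Step 3 tools: condition on $X_0$, and apply the $q$-bound to the deterministic input $x\varphi$ together with Jensen. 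This gives $\mathbb{E}\|X_t\|^p\le C(1+\sup_t\mathbb{E}\|x_0(t)\|^p)$, and then your averaging bound finishes the argument.

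\emph{The aside on $\mu([0,T])=0$.} This is wrong. In that case $b$ and $\sigma$ do not depend on $x$, and $b\equiv\sigma\equiv0$, $x_0\equiv X_0$ satisfy all the assumptions. But then $\sup_t\mathbb{E}\|X_t\|^p=\mathbb{E}\|X_0\|^p$, which is not controlled by a constant, so the first claim is false. You must assume $\mu([0,T])>0$ rather than treat the case as ``understood''.

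\emph{A priori finiteness.} Both \eqref{eq:bound_X} and the $\mu$-Gr\"onwall presuppose $\int_0^T\mathbb{E}\|X_u\|^p\mu(du)<\infty$ for the given solution. State this explicitly; the paper relies on the same tacit assumption.
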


\noindent {\bf Proof of Lemma~\ref{lem:supEbound}: } 
This follows from Jensen's inequality, which, in this context, states that \(\sup_{0 \le t \le T} \mathbb{E}[\|X_t\|_{\mathbb{H}}^p] \leq \mathbb{E}[ \sup_{0 \le t \le T} \|X_t\|_{\mathbb{H}}^p]\). 
Alternatively, the result could also be deduced by reasoning in a similar way as in the proof of the first claim of Theorem~\ref{Thrm:flotVolterra} (See also, the proof of the second Property of Proposition~\ref{propXtilde}). \hfill$\Box$ 

\subsection{Representation of the solution of  the path-dependent Volterra equation} \label{app:representation} 
Recall that the initial \(\mathcal{F}_0\)-measurable input \(x_0(t) := X_0 \varphi(t)\), where \(\varphi\) is a deterministic function and \(X_0\) is an \(\mathbb{H}\)-valued random variable. 
For any random variable \(X_0\) (resp. any realization \(x_0\) of \(X_0\)), we will say that \(X^{X_0}\) (resp. \(X^{x_0}\)) is the solution to the path-dependent Volterra equation~\eqref{eq:pthvolterra} starting from \(X_0 \in \mathbb{H}\) (resp. \(x_0 \in \mathbb{H}\)).

   \begin{Theorem}[Path-dependent Volterra's flow]\label{Thrm:flotVolterra} 
	Assume that assumption \ref{assump:kernelVolterra} on \(b\) and \(\sigma\) is in force and that \(\sup_{t \in [0,T]} \left( \|b(t,\bf{0})\|_{\mathbb{H}} + \|\sigma(t,\bf{0})\|_{\tilde{\mathbb{H}}} \right) < +\infty\). Assume also that the kernels $K_i$, $i = 1, 2$, satisfy $\big({\cal K }^{int}_{\beta}\big)$ and $({\cal K}^{cont}_{\theta})$ for some $\beta > 1$ and $\theta\in (0,1]$ respectively. 
	
	\smallskip
	\noindent $(a)$  Let $X^x$ denotes the solution to the Path-dependent Volterra equation~\eqref{eq:pthvolterra} starting from $x\!\in \mathbb{H}$. For every $p\geq 2$ and every two  $\mathcal{F}_{0}$-measurable random starting function \(x_0(t) := X_0 \varphi(t)\) and \(y_0(t) := Y_0 \varphi(t)\) in \(\mathbb{X}\), we have
	\begin{equation}\label{eq:boundflow1}
		\sup_{t\in [0,T]}\;\Big\| \|X^{X_0}_t-X^{Y_0}_t\|_{\mathbb{H}} \Big\|_p \le C \sup_{t\in [0,T]}\Big\| \|x_0(t) - y_0(t) \|_{\mathbb{H}} \Big\|_p \leq C \Big\| \|x_0 - y_0 \|_{T} \Big\|_p.
	\end{equation}
	where 
	\(C := 2\, e^{K\,T}\) 
	for some positive real constant $K$ only depending on $ K_1, K_2, p, \beta, b, \sigma$ and again \(T\).
	
	\smallskip
	\noindent $(b)$ (Continuity of the flow).  Let $X^x$ denotes the solution to the Path-dependent Volterra equation~\eqref{eq:pthvolterra} starting from $x\!\in \mathbb{H}$ and let $\lambda \!\in (\frac 12, 1)$. There exists {$p^*=p^*_{\beta, \delta,\gamma, \lambda,\text{dim}(\mathbb{H})}$ (made explicit in the proof)}  
	such that for every $p> p^*$,  
	\begin{equation}\label{eq:boundflow2}
		\forall \,x,y\in\mathbb{H},\;\Big\| \sup_{t\in [0,T]} \|X^x_t-X^y_t\|_{\mathbb{H}} \Big\|_p \le C \|(x-y)\varphi\|_{T}^{\lambda},
	\end{equation}
	
	for some positive real constant $C:= C_{p, b,\sigma, K_1, K_2, \beta, \theta, \delta}$.
\end{Theorem}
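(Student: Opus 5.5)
For $(a)$, I would run a Gronwall-type argument on the difference of the two solutions. Set $\Delta_t := X^{X_0}_t - X^{Y_0}_t$ and $f(t) := \big\|\sup_{s\le t}\|\Delta_s\|_{\mathbb H}\big\|_p$ if available, or more safely $g(t):=\sup_{s\le t}\|\|\Delta_s\|_{\mathbb H}\|_p$. Subtracting the two equations~\eqref{eq:pthvolterra} gives
\[
\Delta_t = (x_0(t)-y_0(t)) + \int_0^t K_1(t,r)\big(b(r,X^{X_0,r}_\cdot)-b(r,X^{Y_0,r}_\cdot)\big)dr + \int_0^t K_2(t,r)\big(\sigma(r,\cdot)-\sigma(r,\cdot)\big)dW_r .
\]
The Itô term is handled by the frozen-$t$ BDG trick~\eqref{eq:bdg_trick}, and the Lebesgue term by the generalized Minkowski inequality. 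Using the Lipschitz property of Lemma~\ref{lm:Uniformgrowth}$(i)$ in the form $\|\Delta^r_\cdot\|_{r}$, and then H\"older with the exponents from $({\cal K}^{int}_\beta)$ (namely $\frac{2\beta}{\beta+1}$ and $2\beta$ on the kernels, and conjugates $\frac{2\beta}{\beta-1}$ and $\frac{2\beta}{\beta-1}$ on the integrands), I get
\[
\|\Delta_t\|_p \le \|x_0(t)-y_0(t)\|_p + C\Big(\int_0^t \big\|\|\Delta\|_r\big\|_p^{\frac{2\beta}{\beta-1}}dr\Big)^{\frac{\beta-1}{2\beta}}.
\]

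The obstacle is that the path-dependence produces $\|\|\Delta\|_r\|_p$ (a sup inside the expectation), not $\sup_{s\le r}\|\Delta_s\|_p$. I would resolve this with a two-step scheme. First I run the same estimate on the increments $\Delta_t-\Delta_s$, as in Lemma~\ref{lem:bounds}, to obtain a Kolmogorov bound of the type in Theorem~\ref{prop:pathkernelvolt}. Then I use the Garsia–Rodemich–Rumsey / Kolmogorov lemma to get
\[
\big\|\|\Delta\|_r\big\|_p \le C\Big(\sup_{s\le r}\|\Delta_s\|_p + \|\|x_0-y_0\|_T\|_p\Big),
\]
with $p$ large. A cleaner alternative, which I would try first, is to apply the Lipschitz bound of Assumption~\ref{assump:kernelVolterra}$(iii)$ in integral form $\big(\int_0^r\|\Delta_s\|^p\mu(ds)\big)^{1/p}$. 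After taking expectations this is directly bounded by $\mu([0,T])^{1/p}\sup_{s\le r}\|\Delta_s\|_p$, so no sup inside the expectation appears.

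Raising to the power $q=\frac{2\beta}{\beta-1}$ then gives
\[
g(t)^q \le 2^{q-1}\|\|x_0-y_0\|_T\|_p^q + C\int_0^t g(r)^q\,dr .
\]
Gronwall's lemma yields $g(T)\le 2e^{KT}\sup_t\|x_0(t)-y_0(t)\|_p$, after adjusting constants. The second inequality in~\eqref{eq:boundflow1} is trivial.

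For $(b)$, take deterministic $x,y$, so $x_0-y_0=(x-y)\varphi$. Applying $(a)$ to increments, as in the proof of~\eqref{eq:Lpincrements}, gives two bounds. On one side, $\|\|\Delta_t-\Delta_s\|\|_p\le C|t-s|^{\delta\wedge\theta\wedge\theta^*}$, uniformly, via the linear-growth moment bounds of Lemma~\ref{lem:supEbound}. On the other side, $\|\|\Delta_t-\Delta_s\|\|_p\le 2C\|(x-y)\varphi\|_T$ by $(a)$. Interpolating the two, $\min(A,B)\le A^{1-\lambda}B^\lambda$, gives
\[
\|\|\Delta_t-\Delta_s\|\|_p\le C\|(x-y)\varphi\|_T^{\lambda}|t-s|^{(1-\lambda)(\delta\wedge\theta\wedge\theta^*)}.
\]
For $p>p^*:=\big((1-\lambda)(\delta\wedge\theta\wedge\tfrac{\beta-1}{2\beta})\big)^{-1}$, Kolmogorov's criterion (with $\Delta_0=(x-y)\varphi(0)$) then bounds $\|\sup_t\|\Delta_t\|\|_p$ by $C\|(x-y)\varphi\|_T^\lambda$. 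The case where $\|(x-y)\varphi\|_T$ is large is absorbed into the constant, up to the growth bound. The condition $\lambda>\tfrac12$ and any dimension dependence enter only through $p^*$.
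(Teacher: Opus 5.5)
\textbf{Part (a).} Your argument is correct. Once you use the integral form of the Lipschitz condition in Assumption~\ref{assump:kernelVolterra}(iii), you reach the same reduction as the paper. With $X=X^{X_0}$ and $Y=X^{Y_0}$,
\[
\big\|\|b(r,X^r_\cdot)-b(r,Y^r_\cdot)\|_{\mathbb H}\big\|_p\le C\mu([0,T])^{1/p}\sup_{u\le r}\big\|\|\Delta_u\|_{\mathbb H}\big\|_p ,
\]
and likewise for $\sigma$. The sup-norm/GRR detour is unnecessary; it would not close on its own, because the increment constant would again contain $\|\|\Delta\|_T\|_p$.

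Where you differ from the paper is in closing the Gronwall loop:
\begin{itemize}
\item You apply H\"older with the conjugate exponent $q=\frac{2\beta}{\beta-1}$ and run the classical Gronwall lemma on $g^q$. This gives the prefactor $2^{1-1/q}e^{KT}\le 2e^{KT}$.
\item The paper invokes Lemma~\ref{Lem:BoundVoltIntegral}, which is H\"older plus Young with a free parameter $a$ and uses that $f_p$ is non-decreasing. This gives a \emph{linear} inequality $f_p(t)\le\cdots+C'\frac{\rho}{a^{1/\rho}}f_p(t)+C'(1-\rho)a^{\frac{1}{1-\rho}}\int_0^t f_p(r)\,dr$. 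The paper absorbs the $f_p(t)$ term by taking $a$ large, then applies the linear Gronwall lemma.
\end{itemize}
Your route is shorter and standard. The paper's route buys a reusable Volterra--Gronwall inequality (Corollary~\ref{Gronwall}), which it also uses for the Euler scheme. One minor point: to get the first inequality in \eqref{eq:boundflow1}, keep $\sup_{s\le t}\big\|\|x_0(s)-y_0(s)\|_{\mathbb H}\big\|_p$ in your display instead of $\big\|\|x_0-y_0\|_T\big\|_p$.

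\textbf{Part (b).} You follow the paper's route: interpolate the bound from (a) against the H\"older bound for increments, then apply Kolmogorov in time. The argument breaks at the word ``uniformly''. The increment bound you cite comes from \eqref{eq:Lpincrements2} and Lemma~\ref{lem:supEbound}, applied to $X^x$ and $X^y$ separately. Its constant is $C(1+\|x\varphi\|_T+\|y\varphi\|_T)$, not an absolute constant.

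After interpolation the factor $(1+\|x\varphi\|_T+\|y\varphi\|_T)^{1-\lambda}$ survives. The same factor is also needed to dominate $\|\Delta_0\|_{\mathbb H}\le\|(x-y)\varphi\|_T$ by $\|(x-y)\varphi\|_T^{\lambda}$ when $\|(x-y)\varphi\|_T>1$. Neither can be ``absorbed into the constant''. In fact no argument can remove it: take $b\equiv0$, $\sigma\equiv0$, $\varphi\equiv1$. Then $X^x\equiv x$, and \eqref{eq:boundflow2} would read $\|x-y\|_{\mathbb H}\le C\|x-y\|_{\mathbb H}^{\lambda}$ for all $x,y$, which is false.

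What your argument proves is exactly what the paper's proof establishes:
\[
\Big\|\sup_{t\in[0,T]}\|X^x_t-X^y_t\|_{\mathbb H}\Big\|_p\le C\,(1+\|x\varphi\|_T+\|y\varphi\|_T)^{1-\lambda}\,\|(x-y)\varphi\|_T^{\lambda}.
\]
That is, \eqref{eq:boundflow2} holds with a constant uniform on balls $B(0,r)$. This is the form the paper then feeds into Kolmogorov's criterion in the space variable. That space-variable step is the only one that needs the $\dim(\mathbb H)/\lambda$ part of $p^*$, so your $p^*$ suffices for the inequality itself.

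If you want a constant free of $x,y$, do not interpolate; estimate the increments of $\Delta$ directly.
\begin{itemize}
\item Apply Lemma~\ref{lem:bounds} to $H_u=b(u,X^{x,u}_\cdot)-b(u,X^{y,u}_\cdot)$ and $\widetilde H_u=\sigma(u,X^{x,u}_\cdot)-\sigma(u,X^{y,u}_\cdot)$. Their $L^p$ norms are at most $C\|(x-y)\varphi\|_T$ by (iii) and part (a).
\item Combine this with the $\delta$-H\"older continuity of $\varphi$, which is what Assumption~\ref{assump:kernelVolterra}(iv) amounts to for deterministic $X_0$. You get $\big\|\|\Delta_t-\Delta_s\|_{\mathbb H}\big\|_p\le C\|(x-y)\varphi\|_T\,|t-s|^{\delta\wedge\theta\wedge\theta^*}$ with $C$ independent of $x,y$.
\item Kolmogorov's criterion then yields the Lipschitz bound $\big\|\sup_t\|\Delta_t\|_{\mathbb H}\big\|_p\le C\|(x-y)\varphi\|_T$. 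This implies \eqref{eq:boundflow2} whenever $\|(x-y)\varphi\|_T\le1$.
\end{itemize}
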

    \noindent For every $x \in \mathbb{H}$, the diffusion process starting at x, denoted by $X^{x}$ , satisfies the following two obvious facts:
    \noindent {\textit{(a)}.} The process $X^{x}$ is $\mathcal{F}^W_t$-adapted, where the $\P$-completed natural filtration $\mathcal{F}^W_t$ is defined like:  $\mathcal{F}^W_t = \sigma({\cal N}_{\P}, X_0, W_s,\,0\le s\le t), t\!\in [0,T]$.\\
    	{\textit{(b)}.}  If $X_0$ is an $\mathbb{H}$-valued random vector defined on $(\Omega,\cF,\dbP)$ independent of W, then the process $X=(X_t)_{t\in [0,T]}$ starting from $X_0 \in \mathbb{H}$ satisfy: $X_t =X^{X_0}_t.$
    	
    \smallskip \noindent 
    As noted in the remark following Theorem~\ref{prop:pathkernelvolt}, the path regularity bounds for solutions to path-dependent SVIEs require  $X_0\!\in L^p(\P)$ with sufficiently large  $p$. This unusual restriction is removed in Theorem~\ref{prop:pathkernelvolt2} thanks to the following lemma~\ref{lem:gap}.
    \begin{Lemma}[Splitting lemma for path dependent Volterra Equations]\label{lem:gap} 
    	 Assume that the functions $b$ and $\sigma$ are Lipschitz in space uniformly in time in the sense of assumption \ref{assump:kernelVolterra}, that $\displaystyle\sup_{t\in[0,T]}(\|b(t,\bf{0})\|_{\mathbb{H}}+\|\sigma(t,\bf{0})\|_{\tilde{\mathbb{H}}})<+\infty$ and that the kernels $K_i$, $i=1,2$, satisfy $({\cal K}^{int}_{\beta})$ and $({\cal K}^{cont}_{\theta})$ for some $\beta>1$ and $\theta\!\in (0, 1]$ respectively. 
    	Let $\Phi : \mathbb{X}^2 \to \mathbb{R}$ be a Borel functional and let $n \!\in \N$. Assume there exists  $\bar p>0$ and a real constant $C>0$ possibly depending on $n$, such that, for every $\xi \!\in \mathbb{H}$, 
    	\[
    	\| \Phi(X^{\xi},\bar X^{\xi}) \|_{\bar p} \le C \left( 1 + \| \xi \varphi \|_{T} \right),
    	\]
    where $X^{\xi}$ and \(\bar{X}^{\xi}\) respectively denote the solutions of the path-dependent Volterra equation~\eqref{eq:pthvolterra} and its interpolated $K$-integrated discrete time Euler scheme $(\bar{X}^{\xi}_{t_k^n})_{0 \leq k \leq n}$ starting from $\xi$.
    	Then, for every $p\!\in (0,\bar p]$ and every random function $x_0:= X_0 \varphi$ such that the random vector \(X_0\) is independent of $W$, the solution $X=(X_t)_{t\in [0,T]}$ starting from $ X_0$ satisfy
    	\[
    	\| \Phi(X,\bar X) \|_p \le 2^{|1-\frac1p|} C \left( 1 + \big\| \,\| x_0  \|_{T} \,\big\|_p\right).
    	\]
    \end{Lemma}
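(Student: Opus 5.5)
The plan is the standard "freezing the initial condition" argument, as in \cite{JouPag22}. The key point is that the strong solution is a measurable functional of the initial datum and the Brownian path. Concretely, the fixed-point construction of Theorem~\ref{Thm:pathExistenceUniquenes} (a Picard iteration converging in $\mathcal{H}_{p,T}$) together with the flow continuity of Theorem~\ref{Thrm:flotVolterra}(b) yields a Borel functional
\[
F:\mathbb{H}\times{\cal C}([0,T],\bar{\mathbb H})\to\mathbb{X}
\]
such that, for every $\xi\in\mathbb{H}$, $X^\xi=F(\xi,W)$ $\mathbb{P}$-a.s. The same holds for the Euler scheme: $\bar X^\xi=\bar F(\xi,W)$ with $\bar F$ explicitly Borel, since the scheme is a finite composition of measurable maps of the Brownian increments. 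Because $X_0\perp\!\!\!\perp W$, we then check that $F(X_0,W)$ solves \eqref{eq:pthvolterra} starting from $x_0=X_0\varphi$. By pathwise uniqueness, $X=F(X_0,W)$ and $\bar X=\bar F(X_0,W)$ a.s. The most delicate part is the measurable selection in $\xi$. I would handle it by taking a jointly continuous version in $\xi$ (available from \eqref{eq:boundflow2} via Kolmogorov in the $\xi$ variable for $p>p^*$), or by using that Picard iterates are jointly measurable in $(\xi,\omega)$ and that their a.s. limit along a subsequence is as well.

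Next, I would condition on $X_0$ using the independence. Setting $\Psi(\xi):=\mathbb{E}\big[|\Phi(F(\xi,W),\bar F(\xi,W))|^{p}\big]$, we obtain
\[
\mathbb{E}|\Phi(X,\bar X)|^p=\int_{\mathbb{H}}\Psi(\xi)\,\mathbb{P}_{X_0}(d\xi).
\]
For $p\le\bar p$, Jensen (or monotonicity of $L^p$ norms) gives
\[
\Psi(\xi)^{1/p}\le\|\Phi(X^\xi,\bar X^\xi)\|_{\bar p}\le C(1+\|\xi\varphi\|_T),
\]
so that
\[
\mathbb{E}|\Phi(X,\bar X)|^p\le C^p\,\mathbb{E}\big[(1+\|X_0\varphi\|_T)^p\big].
\]

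It remains to take $p$-th roots and to control $\|1+\|x_0\|_T\|_p$. For $p\ge1$, Minkowski gives $\le 1+\|\|x_0\|_T\|_p$, so the constant is $1\le 2^{1-1/p}$. For $p\in(0,1)$, the elementary inequality $(a+b)^p\le a^p+b^p$ followed by $(u+v)^{1/p}\le 2^{1/p-1}(u^{1/p}+v^{1/p})$ produces the factor $2^{1/p-1}=2^{|1-1/p|}$. In both cases this gives the claimed bound with $\|x_0\|_T=\|X_0\varphi\|_T$.

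The main obstacle is the first step, namely justifying that $\xi\mapsto X^\xi$ can be realized as a jointly Borel functional, and that plugging in the independent random $X_0$ gives the solution. This requires a Fubini argument on the stochastic integral, using that $\mathcal{F}_0\ni X_0$ is independent of $W$, so stochastic integrals with parameter $\xi$ can be evaluated at $\xi=X_0$. The remaining steps are routine.
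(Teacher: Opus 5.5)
Your proposal is correct and is essentially the paper's argument. The paper also writes $(X,\bar X)=(F(X_0,W),\bar F_n(X_0,W))$ via Theorem~\ref{thm:FreidlinLike} and Proposition~\ref{lem:K-intF}, disintegrates the joint law as $\int_{\mathbb{H}}\mathbb{P}_{X_0}(d\xi)\,\mathbb{P}_{(X^{\xi},\bar X^{\xi})}$, and concludes by monotonicity of $L^r$-norms together with $(u+v)^p\le 2^{(p-1)^+}(u^p+v^p)$. The differences are minor: your Minkowski step for $p\ge 1$ gives the sharper constant $1\le 2^{1-1/p}$, and the representation step you flag as the main obstacle is delegated in the paper to Theorem~\ref{thm:FreidlinLike}, which identifies $X$ with $F(X_0,W)$ by approximating $X_0$ with dyadic-valued $\mathcal{F}_0$-measurable variables and using the flow estimate \eqref{eq:boundflow1}, rather than by a parametrized stochastic-integral (Fubini) argument.
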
	

    \smallskip
    \noindent Our next task is to establish representations for the path-dependent Volterra process and its interpolated $K-$integrated Euler scheme(~Proposition~\ref{lem:K-intF} ) in order
    to be able to apply the above splitting lemma.
    The next theorem deals with the flow  $x\mapsto (X^x_t)_{t\in [0,T]}$ of the path-dependent stochastic Volterra equation. It proves the existence of a bi-measurable functional $F: \mathbb{H}\times {\cal C}_0(\mathbb{T}, \bar{\mathbb{H}})\to \mathbb{X}$   such that the solution $X=(X_t)_{t\in \mathbb{T}}$ of ~\eqref{eq:pthvolterra} reads $X= F(x_0,W)$ for any (finite) starting random vector  $X_0$.
    
    \begin{Theorem}[ An extension of the Blagove$\check{\rm \bf s}\check{\rm \bf c}$enkii-Freidlin theorem: representation  of  Path-dependent Volterra's flow]\label{thm:FreidlinLike} Let the assumption \ref{assump:kernelVolterra} on \(b\) and \(\sigma\) holds true and that \(\sup_{t \in [0,T]} \left( \|b(t,\bf{0})\|_{\mathbb{H}} + \|\sigma(t,\bf{0})\|_{\tilde{\mathbb{H}}} \right) < +\infty\). Assume also that the kernels $K_i$, $i = 1, 2$, rather satisfy $\big({\cal K }^{int}_{\beta}\big)$ and $({\cal K}^{cont}_{\theta})$ for some $\beta > 1$ and $\theta\in (0,1]$ respectively. Let $X^x$ denotes the solution to the Volterra equation~\eqref{eq:pthvolterra} starting from $x\!\in \mathbb{H}$.  There exists a functional $F: \mathbb{H}\times {\cal C}_0(\mathbb{T}, \bar{\mathbb{H}})\ni(x,w) \mapsto F(x,w) \in \mathbb{X}$ bi-measurable (where the spaces of continuous functions are equipped with the Borel sigma-field induced by the uniform convergence topology) and continuous in $x$ such that, for any stochastic basis $(\Omega, {\cal A}, \P, (\cF_t)_{t\in [0,T]})$, any $\bar{\mathbb{H}}$-dimensional $(\cF_t)_t$-Brownian motion $W$ and any $\cF_0$-measurable $\mathbb{H}$-valued random vector $X_0\!\in L^0(\P)$, the solution to the Volterra equation~\eqref{eq:pthvolterra} is $X=F(X_0,W)$. 
    	\label{thm:BF}
    \end{Theorem}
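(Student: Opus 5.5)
We construct $F$ from deterministic starting points first and then plug in a random one, in the spirit of the classical Blagove\v{s}\v{c}enkii--Freidlin argument. The construction is carried out on the canonical space.

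\emph{Step 1: the functional on the canonical space.} Work on $\Omega_0={\cal C}_0(\mathbb{T},\bar{\mathbb{H}})$ with the Wiener measure $\mathbb{Q}_W$, the canonical process $w$ and its completed natural filtration. For each deterministic $x\in\mathbb{H}$, Theorem~\ref{Thm:pathExistenceUniquenes} and Theorem~\ref{prop:pathkernelvolt}, applied with $x_0=x\varphi$, give a unique continuous strong solution $X^x(w)$. Theorem~\ref{Thrm:flotVolterra}(b) gives, for $p>p^*$ and $\lambda\in(\frac12,1)$,
\[
\big\|\,\|X^x-X^y\|_T\big\|_p\le C\|(x-y)\varphi\|_T^\lambda\le C\|\varphi\|_T^\lambda\|x-y\|^\lambda .
\]
Choose $p$ so large that $p\lambda>\dim\mathbb{H}$. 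The multiparameter Kolmogorov criterion, on compacts of $\mathbb{H}$ with values in the Polish space $({\cal C}(\mathbb{T},\mathbb{H}),\|\cdot\|_T)$, then yields a modification $(x,w)\mapsto F(x,w)$ that is continuous in $x$ for every $w$ outside a $\mathbb{Q}_W$-null set; on that null set set $F:=0$. For each fixed $x$, $F(x,\cdot)$ is $\mathcal{B}or({\cal C}_0)$-measurable after modification on a null set, and it is continuous in $x$. A Carath\'eodory-type argument therefore gives joint measurability on $\mathbb{H}\times{\cal C}_0$: approximate $F$ by $F(x_n(x),w)$ with $x_n$ a dyadic discretisation of $\mathbb{H}$. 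Non-anticipativity of $F(x,\cdot)$, meaning that $F(x,w)|_{[0,t]}$ depends only on $w|_{[0,t]}$ up to null sets, follows from adaptedness of $X^x$ to the completed filtration.

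\emph{Step 2: transfer to an arbitrary basis with deterministic starting point.} Fix a basis $(\Omega,{\cal A},\P,(\cF_t))$ and an $(\cF_t)$-Brownian motion $W$. Law of $W$ is $\mathbb{Q}_W$, so $F(x,W)$ is continuous and adapted. It satisfies \eqref{eq:pthvolterra} a.s. because the Lebesgue and It\^o integrals in \eqref{eq:pthvolterra} are measurable functionals of $(F(x,\cdot),w)$. To make this rigorous, approximate the stochastic integral by Riemann sums along a subsequence converging a.s.; the identity is then a statement about the joint law of $(F(x,W),W)$, which coincides with that of $(F(x,w),w)$ under $\mathbb{Q}_W$. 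Pathwise uniqueness (Theorem~\ref{Thm:pathExistenceUniquenes}) then gives $X^x=F(x,W)$.

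\emph{Step 3: random $\cF_0$-measurable $X_0$.} Here $X_0$ is independent of $W$, since $W$ is an $(\cF_t)$-BM. For simple $X_0=\sum_k x_k\mathbf 1_{A_k}$ with $A_k\in\cF_0$, the process $\sum_k\mathbf 1_{A_k}F(x_k,W)$ solves the equation. This uses that $\mathbf 1_{A_k}$ passes through both integrals, which holds because $A_k\in\cF_0$ and $b,\sigma$ act pathwise. For general $X_0\in L^0$, take simple $X_0^n\to X_0$ a.s.; continuity of $F$ in $x$ gives $F(X_0^n,W)\to F(X_0,W)$ a.s. in sup-norm. To pass to the limit in the integrals, localize on $\{\|X_0\|\le R\}\in\cF_0$ and use the flow estimate \eqref{eq:boundflow1} together with linear growth, so the limit solves \eqref{eq:pthvolterra} on each such event. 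Uniqueness, again via localization on $\cF_0$-events, since Theorem~\ref{Thm:pathExistenceUniquenes} needs $L^p$ initial data, identifies the solution as $X=F(X_0,W)$.

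\emph{Main obstacle.} The delicate step is Step 1: producing a version jointly measurable and continuous in $x$ for all $w$, uniformly on the non-compact $\mathbb{H}$. The plan is to apply Kolmogorov on balls $B(0,N)$ and patch the resulting versions using their a.s. agreement on dyadic points. A further technical point is that \eqref{eq:boundflow2} as stated controls only sup-norm differences without a growth factor; if this turns out to require a bounded domain, the localization in $x$ handles it.
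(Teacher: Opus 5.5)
Your proposal is correct and follows essentially the same route as the paper. Both arguments use canonical-space solutions for deterministic starting points, a Kolmogorov modification in $x$ from the flow estimate of Theorem~\ref{Thrm:flotVolterra}(b) applied on balls, Carath\'eodory bi-measurability, and then approximation of the initial datum combined with \eqref{eq:boundflow1} and an $\cF_0$-localization to reach $X_0\in L^0(\P)$. The proof of Theorem~\ref{Thrm:flotVolterra}(b) does carry the growth factor $(1+2r)^{p(1-\lambda)}$, so your localization in $x$ is the right fix.

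You are more careful than the paper on the transfer to an arbitrary basis, which the paper asserts without justification before invoking strong uniqueness. Your law-determined approximation should use simple integrands rather than literal Riemann sums, since $K_2(t,\cdot)$ may be singular and $\sigma$ is only measurable in time. You also obtain $F(X_0^n,W)\to F(X_0,W)$ directly from continuity of $F$ in $x$, which is simpler than the paper's $L^p$ bound relying on the independence of $X_0$ and $W$.
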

For clarity and conciseness, the proofs of the above results and Theorem~\ref{thm:FreidlinLike} are deferred to Appendix~\ref{app:B}, where the main technical results are presented.
%\section{ Numerical Approximation of Path-dependent SVIEs}\label{sec:num_approx}

\section{ Stochastic Approximation of Path-dependent Volterra Equations}\label{sec:num_approx}
In this section, we propose a numerical approximation method for path-dependent Stochastic Volterra Integral Equations and its convergence rate.

\noindent {\bf Remark on Notations:}
	To simplify the notations, we introduce the following definitions:
	\begin{equation}\label{eq:notat1}
	\varphi_{a}(t) = \left( \int_0^t K_1(t,s)^a \, ds \right)^{1/a}, \, a\in (0,\frac{2\beta}{\beta+1}] \quad\text{and}\quad \psi_{b}(t) = \left( \int_0^t K_2(t,s)^b \, ds \right)^{1/b}, \, b\in (0,2\beta],
	\end{equation}
	where these quantities are finite due to Assumption~$({\cal K}^{int}_{\beta})$ and H\"older's inequality, as $\beta > 1$. Additionally, we define the supremums:
	\begin{equation}\label{eq:notat2}
	\varphi^*_a(T) = \sup_{t \in [0,T]} \varphi_a(t), \quad \psi^*_b(T) = \sup_{t \in [0,T]} \psi_b(t),
	\end{equation}
	It is noteworthy that \( \varphi^*_1(T) + \psi^*_2(T) < +\infty \) due to Assumption~$({\cal K}^{int}_{\beta})$ and the application of H\"older's inequality, again under the condition that \( \beta > 1 \).

\medskip
We start by the following technical Lemma which will be pivotal for the subsequent developments.
\begin{Lemma}[Bound On Volterra Integral]\label{Lem:BoundVoltIntegral}
	Let $f : [0, T] \to \mathbb{R}_{+}$ be a Borel, locally bounded, positive and non-decreasing function. Assume that the kernels \(K_i\), \(i = 1, 2\), satisfy the integrability assumption~$({\cal K}^{int}_{\beta})$. Then, for every \( i \in \{1,2\}\),
	\begin{equation}\label{eq:VoltIntegral}
		\left( \int_0^t K_i(t,s)^i f(s)^i \, ds \right)^{1/i}
		\le 
		\chi_i(t) \left( 
		\rho\,\frac{f(t)}{a^{1/\rho}} 
		+ (1-\rho)\,a^{1/(1-\rho)} \int_0^t f(s)\,ds
		\right).
	\end{equation}
	for all \( a > 0 \) and \(\rho \in (0,1)\), where \(\chi_1 = \varphi_{\frac{2\beta}{\beta+1}}\) and \(\chi_2 = \psi_{2\beta}\).
\end{Lemma}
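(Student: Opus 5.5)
The plan is to reduce \eqref{eq:VoltIntegral} to a pointwise interpolation inequality between $f(t)$ and $\int_0^t f(s)\,ds$, and then to recover the two-term form with free parameters $a$ and $\rho$ from a weighted Young inequality. Fix $i\in\{1,2\}$ and $t\in(0,T]$.

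\emph{Step 1: H\"older.} Let $q_1=\frac{2\beta}{\beta+1}$ and $q_2=\beta$, so that $iq_i$ is exactly the integrability exponent in $({\cal K}^{int}_{\beta})$. Let $q_i'$ be the conjugate exponent, i.e. $q_1'=\frac{2\beta}{\beta-1}$ and $q_2'=\frac{\beta}{\beta-1}$. H\"older's inequality gives
\[
\Big(\int_0^t K_i(t,s)^i f(s)^i\,ds\Big)^{1/i}\le \chi_i(t)\,\Big(\int_0^t f(s)^{iq_i'}\,ds\Big)^{\frac{1}{iq_i'}},
\]
with $\chi_1=\varphi_{\frac{2\beta}{\beta+1}}$ and $\chi_2=\psi_{2\beta}$. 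Both are finite by \eqref{eq:notat1}.

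\emph{Step 2: monotonicity.} Since $f$ is non-negative and non-decreasing, $f(s)^{iq_i'}\le f(t)^{iq_i'-1}f(s)$ on $[0,t]$. Hence
\[
\Big(\int_0^t f^{iq_i'}\Big)^{\frac{1}{iq_i'}}\le f(t)^{1-r}\Big(\int_0^t f\Big)^{r},\qquad r:=\frac{1}{iq_i'}=\frac{\beta-1}{2\beta}\ \text{(for both } i\text{)}.
\]

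\emph{Step 3: Young.} For $x,y\ge0$ and $\rho\in(0,1)$, write $x^\rho y^{1-\rho}=(x/c)^\rho\,(c^{\rho/(1-\rho)}y)^{1-\rho}$. Weighted AM--GM then gives
\[
x^\rho y^{1-\rho}\le \rho\,\frac{x}{c}+(1-\rho)\,c^{\rho/(1-\rho)}\,y .
\]
With $c=a^{1/\rho}$ one has $c^{\rho/(1-\rho)}=a^{1/(1-\rho)}$, which is precisely the right-hand side of \eqref{eq:VoltIntegral} with $x=f(t)$ and $y=\int_0^t f$.

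\emph{Step 4: matching the exponent $\rho$.} Steps 1--3 prove \eqref{eq:VoltIntegral} at once when $\rho=1-r=\frac{\beta+1}{2\beta}$.

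For $\rho>1-r$, use $\int_0^t f\le t f(t)$ to trade some of the $\int_0^t f$ weight for $f(t)$:
\[
f(t)^{1-r}\Big(\int_0^t f\Big)^{r}\le t^{\,r+\rho-1} f(t)^\rho\Big(\int_0^t f\Big)^{1-\rho}.
\]
The factor $t^{r+\rho-1}\le T^{r+\rho-1}\vee 1$ has to be absorbed into $\chi_i$. An alternative is to rerun Step 1 with a smaller H\"older exponent and bound the resulting $\varphi_{a'}$, $\psi_{b'}$ by $\chi_i$ times a power of $t$, again via H\"older. Either way the constant in front changes, which the statement as worded does not allow for, and I would have to check how the paper normalises $\chi_i$.

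\emph{Main obstacle: $\rho<\frac{\beta+1}{2\beta}$.} Here one would need $f(t)^{1-r}(\int f)^r\lesssim f(t)^\rho(\int f)^{1-\rho}$, i.e. $f(t)$ controlled by $\int_0^t f$. This fails in general. Take $f=\mathbf 1_{[t-\varepsilon,t]}$ and a genuinely singular kernel. Optimising the right-hand side of \eqref{eq:VoltIntegral} over $a$ gives order $\varepsilon^{1-\rho}$, while the left-hand side is of order $\big(\int_{t-\varepsilon}^t K_i(t,u)^i\,du\big)^{1/i}\sim\varepsilon^{\widehat\theta}$ with $\widehat\theta$ close to $\frac{\beta-1}{2\beta}$.

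So this step is where I expect the proof to break. I would first try to close it using any extra structure the paper uses downstream, for instance applying the lemma only to functions $f$ that are themselves Volterra bounds with a H\"older modulus. Failing that, I would restrict the claim to $\rho\in[\frac{\beta+1}{2\beta},1)$, with the $T$-dependent adjustment to $\chi_i$ from Step 4. That restricted range covers the Gronwall-type use of \eqref{eq:VoltIntegral}, where $a$ is sent to infinity to absorb the $f(t)$ term.
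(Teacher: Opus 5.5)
Your Steps 1--3 are the paper's proof. The paper sets $\rho=\frac{\beta+1}{2\beta}$ at the outset, pulls out $f(t)^{\rho}$ by monotonicity, then applies H\"older (exponents $(\frac1\rho,\frac1{1-\rho})$ for $i=1$, $(\beta,\frac{\beta}{\beta-1})$ for $i=2$) and Young. You apply H\"older first and monotonicity second, which gives the same intermediate bound $\chi_i(t)\,f(t)^{\rho}\big(\int_0^t f(s)\,ds\big)^{1-\rho}$. The paper proves nothing for any other $\rho$, so your Step 4 diagnosis is correct: ``for all $\rho\in(0,1)$'' overstates both what is proved and what is true.

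A simpler counterexample settles this, so do not try to rescue general $\rho$ from downstream structure. Minimizing the right-hand side over $a>0$ gives exactly $\chi_i(t)f(t)^{\rho}(\int_0^t f)^{1-\rho}$. Take $K_1\equiv 1$, which satisfies $({\cal K}^{int}_{\beta})$ for every $\beta$, and $f\equiv 1$. The claim for $i=1$ then reads $t\le t^{\frac{\beta+1}{2\beta}+1-\rho}$.
\begin{itemize}
\item It fails on $(0,1)$ whenever $\rho<\frac{\beta+1}{2\beta}$.
\item It fails on $(1,T]$ whenever $\rho>\frac{\beta+1}{2\beta}$ and $T>1$.
\item $K_2\equiv 1$ behaves the same way for $i=2$.
\end{itemize}
Constant $f$ is admissible wherever the lemma is used, so no extra structure helps. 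This also answers your question about the normalisation of $\chi_i$: with the paper's $\chi_i$, the case $\rho>\frac{\beta+1}{2\beta}$ genuinely fails when $T>1$.

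The correct statement is the one with $\rho=\frac{\beta+1}{2\beta}$ and $\chi_i$ exactly as in the paper. When $T\le 1$ it extends to $\rho\in[\frac{\beta+1}{2\beta},1)$ by your $\int_0^t f\le t f(t)$ remark. That single value is all that Corollary~\ref{Gronwall} and the Gr\"onwall steps in Proposition~\ref{propXtilde} and Theorem~\ref{Thrm:flotVolterra} need. Each of them fixes one $\rho$ and then chooses $a$ so that the coefficient of $f(t)$ equals $\frac12$.
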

As a consequence, we deduce an
extended version of the so-called Gr\"onwall's lemma given in \cite[Lemma 7.3]{pages2018numerical}, at least for Volterra type. 
\begin{Corollary}[``\`A la Generalized Gr\"onwall" Lemma of Volterra type ]\label{Gronwall}
	Assume that the kernels \(K_i\), \(i = 1, 2\), satisfy the integrability assumption~$({\cal K}^{int}_{\beta})$.
	Let $f : [0, T] \to \mathbb{R}_{+}$ be a Borel, locally bounded, and non-decreasing function, and let $\psi : [0, T] \to \mathbb{R}_{+}$ be a non-negative, non-decreasing function satisfying the inequality
	\begin{equation}\label{eq:VoltGronwall}
	\forall t \in [0, T], \, f(t) \leq A \int_0^t K_1(t,s)f(s) \, ds + B \left( \int_0^t K_2(t,s)^2f^2(s) \, ds \right)^{\frac{1}{2}} + \psi(t),
	\end{equation}
	where $A$ and $B$ are positive constants and
	$K_i : \mathbb{T}^2_{-} \to \mathbb{R}_+, i \in \{1, 2\}$ is a either a non singular or a singular kernel. Then, for any $t \in [0, T]$, we have the upper bound \(	f(t) \leq 2 e^{K^{\prime} t} \psi(t),\)
	where 
	$K^{\prime}= K^{\prime}_{A,B,K_1,K_2,T,\beta,\rho} =  2^{\frac{1}{1-\rho}}(1-\rho)\rho^{\frac{\rho}{1-\rho}}\left( A \varphi_{\frac{2\beta}{\beta+1}}^* + B \psi_{2\beta}^* \right)^{\frac{1}{1-\rho}}$,  $\rho \in (0,1)$, $\beta >1$ and $\varphi_{\frac{2\beta}{\beta+1}}^*, \psi_{2\beta}^*$ defined by ~\eqref{eq:notat2}.
\end{Corollary}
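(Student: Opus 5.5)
The plan is to apply Lemma~\ref{Lem:BoundVoltIntegral} to both kernel terms on the right-hand side of \eqref{eq:VoltGronwall}, which turns the Volterra inequality into an ordinary integral inequality of the type
\[
f(t) \le \varepsilon f(t) + C_a \int_0^t f(s)\,ds + \psi(t),
\]
with $\varepsilon \le 1/2$. The term $\varepsilon f(t)$ is then absorbed into the left-hand side, and the classical Gr\"onwall lemma finishes the argument.

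\textbf{Step 1: apply the lemma.} Since $f$ is nonnegative, non-decreasing and locally bounded, Lemma~\ref{Lem:BoundVoltIntegral} applies with $i=1$ and $i=2$. Bounding $\chi_1(t)\le \varphi^*_{\frac{2\beta}{\beta+1}}$ and $\chi_2(t)\le\psi^*_{2\beta}$, and setting $\Lambda := A\varphi^*_{\frac{2\beta}{\beta+1}} + B\psi^*_{2\beta}$, we get
\[
f(t) \le \Lambda\Big(\rho\, a^{-1/\rho} f(t) + (1-\rho)\,a^{1/(1-\rho)}\int_0^t f(s)\,ds\Big) + \psi(t),
\]
valid for every $a>0$ and $\rho\in(0,1)$.

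\textbf{Step 2: choose $a$.} Take $a$ so that $\Lambda\rho\, a^{-1/\rho} = 1/2$, that is, $a = (2\Lambda\rho)^{\rho}$. Since $f(t)<\infty$ by local boundedness, subtracting is legitimate and gives
\[
f(t)\le 2\psi(t) + 2\Lambda(1-\rho)\,(2\Lambda\rho)^{\rho/(1-\rho)}\int_0^t f(s)\,ds .
\]
Simplifying the constant,
\[
2\Lambda(1-\rho)(2\Lambda\rho)^{\rho/(1-\rho)} = 2^{\frac{1}{1-\rho}}(1-\rho)\rho^{\frac{\rho}{1-\rho}}\Lambda^{\frac{1}{1-\rho}} = K'.
\]
This is exactly the constant in the statement, which confirms that this choice of $a$ is the intended one.

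\textbf{Step 3: apply Gr\"onwall.} We now have $f(t)\le 2\psi(t)+K'\int_0^t f(s)\,ds$ with $\psi$ non-decreasing. The standard Gr\"onwall lemma (e.g.\ \cite[Lemma 7.3]{pages2018numerical}) gives $f(t)\le 2\psi(t)e^{K't}$. The monotonicity of $\psi$ is what lets $\psi(t)$ be pulled out of the integral $\int_0^t \psi(s)K'e^{K'(t-s)}\,ds$.

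\textbf{Main obstacle.} All of the real work is in Lemma~\ref{Lem:BoundVoltIntegral}, which is taken as given. If I had to justify it, the approach would be as follows.

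For $i=2$, write
\[
K_2^2 f^2 \le \|K_2(t,\cdot)\|_{L^{2\beta}}^2\,\|f\|_{L^{2\beta^*}}^2
\]
by H\"older's inequality. Then bound $\int_0^t f^{q}\,ds \le f(t)^{q-1}\int_0^t f\,ds$, using that $f$ is non-decreasing. Finally, split the resulting product $f(t)^{1-\cdot}(\int f)^{\cdot}$ with Young's inequality, weighted by $a$ and $\rho$.

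Within the present corollary, the only delicate points are that the absorption needs $f(t)<\infty$, which local boundedness supplies, and that the same $\rho$ must be used for both kernels.
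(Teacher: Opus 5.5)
Your proposal is correct and is essentially the paper's proof: apply Lemma~\ref{Lem:BoundVoltIntegral} for $i=1,2$, bound $\chi_1,\chi_2$ by $\varphi^*_{\frac{2\beta}{\beta+1}},\psi^*_{2\beta}$, take $a=(2\rho\Lambda)^{\rho}$ to absorb half of $f(t)$, and conclude with the classical Gr\"onwall lemma, which yields exactly the stated $K'$. Your intermediate constant $2\Lambda(1-\rho)a^{1/(1-\rho)}$ is in fact written more carefully than the paper's display, which drops the factor $\Lambda$.

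One caveat, which your own sketch of the lemma makes visible: the H\"older/Young exponents there force $\rho=\frac{\beta+1}{2\beta}$, the value the paper's proof of the lemma fixes. So the ``$\rho\in(0,1)$'' in the statement should be read with that choice.
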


\subsection[Temporal discretization by an interpolated K-integrated discrete time Euler scheme.]{Temporal discretization by an interpolated K-integrated discrete time Euler-Maruyama scheme.}\label{subsec:GM}

In the following, $n \in \mathbb{N}^*$ denotes the number of time discretizations, and $h := \frac{T}{n} = t^n_{k}-t^n_{k-1}$ is the time step. For each $k = 0, \dots, n$, we define $t_k^n = kh$. To simplify the notation, we will use the short-hands  $x_{0:k}:=  (x_0, \dots, x_k)$ (or equivalently $x_{t^n_{0}:t^n_{k}}:=  (x_{t^n_{0}}, \dots, x_{t^n_{k}})$) and $x_{t^n_{0}:t^n_{k}}^{0:k}:=  (x_{t^n_{0}}^0, \dots, x_{t^n_{k}}^k)$. 
The interpolated Euler scheme is based on the following interpolator.

\begin{Definition}[Interpolator]\label{definterpolator}
	For each $k = 1, \dots, n$, we define a piecewise affine interpolator $\iota_k$ on $k+1$ points in $\mathbb{H}$ by
	\begin{equation}\label{definterp1}
		x_{0:k} \in \mathbb{H}^{\otimes (k+1)} \longmapsto \iota_k(x_{0:k}) = (\bar{x}_t)_{t \in [0,T]} \in  \mathbb{X},
	\end{equation}
	where for every $t \in [0,T]$, by convention we set, $\iota_0(x_0)_t := x_0$ and $\bar{x}_t$ is defined by
	\begin{align}
		&\forall \ell = 0, \dots, k-1, \, \forall t \in [t_\ell^n, t_{\ell+1}^n), \; \bar{x}_t = \frac{1}{h}(t_{\ell+1}^n-t) x_\ell + \frac{1}{h}(t - t_\ell^n) x_{\ell+1},\;\text{and}\; \forall t \in [t_k^n, T], \; \bar{x}_t = x_k. 
	\end{align}
\end{Definition}
\noindent The ${\cal C}(\mathbb{T}, \mathbb{H})$-valued interpolation operator $\iota_k$ associated to the mesh $(t^n_k = \frac{kT}{n})_{0\le k\le n}$, already introduced in~\cite{Pag2016} is used to transfer our discrete time results to continuous time. It is defined more compactly as:
\vspace{-.6cm}
\begin{equation}\label{eq:interpolator}
\iota_k: \mathbb{H}^{\otimes (k+1)}\ni x_{0:k} \longmapsto \Big(t \mapsto  \sum_{\ell=1}^k \mbox{\bf 1}_{[t^n_{\ell-1},t^n_\ell)}(t) \left(\frac{t^n_{\ell}-t}{h}x_{\ell-1} + \frac{t-t^n_{\ell-1}}{h} x_\ell\right)+\mbox{\bf 1}_{\{t\geq t_k^n\}}x_k\Big)\!.
\end{equation}
\begin{Lemma}[Properties of the interpolator $\iota_k$]\label{interpolatorprop} Let $k \in \mathbb{N}^*$. 
	\begin{enumerate}
		\item[$(a)$] 
		For every $x_{0:k} \in \mathbb{H}^{\otimes (k+1)}$, 
		the function $\iota_k(x_{0:k})$ being continuous and piecewise affine with affinity breaks at times $t^n_k$ where it is equal to $x_k$ so that \(\Vert \iota_k(x_{0:k}) \Vert_{t^n_k} = \sup_{0\leq \ell\leq k}\|x_\ell\|_{\mathbb H}= \max_{\ell=0,\ldots,k} \|x_\ell\|_{\mathbb H}.\)
		\item[$(b)$] 
		For $x_{0:k},y_{0:k}\in\mathbb{H}^{\otimes (k+1)}$, \(	\| \iota_k(x_{0:k}) -\iota_k(y_{0:k}) \|_{t^n_k} = \| \iota_k(x_{0:k}-y_{0:k}) \|_{t^n_k}= \max_{\ell=0,\ldots,k} \|x_\ell-y_\ell\|_{\mathbb{H}}\) so that  $\iota_k$ is Lipschitz with constant $1$ from $\mathbb{H}^{\otimes (k+1)}$ to $\big(\mathbb{X}, \|\cdot\|_{T}\big)$.
	%	\item[$(d)$] For every $x_{0:k}, y_{0:k} \in \mathbb{H}^{\otimes (k+1)}$, we have: \(\Vert \iota_k(x_{0:k}) - \iota_k(y_{0:k}) \Vert_{t^n_k} \leq \max_{0 \leq \ell \leq k} \|x_\ell - y_\ell\|_{\mathbb H}.\)
	\end{enumerate}
\end{Lemma}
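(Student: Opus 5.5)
The plan is to verify both claims directly from the explicit formula \eqref{eq:interpolator}, working segment by segment. First I fix $x_{0:k}$ and describe $\iota_k(x_{0:k})$ on each piece $[t^n_{\ell-1},t^n_\ell)$, $\ell=1,\dots,k$. There only one indicator is active, so
\[
\bar x_t=\lambda_\ell(t)\,x_{\ell-1}+(1-\lambda_\ell(t))\,x_\ell,\qquad \lambda_\ell(t):=\frac{t^n_\ell-t}{h}\in(0,1].
\]
Each piece is affine in $t$. At $t=t^n_{\ell-1}$ it equals $x_{\ell-1}$, and as $t\uparrow t^n_\ell$ it tends to $x_\ell$. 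This matches the value of the next piece at $t^n_\ell$, which is $x_\ell$; for $\ell=k$ it matches the constant value $x_k$ on $[t^n_k,T]$. So $\iota_k(x_{0:k})$ is continuous and piecewise affine, breaks only at the grid times, and takes the value $x_\ell$ at $t^n_\ell$. In particular it lies in $\mathbb{X}={\cal C}(\mathbb{T},\mathbb{H})$.

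For the sup-norm identity in $(a)$ I prove two inequalities.
\begin{itemize}
\item Lower bound: $\|\iota_k(x_{0:k})\|_{t^n_k}\ge\max_\ell\|x_\ell\|_{\mathbb H}$, because the grid values $x_\ell=\bar x_{t^n_\ell}$ are attained at points of $[0,t^n_k]$.
\item Upper bound: on each segment, convexity of the norm gives $\|\bar x_t\|_{\mathbb H}\le\lambda_\ell\|x_{\ell-1}\|_{\mathbb H}+(1-\lambda_\ell)\|x_\ell\|_{\mathbb H}\le\max(\|x_{\ell-1}\|_{\mathbb H},\|x_\ell\|_{\mathbb H})$. Taking the supremum over $t\in[0,t^n_k]$ yields the reverse inequality.
\end{itemize}
The same argument shows that the supremum over all of $[0,T]$ also equals $\max_\ell\|x_\ell\|_{\mathbb H}$, since $\bar x\equiv x_k$ on $[t^n_k,T]$. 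This remark is what is needed for the Lipschitz statement in $\|\cdot\|_T$.

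For $(b)$, I use that formula \eqref{eq:interpolator} is linear in $x_{0:k}$: every coefficient $\frac{t^n_\ell-t}{h}$, $\frac{t-t^n_{\ell-1}}{h}$ and every indicator depends only on $t$. Hence $\iota_k(x_{0:k})-\iota_k(y_{0:k})=\iota_k(x_{0:k}-y_{0:k})$ pointwise in $t$. Applying $(a)$ to the difference gives the identity
\[
\|\iota_k(x_{0:k})-\iota_k(y_{0:k})\|_{t^n_k}=\max_{0\le\ell\le k}\|x_\ell-y_\ell\|_{\mathbb H}.
\]
Combined with the remark above that the $\|\cdot\|_T$ norm equals the $\|\cdot\|_{t^n_k}$ norm for interpolated paths, this shows that $\iota_k$ is $1$-Lipschitz from $(\mathbb{H}^{\otimes(k+1)},\max_\ell\|\cdot\|_{\mathbb H})$ into $(\mathbb{X},\|\cdot\|_T)$.

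There is no real obstacle. The only point needing care is the continuity check at the junction times, including the last one at $t^n_k$. That check must be done against the half-open intervals in \eqref{eq:interpolator}, so that no double counting occurs at $t^n_k$, where the last segment indicator vanishes and $\mathbf 1_{\{t\ge t^n_k\}}$ takes over.
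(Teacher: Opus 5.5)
Your proposal is correct and follows essentially the same route as the paper: the lower bound comes from the grid values $x_\ell$ being attained at the times $t^n_\ell$, the upper bound from the segment-wise convex-combination estimate together with the constant tail $x_k$ on $[t^n_k,T]$, and $(b)$ from the linearity of $\iota_k$ applied to $x_{0:k}-y_{0:k}$. Your explicit observation that $\|\iota_k(x_{0:k})\|_T=\|\iota_k(x_{0:k})\|_{t^n_k}$ is precisely what the paper uses implicitly to obtain the $1$-Lipschitz property in $\|\cdot\|_T$.
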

\noindent{\bf Proof:} We only need to prove the equality in $(a)$, from which the inequality and the claim $(b)$ can be straightforwardly obtained through Definition \ref{definterpolator}.

\noindent $(a)$  First, it is obvious that $\sup_{0\leq \ell\leq k}\|x_\ell\|_{\mathbb H}\leq \big\Vert \iota_k (x_{0:k}) \big\Vert_{\sup}$ by the definition of $\iota_k$. For every $\ell\in\{0, ..., k-1\}$, for every $t\in[t_\ell, t_{\ell+1}]$, we have \(\big\|\iota_{k}(x_{0:k})_{t}\big\|_{\mathbb H}\leq \|x_\ell\|_{\mathbb H}\vee \|x_{\ell+1}\|_{\mathbb H} \leq \sup_{0\leq \ell\leq k}\|x_\ell\|_{\mathbb H}\)
and for every $t\in[t_k, T]$, we have $\big\|\iota_{k}(x_{0:k})_{t}\big\|_{\mathbb H}= \|x_{k}\|_{\mathbb H}\leq \sup_{0\leq \ell\leq k}\|x_\ell\|_{\mathbb H}$. Then we can conclude $\sup_{0\leq \ell\leq k}\|x_k\|_{\mathbb H}= \big\Vert \iota_k (x_{0:k}) \big\Vert_{\sup} = \max_{\ell=0,\ldots,k} \|x_\ell\|_{\mathbb H}$. 
\hfill$\Box$\\

In \cite{JouPag22}, a \(K\)-discrete Euler scheme was firstly proposed for a non-path-dependent equation before the K-integrated scheme, along with its natural continuous time (or "genuine" ) extension. However, in our case, we will directly propose a \(K\)-integrated discrete-time Euler scheme for the path-dependent volterra Equations, as it offers greater precision compared to the naive version (see. e.g. \cite[Appendix A]{EGnabeyeu2025}).

\begin{Definition}[Interpolated $K$-integrated Euler scheme]\label{def:discretization_scheme}
	Given the Brownian motion $(W_t)_{t \in [0,T]}$ and an initial random vector $X_0$ , the \textit{discrete time interpolated $K$-integrated Euler scheme} $(\bar{X}^h_{t_k^n})_{0 \leq k \leq n}$ for the path-dependent Volterra Integral Equations equation~\eqref{eq:pthvolterra} is given by:
	
     $1.$ $\bar{X}_0^h = X_0$;\qquad $2.$ For all $k=1,\ldots,n$,
		\begin{equation}\label{eq:discretescheme}
			\bar X_{t^n_k}^h = X_0 \varphi(t^n_k) + \sum_{\ell=0}^{k-1} \Big(b_\ell(t^n_{\ell}, \bar X^h_{t_0^n:t^n_{\ell}}) \int_{t^n_{\ell}}^{t^n_{\ell+1}} K_1(t^n_k, s) \, ds + \sigma_\ell(t^n_{\ell}, \bar X^h_{t_0^n:t^n_{\ell}}) \int_{t^n_{\ell}}^{t^n_{\ell+1}} K_2(t^n_k, s) \, dW_s \Big),
		\end{equation}
	The applications $b_\ell$ and $\sigma_\ell$ are defined on $\mathbb{T} \times \mathbb{H}^{\otimes (l+1)} $ and are valued in $\mathbb{H}$ and $\tilde{\mathbb{H}}$, respectively, with
	\begin{align}\label{defbmsigmal}
		b_\ell(t, x_{0:\ell}) := b\left(t, \iota_\ell(x_{0:\ell})\right) \;\text{and}\;\;
		\sigma_\ell(t, x_{0:\ell}) := \sigma\left(t, \iota_\ell(x_{0:\ell})\right).
	\end{align}
	Additionally setting \( \underline s = \underline s_n=  t^n_k := \frac{kT}{n}, \quad [\underline s] := k \quad \hbox{if} \quad s\!\in [t^n_k, t^n_{k+1})\), the process $(\bar X^{h}_{t^n_k})_{k \in 0 \dots n}$ defined by \eqref{eq:discretescheme} satisfies 
	the natural continuous time extension $(\bar{X}^h_t)_{t \in [0,T]}$ (entitled the \textit{genuine (or continuous time) interpolated $K$-integrated Euler scheme}) defined by setting, for all $t \in [t_k^n, t_{k+1}^n)$,
	\begin{equation}\label{eq:def_continuous_2}
		\bar X^{h}_t = \bar X^{h}_0\varphi(t) + \int_0^t K_1(t,s) b \Big(\underline s, \iota_{[\underline s]}\big(\bar X^{h}_{t_0:t_{[\underline s]}} \big) \Big)\, ds + \int_0^t K_2(t,s) \sigma \Big(\underline s, \iota_{[\underline s]}\big(\bar X^{h}_{t_0:t_{[\underline s]}} \big) \Big)\, d W_s.
	\end{equation}
	{which also reads ``in extension'' when $t\!\in [t^n_k,t^n_{k+1})$,}
	%\textcolor{blue}{
		\begin{align}
			\nonumber  \bar X_{t}^h &=\bar X^{h}_0\varphi(t) + b_k(t^n_{k}, \bar X^h_{t_0^n:t^n_{k}})\int_{t^n_{k}}^{t}  K_1(t,s)ds \,  +\sigma_k(t^n_{k}, \bar X^h_{t_0^n:t^n_{k}})\int_{t^n_{k}}^{t}   K_2(t,s)dW_s \\
			\label{eq:Eulergen2bis}   & +  \sum_{\ell=0}^{k-1} \Big(b_\ell(t^n_{\ell}, \bar X^h_{t_0^n:t^n_{\ell}}) \int_{t^n_{\ell}}^{t^n_{\ell+1}} K_1(t, s) \, ds + \sigma_\ell(t^n_{\ell}, \bar X^h_{t_0^n:t^n_{\ell}}) \int_{t^n_{\ell}}^{t^n_{\ell+1}} K_2(t, s) \, dW_s \Big),\;k=0,\ldots,n.
		\end{align}
	\end{Definition}
	\noindent {\bf Remark:} In the above terminology, the term \textit{interpolated} refers to the fact that the drift and diffusion functions in the scheme are based on the piecewise interpolation defined in equation~\eqref{defbmsigmal}. Moreover,
	\begin{enumerate}
		\item When a closed or a semi-closed form is available for the vectors $\displaystyle\Big[\int_{t^n_{\ell-1}}^{t^n_{\ell}}K_1(t^n_k,s)ds\Big]_{1\le \ell\le k}$, $k=1,\ldots,n$ and the covariance matrices $\displaystyle \Big [\int_{t^n_{\ell-1}}^{t^n_{\ell}}K_2(t^n_k,s)K_2(t^n_{k'},s)ds \Big]_{\ell\le k, k'\le n}$, $\ell=1,\ldots,n$, then the above \textit{discrete time interpolated $K$-integrated Euler scheme} becomes  simulable (see practitioner's corner further on).
			
		\item The functions $b_\ell$ and $\sigma_\ell$ defined in \ref{defbmsigmal} process discrete inputs, which often simplifies the computations. 
		
		%For example, if \(b\left(t, (X^t_s)_{s \in [0,T]}\right) := \int_0^t \Phi(s,X_s) \, ds\)
	%	\begin{equation}\label{eq:b_integral}
	%		b\left(t, (X_s)_{s \in [0,T]}\right) := \int_0^t \Phi(s,X_s) \, ds
	%	\end{equation}
	%	with a bounded continuous function $\Phi$, then, by the definition of $b_\ell$,
	%	\begin{equation}\label{eq:NumIntegral}
	%		b_\ell(t_\ell^n, \bar{X}^h_{t_0^n:t_\ell^n}) = \frac{h}{2}\left(\Phi(t_0^n,\bar{X}^h_{t_0^n}) + \Phi(t_\ell^n,\bar{X}^h_{t_\ell^n})\right) + h \sum_{m=1}^{\ell-1} \Phi(t_m^n ,\bar{X}^h_{t_m^n}).
	%	\end{equation}
	%	\noindent Clearly, numerical computation of an integral, as in the definition of \(b\), is more computationally intensive than handling sums, as in the above equation.
		
		\item Due to the lack of Markovianity, $\bar X_{t^n_k}^h$ is generally not a function of $(\bar X^h_{t^n_{k-1}}, W_{t^n_k} - W_{t^n_{k-1}})$. However, it can be computed uniquely from $(\bar X_0^h, \dots, \bar X^h_{t^n_{k-1}})$ and from the Gaussian vector $\left(\int_{t_{\ell-1}}^{t_{\ell}} K_2(t^n_k,s) dW_s \right)_{\ell=1,\dots,k}$ for equation~\eqref{eq:discretescheme}, ensuring that the Euler scheme is well-defined by induction. The complexity for computing $(\bar X_{t^n_k})_{k=0, \dots, n}$ is ${\mathcal O}(n^2)$, and for the discrete time interpolated $K$-integrated scheme, independent Gaussian vectors $ \left(\int_{t_{\ell}}^{t_{\ell+1}} K_2(t^n_k,s) dW_s\right)_{\ell=1, \dots, n}$ must be generated.
	\end{enumerate}
	\medskip
	\noindent {\bf Practitioner's corner:}
	{\em Simulation of the semi-integrated scheme for the Path-dependent stochastic Volterra integral Equations~\eqref{eq:discretescheme}}.
	The exact simulation of the discretization or Euler scheme~\eqref{eq:discretescheme} involves computing the weight matrix \(\left( \int_{t^n_{\ell}}^{t^n_{\ell+1}} K_1(t^n_k, s) \, ds \right)_{1 \leq \ell \leq k \leq n},\)
	and simulating the independent random vectors \(G^{n,\ell} = \left( \int_{t^n_{\ell}}^{t^n_{\ell+1}} K_2(t^n_k, s) \, dW_s \right)_{\ell \leq k \leq n}, \quad \ell = 1, \dots, n.\) Denoting by $\Lambda^n$ the $n\times n$ lower triangular matrix involving the deterministic integrals, namely 
	$$ \Lambda^n=(\Lambda^n_{k\ell})_{k,\ell=1:n}= \left(
	\int_{t_{\ell-1}}^{t_\ell} K_1(t_k, s)ds\mbox{\bf 1}_{\{1\le \ell\le k\le n\}}
	\right)_{k,\ell=1:n}.
	$$
	The deterministic terms can be easily computed using high performance  numerical integration methods, e.g. the quadrature function \(\textit{quad}\) of the library \(\textit{scipy}\). The primary challenge lies in the simulation of the random terms \( \int_{t_{\ell-1}}^{t_{\ell}} K_2(t_k, s) \, dW_s \), which are represented by the \((n+1) \times n\) matrix:
	\begin{equation}\label{eq:GaussianVect}
	G^n = (G^n_{k\ell})_{k=1:n+1, \ell=1:n} = \left( \begin{array}{l}
		\int_{t_{\ell-1}}^{t_\ell} K_2(t_k, s) \, dW_s \, \mathbf{1}_{\{ 1 \leq \ell \leq k \leq n \}}	\\
		\qquad \Delta W_{t_\ell}\mathbf{1}_{\{ k = n+1 \}} %\, k = n+1, \, \ell = 1:n
	\end{array} \right)_{k=1:n+1, \ell=1:n} ,
    \end{equation}
	where the inclusion of the last row is necessary for the consistent joint simulation of the Brownian increments, typically, in the context of volatility model dynamics, both the asset's price process and its volatility can be driven by the same Brownian motion. For instance, such is the case in the quadratic rough volatility model introduced in~\cite{GaJuRo2020}. 
	
		Then the following relation holds: $\overline{X}_0^h = X_0$ and \(\big( \overline X^h_{t_{k}} \big)_{k=1:n}\) is such that for every $k=1,\dots,n$
		\begin{equation}\label{eq:discrete-interpolated-scheme}
		\overline X^h_{t_{k}}= X_0 \varphi(t^n_k)\mbox{\bf 1}+\Lambda^n_{k,\cdot}\odot\big( b_{\ell-1}(t^n_{\ell-1}, \overline{X}^h_{t_0^n:t^n_{\ell-1}})\big)_{\ell=1:n} +G^n_{k,\cdot}\odot\big( \sigma_{\ell-1}(t^n_{\ell-1}, \overline{X}^h_{t_0^n:t^n_{\ell-1}})\big)_{\ell=1:n}
		\end{equation}
		 where $\odot$ denote the Hadamard (pointwise or component-wise) product, 
		and for every $\ell= 1,\ldots,n$, the applications $b_\ell$ and $\sigma_\ell$ are defined on $\mathbb{T} \times \mathbb{H}^{\otimes (\ell+1)} $ by~\eqref{defbmsigmal} with valued in $\mathbb{H}$ and $\tilde{\mathbb{H}}$, respectively.
		
		The reader is referred to Appendix~\ref{app:A} for details on the simulation of the Gaussian stochastic integral terms \(G^n\) appearing in the discretization or the interpolated semi-integrated scheme~\eqref{eq:discrete-interpolated-scheme}.
		{\begin{Proposition}[Blagove$\check{\rm \bf s}\check{\rm \bf c}$enkii-Freidlin's representation formula for  the interpolated $K-$ integrated Euler scheme]\label{lem:K-intF} 
			%	 Assume {that  $(\widehat {\cal K}^{cont}_{\widehat \theta})$} holds for some $\widehat\theta\in (0,1]$.
				Assume either $(\widehat {\cal K}^{cont}_{\widehat \theta})$ or $({\cal K}^{int}_{\beta})$ and $({\cal K}^{cont}_{\theta})$ are in force. Let $n\ge 1$. There exists a bi-measurable functional $\bar F_n : \mathbb{H}\times {\cal C}_0([0,T], \bar{\mathbb{H}})\to  \mathbb{X}$ such that, for any stochastic basis $(\Omega, {\cal A}, \P, (\cF_t)_{t\in [0,T]})$, any $\bar{\mathbb{H}}$-valued $(\cF_t)_t$-Brownian motion $W$ and any $\cF_0$-measurable $\mathbb{H}$-valued random function $X_0\!\in L^0(\P)$,  the interpolated $K$-integrated Euler scheme $\bar X$ with step $\frac Tn$ starting from $X_0$ defined by~\eqref{eq:Eulergen2bis} writes
			\[
			\bar X = \bar F_n (X_0,W).
			\]
		\end{Proposition}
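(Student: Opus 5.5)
The proof is an induction on the time grid. At each step the discrete scheme is a measurable function of $X_0$ and of finitely many Wiener integrals $\int_{t^n_\ell}^{t^n_{\ell+1}}K_2(t,s)\,dW_s$. The one real difficulty is to realise these integrals as bi-measurable functionals of the path $w$ that do not depend on the stochastic basis.

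\textbf{Step 1 (deterministic Wiener integrals).} The goal is, for fixed $0\le a<b\le T$ and $t\in[a,T]$, a Borel functional $I_{a,b}(t,\cdot):{\cal C}_0([0,T],\bar{\mathbb H})\to\bar{\mathbb H}$ such that $I_{a,b}(t,W)=\int_a^{b\wedge t}K_2(t,s)\,dW_s$ a.s. for any $(\cF_t)$-Brownian motion $W$ on any basis. Since $K_2(t,\cdot)\in L^2([a,b\wedge t])$ by $({\cal K}^{int}_\beta)$ or $(\widehat{\cal K}^{cont}_{\widehat\theta})$, choose step functions $g_m=\sum_j c^m_j\mathbf 1_{(s^m_j,s^m_{j+1}]}$ with $\|g_m-K_2(t,\cdot)\|_{L^2}\le 2^{-m}$. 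Then set $I_{a,b}(t,w):=\limsup_m\sum_j c^m_j\,(w_{s^m_{j+1}}-w_{s^m_j})$, taken componentwise and set to $0$ where the limit does not exist. Each approximant is continuous in $w$, so $I_{a,b}(t,\cdot)$ is Borel. By the Itô isometry and Borel--Cantelli, the approximants converge a.s. to the Itô integral under any Brownian law. The equality therefore holds a.s., and the exceptional set is only $\mathbb Q_W$-null, whatever the basis.

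To get a functional of the pair $(t,w)$, use that $t\mapsto K_2(t,\cdot)$ is continuous into $L^2$ by $({\cal K}^{cont}_\theta)$. The approximations $g_m$ can then be chosen piecewise in $t$ on dyadic grids, which yields joint measurability.

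\textbf{Step 2 (induction on the grid).} Define $\bar F^{(0)}(x,w):=x$ and, for $k=1,\dots,n$,
\[
\bar F^{(k)}(x,w):= x\varphi(t^n_k)+\sum_{\ell=0}^{k-1}\Big(b_\ell\big(t^n_\ell,\bar F^{(0:\ell)}(x,w)\big)\int_{t^n_\ell}^{t^n_{\ell+1}}K_1(t^n_k,s)\,ds+\sigma_\ell\big(t^n_\ell,\bar F^{(0:\ell)}(x,w)\big)\,I_{t^n_\ell,t^n_{\ell+1}}(t^n_k,w)\Big).
\]
The map $\iota_\ell$ is $1$-Lipschitz by Lemma~\ref{interpolatorprop}, and $b(t,\cdot)$, $\sigma(t,\cdot)$ are Lipschitz on $\mathbb X$ (Lemma~\ref{lm:Uniformgrowth}). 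Hence $b_\ell(t^n_\ell,\cdot)$ and $\sigma_\ell(t^n_\ell,\cdot)$ are continuous on $\mathbb H^{\otimes(\ell+1)}$. Composing with the Borel maps already built shows, by induction, that each $\bar F^{(k)}$ is Borel on $\mathbb H\times{\cal C}_0$.

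For the continuous-time extension~\eqref{eq:Eulergen2bis}, on $t\in[t^n_k,t^n_{k+1})$ set
\[
\bar F_n(x,w)_t:=x\varphi(t)+\sum_{\ell\le k}\Big(b_\ell\,\textstyle\int_{t^n_\ell}^{t^n_{\ell+1}\wedge t}K_1(t,s)\,ds+\sigma_\ell\,I_{t^n_\ell,t^n_{\ell+1}}(t,w)\Big),
\]
with $b_\ell,\sigma_\ell$ evaluated at $(t^n_\ell,\bar F^{(0:\ell)}(x,w))$. This map is jointly measurable in $(t,x,w)$. If needed, modify it on a null set to take values in $\mathbb X$, using the a.s. continuity of the scheme; that continuity follows from Lemma~\ref{lem:bounds} together with Kolmogorov's criterion applied to the Gaussian integrals. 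This gives a measurable map into $\mathbb X$ with its Borel $\sigma$-field, since that $\sigma$-field is generated by the coordinate maps.

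\textbf{Step 3 (identification).} Given any basis, $W$ and an $\cF_0$-measurable $X_0$, the Itô integrals satisfy $\int_a^b K_2(t,s)\sigma\,dW_s=\sigma\int_a^bK_2(t,s)\,dW_s$ for $\cF_a$-measurable $\sigma$. Because $\bar X^h_{t^n_{0:\ell}}$ is $\cF_{t^n_\ell}$-measurable, induction on $k$ together with Step~1 shows that $\bar X^h_{t^n_k}=\bar F^{(k)}(X_0,W)$ a.s. Plugging this into~\eqref{eq:Eulergen2bis} then gives $\bar X=\bar F_n(X_0,W)$, with no integrability required on $X_0$.

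The main obstacle is Step~1. The stochastic integral must be replaced by a basis-independent Borel functional of $w$, and it must be jointly measurable in $t$. I would first try the dyadic step-function and Borel--Cantelli construction above, with the $t$-dependence controlled by $({\cal K}^{cont}_\theta)$.
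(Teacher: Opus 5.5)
Your proposal is correct and follows the paper's proof. Both arguments represent each Gaussian integral $\int_{t^n_\ell}^{t^n_{\ell+1}\wedge t}K_2(t,s)\,dW_s$ as a basis-independent Borel functional of $W$, continuous in $t$ by Lemma~\ref{lem:bounds} and Kolmogorov's criterion, and then induct over the grid using \eqref{eq:Eulergen2bis}. The one difference is how that functional is obtained: you build it explicitly from step-function approximations and Borel--Cantelli, while the paper uses $\sigma(W)$-measurability on the canonical Wiener space and transfers it to an arbitrary basis because the joint law of $(W,\int K_2\,dW)$ does not depend on the basis; your joint-in-$t$ measurability step is also unnecessary, since measurability for each fixed $t$ plus continuity of paths already gives a Borel map into $\mathbb X$.
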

	}
	In the sequel, we present results on the approximation scheme introduced earlier. Specifically, in Section~\ref{subsec:prop_x_tilde} we derive a key preliminary result, Proposition~\ref{propXtilde}, and then proceed to the main result, Theorem~\ref{thm:Eulercvgce2}, along with its associated Corollary~\ref{corol:Eulercvgce2} in Section~\ref{subsec:Cvgce_Euler}. 
	\subsection{Some Properties of the  continuous extension process of the interpolated K-integrated Euler scheme and $L^p$-convergence
		at fixed times.}\label{subsec:prop_x_tilde}
	Here, we provide some properties of the  continuous extension process $(\bar X^h_t)_{t \ge 0}$ defined by \eqref{eq:def_continuous_2}.
	
		\begin{assumption}\label{assump:VoltEulerScheme}
		We consider the following:
		\begin{enumerate}
			\item[(i)] Assume that 
			$X_0\in L^{p}(\mathbb{P})$ and the coefficient functions $b, \sigma$ are continuous in $t$, uniformly Lipschitz continuous in $x$ in the following sense there exists $C_T>0$ such that \(\forall \, t \in \mathbb{T}, \; \forall \, x, y \in \mathbb{X},\):  
			\begin{equation}
				\|b(t,x)-b(t,y)\|_{\mathbb{H}} +  \|\sigma(t,x)-\sigma(t,y)\|_{\tilde{\mathbb{H}}}  \leq  C_{b, \sigma, T} \left( \int_{0}^t \| x(s)-y(s)\|_{\mathbb{H}}^p \mu (ds) \right)^{\frac1p}.
				\label{eq:uniformLipscht2}
			\end{equation}
			\item[(ii)] The coefficient functions $b, \sigma$ are $\gamma$-H\"older in $t$ for some $0< \gamma\leq 1$, uniformly in $x$ in the following sense : there exists $c_T>0$ such that \(\forall \, t, s \in \mathbb{T}, \; \forall \, x \in \mathbb{X}, \) 
			\begin{equation}
				\|  b\left(t, x\right) - b\left(s, x\right) \|_{\mathbb{H}} +
				\|\sigma\left(t, x\right) - \sigma\left(s, x\right)\|_{\tilde{\mathbb{H}} }  
				\leq c_T \left(1 + \Big( \int_{0}^t \| x(s)\|_{\mathbb{H}}^p\, \mu (ds) \Big)^{\frac1p}\right) |t - s|^{\gamma}.
				\label{eq:coefficient_hoelder2}
			\end{equation}
			\item[(iii)] Assume furthermore that the kernels $K_i$, $i=1,2$,  satisfy  the continuity condition $({\cal K}^{cont}_{\theta} )$ (see~\eqref{eq:contK}) for some $\theta\in (0,1]$  and either the integrability condition~$({\cal K}^{int}_{\beta})$  (see~\eqref{eq:KisL^2}) for some $\beta>1$ or {that  $(\widehat {\cal K}^{cont}_{\widehat \theta})$} holds for some $\widehat\theta\in (0,1]$. 
		\end{enumerate}
	\end{assumption}
	Equivalently, Assumptions~\emph{(i)} and~\emph{(ii)} can be compactly reformulated through a
	joint time-space H\"older-Lipschitz continuity condition.
	More precisely, we assume that the coefficients $b$ and $\sigma$ satisfy
	{\small
		\begin{align}
			& \nonumber ({\cal LH}_{\gamma})\quad  \exists\, C_{b,\sigma}< +\infty,\; \forall\, s,t\!\in \mathbb{T}, \; \forall\, x,\,y \!\in \mathbb{X}\quad  \|b(t,y)-b(s,x)\|_{\mathbb{H}} +\|\sigma(t,y)-\sigma(s,x)\|_{\tilde{\mathbb{H}} },\\
			\label{eq:HolLipbsig2} &  \le C_{b,\sigma}\left(\Big(1+\Big( \int_{0}^s \| x(u)\|_{\mathbb{H}}^p\, \mu (du) \Big)^{\frac1p}+\Big( \int_{0}^t \| y(u)\|_{\mathbb{H}}^p\, \mu (du) \Big)^{\frac1p}\Big)|t-s|^{\gamma} + \Big( \int_{0}^t \| x(u)-y(u)\|_{\mathbb{H}}^p \mu (du) \Big)^{\frac1p}\right) 
		\end{align}
	}
	for some $\gamma\in (0,1]$. Note that if there is no dependence on time, then \(\gamma=1\).
%$({\cal LH}_{\gamma})$ in~\eqref{eq:HolLipbsig2}	{assump:VoltEulerScheme_equiv} 
%	\begin{assumption}\label{assump:VoltEulerScheme_equiv} 
%		We consider the following:
		
%		\medskip
%		\noindent {(i)}
%		Assume $b$ and $\sigma$ satisfy the following time-space H\"older-Lipschitz continuity assumption
%		{\small
%			\begin{align}
%				& \nonumber ({\cal LH}_{\gamma})\quad  \exists\, C_{b,\sigma}< +\infty,\; \forall\, s,t\!\in \mathbb{T}, \; \forall\, x,\,y \!\in \mathbb{X}\quad  \|b(t,y)-b(s,x)\|_{\mathbb{H}} +\|\sigma(t,y)-\sigma(s,x)\|_{\tilde{\mathbb{H}} },\\
%				\label{eq:HolLipbsig2} &  \le C_{b,\sigma}\left(\Big(1+\big( \int_{0}^s \| x(u)\|_{\mathbb{H}}^p\, \mu (du) \big)^{\frac1p}+\big( \int_{0}^t \| y(u)\|_{\mathbb{H}}^p\, \mu (du) \big)^{\frac1p}\Big)|t-s|^{\gamma} + \Big( \int_{0}^t \| x(u)-y(u)\|_{\mathbb{H}}^p \mu (du) \Big)^{\frac1p}\right) 
%			\end{align}
%		}
%		for some $\gamma\in (0,1]$. 
		
%		\medskip
%		\noindent {(ii)} Assume furthermore that the kernels $K_i$, $i=1,2$,  satisfy  the continuity condition $({\cal K}^{cont}_{\theta} )$ (see~\eqref{eq:contK}) for some $\theta\in (0,1]$  and either the integrability condition~$({\cal K}^{int}_{\beta})$  (see~\eqref{eq:KisL^2}) for some $\beta>1$ or {that  $(\widehat {\cal K}^{cont}_{\widehat \theta})$} holds for some $\widehat\theta\in (0,1]$. 
%	\end{assumption} 
	
	\begin{Proposition}\label{propXtilde} $\forall\,n \in \mathbb{N}$ and \(h=\frac{T}{n}\), let $(\bar{X}_{t}^h)_{t\in[0,T]}$ be the genuine interpolated $K$-integrated Euler scheme defined by \eqref{eq:def_continuous_2}. Then, under Assumption~\ref{assump:VoltEulerScheme}, or equivalently,  under condition $({\cal LH}_{\gamma})$ in \eqref{eq:HolLipbsig2}, together with
		$({\cal K}^{cont}_{\theta})$ (see~\eqref{eq:contK}) and either
		$({\cal K}^{int}_{\beta})$ (see~\eqref{eq:KisL^2}) or
		$(\widehat{\cal K}^{cont}_{\widehat{\theta}})$ (see~\eqref{eq:contKtilde}),
		the following properties hold.
		\begin{enumerate}
			\item[$(a)$] {\sc Property~{\bf 1}.} {\em $L^p$-integrability and pathwise continuity of $\bar X_t^h$, $p>0$}: The genuine interpolated $K$-integrated Euler scheme satisfies $\bar X_{t}^h\!\in L^p(\P)\; \forall\, t\in\mathbb{T}$ and has a continuous modification.
			
			\item[$(b)$] {\sc Property~{\bf 2}.} {\em Moment control of $\bar X_t^h$ (for $p\ge 2$)}. % $p\ge 2$
			 For every \( p \ge 2 \), we have
			\[
			\sup_{0 \le s \le T} \Big\| \|\bar X_s^h\|_{\mathbb{H}} \Big\|_p
			\le K\,\left(1+\Big\| \left\| x_0 \right\|_{T} \Big\|_p\right),
			\]
			where \( K \ge 2, \) is a constant depending on \( p, b, \sigma, T \).
			\item [$(c)$] {\sc Property~{\bf 3}.} 
			{\em  Control of the increments $\left\| \|\bar X_t^h-\bar X_s^h\|_{\mathbb{H}} \right\|_p$  (for $p\ge 2$)}. % 
			There exists a positive real constant  $C_{b,\sigma,p,T,K_1,K_2}$ not depending on $n$ such that for every $p\ge 2$
			\[
			\forall\, s,\, t\!\in [0,T], \quad  \left\| \|\bar X_t^h-\bar X_s^h\|_{\mathbb{H}} \right\|_p\le C_{b,\sigma,p,T,K_1,K_2}\Big(1 + \big\| \|x_0 \|_{T} \big\|_{p}\Big) |t-s|^{\delta \wedge \theta \wedge \theta^*}.
			\]
			where $\theta^*$ is given by Equation~\eqref{eq:def_theta_star}.
			\item [$(d)$] {\sc Property~{\bf 4}.} {\em % $L^p$-
				Rate of convergence of $\big\|\| X_t-\bar X_t\big\|_{\mathbb{H}}\|_p$  at fixed time $t$ for $p>p_{eu}:= \frac{1}{\theta}\vee \frac{1}{\theta^*}\vee\frac{1}{\delta}$}. % 
			\[
			\forall\, t\!\in [0,T], \quad  \big\|\| X_s-\bar X_s^h\big\|_{\mathbb{H}}\|_p\le \bar C\Big(1 + \big\| \|x_0 \|_{T} \big\|_{p}\Big) \left(h^{\gamma}+h^{\theta\wedge\widehat\theta}\right) \;\text{with}\; \bar C= \bar C _{K_1,K_2, \beta, b,\sigma, p,T } 
			\]
		\end{enumerate}
	\end{Proposition}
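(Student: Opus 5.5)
We prove the four properties in order; each one feeds the next. Throughout, we write $\bar X=\bar X^h$ and set $\bar Y_s:=\iota_{[\underline s]}(\bar X_{t_0:t_{[\underline s]}})\in\mathbb{X}$ for the frozen interpolated path entering the coefficients in~\eqref{eq:def_continuous_2}.

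\textbf{Properties 1 and 2.} For Property~1 we argue by induction over the grid. On $[t^n_k,t^n_{k+1})$, representation~\eqref{eq:Eulergen2bis} shows that $\bar X_t$ is a finite sum of products of $\mathcal F_{t_k}$-measurable coefficients $b_\ell,\sigma_\ell$ with deterministic integrals and Gaussian Wiener integrals. Lemma~\ref{lm:growth} and Lemma~\ref{interpolatorprop}(a) bound these coefficients linearly by $\max_{\ell\le k}\|\bar X_{t_\ell}\|$. An induction on $k$ then gives $\bar X_t\in L^p(\P)$; since the Gaussian factors are independent of the coefficients, this holds for every $p>0$.

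Continuity is obtained later: once Property~3 holds, Kolmogorov's criterion yields a continuous modification, since $p$ can be taken large in Property~3.

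For Property~2, set $f(t):=\sup_{s\le t}\|\|\bar X_s\|\|_p$, which is non-decreasing. We estimate the drift term by Minkowski and the stochastic term by the BDG trick~\eqref{eq:bdg_trick}. Using Lemma~\ref{lm:growth} together with $\big(\int_0^s\|\bar Y_s(u)\|^p\mu(du)\big)^{1/p}\le \mu([0,T])^{1/p}\|\bar Y_s\|_T$ and $\|\|\bar Y_s\|_T\|_p\le$ (the $L^p$ norm of $\max_{\ell}\|\bar X_{t_\ell}\|$, controlled through $f$ up to a harmless max-versus-sup step), we obtain
\[
f(t)\le \big\|\|x_0\|_T\big\|_p + C\int_0^tK_1(t,s)(1+f(s))\,ds + C\Big(\int_0^tK_2(t,s)^2(1+f(s))^2ds\Big)^{1/2}.
\]
Corollary~\ref{Gronwall} then gives $f(T)\le K(1+\|\|x_0\|_T\|_p)$. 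Under $(\widehat{\cal K}^{cont}_{\widehat\theta})$ alone, we would instead use a direct fixed-step Gr\"onwall on small intervals, via uniform non-atomicity.

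\textbf{Property 3.} For $s\le t$, we apply Lemma~\ref{lem:bounds} with $H_u=b(\underline u,\bar Y_u)$ and $\widetilde H_u=\sigma(\underline u,\bar Y_u)$. Their $L^p$ moments are uniformly bounded by Property~2 and the linear growth bound. This gives the rate $(t-s)^{\theta\wedge\theta^*}$ for both integral terms. The initial term contributes $\|X_0\|_p|\varphi(t)-\varphi(s)|$, which is of order $(t-s)^{\delta}$ by Assumption~\ref{assump:kernelVolterra}(iv). All constants are independent of $n$.

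\textbf{Property 4 (main obstacle).} Subtracting the two equations, the error $X_t-\bar X_t$ is a sum of kernel integrals of $b(s,X^s)-b(\underline s,\bar Y_s)$ and $\sigma(s,X^s)-\sigma(\underline s,\bar Y_s)$. We split each integrand, using $({\cal LH}_\gamma)$, into three pieces:
\begin{itemize}
\item[(a)] a time-Hölder piece of order $h^\gamma$;
\item[(b)] a piece $\|X^s-\bar X^s\|$ measured in the $\mu$-integral norm;
\item[(c)] an interpolation error $\|\bar X^s-\bar Y_s\|$.
\end{itemize}
Piece (c) is the delicate one. On $[0,\underline s]$ the function $\bar Y_s$ is the affine interpolant of $\bar X$, and on $[\underline s,s]$ it is frozen. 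Property~3 bounds the discrepancy pointwise in $L^p$ by $C(1+\|\|x_0\|_T\|_p)h^{\delta\wedge\theta\wedge\theta^*}$. Because the Lipschitz condition uses an integral against $\mu$, a pointwise-in-time $L^p$ bound suffices after Fubini: we need only $\big(\int_0^s\E\|\cdot\|^p\mu(du)\big)^{1/p}$, not a sup inside the expectation. This is exactly where the integral form of Assumption~\ref{assump:VoltEulerScheme}(i) avoids a maximal inequality. Piece (b) is handled the same way and is bounded by $\mu([0,T])^{1/p}\sup_{u\le s}\|\|X_u-\bar X_u\|\|_p$.

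Setting $g(t):=\sup_{u\le t}\|\|X_u-\bar X_u\|\|_p$ and using the BDG trick again, we arrive at
\[
g(t)\le C\int_0^tK_1(t,s)g(s)\,ds + C\Big(\int_0^tK_2^2(t,s)g(s)^2ds\Big)^{1/2} + C(1+\|\|x_0\|_T\|_p)\big(h^\gamma+h^{\theta\wedge\widehat\theta}\big).
\]
Corollary~\ref{Gronwall} then concludes. The restriction $p>p_{eu}$ is needed so that Theorem~\ref{prop:pathkernelvolt} supplies the moment and regularity bounds on $X$ used in piece (a).
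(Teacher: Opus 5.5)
Your overall route is the paper's: grid induction for integrability, a Volterra--Gr\"onwall argument for moments, Lemma~\ref{lem:bounds} for the increments, and for Property~4 a three-way split closed by Volterra--Gr\"onwall, with Property~3 controlling the interpolation error. There is, however, a genuine gap in Property~2, where you pass through the pathwise sup-norm. You bound $b(s,\bar Y_s)$ by $\|\bar Y_s\|_T=\max_{\ell\le[\underline s]}\|\bar X_{t_\ell}\|_{\mathbb H}$ and then need $\big\|\max_{\ell}\|\bar X_{t_\ell}\|_{\mathbb H}\big\|_p$. Your $f(s)=\sup_{u\le s}\big\|\|\bar X_u\|_{\mathbb H}\big\|_p$ only dominates $\max_\ell\big\|\|\bar X_{t_\ell}\|_{\mathbb H}\big\|_p$, with the maximum outside the $L^p$-norm. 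The generic bound for the $L^p$-norm of the maximum costs a factor $(n+1)^{1/p}$, so your Gr\"onwall constant, and hence $K$, would depend on $n$. Uniformity in $n$ is exactly what Properties~3--4 and Theorem~\ref{thm:Eulercvgce2} need. A maximal inequality for $\bar X$ is only available after Property~3, through the GRR lemma, so the step you call harmless is precisely where the argument breaks. The fix is the Fubini observation you already use in Property~4. Keep the expectation inside the $\mu$-integral, $\big\|\big(\int_0^s\|\bar Y_s(u)\|^p_{\mathbb H}\mu(du)\big)^{1/p}\big\|_p=\big(\int_0^s\E\|\bar Y_s(u)\|^p_{\mathbb H}\,\mu(du)\big)^{1/p}$. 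Then use that, for each $u$, $\bar Y_s(u)$ is a convex combination of two consecutive grid values, so $\E\|\bar Y_s(u)\|^p_{\mathbb H}\le\max_{\ell\le[\underline s]}\E\|\bar X_{t_\ell}\|^p_{\mathbb H}\le f(s)^p$. This is the paper's Lemma~\ref{lem:intHolcont}, \eqref{eq:boundIntegrator}, and it is also what you need to bound $\sup_u\|\|H_u\|\|_p$ and $\sup_u\|\|\widetilde H_u\|\|_p$ in Property~3.

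Three further points. First, deriving continuity in Property~1 from Property~3 via Kolmogorov requires $\big\|\|x_0\|_T\big\|_p<\infty$ for some $p>1/(\delta\wedge\theta\wedge\theta^*)$. It therefore does not cover $X_0\in L^p(\P)$ with small $p$, which Property~1 claims. The paper argues structurally and needs no moments of $X_0$: on $[t_k,t_{k+1}]$ the scheme is a finite sum of $\mathcal F_{t_k}$-measurable coefficients times continuous deterministic functions $t\mapsto\int K_1(t,s)\,ds$ and Gaussian processes $t\mapsto\int_{t_\ell}^{t\wedge t_{\ell+1}}K_2(t,u)\,dW_u$. These Gaussian processes have continuous modifications by Lemma~\ref{lem:bounds} with $\widetilde H\equiv1$ and Kolmogorov, since all Gaussian moments are available. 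Alternatively, truncate on $\{\|X_0\|_{\mathbb H}\le k\}\in\mathcal F_0$, where the scheme coincides pathwise with the one started from the truncated $X_0$. Second, with $f$ and $g$ defined as suprema over $[0,t]$, the inequalities you feed into Corollary~\ref{Gronwall} do not follow directly, because $u\mapsto\int_0^uK_1(u,s)f(s)\,ds$ need not be monotone. Apply Lemma~\ref{Lem:BoundVoltIntegral} at each $u\le t$ first; its right-hand side, with $\chi_i$ replaced by $\chi_i^*$, is non-decreasing in $u$. Only then take the supremum, as the paper does. Third, your interpolation-error piece yields $h^{\delta\wedge\theta\wedge\theta^*}$. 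That is the exponent your argument actually proves in Property~4, as the paper's proof ultimately records and Theorem~\ref{thm:Eulercvgce2} states, not $h^{\theta\wedge\widehat\theta}$.
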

	\noindent {\bf Remark:}
		The relations ~\eqref{eq:Lpincrements2} and ~\eqref{eq:Holderpaths2} of Theorem \ref{prop:pathkernelvolt2}  can easily be derived from  {\sc Property~{\bf 3}} and {\sc Property~{\bf 4}} of Proposition \ref{propXtilde}. In fact, for $p\ge 2$, owing to {\sc Property~{\bf 3}}, there exists a constant $C > 0$ (independent of $n$) such that for all $n \ge 1$, the genuine interpolated $K$-integrated Euler scheme $\bar X^h$ with step size $\frac{T}{n}$ satisfies the H\"older regularity:
		\[
		\left\| \|\bar X_t^h-\bar X_s^h\|_{\mathbb{H}} \right\|_p\le C\Big(1 + \big\| \|x_0 \|_{T} \big\|_{p}\Big) |t-s|^{\delta \wedge \theta \wedge \theta^*}, \quad \forall\, s, t \in [0,T].
		\]
		For $p p>p_{eu}= \frac{1}{\theta}\vee \frac{1}{\theta^*}\vee\frac{1}{\delta}> 2 $, owing to {\sc Property~{\bf 4}}, the same genuine interpolated $K$-integrated Euler scheme converges to $X$ in the $p$-norm, i.e., $ \left\| \|X_t - \bar X_t^h\|_{\mathbb{H}} \right\|_p \to 0$ as $n \to \infty$, for all $t \in [0,T]$. Consequently, as $n \to 0$, for $p>p_{eu}$, we obtain:
		\[
		\left\| \|X_t- X_s\|_{\mathbb{H}} \right\|_p \le C\Big(1 + \big\| \|x_0 \|_{T} \big\|_{p}\Big) |t-s|^{\delta \wedge \theta \wedge \theta^*}, \quad \forall\, s, t \in [0,T].
		\]
		Next, let $a \in (0, \theta \wedge \theta^* \wedge \delta)$ and $p > \frac{1}{\theta \wedge \theta^* \wedge \delta-a} \vee p_{eu} $ so that $a < \theta \wedge \theta^* \wedge \delta - \frac{1}{p}$. By tracking constants in the proof of Kolmogorov's criterion (e.g., see~\cite[Chapter 2, p.26]{RevuzYor}), one concludes that~\eqref{eq:Holderpaths2} holds for such $p$.\\
	% \vspace{-.58cm}
	%  Finally, the restrictions on \(p\) in Proposition~\ref{propXtilde}, which are sufficiently large, are bypassed in the Main Theorem~\ref{thm:Eulercvgce2} below, thanks to the splitting Lemma established in Lemma~\ref{lem:gap}.
	 Finally, the sufficiently large restrictions on \(p\) in Proposition~\ref{propXtilde} are bypassed in the main Theorem~\ref{thm:Eulercvgce2} below, thanks to the splitting Lemma established in Lemma~\ref{lem:gap}.
	\subsection{The convergence rate of the genuine interpolated $K$-integrated Euler scheme}\label{subsec:Cvgce_Euler}
	
	The goal of this section is to prove a convergence result for the interpolated $K$-integrated Euler scheme. Let $T>0$ and let $p\!\in (0, +\infty)$ being fixed. 
	
	\begin{Theorem}[Convergence Rate of the genuine interpolated $K$-integrated Euler scheme]\label{thm:Eulercvgce2}  
		Let $T>0$ and let $p\!\in (0, +\infty)$ being fixed. Assume $b$ and $\sigma$ satisfy the time-space H\"older-Lipschitz continuity assumption \(({\cal LH}_{\gamma})\) in equation~\eqref{eq:HolLipbsig2}. Assume the kernels $K_i$, $i=1,2$,   satisfy %the integrability condition~$({\cal K}^{int}_{\beta})$  (see~\eqref{eq:KisL^2}) for some $\beta>1$ and  
		the continuity condition $({\cal K}^{cont}_{\theta} )$ (see~\eqref{eq:contK}) for some $\theta\in (0,1]$. Assume furthermore
		{that  $(\widehat {\cal K}^{cont}_{\widehat \theta})$} holds for some $\widehat\theta\in (0,1]$.
		Let $(X_t)_{t \in [0,T]}$ be the unique strong solution to the path-dependent SVIE~\eqref{eq:pthvolterra}, and let $(\bar{X}_t^h)_{t \in [0,T]}$ be the process defined by \eqref{eq:def_continuous_2} i.e. $\bar X^h$ denotes  the  genuine interpolated $K$-integrated  Euler scheme of~\eqref{eq:pthvolterra} with time step $h = \frac Tn$, then $\bar X^h$ has a pathwise continuous modification.
		
		There exists a real constant $C= C_{K_1,K_2,\beta, p,b,\sigma,T} \!\in (0, +\infty)$  (not depending on $n$) such that, for every $n\ge 1$,  
		\begin{equation}\label{eq:incrementXbar}
			\forall\, s,\, t\!\in [0, T], \qquad 
			%\big\|X_t - X_s\big\|_p + 
			\big\|\|\bar X^h_t -\bar X^h_s\|_{\mathbb H}\big\|_p \le C\Big(1 + \big\| \|x_0 \|_{T} \big\|_{p}\Big) |t-s|^{\delta \wedge \theta \wedge \theta^*}
		\end{equation}
		and
		%	\begin{equation}\label{eq:Xt-barX_t}
			%		\max_{k=0,\ldots,n}\big\|\|X_{t_k} -\bar X_{t_k}\|_{\mathbb H}\big\|_p \le \sup_{t\in [0,T]}\big\|\|X_t -\bar X_t\|_{\mathbb H}\big\|_p\le C\left( 1 + \big\| \|X_0 \phi\|_{T} \big\|_{p} \right) \big(\tfrac Tn\big)^{\gamma\wedge \theta \wedge  \widehat\theta}.
			%	\end{equation}
		\begin{equation}\label{eq:Xt-barX_t}
			\max_{k=0,\ldots,n}\big\|\|X_{t_k} -\bar X_{t_k}^h\|_{\mathbb H}\big\|_p \le \sup_{t\in [0,T]}\big\|\|X_t -\bar X_t^h\|_{\mathbb H}\big\|_p\le C\Big(1 + \big\| \|x_0 \|_{T} \big\|_{p}\Big) \left(\big(\tfrac Tn\big)^{\gamma}+\big(\tfrac Tn\big)^{\delta \wedge \theta \wedge \theta^* }\right).
		\end{equation}
		Moreover, there exists for every $\varepsilon\in(0,1)$  a real constant $C_{\varepsilon}= C_{\varepsilon, K_1,K_2,\beta,\gamma, p,b,\sigma,T} \!\in (0, +\infty)$ such that 
		\begin{equation}\label{eq:sup|Xt-barX_t|}
			\Big \|\max_{k=0,\ldots,n}\|X_{t_k} -\bar X_{t_k}^h\|_{\mathbb H} \Big\|_p \le \Big\|\sup_{t\in [0,T]}\|X_t -\bar X_t^h\|_{\mathbb H} \Big\|_p\le C_{\varepsilon}\Big(1 + \big\| \|x_0 \|_{T} \big\|_{p}\Big)\big(\tfrac Tn\big)^{(\delta \wedge\gamma\wedge\theta\wedge \widehat\theta) (1-\varepsilon)}.
		\end{equation}
	\end{Theorem}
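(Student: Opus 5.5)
The plan is to first prove the three bounds for $p$ large, namely $p>p_{eu}\vee 2$, using Proposition~\ref{propXtilde}, and then remove the restriction on $p$ with the splitting Lemma~\ref{lem:gap} together with the representation results Theorem~\ref{thm:FreidlinLike} and Proposition~\ref{lem:K-intF}.

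For large $p$, the increment bound~\eqref{eq:incrementXbar} is exactly {\sc Property~3}. The fixed-time bound~\eqref{eq:Xt-barX_t} is {\sc Property~4}, once we check that the proof of Property~4 holds uniformly in $t\in[0,T]$. Recall how that proof goes: write $X_t-\bar X^h_t$ as a Volterra integral of $b(s,X^s)-b(\underline s,\iota_{[\underline s]}(\bar X^h_{t_0:t_{[\underline s]}}))$ (and similarly for $\sigma$), and split the integrand into three pieces:
\begin{itemize}
\item a Lipschitz part, controlled by $X-\bar X^h$;
\item a time H\"older part, of size $h^\gamma$ by $({\cal LH}_\gamma)$;
\item an interpolation part $\bar X^{h,s}-\iota_{[\underline s]}(\cdots)$, controlled by {\sc Property~3} as $O(h^{\delta\wedge\theta\wedge\theta^*})$, since the piecewise-affine interpolant differs from the frozen path by increments over one mesh step.
\end{itemize}
Applying the BDG trick~\eqref{eq:bdg_trick} and then the Volterra Gr\"onwall Corollary~\ref{Gronwall} to $f(t)=\sup_{s\le t}\|\|X_s-\bar X^h_s\|\|_p$ gives~\eqref{eq:Xt-barX_t}.

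For the sup-norm estimate~\eqref{eq:sup|Xt-barX_t|} with large $p$, we would apply a Kolmogorov/Garsia–Rodemich–Rumsey argument to the process $Z=X-\bar X^h$. Two bounds are available:
\begin{itemize}
\item \textbf{Increments:} by~\eqref{eq:Lpincrements} and~\eqref{eq:incrementXbar}, $\|Z_t-Z_s\|_p\le C(1+\|\|x_0\|_T\|_p)|t-s|^{\alpha}$ with $\alpha=\delta\wedge\theta\wedge\theta^*$.
\item \textbf{Values:} by~\eqref{eq:Xt-barX_t}, $\sup_t\|Z_t\|_p\le C(1+\|\|x_0\|_T\|_p)h^{\alpha\wedge\gamma}$.
\end{itemize}
Interpolating the two gives $\|Z_t-Z_s\|_p\le C\min(|t-s|^\alpha,2h^{\alpha\wedge\gamma})\le C h^{(\alpha\wedge\gamma)(1-\lambda)}|t-s|^{\alpha\lambda}$ for any $\lambda\in(0,1)$. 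Kolmogorov's criterion with $\alpha\lambda p>1$ then yields $\|\sup_t\|Z_t\|\|_p\le \|\|Z_0\|\|_p+C\,h^{(\alpha\wedge\gamma)(1-\lambda)}$. Given $\varepsilon$, choose $\lambda=\varepsilon$ and $p$ large enough that $\varepsilon\alpha p>1$. The rate for smaller $p$ then follows by monotonicity of $L^p$ norms, since we may always raise $p$.

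To reach all $p>0$, use Theorem~\ref{thm:FreidlinLike} and Proposition~\ref{lem:K-intF} to write $X=F(X_0,W)$ and $\bar X^h=\bar F_n(X_0,W)$. Then apply Lemma~\ref{lem:gap} with the three functionals
\[
\Phi(x,\bar x)=h^{-(\delta\wedge\gamma\wedge\theta\wedge\widehat\theta)(1-\varepsilon)}\|x-\bar x\|_T,\qquad \Phi=\|\bar x_t-\bar x_s\|/|t-s|^{\alpha},\qquad \Phi=h^{-\cdots}\|x_t-\bar x_t\|.
\]
Each satisfies $\|\Phi\|_{\bar p}\le C(1+\|\xi\varphi\|_T)$ for deterministic $\xi$ and large $\bar p$, by the first two steps. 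The lemma then gives the bound for every $p\in(0,\bar p]$ when $X_0$ is independent of $W$. Continuity of the modification of $\bar X^h$ follows from {\sc Property~1}.

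The main obstacle is the fixed-time estimate. The interpolation error must be bounded in the $\mu$-integrated path norm of $({\cal LH}_\gamma)$, and the hypotheses mix $\widehat\theta$ with $({\cal K}^{int}_\beta)$ (Corollary~\ref{Gronwall} formally requires the latter). I would first try to run Gr\"onwall directly using $(\widehat{\cal K}^{cont}_{\widehat\theta})$ via a fixed-point/weighted-norm argument as in Proposition~\ref{prop:pathExistenceUniquenes}, bounding $\int_0^s\|\cdot\|^p\mu(du)$ by $\mu([0,T])\sup_u\|\cdot\|_p^p$.
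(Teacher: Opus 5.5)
Your proposal follows the paper's proof essentially step for step. It takes the large-$p$ bounds from Properties~3 and~4 of Proposition~\ref{propXtilde}, and gets the sup-norm estimate by feeding the $\lambda$/$(1-\lambda)$ geometric interpolation between the increment bounds and the marginal error bound into the GRR lemma (Theorem~\ref{thm:GRRtypeLemma}). It then uses the splitting Lemma~\ref{lem:gap}, via the representations of Theorem~\ref{thm:FreidlinLike} and Proposition~\ref{lem:K-intF}, to reach every $p>0$; normalising $\Phi$ by the rate is equivalent to the paper letting $C$ depend on $n$.

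Your caveat is accurate. The paper's Gr\"onwall step (through Lemma~\ref{Lem:BoundVoltIntegral}), as well as Lemma~\ref{lem:gap} and Theorem~\ref{thm:FreidlinLike}, implicitly use $({\cal K}^{int}_\beta)$, even though the statement only assumes $(\widehat{\cal K}^{cont}_{\widehat\theta})$. Closing the Gr\"onwall argument directly from $(\widehat{\cal K}^{cont}_{\widehat\theta})$, for instance through the $\mu$-integrated Lipschitz structure used in Proposition~\ref{prop:pathExistenceUniquenes}, is a legitimate way to remove that dependence.
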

	\noindent {\bf Remark} 1. If $(\widehat {\cal K}^{cont}_{\widehat \theta})$ is not satisfied, but the integrability condition~$({\cal K}^{int}_{\beta})$  (see~\eqref{eq:KisL^2}) holds for some $\beta>1$, the above theorem remains valid by replacing {\em mutatis mutandis} $\widehat \theta$ by $\frac{\beta-1}{2\beta}$ in all three inequalities.
	2. In particular, when considering a drift $b$ and volatility $\sigma$ that are either autonomous or Lipschitz in time, then $\gamma = 1$ in the above relations~\eqref{eq:Xt-barX_t}--\eqref{eq:sup|Xt-barX_t|}
		
	\medskip 
	For clarity and conciseness, the proofs of Proposition~\ref{propXtilde}, Theorem~\ref{thm:Eulercvgce2} and Corollary~\ref{corol:Eulercvgce2} are postponed to Appendices~\ref{app:B2}, where the
	main technical results are presented.
	
	\medskip 
	From Definition~\ref{def:discretization_scheme}, we introduce a\textit{ pathwise continuous piecewise affine extension} of the discrete time Euler scheme $(\bar{X}^h_{t_k})_{0 \leq k \leq n}$, denoted by $\hat{X}^h = (\hat{X}_t^h)_{t \in [0,T]}$, defined by $\hat{X}^h := \iota_n(\bar{X}^h_{t_0 : t_n})$. We then obtain the following convergence result:
	
	\begin{Corollary}\label{corol:Eulercvgce2}
		Let $p \in [1,+\infty)$ be such that $\|\|x_0\|_{\sup}\|_{p} < +\infty$ and assume that the
		assumptions of Theorem~\ref{thm:Eulercvgce2} (i.e. Assumption ~\eqref{assump:VoltEulerScheme})  are satisfied. For $n$ being an integer with $h:=\frac Tn$, suppose that 
		$\big(\bar X_{t_k^n}^h\big)_{k=0,\ldots,n}$ is the discrete time interpolated $K$-integrated Euler scheme in~\ref{def:discretization_scheme}.  Then, its pathwise continuous piecewise affine extension denoted $\hat{X}^h = (\hat{X}_t^h)_{t \in [0,T]}$, defined by $\hat{X}^h := \iota_n(\bar{X}^h_{t_0 : t_n})$ is such that:
		\[
		\Big\|\big\| X - \iota_n\big((\bar X_{t_k}^h)_{k=0,\ldots,n}\big) \big \|_{T} \Big\|_p\to 0\quad \mbox{ as }\quad n\to +\infty.
		\]
	\end{Corollary}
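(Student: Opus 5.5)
The corollary follows from Theorem~\ref{thm:Eulercvgce2} via a three-term decomposition in sup-norm. Write $\hat X^h=\iota_n(\bar X^h_{t_0:t_n})$ and, for the true solution, $\hat X := \iota_n(X_{t_0:t_n})$, its piecewise affine interpolation on the same mesh. Then
\[
\|X-\hat X^h\|_T \le \|X-\hat X\|_T + \|\hat X - \hat X^h\|_T .
\]
By Lemma~\ref{interpolatorprop}$(b)$, $\iota_n$ is $1$-Lipschitz for the sup-norm. Hence the second term equals $\max_{k=0,\ldots,n}\|X_{t_k}-\bar X^h_{t_k}\|_{\mathbb H}$.

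For the second term, take the $L^p$ norm and apply~\eqref{eq:sup|Xt-barX_t|} with, say, $\varepsilon=1/2$. This bounds it by $C(1+\|\|x_0\|_T\|_p)\,(T/n)^{(\delta\wedge\gamma\wedge\theta\wedge\widehat\theta)/2}$, which tends to $0$ because $\|\|x_0\|_{\sup}\|_p<+\infty$. The theorem holds for every $p>0$ thanks to the splitting Lemma~\ref{lem:gap}, so no restriction on $p\ge1$ is needed here.

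For the first term, the key pathwise inequality is this: on each $[t_{\ell},t_{\ell+1}]$, $\hat X_t$ is a convex combination of $X_{t_\ell}$ and $X_{t_{\ell+1}}$. Therefore
\[
\|X_t-\hat X_t\|_{\mathbb H}\le \max\big(\|X_t-X_{t_\ell}\|_{\mathbb H},\|X_t-X_{t_{\ell+1}}\|_{\mathbb H}\big)\le w(X,h),
\]
where $w(X,h):=\sup_{|t-s|\le h}\|X_t-X_s\|_{\mathbb H}$ is the modulus of continuity of $X$. (On $[t_n,T]$ the interval is empty since $t_n=T$.) Fix $a\in(0,\delta\wedge\theta\wedge\widehat\theta)$. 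Theorem~\ref{prop:pathkernelvolt2}, specifically~\eqref{eq:Holderpaths2}, which is valid for any $p>0$, gives
\[
\big\|w(X,h)\big\|_p\le h^{a}\Big\|\sup_{s\ne t}\tfrac{\|X_t-X_s\|_{\mathbb H}}{|t-s|^a}\Big\|_p\le C_{a,p,T}^{1/p}\big(1+\|\|x_0\|_T\|_p\big)h^a\to0 .
\]
Alternatively, one could use only pathwise continuity together with dominated convergence, with $w(X,h)\le 2\|X\|_T\in L^p$ by~\eqref{eq:supLpbound2}. This would not give a rate but still suffices for the stated convergence.

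Finally, Minkowski's inequality, valid since $p\ge1, combines the two bounds and yields the claim, in fact with rate $h^{a\wedge (\delta\wedge\gamma\wedge\theta\wedge\widehat\theta)(1-\varepsilon)}$. The only delicate point is the uniform-in-$k$ control of the scheme error, and that is exactly what~\eqref{eq:sup|Xt-barX_t|} provides, so no step is a genuine obstacle. One should simply check that the hypotheses of Theorem~\ref{thm:Eulercvgce2}, including $(\widehat{\cal K}^{cont}_{\widehat\theta})$ or the substitute $({\cal K}^{int}_\beta)$ from its Remark, are covered by Assumption~\ref{assump:VoltEulerScheme}.
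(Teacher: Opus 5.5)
Your proof is correct and follows the paper's route. It uses the same triangle inequality through $\iota_n(X_{t_0:t_n})$, the isometry of Lemma~\ref{interpolatorprop}(b) together with the $L^p$ bound on the maximal grid error from Theorem~\ref{thm:Eulercvgce2}, and the modulus-of-continuity bound $\|X-\iota_n(X_{t_0:t_n})\|_T\le w(X,h)\le 2\|X\|_T$ for the interpolation term.

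The main difference is that you quantify $w(X,h)$ via the H\"older-seminorm moment~\eqref{eq:Holderpaths2}, which gives an explicit rate. The paper instead concludes by dominated convergence, which is your stated alternative. Your appeal to \eqref{eq:supLpbound2} rather than the paper's \eqref{eq:supLpbound} also avoids the restriction $p>p_{eu}$.
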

		  	      \medskip
	\noindent {\bf Proof of Corollary \ref{corol:Eulercvgce2}:}
	We write,  
	 {\small
	\begin{align*}
		\Big\|\big\| X - \iota_n\big((\bar X_{t_k}^h)_{k=0,\ldots,n}\big) \big \|_{T} \Big\|_p&\leq \Big\|  \big \| X-\iota_n\big( (X_{t^n_k})_{k=0,\ldots, n}\big)\big\|_{T}\Big\|_p + \left\Vert \;\| \iota_n\big((\bar X_{t^n_k}^h)_{k=0,\ldots,n}\big) -\iota_n\big((X_{t^n_k})_{k=0,\ldots,n}\big) \|_{T}\;\right\Vert_p
	\end{align*}
	}
	\noindent As a consequence of Lemma \ref{interpolatorprop} (b) and (c), we have:
	\[
	\big\| \iota_n\big((\bar X_{t^n_k}^h)_{k=0,\ldots,n}\big) -\iota_n\big((X_{t^n_k})_{k=0,\ldots,n}\big)\big \|_{T} = \max_{k=0,\ldots,n} \|\bar X_{t^n_k}^h-X_{t^n_k}\|_{\mathbb{H}} 
	\]  
	Let $p>1$, calling upon Theorem~\ref{thm:Eulercvgce2}, we hence get that 
	\begin{align*}
		\Big\|\big\| \iota_n\big((\bar X_{t^n_k}^h)_{k=0,\ldots,n}\big) -\iota_n\big((X_{t^n_k})_{k=0,\ldots,n}\big)\big \|_{T} \Big\|_p&=\big \| \max_{k=0,\ldots,n} \|\bar X_{t^n_k}^h-X_{t^n_k}\|_{\mathbb{H}}  \big\|_p \\
		&\le C\Big(1 + \big\| \|x_0 \|_{T} \big\|_{p}\Big)\left(\big(\tfrac Tn\big)^{\gamma}+\big(\tfrac Tn\big)^{\delta \wedge \widehat \theta }\right) \to 0\; \mbox{ as }\; n\to +\infty.
	\end{align*}
	Finally, we know that the uniform continuity modulus $w(\Xi, \delta)=\sup_{0\le s\le t\le (s+\delta)\wedge T}|\Xi(t)-\Xi(s)|$ of a function $\Xi\!\in {\cal C}([0,T], \mathbb{H})$ converges to $0$ as $\delta\to 0$.  
	$ \big \| X-\iota_n\big( (X_{t^n_k})_{k=0,\ldots, n}\big)\big\|_{T} \le  w\big(X, \tfrac Tn\big)\le 2\big \| X\big\|_{T}
	$ and according to equation~\eqref{eq:supLpbound} of Theorem~\ref{prop:pathkernelvolt}, $\big\| \|X\|_{T}\big\|_p <+\infty$, it follows by dominated convergence that  
	$
	\Big\|  \big \| X-\iota_n\big( (X_{t^n_k})_{k=0,\ldots, n}\big)\big\|_{T}\Big\|_p\to 0\quad \mbox{ as }\quad n\to +\infty.
	$ \\
	Combining above two yields  the convergence,
	$
	\quad \Big\|\big\| X-  \iota_n\big((\bar X_{t^n_k}^h)_{k=0,\ldots,n}\big) \big \|_{T} \Big\|_p\to 0\quad \mbox{ as }\quad n\to +\infty.
	$ \hfill$\Box$
	
	\section{Applications and Numerical Simulations with Fractional Kernels}
	In dimension 1, we focus on  the special case of a {\em scaled} convolutive path-dependent stochastic Volterra equation associated to a convolutive kernel $K:\R_+\to \R_+$ satisfying~\eqref{eq:contKtilde} and ~\eqref{eq:contK} 
	\begin{equation}\label{eq:Volterrameanrevert}
		X_t = X_0 +\int_0^t K_\alpha(t-s)b(s,X^s_\cdot)ds + \int_0^t K_\alpha(t-s)\sigma(s,X^{s}_\cdot)dW_s, \quad X_0\perp\!\!\!\perp W.
	\end{equation}
	where $K_\alpha$ is a fractional kernel of Example~\ref{Ex:Kernels}, that is $K_\alpha(t)=K_{1,\alpha,0}(t) = \frac{t^{\alpha-1}}{\Gamma(\alpha)}, t \in [0,T], \alpha>0$ and the drift $b(t,x) $ and diffusion coefficient $\sigma(t,x) $ are Lipschitz continuous in $x\in \mathbb{X}$,  uniformly in $t\!\in \mathbb{T}$. 
	Note that, if \(\alpha\in(0,1)\), then the kernel \(K_\alpha\) is singular.
	%	One can check (see, e.g. \cite[Example~2.1]{RiTaYa2020}) that 
	Following Example~\ref{Ex:Kernels}, the assumptions of the above theorems (assumption~\ref{assump:kernelVolterra} or conditions $(\widehat {\cal K}^{cont}_{\widehat \theta})$, $\big({\cal K }^{int}_{\beta}\big)$ and $({\cal K}^{cont}_{\theta})$)  are satisfied with $\beta \in \bigl(1,\frac{1}{2(\alpha-1)^{-}}\bigr)$ and $\theta = \widehat \theta = \min\bigl( \alpha-\frac12,\; 1\bigr).$ Hence, if $\alpha \in \bigl(\tfrac12, \tfrac32\bigr)$, one has \(\frac{\beta-1}{2\beta} = \tfrac12 - \tfrac{1}{2\beta} \leq \tfrac12 - (\alpha-1)^{-} \)  and $\theta = \widehat \theta = \alpha-\frac12.$
		As a consequence, considering $\big({\cal K }^{int}_{\beta}\big)$ and $({\cal K}^{cont}_{\theta})$, the final rate of convergence is $\mathcal{O}\!\left(\left(\tfrac{T}{n}\right)^{\gamma \wedge (\alpha - \tfrac12)}\right)$ at a fixed time $t$, and $\mathcal{O}\!\left(\left(\tfrac{T}{n}\right)^{(\gamma \wedge (\alpha - \tfrac12))(1-\varepsilon)}\right)$ for every $\varepsilon \in (0,1)$,
		uniformly over the interval $[0,T]$.
			Therefore, the Euler scheme $\bar X^h$ converges to $X$ in $L^p(\mathbb{P})$ with a strong rate of order \(\gamma \wedge (\alpha - \tfrac12),\)
		where $\gamma \in (0,1)$ depends on the time regularity of the drift and volatility coefficients.
		
		\medskip
		\noindent {\bf Remark: } In the settings of Example~\ref{eq:pathSDE}, if the Kernel \(K\) is the fractional kernel \(K_\alpha\) of Equation~\ref{eq:Volterrameanrevert} and the path-dependent drift and diffusion coefficients $b$ and $\sigma$ are of the form\eqref{eq:b_sigma_general}, then
	%	If the path-dependent drift and diffusion coefficients $b$ and $\sigma$ are of the form
	%	\begin{equation}\label{eq:b_sigma_general}
	%		\sigma(t,x) = G\!\big(t, (\widetilde K \star x)_t\big)
	%		= G\!\Big(t, \int_0^t \widetilde K(t-s)\,x(s)\,ds \Big),
	%		\qquad t \in [0,T], \ x \in \mathbb{X},
	%	\end{equation}
	%	where $G:[0,T]\times \mathbb{R} \to \mathbb{R}$ is Borel measurable, Lipschitz in $x$ uniformly in $t$, and uniformly bounded at $x=0$, and where $\widetilde K$ is the $\rho$-pseudo-inverse co-kernel of $K$ (see \cite[Definition 3.1]{Bonesini2023Volterra}), then \eqref{eq:b_sigma_general} corresponds to a particular case of the framework in \cite{Bonesini2023Volterra}.  
	by \cite[Theorem 5.3]{Bonesini2023Volterra}, the genuine interpolated \(K-\)integrated Euler scheme $\bar X^h$ converges to $X$ in $L^p(\mathbb{P})$ with strong rate of order $\gamma \wedge \tfrac12$.
		\subsection{Numerical Illustration} 
		The discrete time interpolated $K$-integrated Euler scheme $(\bar{X}_{t_k^n})_{0 \leq k \leq n}$ in equation~\eqref{eq:discretescheme} for the path-dependent Volterra Integral Equations equation~\eqref{eq:Volterrameanrevert} is given by:
		{\small
		\begin{align*}
			\bar X_{t_{k}^n} &= X_0 + \frac{1}{\Gamma(\alpha)}\sum_{\ell=1}^{k}b_{\ell-1}(t_{\ell-1}^n, \bar{X}_{t_0^n:t_{\ell-1}^n}) \int_{t^n_{\ell-1}}^{t_\ell^n} (t^n_k-s)^{\alpha-1}ds + \frac{1}{\Gamma(\alpha)}\sum_{\ell=1}^{k} \sigma_{\ell-1}(t_{\ell-1}^n, \bar{X}_{t_0^n:t_{\ell-1}^n})\int_{t^n_{\ell-1}}^{t^n_\ell} (t^n_k-s)^{\alpha-1}dW_s \\ &=
			  X_0 +\frac{\big(\frac{T}{n}\big)^{\alpha}}{\Gamma(\alpha+1)}\sum_{\ell=1}^{k} b_{\ell-1}(t_{\ell-1}^n, \bar{X}_{t_0^n:t_{\ell-1}^n}) \big[(k-l+1)^{\alpha}-(k-l)^{\alpha}\big] +\sum_{\ell=1}^{k}\sigma_{\ell-1}(t_{\ell-1}^n, \bar{X}_{t_0^n:t_{\ell-1}^n}) G^n_{k\ell}.
		\end{align*}
		}
		Here, $b_\ell$ and $\sigma_\ell$ are defined in~\eqref{defbmsigmal}, 
		$G^n$ is defined in~\eqref{eq:GaussianVect} and can be efficiently simulated using
		Proposition~\ref{prop:EfficientSimul}.
		One derives from~\eqref{eq:VCV1} that for every \( \ell=1,\ldots,n,\) 
		\begin{equation}\label{eq:VCV1}
			\Sigma^{n,\ell} = \left[ \frac{T}{n}\int_0^{1} K_\alpha(\frac{T}{n}(i-u)) K_\alpha(\frac{T}{n}(j-u)) \, du \right]_{1 \leq i,j \leq n-\ell+1}=  \big(\tfrac Tn\big)^{2\alpha-1}\frac{1}{\Gamma(\alpha)^2}\big(\Sigma_{ij}\big)_{i,j=1:n-\ell+1}.
		\end{equation}
	where $\Sigma$ is defined by 
	\[
	\Sigma_{ij} = \int_0^1 (i-1+v)^{\alpha-1}(j-1+v)^{\alpha-1}dv,\quad i\neq j\ge 1, \quad  \Sigma_{ii}= \frac{1}{2\alpha-1}\big(i^{2\alpha-1}-(i-1)^{2\alpha-1}\big), \, i\ge 1,
	\]
	\begin{Example}
		We consider path-dependent drift and diffusion coefficients of the form
		\begin{equation}\label{eq:sigma_general}
		b(t,x)= \mu_0-\lambda x(t) + \int_0^t x(t - u)\, \nu(du), \quad 	\sigma(t,x)
			=
			G\!\left(
			\int_0^t \Phi\big(x(u)\big)\,\mu(du)
			\right),
			\quad t \in [0,T], \ x \in \mathbb{X},
		\end{equation}
		in the general delay case with $\mu_0, \lambda \in \R$ 
		where \(\mu, \nu \in \mathcal{M}\) and the function $G:\mathbb{R}\to\mathbb{R}$ is assumed to be bounded Lipschitz and
		$\Phi:\mathbb{R}\to\mathbb{R}$ Lipschitz continuous so that, the mapping $x\mapsto \sigma(t,x)$ is globally
		Lipschitz on path space, and Lipschitz continuous in time  with $\gamma=1$
		in the sense of~\eqref{eq:HolLipbsig2}.
	\end{Example}
	Then, Equation~\eqref{eq:Volterrameanrevert} has a unique solution \( (X_t)_{t\geq0} \) adapted to \( \mathcal{F}^{X_0, W}_t \), starting from \( X_0 \in L^p(\mathbb{P}), p>0 \). This follows by applying the existence Theorem~\ref{Thm:pathExistenceUniquenes} 
	to each time interval \( [0,T] \), \( T \in \mathbb{N} \), and gluing the solutions together, utilizing the uniform linear growth of the drift and \( \sigma \) in time.\\

	\noindent Moreover, by the definition of $b_\ell$ and $\sigma_\ell$ in~\eqref{defbmsigmal}, we have
	\(b_0(t_0^n, \bar{X}^h_{t_0^n:t_0^n}) = \mu_0-\lambda \bar{X}^h_{t^n_0}\), \(\sigma_0(t_0^n, \bar{X}^h_{t_0^n:t_0^n}) = G(0)\) and for every \( \ell\geq1\), a trapezoidal rule yields the approximation:
	{\small
		\begin{equation}\label{eq:NumIntegral}
		%	b_\ell(t_\ell^n, \bar{X}^h_{t_0^n:t_\ell^n}) \approx -\lambda \bar{X}^h_{t^n_\ell} + \frac{h}{2}\left(\Phi(\bar{X}^h_{t_0^n}) + \Phi(\bar{X}^h_{t_\ell^n})\right) + h \mathbf{1}_{\ell \geq2}\sum_{m=1}^{\ell-1} \Phi(\bar{X}^h_{t_m^n}), \;
		\sigma_\ell(t_\ell^n, \bar{X}^h_{t_0^n:t_\ell^n}) \approx G\!\left(\frac{h}{2}\left(\Phi(\bar{X}^h_{t_0^n}) + \Phi(\bar{X}^h_{t_\ell^n})\right) + h \mathbf{1}_{\ell \geq2}\sum_{m=1}^{\ell-1} \Phi(\bar{X}^h_{t_m^n}) \right).
		\end{equation}
	}
	\noindent Clearly, numerical computation of an integral, as in the definition of \(\sigma_\ell\) in~\eqref{defbmsigmal}, is more computationally intensive than handling sums, as in the above equation~\eqref{eq:NumIntegral}.
	
		\medskip
	The path-dependent Volterra equation~\eqref{eq:Volterrameanrevert} is driven by a pseudo-fractional Brownian motion
	with Hurst parameter $H = \alpha-\frac12 \in (0,1]$, which corresponds to the standard framework of rough models
	(see~\cite{GatheralJR2018,Pages2024}) when $H \in (0,\tfrac12)$, or of long-memory models
	(see~\cite{ComteR1998, EGnabeyeu2025}) when $H \in (\tfrac12,1)$. In particular, when considering a drift $b$ and volatility $\sigma$ Lipschitz in time
	(i.e.\ $\gamma = 1$), the convergence rate is at least \(H\).
	Note that in the long-memory setting, the rates $\mathcal{O}\!\left(\left(\tfrac{T}{n}\right)^{\alpha - \tfrac12}\right)$ or $\mathcal{O}\!\left(\left(\tfrac{T}{n}\right)^{(\alpha - \tfrac12)(1-\varepsilon)}\right)$,
	for sufficiently small $\varepsilon > 0$, are faster than $	\mathcal{O}\!\left(\left(\tfrac{T}{n}\right)^{1/2}\right).$
%	In the figures~\ref{fig:trajectories}, we display sample trajectories of our path-dependent stochastic process.	Two figures are shown, corresponding to two values of $\alpha$, namely $\alpha = 0.9$ (i.e., $H = 0.4$)	and $\alpha = 0.6$ (i.e., $H = 0.1$), in order to emphasize that the smaller the parameter $\alpha$, the rougher the trajectories of the associated process $X$.
	
	\medskip
	\noindent	We work in the following settings: \(\mu \equiv \lambda_1\) is the Lebesgue measure on $(\R_+, {\cal B}or(\R_+))$
	\[
	\nu\equiv 0,\quad G(x) = \eta_1 \sqrt{c + \tanh(x)} \quad \text{and}\quad \Phi(x) = \eta_2 \sqrt{\kappa_2(x-a)^2 + \kappa_0}, \; \text{i.e.}\, \Phi^2\in \mathrm{Pol}_2(\mathbb{R})\subset \mathrm{Pol}(\mathbb{R})
	\]
	\noindent
	Here, $\kappa_2,\kappa_0,\mu_0,\lambda,\eta_1, \eta_2>0$, $c>1$, and $a\in\mathbb{R}$ are fixed constants.
	In particular, the numerical values used for the simulation of the two settings are as follows:
	$\mu_0 = 2$, $\lambda = 0.2$, $\kappa_2 = 0.384$, $a = 0.095$, 
	$\kappa_0 = 0.0025$, $x_0 = 0$, $\eta_1=\eta_2 =c= 1$.
		\begin{figure}[H]
		\centering
		\begin{minipage}{0.48\linewidth}
			\centering
			\includegraphics[width=1\linewidth]{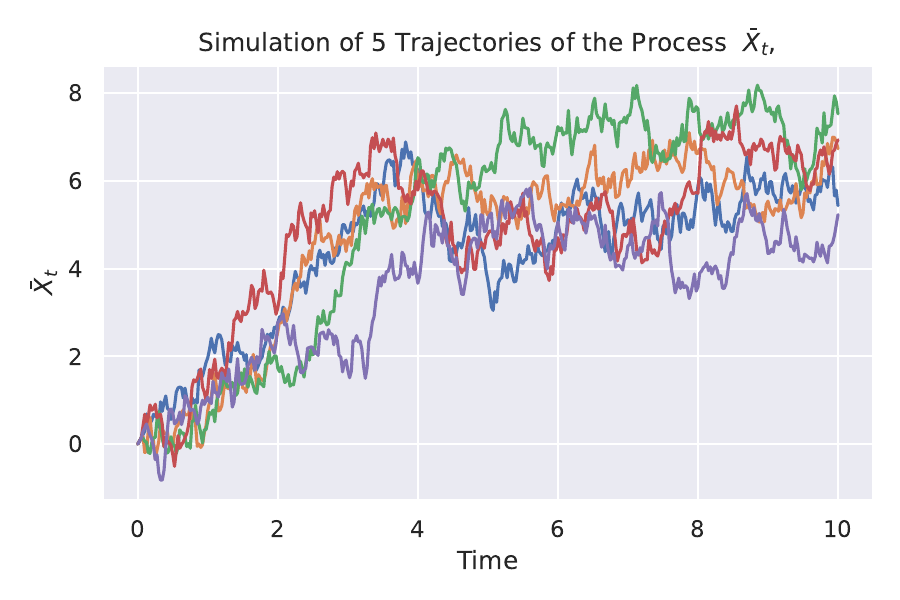
			}
			% \caption{}
		\end{minipage}%
		\hfill
		\begin{minipage}{0.48\linewidth}
			\centering
			\includegraphics[width=1\linewidth]{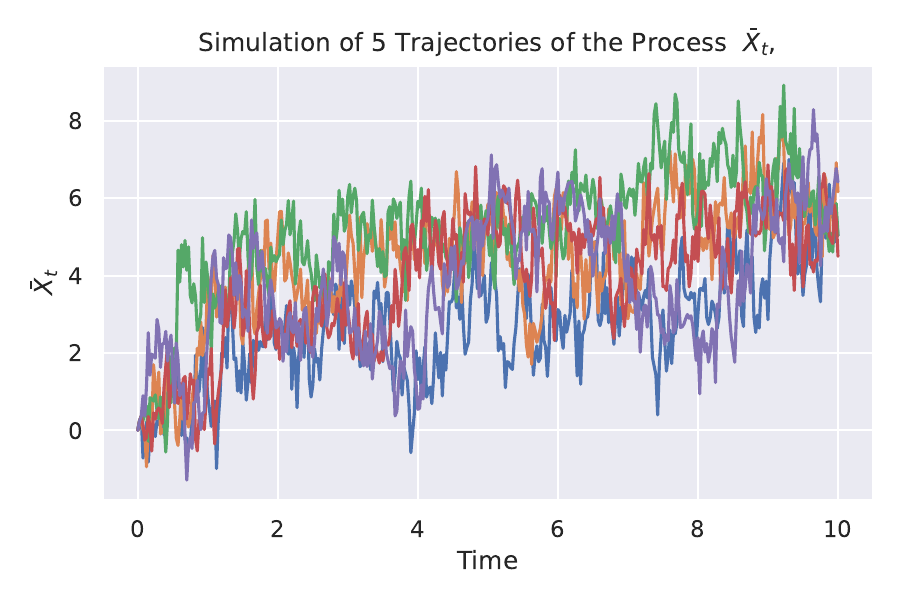}
			% \caption{}
		\end{minipage}
		\caption[Simulationsamples paths.]{\textit{	Simulation of \(5\) trajectories of the process \(X\)  for \(\alpha=0.9\) (left) and  \(\alpha=0.6\)  (right). Number of time steps: \( n = 400 \).}}
		\label{fig:trajectories}
	\end{figure}
	In the figures~\ref{fig:trajectories}, we display sample trajectories of our path-dependent stochastic process.	Two figures are shown, corresponding to two values of $\alpha$, namely $\alpha = 0.9$ (i.e., $H = 0.4$)	and $\alpha = 0.6$ (i.e., $H = 0.1$), in order to emphasize that the smaller the parameter $\alpha$, the rougher the trajectories of the associated process $X$.
	
	\vspace{-.2 cm}
	\subsection{Numerical Verification of the Strong Convergence Rate}
	We now numerically verify the convergence of the continuous-time extension~\eqref{eq:def_continuous_2} of the interpolated $K$-integrated discrete time Euler scheme introduced in Section \ref{subsec:GM}. Specifically, we examine the $L^2$ strong rate of convergence, defined as \(\sup_{t \in [0,T]} \mathbb{E}[|X_t - \overline{X}_t|^2]^{1/2}.\) As predicted by Theorem~\ref{thm:Eulercvgce2}, this rate is at least $\gamma \wedge (\alpha-\tfrac12)$
	(see, e.g., Equation~\eqref{eq:Xt-barX_t} ),
	and in the present setting where the drift and volatility coefficients are Lipschitz in time
	(i.e., $\gamma=1$), it reduces to $\alpha-\tfrac12 =: H$.

	The theoretical error in Equation~\eqref{eq:Xt-barX_t} is computed with respect to the exact solution, $X$, which is not available in closed form. Thus, an alternative method is required to establish convergence.
	To this end, we set the time grid as $t^n_k = kT/n$ for $k = 0, \dots, n$ (notice that \(t^{2n}_{2k}=t^n_k\)), and we compare two numerical schemes: $(\overline{X}^n_{t_k})_{k \in \{0, \dots, n\}}$ and $(\overline{X}^{q \cdot n}_{t_{q \cdot k}})_{k \in \{0, \dots, n\}}$, where the refinement ratio $q$ is a small integer relative to $n$ (we take $q = 2$). The superscripts $n$ and $q \cdot n$ emphasize the grid dimension.
	
	However, to measure strong convergence, both schemes must be constructed on the same probability space, i.e. driven by the same Brownian motion.
	Empirically, we show that there exists a constant $C$ such that for all $k \in \{0, \dots, n\}$ and sufficiently many values of $n$:
	\begin{equation}\label{eq:estimateScheme}
	\left\||\overline{X}^n_{t^n_k} - \overline{X}^{2n}_{t^{2n}_{2k}}|\right\|_2 \leq C \big(\frac{T}{n}\big)^{\alpha-\tfrac12 } =: C_T \big(\frac{1}{n}\big)^{H}.
	\end{equation}
	This inequality numerically guarantees the announced strong convergence rate. Indeed, since $\overline{X}^n \to X$ in $L^2$ as $n \to \infty$, we obtain in particular for every $k \in \{0, \dots, n\}$ $X_{t_k^n}
	=
	\lim_{i \to \infty} \bar X^{2^i n}_{t_k^n}
	\quad \text{in } L^2$ so that for \( i \ge 0\), applying estimate~\eqref{eq:estimateScheme} with the substitution $n \mapsto 2^i n$, yields \(\Big\||\bar X^{2^{i+1} n}_{t_k^n} - \bar X^{2^i n}_{t_k^n}|\Big\|_2
	\le
	\frac{C_T}{(2^i n)^H}.\) Consequently,
	 {\small
	\[
	\Big\||X_{t_k^n} - \overline{X}^n_{t_k^n}|\Big\|_2 \leq \sum_{i=0}^{\infty} \Big\||\overline{X}^{2^i n}_{t_k^n} - \overline{X}^{2^{i+1} n}_{t_k^n}|\Big\|_2 \leq C_T \sum_{i=0}^{\infty} \frac{1}{2^{iH}n^H} = \frac{C_T}{n^H} \sum_{i=0}^{\infty} \frac{1}{2^{iH}} = \frac{C_T}{n^H} \frac{1}{1 - 2^{-H}}.
	\]
	 }
	\noindent
	Note that the  inequality in Equation~\eqref{eq:estimateScheme} is necessary for numerical convergence, as it ensures that:
		\[
		\max_{k \in \{0, \dots, n\}} \mathbb{E}[|X_{t^n_k} - \overline{X}^n_{t^n_k}|^2] \leq \frac{C_T^2}{n^{2H}}, \quad \max_{k \in \{0, \dots, n\}} \mathbb{E}[|X_{t^{2n}_{2k}} - \overline{X}^{2n}_{t^{2n}_{2k}}|^2] \leq \frac{C_T^2}{(2n)^{2H}}.
		\]
		This implies:
		{ \small
		\[
		\frac1n \sum_{k=1}^n \left\||\overline{X}^n_{t^n_k} - \overline{X}^{2n}_{t^{2n}_{2k}}|\right\|_2 \leq \max_{k \in \{0, \dots, n\}} \left(\mathbb{E}[|\overline{X}^n_{t^n_k} - \overline{X}^{2n}_{t^{2n}_{2k}}|^2]\right)^{1/2} \leq \frac{C_T}{n^{H}} + \frac{C_T}{(2n)^{H}} = \left(1 + \frac{1}{2^{H}}\right)\frac{C_T}{n^{H}}.
		\]
	   }
	\noindent In Figure \ref{fig:rateMaxPointError}, we plot the mean error $\frac1n \sum_{k=1}^n \left\||\overline{X}^n_{t^n_k} - \overline{X}^{2n}_{t^{2n}_{2k}}|\right\|_2$ for different values of $n$, with $H \in \{0.1, 0.4\}$, and compare it with a polynomial fit to the log-error (dashed line).
	%All errors are estimated using  \(10000\) Monte Carlo simulations.
	
%	 \noindent In Figure \ref{fig:rateEndPointError}, we plot the end point error $\left\||\overline{X}^n_T - \overline{X}^{2n}_T|\right\|_2$ for different values of $n$, with $H \in \{0.1, 0.4\}$, and compare it with a polynomial fit (dashed line). The solid lines represent the observed error, and the dashed lines show the polynomial fit to the log-error.\\
%	The slope of the dashed line indicates the approximation of the strong convergence rate of the process. The interpolated $K$-integrated discrete time Euler scheme consistently adheres to, and sometimes sharper than, the predicted theoretical rate.
	
		\begin{figure}[H]
		\centering
		\begin{minipage}{0.49\linewidth}
			\centering
			\includegraphics[width=.75\linewidth]{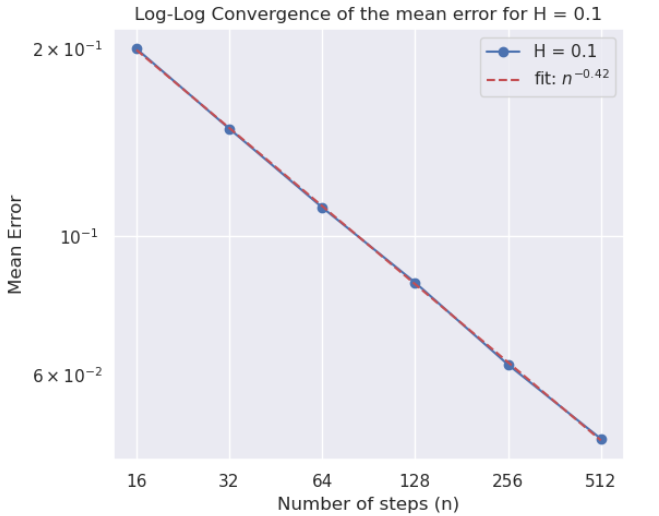
			}
			% \caption{}
		\end{minipage}%
		\hfill
		\begin{minipage}{0.49\linewidth}
			\centering
			\includegraphics[width=.75\linewidth]{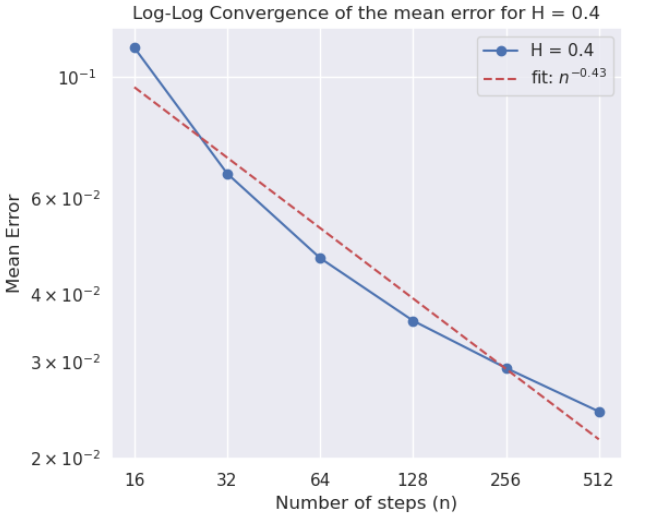}
			% \caption{}
		\end{minipage}
		\caption[Convergence rate.]{\textit{Strong convergence rate of the mean error in log-log plot with 1000 Monte Carlo iterations for $H = 0.1$ (left) and $H = 0.4$ (right) for the process $X$. %The solid lines represent the observed error, and the dashed lines show the polynomial fit to the log-error. 
			}
		}
		\label{fig:rateMaxPointError}
	\end{figure}
	\noindent In Figure \ref{fig:rateEndPointError}, we plot the end point error $\left\||\overline{X}^n_T - \overline{X}^{2n}_T|\right\|_2$ for different values of $n$, with $H \in \{0.1, 0.4\}$, and compare it with a polynomial fit (dashed line). The solid lines represent the observed error, and the dashed lines show the polynomial fit to the log-error.
\vspace{-.4cm}
	\begin{figure}[H]
		\centering
		\begin{minipage}{0.49\linewidth}
			\centering
			\includegraphics[width=.75\linewidth]{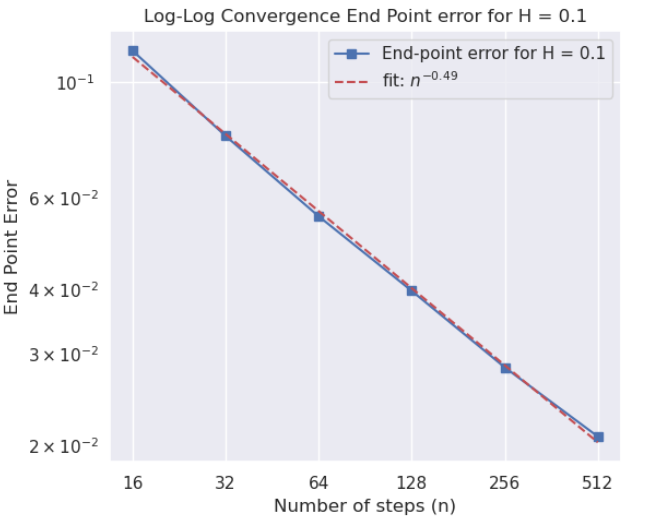
			}
			% \caption{}
		\end{minipage}%
		\hfill
		\begin{minipage}{0.49\linewidth}
			\centering
			\includegraphics[width=.75\linewidth]{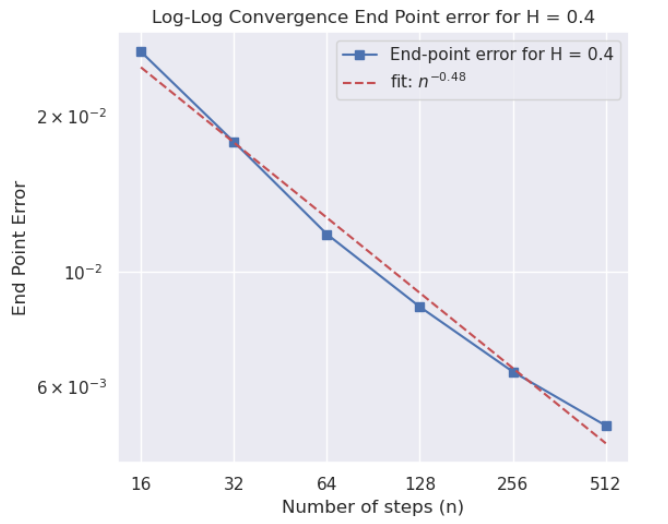}
			% \caption{}
		\end{minipage}
		\caption[Convergence rate.]{\textit{Strong convergence rate of the end point error in log-log plot with 1000 Monte Carlo iterations for $H = 0.1$ (left) and $H = 0.4$ (right) for the process $X$. }
		}\label{fig:rateEndPointError}
	\end{figure}
%	\noindent The interpolated $K$-integrated discrete time Euler scheme consistently adheres to, and sometimes sharper than, the predicted theoretical rate.

 \noindent The slope of the dashed line indicates the approximation of the strong convergence rate of the process. The interpolated $K$-integrated discrete time Euler scheme consistently adheres to, and sometimes sharper than, the predicted theoretical rate.
	\section{Proofs of the Main Results}\label{sect:ProofMainResult}
	% Proposition \ref{prop:pathExistenceUniquenes}, Theorems~\ref{Thm:pathExistenceUniquenes},~\ref{prop:pathkernelvolt} and~\ref{prop:pathkernelvolt2}}\label{sect:ProofMainResult}
	\noindent {$\rhd$ {\em Preliminaries}:}
	The proof is carried out in two steps. First,Proposition~\ref{prop:pathExistenceUniquenes} and Theorem~\ref{Thm:pathExistenceUniquenes} establish the existence and uniqueness of a solution in the Bochner space $(\mathcal{H}_{p,T}, \|\cdot\|_{p,T})$. Since these solutions are not guaranteed to be continuous, we then prove \emph{a posteriori} (in contrast to most existing results), the existence of a continuous modification and their path regularity in Theorems~\ref{prop:pathkernelvolt} and~\ref{prop:pathkernelvolt}, using either Kolmogorov's continuity criterion or the Garsia-Rodemich-Rumsey inequality; see e.g., \citep[Lemma 1.1]{GRR1070}.
	\subsection{Proof of Proposition \ref{prop:pathExistenceUniquenes} and Theorem~\ref{Thm:pathExistenceUniquenes}}
	\noindent {\bf Proof of Proposition \ref{prop:pathExistenceUniquenes}: }
	Let \( X, Y \in \mathcal{H}_{p,T} \) and define \(\mathcal{T}_c\) by \(	\mathcal{T}_c(X) := (\mathcal{T}_c(X)_t)_{0 \leq t \leq T}\) with 
	\[
	(\mathcal{T}_c(X))_t := x_0(t) + \int_{0}^{t} K_1(t,r) b(r, X^r_\cdot) \, dr + \int_{0}^{t} K_2(t,r) \sigma(r, X^r_\cdot) \, dW_r.
	\]
	\noindent {\sc Step~1} ({\em Well-definedness}).
	We need to prove that $\|\mathcal{T}_c(X)\|_{p,c,T} < \infty$  so that $\mathcal{T}_c(X) \in \mathcal{H}_{p,T}$. By the triangle inequality,
	\begin{equation}\label{eq:norm_X}
		\| \mathcal{T}_c(X) \|_{p,c,T} 
		\leq \| x_0 \|_{p,c,T} + \Big\| \int_{0}^{\cdot } K_1(\cdot,r) b(r, X^r_\cdot) \, dr  \Big\|_{p,c,T} + \Big\| \int_{0}^{\cdot} K_2(\cdot,r) \sigma(r, X^r_\cdot) \, dW_r \Big\|_{p,c,T}. 
	\end{equation}
	For the first and deterministic term,  using the triangle inequality in 
	\(\mathbb{H}\) for Bochner integrals and applying H\"older inequality with exponent \((\frac{p}{p-1},p)\) taking advantage to the the linear growth of $b$ in Lemma \ref{lm:growth} yields
	\begin{align*}
		\mathbb{E} \left[ \left\| \int_0^t K_1(t,r) b(r,X^r_\cdot) \,dr \right\|^p_{\mathbb{H}} \right] &\leq 2^{p-1}C_{b,\sigma,T}^p\, \Big( \int_0^t K_1(t,r)\, dr \Big)^{p - 1}
		\int_0^t K_1(t,r) \big( 1 + \int_0^r \mathbb{E} \left[\|X_u^r\|^p_{\mathbb{H}} \right] \mu(du) \big) dr\\
		&\leq 2^{p-1}C_{b,\sigma,T}^p\, \left( \int_0^t K_1(t,r)\, dr \right)^{p } \left( 1 + e^{pc t}  \|X^t_\cdot\|^p_{p,c,t}\right).
	\end{align*}
	Consequently, calling upon the condition 	\((\widehat {\cal K}^{cont}_{\widehat \theta})\) in ~\eqref{eq:contKtilde} , we deduce that:
	{\small
		\begin{equation}\label{eq:boundnorm1}
			\Big\| \int_{0}^{\cdot} K_1(\cdot,r) b(r, X^r_\cdot) \, dr \Big\|_{p,c,T}^p 
			\leq 2^{p-1}C^p_{b,\sigma,T} \sup_{t\in [0,T]}\left(  e^{-ct} \int_0^t u^{p\widehat{\theta}}\, \mu(du) + \int_0^t u^{p\widehat{\theta}} e^{-pc(t-u)} \| X^u_\cdot \|_{p,c,u}\, \mu(du)\right). 
		\end{equation}
	}
	\noindent
	For the martingale terms, using the Burkholder-Davis-Gundy (BDG) inequality and standard arguments (Jensen's inequality
	\hyperref[fn:first]{\footnotemark[\getrefnumber{fn:first}]} with coefficient \(\frac{p}{2}\geq1\)) %(H\"older inequality with exponent \((\frac{p}{p-2},\frac{p}{2})\)) 
	taking advantage of the the linear growth property of $\sigma$ in Lemma \ref{lm:growth}, gives sucessively:
	\begin{align*}
		&\mathbb{E} \left[ \left\| \int_0^t K_2(t,r)\sigma(r,X^r_\cdot)\,dW_r \right\|^p_{\mathbb{H}} \right] \leq C_p^{\text{BDG}}\, \left( \int_0^t K_2(t,r)^2\, dr \right)^{p/2 - 1} \int_0^t K_2(t,r)^2 \mathbb{E}\left[ \|\sigma(r,X^r_\cdot)\|^p_{\tilde{\mathbb{H}}} \right] dr\\
		&\hspace{3cm} \leq 2^{p-1}C_{b,\sigma,T}^p C_p^{\text{BDG}} \left( \int_0^t K_2(t,r)^2 dr \right)^{p/2 - 1}
		\int_0^t K_2(t,r)^2 \Big( 1 + \int_0^r \mathbb{E} \left[\|X_u^r\|^p_{\mathbb{H}} \right] \mu(du) \Big) dr\\
		& \hspace{3cm} \leq 2^{p-1}C_{b,\sigma,T}^p\, C_p^{\text{BDG}} \left( \int_0^t K_2(t,r)^2\, dr \right)^{\frac{p}{2}} \left( 1 + e^{pc t}  \|X^t_\cdot\|^p_{p,c,t}\right).
	\end{align*}
	Consequently, with the condition 	\((\widehat {\cal K}^{cont}_{\widehat \theta})\)	in ~\eqref{eq:contKtilde} , we obtain:
	{\small
		\begin{equation}\label{eq:boundnorm2}
			\Big\| \int_{0}^{\cdot} K_2(\cdot,r) \sigma(r, X^r_\cdot) \, dW_r \Big\|_{p,c,T}^p 
			\leq 2^{p-1}C^p_{b,\sigma,T}C_p^{\text{BDG}} \sup_{t\in [0,T]}\left(  e^{-ct} \int_0^t u^{p\widehat{\theta}}\, \mu(du) + \int_0^t u^{p\widehat{\theta}} e^{-pc(t-u)} \| X^u_\cdot \|_{p,c,u}\, \mu(du)\right). 
		\end{equation}
	}
	\noindent As the measure \( \mu \) is finite on \( [0,T] \), \( \int_0^T u^{p\widehat{\theta}}\, \mu(du) <\infty \). Then, if \(\| X^T_\cdot \|_{p,c,T} <\infty \), using the monotonicity of exponential decay, inequalities~\eqref{eq:boundnorm1} and~\eqref{eq:boundnorm2} together with assumption~\ref{assump:VoltEulerScheme}~(iv) implies that $\|\mathcal{T}_c(X)\|_{p,c, T} < \infty$ and thus $\mathcal{T}_c(X) \in \mathcal{H}_{p,T}$ whenever \( X \in  \mathcal{H}_{p,T} \). The inequality~\eqref{eq:boundnorm} follows by injecting~\eqref{eq:boundnorm1} and~\eqref{eq:boundnorm2} into~\eqref{eq:norm_X}.
	Moreover by Proposition~\ref{prop:pathkernelvolt} below, we will have that $\mathcal{T}_c(X)$ is continuous in $L^p$.
	
	\smallskip
	\noindent {\sc Step~2} ({\em Lipschitzianity}).
	For $0\leq t \leq T$ and $c > 0$, 
	We want to show that \( \mathcal{T}_c \) is Lipschitz with respect to the norm \(\|X\|_{p,c,T}\) in \eqref{eq:weighted_norm}.
	Let us estimate \( \|\mathcal{T}_c(X) - \mathcal{T}_c(Y)\|_{p,c,T} \). For all \( t \in [0,T] \), we have
	\begin{align*}
		\mathbb{E} \left[ \left\| (\mathcal{T}_c(X))_t - (\mathcal{T}_c(Y))_t \right\|_{\mathbb{H}}^p \right]
		&\leq 2^{p-1} \mathbb{E} \left[ \left\| \int_0^t K_1(t,r) (b(r,X^r_\cdot) - b(r,Y^r_\cdot))\,dr \right\|^p_{\mathbb{H}} \right] \\
		&\quad + 2^{p-1} \mathbb{E} \left[ \left\| \int_0^t K_2(t,r)(\sigma(r,X^r_\cdot) - \sigma(r,Y^r_\cdot))\,dW_r \right\|^p_{\mathbb{H}} \right].
	\end{align*}
	For the first and deterministic term,  using the triangle inequality in 
	\(\mathbb{H}\) for Bochner integrals and applying Jensen's inequality twice \footnote{For two measurable real-valued functions \( f \), \( g \), and \( p \geq 1 \), we have
		\begin{align*}
			\left| \int f(s)g(s) \, ds \right|^p 
			&\leq \left| \int |f(s)|^{1 - \frac{1}{p}} \cdot |f(s)|^{\frac{1}{p}} g(s) \, ds \right|^p 
			\leq \left( \int |f(s)| \, ds \right)^{p - 1} \cdot \int |f(s)||g(s)|^p \, ds.
		\end{align*}
	\label{fn:first}} or rather H\"older inequality with exponent \((\frac{p}{p-1},p)\) yields 
	\begin{align*}
		\left\| \int_0^t K_1(t,r) (b(r,X^r_\cdot) - b(r,Y^r_\cdot))\,dr \right\|^p_{\mathbb{H}}
		&\leq \left( \int_0^t K_1(t,r) \|b(r,X^r_\cdot) - b(r,Y^r_\cdot)\|_{\mathbb{H}}\, dr \right)^{p} \\
		&\leq \left( \int_0^t K_1(t,r)\, dr \right)^{p - 1}
		\int_0^t K_1(t,r) \|b(r,X^r_\cdot) - b(r,Y^r_\cdot)\|_{\mathbb{H}}^p \, dr.
	\end{align*}
	Taking expectation and using the Lipschitz assumption on \( b \), we get:
	\begin{align*}
		&\mathbb{E} \left[ \left\| \int_0^t K_1(t,r) (b(r,X^r_\cdot) - b(r,Y^r_\cdot))\,dr \right\|^p_{\mathbb{H}} \right] \\
		&\qquad\qquad\qquad \leq C_b^p\, \left( \int_0^t K_1(t,r)\, dr \right)^{p - 1}
		\int_0^t K_1(t,r) \left( \int_0^r \mathbb{E} \left[\|X_u^r - Y_u^r\|^p_{\mathbb{H}} \right] \mu(du) \right) dr.
	\end{align*}
%	Now, we exchange the order of integration owing to Fubini's Theorem \cite[Thm. 1]{Kailath_Segall},  and further details can be found in \cite[Thm. IV.65]{Protter}.(see also \cite[ Theorem 2.6]{Walsh1986}, \cite[ Theorem 2.6]{Veraar2012}), and estimate:
	Now, we exchange the order of integration owing to Fubini's Theorem \citep[Thm. 1]{Kailath_Segall}, (see also \cite[ Theorem 2.6]{Walsh1986}), and estimate:
	\begin{align*}
		&\int_0^t K_1(t,r) \left( \int_0^r \mathbb{E} \left[\|X_u^r - Y_u^r\|^p_{\mathbb{H}} \right] \mu(du) \right) dr  = \int_0^t \left( \int_{ u}^t K_1(t,r)\, \mathbb{E} \left[\|X_u^r - Y_u^r\|^p_{\mathbb{H}} \right] dr \right) \mu(du)\\
		&\qquad \leq \left( \int_{0}^t K_1(t,r)\, dr \right) \cdot \int_0^t \mathbb{E} \left[\|X_u^t - Y_u^t\|^p_{\mathbb{H}} \right]\, \mu(du)
	\end{align*}
	Noting that for every \( t \in [0,T] \), \(  e^{-pct} \, \int_0^t \mathbb{E} \left[\|X_u^t - Y_u^t\|^p_{\mathbb{H}} \right]\, \mu(du) \leq \|X^t_\cdot - Y^t_\cdot\|^p_{p,c,t} \leq  \|X - Y\|^p_{p,c,T}\), we then have:
	\begin{align}
		&\mathbb{E} \left[ \left\| \int_0^t K_1(t,r) (b(r,X^r_\cdot) - b(r,Y^r_\cdot))\,dr \right\|^p_{\mathbb{H}} \right] \leq C_b^p\, \left( \int_0^t K_1(t,r)\, dr \right)^{p } e^{pc t}  \|X - Y\|^p_{p,c,T}\label{boundI}
	\end{align}
	
	Similarly, for the martingale terms, using the Burkholder-Davis-Gundy (BDG) inequality and standard arguments taking advantage of the Lipschitz property of $\sigma$, yield
	\begin{align*}
		&\mathbb{E} \left[ \left\| \int_0^t K_2(t,r)(\sigma(r,X^r_\cdot) - \sigma(r,Y^r_\cdot))\,dW_r \right\|^p_{\mathbb{H}} \right] \leq C_p^{\text{BDG}}\, \mathbb{E} \left[ \left( \int_0^t K_2(t,r)^2 \|\sigma(r,X^r_\cdot) - \sigma(r,Y^r_\cdot)\|^2_{\tilde{\mathbb{H}}} dr \right)^{p/2} \right].
	\end{align*}
	Applying Jensen's inequality % or rather H\"older inequality with exponent \((\frac{p}{p-2},\frac{p}{2})\)
	\hyperref[fn:first]{\footnotemark[\getrefnumber{fn:first}]} with coefficient \(\frac{p}{2}\geq1\) to factor out the \( L^2 \)-norm of the kernel, then using the Lipschitz assumption on \(\sigma\) and exchanging the order of integration as before gives sucessively:
	\begin{align*}
		&\mathbb{E} \left[ \left\| \int_0^t K_2(t,r)(\sigma(r,X^r_\cdot) - \sigma(r,Y^r_\cdot))\,dW_r \right\|^p_{\mathbb{H}} \right] \notag \\
		&\hspace{3cm} \leq C_p^{\text{BDG}}\, \left( \int_0^t K_2(t,r)^2\, dr \right)^{p/2 - 1} \int_0^t K_2(t,r)^2 \mathbb{E}\left[ \|\sigma(r,X^r_\cdot) - \sigma(r,Y^r_\cdot)\|^p_{\tilde{\mathbb{H}}} \right] dr\\
		&\hspace{3cm} \leq C_{\sigma}^p C_p^{\text{BDG}} \left( \int_0^t K_2(t,r)^2 dr \right)^{p/2 - 1}
		\int_0^t K_2(t,r)^2 \left( \int_0^r \mathbb{E}\left[ \|X_u^r - Y_u^r\|_{\mathbb{H}}^p \right] \mu(du) \right) dr.
	\end{align*}
	As before, we switch the order of integration  and introducing the exponential decay weight \( e^{-pc t} \), we find:   
	\begin{align*}
		&e^{-pc t} \int_0^t K_2(t,r)^2 \left( \int_0^r \mathbb{E}\left[ \|X_u^r - Y_u^r\|_{\mathbb{H}}^p \right] \mu(du) \right) ds  = e^{-pc t} \int_0^t \left( \int_{u}^t K_2(t,r)^2\, \mathbb{E} \left[\|X_u^r - Y_u^r\|^p_{\mathbb{H}} \right] dr \right) \mu(du)\\
		&\qquad \leq \left( \int_{0}^t K_2(t,r)^2\, dr \right) \cdot e^{-pc t} \int_0^t \mathbb{E} \left[\|X_u^t - Y_u^t\|^p_{\mathbb{H}} \right]\, \mu(du) \leq  \left( \int_{0}^t K_2(t,r)^2\, dr \right)\|X - Y\|^p_{p,c,t}. 
	\end{align*}
	Combining everything, we obtain the estimates:
	\begin{align}
		\mathbb{E} \left[ \left\| \int_0^t K_2(t,r)(\sigma(r,X^r_\cdot) - \sigma(r,Y^r_\cdot))\,dW_r \right\|_{\mathbb{H}}^p \right] \leq C_{\sigma}^p\, C_p^{\text{BDG}} \left( \int_0^t K_2(t,r)^2\, dr \right)^{\frac{p}{2}} e^{pc t} \|X - Y\|_{p,c,T}^p.\label{boundII}
	\end{align}
	
	Combining both the deterministic and stochastic estimates, \eqref{boundI}--\eqref{boundII} with the condition 	\((\widehat {\cal K}^{cont}_{\widehat \theta})\)	in ~\eqref{eq:contKtilde} , we obtain:
	\begin{align*}
		\| \mathcal{T}_c( X )- \mathcal{T}_c( Y ) \|_{p,c, T}^p 
		&\leq \sup_{t\in [0,T]} \left( e^{-pc t} \int_0^t u^{p\widehat{\theta}} \cdot \left( C_{b}^p + C_{\sigma}^p\, C_p^{\text{BDG}} \right)\cdot e^{pc u} \cdot \|X - Y\|_{p,c,T}^p\, \mu(du) \right) \\
		&\leq \left( C_{b}^p+ C_{\sigma}^p\, C_p^{\text{BDG}} \right)\times \sup_{t\in [0,T]} \left( e^{-pc t} \int_0^t u^{p\widehat{\theta}} e^{pc u}\, \mu(du) \right)\cdot \|X - Y\|_{p,c,T}^p,
	\end{align*}
	so that 
	\begin{equation}\label{eq:lipschtbound}
		\| \mathcal{T}_c( X )- \mathcal{T}_c( Y ) \|_{p,c, T} 
		\leq K^\prime \cdot \sup_{t\in [0,T]} \left( \int_0^t u^{p\widehat{\theta}} e^{-pc(t-u)}\, \mu(du) \right)^\frac1p \cdot \|X - Y\|_{p,c,T}
	\end{equation}
	for some constant $K^\prime$ depending only on $p$, $T$, and the Lipschitz constants of $b$ and $\sigma$.  \hfill$\Box$
	
	\medskip
	\noindent {\bf Proof of Theorem~\ref{Thm:pathExistenceUniquenes}: }
	\smallskip
	\medskip 
	\noindent {\sc Step~1} ({\em Existence}).
	Proposition \ref{prop:pathExistenceUniquenes} implies that $\mathcal T_c$ is a Lipschitz continuous function. Thus, the image \(\mathcal{F}_c := \mathcal T_c\left( \mathcal{H}_{p,T} \right)\)
	is a closed subset of $\mathcal{H}_{p,T}$.
	Since $\|\cdot\|_{p,T}$ and $\|\cdot\|_{p,c,T}$ are equivalent, it suffices to find $c > 0$ such that $\mathcal{T}_c$ is a contraction on $(\mathcal{H}_{p,T}, \|\cdot\|_{p,c,T})$. That is, we seek $c > 0$ and $\rho < 1$ such that
	\begin{equation}\label{eq:contraction}
		\| \mathcal{T}_c( X) -\mathcal{T}_c( Y)\|_{p,c,T} \leq \rho\, \| X - Y \|_{p,c,T}, 
		\quad X,Y \in \mathcal{H}_{p,T}.
	\end{equation}
	To prove that \( \mathcal{T}_c \) is a contraction for sufficiently large \( c \), recall the estimate or the bound in equation~\eqref{eq:lipschtbound}:
	\[
	\|\mathcal{T}_c(X) - \mathcal{T}_c(Y)\|_{p,c,T}
	\leq \kappa(c)\, \|X - Y\|_{p,c,T} \quad \text{where}\quad	\kappa(c) := K^\prime \cdot \sup_{t\in [0,T]} \left( \int_0^t u^{p\widehat{\theta}} e^{-pc(t-u)}\, \mu(du) \right)^\frac1p.
	\]
	We now show that \( \kappa(c) \to 0 \) as \( c \to \infty \), which will imply that the operator \( \mathcal{T}_c \) will becomes a strict contraction on \( (\mathcal{H}_{p,T}, \|\cdot\|_{p,c,T}) \) for sufficiently large \( c \).
	Fix \( t \in [0,T] \), and define
	
	\centerline{$
		f_c^{(t)}(u) := u^{p\widehat{\theta}} e^{-pc(t - u)} \cdot \mathbf{1}_{[0,t]}(u).
		$}
	\noindent Then \( f_c^{(t)}(u) \to 0 \) pointwise on \( [0,t) \) as \( c \to \infty \), and for all \( c > 0 \), \(0 \leq f_c^{(t)}(u) \leq u^{p\widehat{\theta}}.\)
	As the measure \( \mu \) is finite on \( [0,T] \), \( u \mapsto u^{p\widehat{\theta}} \in L^1([0,T], \mu) \). Then, by the Dominated Convergence Theorem and using monotonicity of exponential decay, for each \( t \in [0,T] \), \(	\lim_{c \to \infty} \int_0^t f_c^{(t)}(u) \mu(du) = 0\) as $\mu$ is satisfies ~\eqref{eq:uniform-non-atomicity}.
	Moreover, since the integrals are bounded by \( \int_0^T u^{p\widehat{\theta}} \mu(du) \), independently of \( t \in [0,T] \), the convergence to zero is uniform in  \( t \in [0,T] \). Therefore, \(\lim_{c \to \infty} \kappa(c) = 0.\)
	Hence, there exists \( c_0 > 0 \) such that for all \( c > c_0 \), we have \( \kappa(c) < 1 \), and so for sufficiently large \( c \), the operator \( \mathcal{T}_c \) becomes a contraction on \( (\mathcal{H}_{p,T}, \|\cdot\|_{p,c,T}) \) such that the contraction condition~\eqref{eq:contraction} holds. 
	The Banach fixed-point theorem then ensures the existence and uniqueness of a fixed point in \( \mathcal{H}_{p,c,T} \) such that \( \mathcal{T}_c(X) = X \), i.e., a solution to the integral equation.
	This implies that $\mathcal{T}_c$ is a contraction mapping on a complete metric space.
	By the Banach fixed point theorem, $\mathcal{T}_c$ admits a unique fixed point \(H \in \mathcal{F}_c \subset \mathcal{H}_{p,T} \)
	and this process $H$ is the unique strong solution of the path-dependent volterra equation.
	
	\smallskip
	\noindent {\sc Step~2} ({\em Uniqueness:})
	Alternatively, let $X$ be the unique fixed point of $\mathcal{T}_c$ in $\mathcal{H}_{p,T}$. Proposition~\ref{prop:pathkernelvolt} implies that $X$ has a continuous version, which is a strong solution of~\eqref{eq:pthvolterra} on $[0,T]$. By virtue of Proposition ~\ref{prop:pathkernelvolt} and Lemma~\ref{lem:supEbound}, any continuous solution of~\eqref{eq:pthvolterra} on $[0,T]$ must be this fixed point, hence uniqueness. Since $T \geq 0$ was arbitrary, it follows that ~\eqref{eq:pthvolterra} admits a unique strong solution on $[0,\infty)$.
	
	\medskip 
	\noindent {\sc Step~3} ({\em Bound~\eqref{eq:bound_X}}).
	In fact, by Lemma \ref{lm:growth} and Jensen's inequality,
	{\small
		\[
		\mathbb{E}\left[\|\mathcal{T}_c(0)_t\|_{\mathbb{H}}^p\right] \leq 3^{p-1} \left( \mathbb{E}\left[ \| x_0(t) \|_{\mathbb{H}}^p \right] 
		+ C_{b,\sigma,T} \Big(\big( \int_0^{t} K_1(t,r)\, dr \big)^{p} +  C_p^{\text{BDG}} \big( \int_0^{t} K_2(t,r)^2\, dr \big)^{\frac{p}{2} }\Big) \right).
		\]
	}
	so that by assumption \ref{assump:kernelVolterra} (i),
	\begin{align*}
		\| \mathcal{T}_c(0) \|^p_{p,c,T} 
		&\leq \| \mathcal{T}_c(0) \|_{p,T}^p := \Big( \int_{0}^T \mathbb{E}\left[ \| \mathcal{T}_c(0)_t \|_{\mathbb{H}}^p \right] \mu(dt) \Big) \\
		&\leq 3^{p-1} \Big( \int_{0}^T \mathbb{E}\left[ \| x_0(t) \|_{\mathbb{H}}^p \right] \mu(dt) 
		+ C_{b,\sigma,T} (1 +  C_p^{\text{BDG}}) \int_{0}^T t^{p\widehat{\theta}} \mu(dt) \Big).
	\end{align*}
	Thus \(\exists \, C >0 \,\) such that
	\begin{equation}\label{eq:norm_0}
		% \text{Thus }\;\exists \, C >0 \,\;\text{ such that}\;	
		\| \mathcal{T}_c(0) \|_{p,c,T} 
		\leq \| \mathcal{T}_c(0) \|_{p,T} \leq C \left(1 + \Big( \int_{0}^T \mathbb{E}\left[ \| x_0(t) \|_{\mathbb{H}}^p \right] \mu(dt)\right)^\frac1p\Big) <\infty. 
	\end{equation}
	still owing to Lemma \ref{lm:growth} (ii).	
	\noindent $\Rightarrow$ The bound~\eqref{eq:bound_X} is obtained, by taking \( Y = 0 \), in inequality ~\eqref{eq:lipschtbound} and owing to ~\eqref{eq:norm_0} and the fact that \(\mathcal{T} (X) = X\) since \(X\) is a fixed point of the application \(\mathcal{T}\). The result being true for every \(c\geq 0.\)
	
	Note that, by taking \( Y = 0 \), inequality ~\eqref{eq:lipschtbound} implies that $\|\mathcal{T}_c(X)\|_{p,c, T} < \infty$ and thus $\mathcal{T}_c(X) \in \mathcal{H}_{p,T}$ whenever \( X \in  \mathcal{H}_{p,T} \). 
	We thus recover the well-definedness of the map 
	\(\mathcal{T}_c\), as established in the first step of the proof of Proposition~\ref{prop:pathExistenceUniquenes}.
	
	\medskip	
	\noindent  {\sc Step~4} : Existence and Uniqueness of ~\eqref{eq:pthvolterra} with $X_0 \in L^0(\Omega, {\cal F}_0, \P)$.
	\medskip
	\noindent \\
	$\rhd$ {\em Existence}. We now suppose that $x_0 \in L^0(\Omega, {\cal F}_0, \P)$. For every $k \in \N^*$, let $A_k = \{\|x_0 \|_{T} < k\}$ and $X^{(k)}$ denote the $({\cal F}_t)$-adapted continuous unique solution to~\eqref{eq:pthvolterra} with initial random function $x_0^{(k)} = x_0 \mathbf{1}_{A_k} \in L^\infty(\Omega, {\cal F}_0, \P)$. That is, \( X^{(k)} \) is the sequence of truncated starting function \( x_0^{(k)} \) of the resulting pathwise continuous solution to~\eqref{eq:pthvolterra}.
	We set $X = \sum_{k \in \N^*} X^{(k)} \mathbf{1}_{A_k \setminus A_{k-1}}$ where $A_0 = \emptyset$.
	Let $t \in [0,T]$. Since,
	\[\int_0^t K_2^2(t, s) \|\sigma(s, X^s_{\cdot })\|_{\tilde{\mathbb{H}}}^2 \, ds = \sum_{k \in \N^*} \mathbf{1}_{A_k \setminus A_{k-1}} \int_0^t K_2^2(t, s) \|\sigma(s,X^{(k)}_{\cdot \wedge s})\|_{\tilde{\mathbb{H}}}^2 \, ds < +\infty, \;\P \text{-a.s.}, \]
	 owing to \ref{assump:kernelVolterra} (i) and Lemma \ref{lm:growth}, the stochastic integral $\int_0^t K_2(t, s) \sigma(s,X^s_{\cdot}) \, dW_s$ makes sense. Moreover, since stochastic integrals with respect to an $({\cal F}_s)$-Brownian motion commute with $\cF_0$-measurable random variables, for each $k \in \N^*$, we have
	\begin{align*}
		&\,\mathbf{1}_{A_k \setminus A_{k-1}} \int_0^t K_2(t, s) \sigma(s, X^s_{\cdot}) \, dW_s = \int_0^t K_2(t, s) \mathbf{1}_{A_k \setminus A_{k-1}} \sigma(s, X^s_{\cdot}) \, dW_s \\
		&\hspace{3cm}= \int_0^t K_2(t, s) \mathbf{1}_{A_k \setminus A_{k-1}} \sigma(s, X^{(k)}_{\cdot \wedge s}) \, dW_s = \mathbf{1}_{A_k \setminus A_{k-1}} \int_0^t K_2(t, s) \sigma(s, X^{(k)}_{\cdot \wedge s}) \, dW_s, \;\P \text{-a.s.}.\\
	  &\,\text{Hence, we get,}\qquad\qquad\;
	\int_0^t K_2(t, s) \sigma(s, X^s_{\cdot}) \, dW_s = \sum_{k \in \N^*} \mathbf{1}_{A_k \setminus A_{k-1}} \int_0^t K_2(t, s) \sigma(s, X^{(k)}_{\cdot \wedge s}) \, dW_s, \;\P \text{-a.s.}
    \end{align*}
	\noindent Similarly, we have
	\[\int_0^t K_1(t, s) b( s, X^s_{\cdot}) \, ds = \sum_{k \in \N^*} \mathbf{1}_{A_k \setminus A_{k-1}} \int_0^t K_1(t, s) b(s, X^{(k)}_{\cdot \wedge s}) \, ds, \;\P \text{-a.s.}, \]
	\noindent and thus we deduce that
	\begin{align*}
		X_t &= \sum_{k \in \N^*} \mathbf{1}_{A_k \setminus A_{k-1}} \left( x_0^{(k)}(t) + \int_0^t K_1(t, s) b(s, X^{(k)}_{\cdot \wedge s}) \, ds + \int_0^t K_2(t, s) \sigma(s, X^{(k)}_{\cdot \wedge s}) \, dW_s \right) \\
		&= x_0(t) + \int_0^t b(s, X^s_{\cdot}) \, ds + \int_0^t \sigma(s, X^s_{\cdot}) \, dW_s, \;\P \text{-a.s.},
	\end{align*}
	with the sum over $k \in \N^*$ providing a continuous modification of the right-hand side.
	
	\smallskip
	$\rhd$  {\em Uniqueness}.  Let $X$ and $\widetilde X$ denote two $({\cal F}_t)$-adapted continuous  solutions to~\eqref{eq:pthvolterra} starting from the same initial condition $x_0\!\in L^0(\Omega,{\cal F}_0,\P)$ and let $X^0$ denote the solution starting from $0$. For $k\in \N^*$, $\mbox{\bf 1}_{A_k} X +  \mbox{\bf 1}_{A_k^c} X^0$ and $\mbox{\bf 1}_{A_k} \widetilde X +  \mbox{\bf 1}_{A_k^c} X^0$ are both solutions to~\eqref{eq:pthvolterra} starting from $x_0\mbox{\bf 1}_{A_k}\!\in L^{\infty}(\Omega,{\cal F}_0, \P)$. By uniqueness, $\P(\{X=\widetilde X\}\cap A_k)=\P(A_k)$. Letting $k\to\infty$, we conclude that $
	\P(X=\widetilde X)=1$. %
	Hence uniqueness holds starting from any random variable lying in $L^0(\Omega, {\cal F}_0, \P)$.  \hfill$\Box$
	
	\subsection{Proof of Theorems~\ref{prop:pathkernelvolt} and~\ref{prop:pathkernelvolt2}}
	\noindent {\bf Proof of Theorem\ref{prop:pathkernelvolt}: } {\em Existence of a continuous version to~\eqref{eq:pthvolterra} and $L^p$ pathwise regularity.}
	
	\smallskip
	\noindent {\sc Step~1} ({\em $L^p$-pathwise regularity  $p\!\in (p_{eu}, +\infty)$}).
	For $0\leqslant t<t^\prime\leqslant T$
	$$
	\begin{aligned}
		X_{t^\prime} - X_t &=  (x_0(t^\prime)-x_0(t)) + \int_0^t \left[ b(t^\prime, s, X_\cdot) - b(t, s, X_\cdot) \right] \, ds + \int_0^t \left[ \sigma(t^\prime, s, X_\cdot) - \sigma(t, s, X_\cdot) \right]\, dW_s.\\
		&+\int_t^{t^{\prime}}b(t^{\prime},s,X_\cdot)\mathrm{d}s+\int_t^{t^{\prime}}\sigma(t^{\prime},s,X_\cdot)\mathrm{d}W_s
	\end{aligned}$$
	Thus, since it holds that $\mathbb{E}\|x_0(t^\prime)-x_0(t)\|_{\mathbb{H}}^p\leqslant C_{T,p}\left( 1 + \int_{0}^{T}\mathbb{E}\left[ \| x_0(s) \|_{\mathbb{H}}^p \right] \mu (ds) \right)|t'-t|^{\delta p}$, we have using Lemma \ref{lem:bounds} with $H_u:=b(u,X_{\wedge u})$ and $\widetilde{H}_u:=\sigma(u,X_{\wedge u})$ where we set
	\( |b_u| =  \|H_u\|_{\mathbb{H}}\)  and
	\( |a_u| = \|\widetilde{H}_u\|^2_{\tilde{\mathbb{H}}} \):
	\begin{align*}
		\mathbb{E}\left[\left\|X_{t'} - X_t\right\|_\mathbb{H}^p\right] 
		\leq C_{p,T}\left[\left( 1 + \int_{0}^{T}\mathbb{E}\left[ \| x_0(s) \|_{\mathbb{H}}^p \right] \mu (ds) \right)\vee\left(\sup_{u \in [0,T]}\mathbb{E}\left[\left( |b_u|^{p} + |a_u|^{\frac{p}{2}} \right) \right]\right)\right]|t'-t|^{(\delta p)\wedge( \theta p)\wedge( \widehat\theta p)}\label{eq:estimate}
	\end{align*}
	Then, for any $p\geqslant p_{eu}:=\frac{1}{\delta} \vee\frac{1}{\theta} \vee \frac{1}{ \widehat\theta}, T> 0$, the solution X satisfies (up to a $\mathbb P$-indistinguishability or a path-continuous version $\tilde{X}$ ): \( \forall\, t,\, t^\prime\!\in [0,T],\)	
	\begin{equation*}
		\mathbb{E}\left[\left\|X_{t^\prime} - X_t\right\|_\mathbb{H}^p\right] \le C_{p,T} \left[\left( 1 + \int_{0}^{T}\mathbb{E}\left[ \| x_0(s) \|_{\mathbb{H}}^p \right] \mu (ds) \right)\vee\left(\sup_{u \in [0,T]}\mathbb{E}\left[\left( |b_u|^{p} + |a_u|^{\frac{p}{2}} \right) \right]\right)\right]|t^\prime-t|^{p(\delta \wedge\theta\wedge \widehat\theta)}.
	\end{equation*}
	Using Lemma \ref{lm:growth} we note that:
	\begin{align*}
		&\, \sup_{u \in [0,T]}\mathbb{E}\left[\left( |b_u|^{p} + |a_u|^{\frac{p}{2}} \right) \right] = \sup_{u \in [0,T]} \mathbb{E}\left[\left( \|b(u,X_{\cdot\wedge u})\|^p_\mathbb{H} + \|\sigma(u,X_{\cdot\wedge u})\|^p_\mathbb{\tilde H} \right) \right]  \\
		&\hspace{2cm}\leq 2 C^{\prime}_{b, \sigma, T}  \left(1 + \sup_{u \in [0,T]}\left( \int_{0}^u \mathbb{E}\left[\| X_{s\wedge u} \|_{\mathbb{H}}^p \right]\, \mu (ds) \right)\right) \leq 2 C^{\prime}_{b, \sigma, T}  \left(1 + \int_{0}^T \mathbb{E}\left[\| X_{s\wedge T} \|_{\mathbb{H}}^p \right]\, \mu (ds)\right)
	\end{align*}
	By the bound~\eqref{eq:bound_X} of Theorem\ref{Thm:pathExistenceUniquenes}, it holds that:
	\[\int_{0}^T \mathbb{E}\big[\| X_s\|_{\mathbb{H}}^p \big]\mu (ds) \leq C \left(1 + \int_{0}^T \mathbb{E}\left[ \| x_0(t) \|_{\mathbb{H}}^p \right] \mu(dt)\right)\]
	From this, it follows that the quantities  $a$ and $b$ are bounded, as shown by the inequality:
	\[\sup_{u \in [0,T]}\mathbb{E}\left[\left( |b_u|^{p} + |a_u|^{\frac{p}{2}} \right) \right] \leq  2 C^{\prime}_{b, \sigma, T} (1 + C)\left(1 + \int_{0}^T \mathbb{E}\left[ \| x_0(t) \|_{\mathbb{H}}^p \right] \mu(dt)\right) \]
	This leads us to the inequality~\eqref{eq:Lpincrements}.
	The desired conclusions about the \(L^p-\)  boundness of the \(a\)-H\"older seminorm on \([0, T]\) follow from [\cite{ZhangXi2010}, Theorem 2.10] or owing to Kolmogorov's continuity (Kolmogorov's C-tightness) criterion (see \cite[Theorem 2.1, p. 26, 3rd edition]{RevuzYor}( which can be derived directly by Garsia's inequality (cf. \cite{GRR1070})) \footnote{Let $ X$ be a stochastic process with values in the Polish metric space $(S, \rho)$. If there exist positive constants $\alpha,\beta, c > 0 $ such that \(\mathbb E \left[ \rho( X_s, X_t)\right]^\alpha \le  c | s - t| ^{\beta+d}, \quad s,t \in \mathbb R^d\),
		then $X$ admits a continuous modification and there exists a modification whose paths are H\"{o}lder continuous of order $\gamma$, for every $\gamma \in (0, \frac{\beta}{\alpha})$.} or \cite[Lemma 44.4, Section IV.44, p.100]{RogersWilliamsII}).
	
	That is \( t \mapsto X_t \) admits a H\"older continuous modification (still denoted \( X \) in lieu of \( \tilde{X} \) up to a \( \mathbb{P} \)-indistinguishability), and  for any \( p > 0 \), \( T > 0 \), and some \( a \in \big(0,(\delta \wedge\theta\wedge \widehat\theta) -\frac 1 p\big) \), we have:
	\begin{equation}
		\mathbb{E}\left(\sup\limits_{t\neq t'\in[0,T]}\frac{\|X_{t^\prime}-X_t\|_\mathbb{H}^p}{|t'-t|^{ap}}\right) \leqslant C_{T,p,a}\left( 1 + \int_{0}^{T}\mathbb{E}\left[ \| x_0(s) \|_{\mathbb{H}}^p \right] \mu (ds) \right).
	\end{equation}
	%  Finally, if $a$ and $b$ are locally bounded, choose stopping times $\tau_n \to \infty$ such that $a$ and $b$ are bounded on $[0,\tau_n]$. Then \(
	%  X^n := X\,\bm{1}_{[0,\tau_n]} \)
	%  has a version which is H\"older continuous of any order $a < \delta \wedge\theta\wedge \widehat\theta$, and $X_t = X^n_t$ almost surely on $\{ t \leq \tau_n \}$ for each $t$.
	\smallskip
	\noindent {\bf Remark: }	
	{\em A variant of the Kolmogorov's C-tightness criterion in the spirit of Garsia-Rodemich-Rumsey inequality.}
	An alternative approach to proving the above estimate would involve the use of the Garsia-Rodemich-Rumsey inequality; see Lemma 1.1 in \cite{GRR1070} with \( \psi(u) = |u|^p \) and \( p(u) = |u|^{q + \frac{1}{p}} \) for \( q > \frac{1}{p} \). It states that for a continuous function \( f \) on \( \mathbb{R}_+ \), there exists a constant \( C_{p,q} > 0 \) such that for any \( x_2 > x_1 \geq 0 \),
	\[
	|f(x_2) - f(x_1)| \leq C_{p,q} \cdot |x_2 - x_1|^{q - \frac1p}
	\left( \int_{x_1}^{x_2} \int_{x_1}^{x_2} \frac{|f(t) - f(r)|}{|t - r|^{1 + pq }} \, dr \, dt \right)^{\frac1p}.
	\]
	
	Before stating the result, we recall the following definition. For \( a \in (0, 1] \), the \(a\)-H\"older coefficient of a real-valued function \( f \) or its \(a\)-H\"older seminorm on \([0, T]\) is defined by
	\[
	\|f\|_{C^{a}_{T}} := \sup_{0 \leq x_1 < x_2 \leq T} \frac{|f(x_1) - f(x_2)|}{|x_1 - x_2|^{a}}.
	\]
	
	Let \(a \in (0,1]\). We aim to apply the above GRR inequality to the process \( X \) (see for example \cite[Theorem F.1 (GRR lemma) ]{JouPag22}) for large values of \( p \), namely \(p > p_a := \frac{1}{\delta \wedge \vartheta \wedge \widehat \theta - a}\), and \( q = \tfrac{1}{p} + a \).
	We now have:
	\[
	\| X \|_{C^a_{T}}^p
	= \sup_{0 \leq t < t' \leq T} \frac{\|X_{t'} - X_t\|_{\mathbb{H}}^p}{|t' - t|^{pa}}
	\leq C \cdot \int_0^{T} du \int_0^{T} \frac{|X_u - X_r|^p}{|u - r|^{pa + 2}} \, dr.
	\]
	Taking expectations on both sides of the preceding inequality, and then applying Fubini's theorem along with equation~\eqref{eq:Lpincrements}, we obtain:
	\begin{align*}
		&\,\mathbb{E} \left[  \sup_{0 \leq t < t' \leq T} \frac{\|X_{t'} - X_t\|^p_{\mathbb{H}}}{|t' - t|^{pa}} \right] \leq C \cdot C_{p,T} \cdot \left( 1 + \int_{0}^{T}\mathbb{E}\left[ \| x_0(s) \|_{\mathbb{H}}^p \right] \mu (ds) \right) \cdot \int_0^{T} du \int_0^{T} |u - r|^{p(\delta \wedge \vartheta \wedge \widehat \theta) - pa - 2} \, dr \nonumber \\
		&\hspace{.3cm}\leq \tilde C_{p,T} \cdot (1 + T)^{p(\delta \wedge \vartheta \wedge \widehat \theta - a)}\cdot \left( 1 + \int_{0}^{T}\mathbb{E}\left[ \| x_0(s) \|_{\mathbb{H}}^p \right] \mu (ds) \right)\leq C_{p,T, \delta,\widehat \theta, \vartheta,a} \cdot \left( 1 + \int_{0}^{T}\mathbb{E}\left[ \| x_0(s) \|_{\mathbb{H}}^p \right] \mu (ds) \right).
	\end{align*}
	We then conclude that the bound holds uniformly for all \( T \geq 0 \).
	
	\smallskip
	\noindent {\sc Step~2}  {\em (Maximal Inequalities) :}  {\em Proof of the bound ~\eqref{eq:supLpbound} for $p\!\in (p_{eu}, +\infty)$}.
	Now, we can establish the maximal inequality. Let \( p \) be given. For each \( a \in \big(0, \delta \wedge \vartheta \wedge \widehat \theta \big) \),
	Noting that:
	\[
	\| X_t \|_{\mathbb{H}}^p = \| X_t - X_s + X_s \|_{\mathbb{H}}^p \leq 2^{p-1} \left( \| X_t - X_s \|_{\mathbb{H}}^p + \| X_s \|_{\mathbb{H}}^p \right),
	\]
	Equation~\eqref{eq:supLpbound} can be derived from Equation~\eqref{eq:Holderpaths} by setting \( s = 0 \), as follows:
	\[
	\sup_{0\leq t' \leq T} \| X_{t'} \|_{\mathbb{H}}^p \leq 2^{p-1}\left( \sup_{0\leq t' \leq T} \|X_{t'} - x_0(0)\|_{\mathbb{H}}^p + \|x_0(0)\|_{\mathbb{H}}^p \right) \leq 2^{p-1}\left( \| X \|_{C^a_{T}}^p T^{pa} + \int_{0}^{T} \| x_0(s) \|_{\mathbb{H}}^p\, \mu (ds) \right)
	\]
	Taking expectations on both sides of the inequality and then using the claim~\eqref{eq:Holderpaths} yields:
	\begin{align*}
		\mathbb{E} \left[ \sup_{0 \leq t' \leq T} \|X_{t'}\|_{\mathbb{H}}^p \right] &\leq 2^{p-1} \left( T^{pa} \mathbb{E} \left[ \sup_{0 \leq t < t' \leq T} \frac{\|X_{t'} - X_t\|_{\mathbb{H}}^p}{|t' - t|^{pa}} \right] +  \int_{0}^{T}\mathbb{E}\left[ \| x_0(s) \|_{\mathbb{H}}^p \right] \mu (ds) \right) \\
		& \le 2^{p-1}\left( T^{pa} C_{p,T, \delta,\widehat \theta, \vartheta,a} \cdot \left( 1 + \int_{0}^{T}\mathbb{E}\left[ \| x_0(s) \|_{\mathbb{H}}^p \right] \mu (ds) \right) + \int_{0}^{T}\mathbb{E}\left[ \| x_0(s) \|_{\mathbb{H}}^p \right] \mu (ds)  \right)\\
		&\leq C'_{p,T, \delta,\widehat \theta, \vartheta,a} \left( 1 + \int_{0}^{T}\mathbb{E}\left[ \| x_0(s) \|_{\mathbb{H}}^p \right] \mu (ds) \right)
	\end{align*}
	where the constant $C'_{p,T, \beta,\gamma_T, \vartheta,a} $ is given by: $C'_{p,T, \beta,\gamma_T, \vartheta,a}= 2^{p-1}\left(1+ T^{pa} C_{p,T, \beta,\gamma_T, \vartheta,a}\right)$. This give the announced result and we are done. \hfill$\Box$
	
	\medskip
	\noindent {\bf Proof of Theorem~\ref{prop:pathkernelvolt2}:} We note that: Let $T > 0$ and $p > p_{eu}:=\frac{1}{\delta} \vee\frac{1}{\theta} \vee \frac{1}{ \widehat\theta}$, the results stated in the Corollary are straigthforward consequence of Proposition~\ref{prop:pathkernelvolt} with Lemma \ref{lm:Uniformgrowth} and all the bounds are valid.
	
	To extend the $L^p$-pathwise regularity~\ref{eq:Holderpaths2}  to every \( p \in (0, +\infty) \) such that \(  \big\| \,\| x_0  \|_{T} \,\big\|_p < +\infty \),
	one can take advantage of the form of the control to apply the splitting Lemma~\ref{lem:gap} with \( p < \bar{p} \), where \(\bar{p} = \frac{1}{\delta \wedge \vartheta \wedge \widehat \theta - a} \vee p_{\text{eu}}\),
	and considering the functional \( \Phi: \mathcal C ([0, T],\R) \to \mathbb{H} \) defined by:
	
	$$\Phi(x,y) = \sup_{s \neq t \in [0, T]} \frac{\|x(t)-x(s)\|_{\mathbb{H}}}{|t - s|^a}.$$
	
	The marginal bound~\eqref{eq:Lpincrements2} can be extended likewise by considering the functional $\Phi: \mathbb{X} \to \mathbb{H}$ defined by:
	\(\Phi(x,y) = \|x(t)-x(s)\|_{\mathbb{H}}.\) \\
	
	The sup bound~\eqref{eq:supLpbound2} above, valid for all \(p > \bar{p}\), can be extended simirlarly to every \(p \in (0,+\infty) \) by applying the splitting argument (Lemma~\ref{lem:gap}), using the same \(\bar{p}\) as above and taking the functional \(\Phi(x,y) = \sup_{t \in [0,T]} \|x(t)\|_{\mathbb{H}}.\)\\ 
	This complete the proof and we are done. \hfill$\Box$ 
	
	\subsection{Proofs of Proposition~\ref{propXtilde} and Theorem~\ref{thm:Eulercvgce2} }
	\noindent {$\rhd$ {\em Preliminaries}. As a first preliminary, we will establish   the following lemma  which provides a control in $L^p(\P)$ of some quantities involving the integrator. It  will be used several times  throughout the appendices.    	
		\begin{Lemma} \label{lem:intHolcont} 
			For all \( r \in [0,T] \) and for any process \( X^r_\cdot \in \mathbb{X} \),
			there exists a constant \(C=C_{p,T}\) such that:
			\begin{align}
				\Big( \int_0^{\underline r} \mathbb{E} \left[\|\iota_{[\underline r]}\big(\bar X^{h}_{t_0:t_{[\underline r]}} \big)_u\|^p_{\mathbb{H}} \right] \mu(du)\Big)^\frac{1}{p} 
				&\leq C\,\mu([0,r])^\frac{1}{p} \sup_{0 \le k \le [\underline{r}]} \Big\| \| \bar{X}_{t_k} \|_{\mathbb H} \Big\|_p
				\label{eq:boundIntegrator} \\
				\Bigg( \int_{0}^{r} 
				\mathbb{E}\Big[\|\bar X_u - \iota_{[\underline r]}\big(\bar X_{t_0:t_{[\underline r]}} \big)_u\|_{\mathbb{H}}^p\Big] \,\mu(du)
				\Bigg)^{\frac{1}{p}} 
				&\leq 2\,\mu([0,r])^{\frac{1}{p}}
				\sup_{0\le u\le r}\Big\|\|\bar X_u - \bar{X}_{\underline u}\|_{\mathbb{H}}^p\Big\|_p
				\label{eq:boundHolIntegrator}
			\end{align}
		\end{Lemma}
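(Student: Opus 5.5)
Both bounds in Lemma~\ref{lem:intHolcont} reduce to pointwise-in-$u$ estimates, which are then integrated against $\mu$ on a set of mass at most $\mu([0,r])$. The main tool is Lemma~\ref{interpolatorprop}: the interpolator $\iota_k$ is piecewise affine between grid values, and its sup-norm on $[0,t^n_k]$ equals the maximum of its nodal values.

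\emph{First bound.} Set $k=[\underline r]$. By Definition~\ref{definterpolator}, for every $u\in[0,T]$ the value $\iota_k(\bar X^h_{t_0:t_k})_u$ is a convex combination of two consecutive nodal values $\bar X^h_{t_\ell},\bar X^h_{t_{\ell+1}}$ with $\ell+1\le k$. Beyond $t_k$ it is simply $\bar X^h_{t_k}$. Convexity of $z\mapsto\|z\|_{\mathbb H}^p$ for $p\ge1$ then gives
\[
\mathbb E\big[\|\iota_k(\bar X^h_{t_0:t_k})_u\|_{\mathbb H}^p\big]\le \max_{0\le \ell\le k}\mathbb E\big[\|\bar X^h_{t_\ell}\|_{\mathbb H}^p\big].
\]
For $p<1$ one uses $\|\lambda a+(1-\lambda)b\|^p\le \|a\|^p+\|b\|^p$ instead, which yields the same bound with a constant $2$; this is where $C=C_{p,T}$ comes from. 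Integrating in $u$ over $[0,\underline r]\subset[0,r]$ against $\mu$ and taking the $p$-th root gives \eqref{eq:boundIntegrator}. Note that one should not bound $\mathbb E[\max_\ell \|\bar X^h_{t_\ell}\|^p]$ here: only the pointwise convex-combination structure is used. That structure is exactly what lets the $\sup_k$ sit outside the $L^p$ norm.

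\emph{Second bound.} Again take $u\in[0,r]$. If $u\in[t_\ell,t_{\ell+1})$ with $\ell<k$, then
\[
\bar X_u-\iota_k(\bar X_{t_0:t_k})_u=\lambda(\bar X_u-\bar X_{t_\ell})+(1-\lambda)(\bar X_u-\bar X_{t_{\ell+1}}),\qquad \lambda=\tfrac{t_{\ell+1}-u}{h}.
\]
The first increment is $\bar X_u-\bar X_{\underline u}$. The second is $\bar X_u-\bar X_{t_{\ell+1}}$, the increment from $u$ to the next grid point. If instead $u\in[t_k,r]$, the difference is $\bar X_u-\bar X_{t_k}=\bar X_u-\bar X_{\underline u}$ directly. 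Minkowski's inequality then gives
\[
\big\|\|\bar X_u-\iota_k(\cdot)_u\|_{\mathbb H}\big\|_p\le \|\bar X_u-\bar X_{\underline u}\|_p+\|\bar X_u-\bar X_{t_{\ell+1}}\|_p .
\]
The forward term $\|\bar X_u-\bar X_{t_{\ell+1}}\|_p$ is the step I expect to be the main obstacle, since it is not literally of the form $\bar X_v-\bar X_{\underline v}$ with $v\le r$.

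To handle it, I will interpret the right-hand side of \eqref{eq:boundHolIntegrator} as the supremum of increments over pairs at distance at most one mesh step within $[0,r]$. Equivalently, I write $\bar X_u-\bar X_{t_{\ell+1}}=(\bar X_u-\bar X_{t_\ell})-(\bar X_{t_{\ell+1}}-\bar X_{t_\ell})$ and use that $t_{\ell+1}\le t_k\le r$. The second piece is then $\bar X_v-\bar X_{\underline v}$ with $v\to t_{\ell+1}^-$, controlled via the $L^p$-continuity of $\bar X$ from Proposition~\ref{propXtilde}, Property~3. This produces the factor $2$ (or a slightly larger absolute constant, which is harmless). I would state explicitly that the supremum is understood in this mesh-increment sense.

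Raising to the $p$-th power and integrating against $\mu$ over $[0,r]$ yields \eqref{eq:boundHolIntegrator}. Since in the downstream use (Property~4 and Theorem~\ref{thm:Eulercvgce2}) this quantity is in any case bounded by $C(1+\|\|x_0\|_T\|_p)h^{\delta\wedge\theta\wedge\theta^*}$ through Property~3, the precise value of the constant is immaterial.
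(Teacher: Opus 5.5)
Your proposal is correct and takes essentially the same route as the paper: you dominate the interpolator pointwise in $u$ by its two adjacent nodal values, where the paper uses $\|\bar X_{t_\ell}\|\vee\|\bar X_{t_{\ell+1}}\|\le\|\bar X_{t_\ell}\|+\|\bar X_{t_{\ell+1}}\|$ and your convexity argument gives a sharper constant, and for the second bound you use the splitting $\bar X_u-\iota_k(\cdot)_u=(\bar X_u-\bar X_{t_\ell})-\tfrac{u-t_\ell}{h}(\bar X_{t_{\ell+1}}-\bar X_{t_\ell})$, then integrate against $\mu$. Your left-limit step $\|\bar X_{t_{\ell+1}}-\bar X_{t_\ell}\|_p=\lim_{v\uparrow t_{\ell+1}}\|\bar X_v-\bar X_{\underline v}\|_p$ with $v<t_{\ell+1}\le r$ is more careful than the paper, which silently takes the supremum over the closed cell, and it already proves the bound with the supremum exactly as stated (Property~3 relies only on the first bound, so there is no circularity); so no reinterpretation is needed, and keeping the weights $\lambda,1-\lambda$ gives the constant $2$ exactly.
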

		\noindent {Proof }
		Let  \( r \in [0,T] \) and consider \( X^r_\cdot \in \mathbb{X} \).
		\noindent {\sc Step~1}
		For all \(\ell=0,...,n\) , \(\forall \, u \in [t_{\ell},t_{\ell+1}]\subset [0,r]\),\\ 
		% \( \|\iota_{[\underline r]}\big(\bar X^{h}_{t_0:t_{[\underline r]}} \big)_u\|_{\mathbb{H}} \leq \| \bar{X}_{t_{\ell}} \|_{\mathbb H} \vee \| \bar{X}_{t_{\ell+1}} \|_{\mathbb H} \leq \| \bar{X}_{t_{\ell}} \|_{\mathbb H}+ \| \bar{X}_{t_{\ell+1}} \|_{\mathbb H}  \) 
		\[ \|\iota_{[\underline r]}\big(\bar X^{h}_{t_0:t_{[\underline r]}} \big)_u\|_{\mathbb{H}} \leq \| \bar{X}_{t_{\ell}} \|_{\mathbb H} \vee \| \bar{X}_{t_{\ell+1}} \|_{\mathbb H} \leq \| \bar{X}_{t_{\ell}} \|_{\mathbb H}+ \| \bar{X}_{t_{\ell+1}} \|_{\mathbb H} \]
		so that, \(\forall\, r>0\),
		\begin{equation*}
			\int_0^{\underline r} \mathbb{E} \left[\|\iota_{[\underline r]}\big(\bar X^{h}_{t_0:t_{[\underline r]}} \big)_u\|^p_{\mathbb{H}} \right] \mu(du) \leq \sum_{\ell=0}^{\underline r-1}\int_{t_\ell}^{t_{\ell+1}}\mathbb{E} \left[\|\iota_{[\underline r]}\big(\bar X^{h}_{t_0:t_{[\underline r]}} \big)_u\|^p_{\mathbb{H}} \right] \mu(du) \leq 2^p \mu([0,\underline r]) \sup_{0 \le k \le [\underline{r}]} \mathbb{E} \left[\| \bar{X}_{t_k} \|_{\mathbb H}^p\right]
		\end{equation*}
		Alternatively, from Equation~\eqref{eq:interpolator}, \(\forall \, u \in [0,\underline{r}]\),
		\begin{align*}
			&\, \mathbb{E} \left[\|\iota_{[\underline r]}\big(\bar X^{h}_{t_0:t_{[\underline r]}} \big)_u\|^p_{\mathbb{H}} \right] \leq (2[\underline r])^{p-1}\sum_{\ell=1}^{\underline r} \mbox{\bf 1}_{[t_{\ell-1},t_\ell)}(u) \left(\frac{(t_{\ell}-u)^p}{h^p}\mathbb{E} \left[\| \bar{X}_{t_{\ell-1}} \|_{\mathbb H}^p\right] + \frac{(u-t_{\ell-1})^p}{h^p} \mathbb{E} \left[\| \bar{X}_{t_\ell} \|_{\mathbb H}^p\right]\right)\\
			&\text{by integrating,}\;
			\Big(	\int_0^{\underline r} \mathbb{E} \left[\|\iota_{[\underline r]}\big(\bar X^{h}_{t_0:t_{[\underline r]}} \big)_u\|^p_{\mathbb{H}} \right] \mu(du)\Big)^\frac1p \leq \Big(2^p[\underline r]^{p-1} \sup_{0 \le k \le [\underline{r}]} \mathbb{E} \left[\| \bar{X}_{t_k} \|_{\mathbb H}^p\right] \sum_{\ell=1}^{\underline r}\int_{t_{\ell-1}}^{t_{\ell}} \mu(du)\Big)^\frac1p\\
			&\hspace{9.5cm}= 2 [\underline r]^{1-\frac1p} \mu([0,\underline r])^\frac1p  \sup_{0 \le k \le [\underline{r}]} \Big\| \| \bar{X}_{t_k} \|_{\mathbb H} \Big\|_p
		\end{align*}
		Consequently, in both cases, there exists a constant \(C=C_{p,T}\) such that:
		\[\Big(	\int_0^{\underline r} \mathbb{E} \left[\|\iota_{[\underline r]}\big(\bar X^{h}_{t_0:t_{[\underline r]}} \big)_u\|^p_{\mathbb{H}} \right] \mu(du)\Big)^\frac1p \leq C\,\mu([0,T])^\frac1p  \sup_{0 \le k \le [\underline{r}]} \Big\| \| \bar{X}_{t_k} \|_{\mathbb H} \Big\|_p\]
		\medskip 
		\noindent {\sc Step~2}
		For all \(u \in [t_\ell,t_{\ell+1}] \subset [0,r]\),
		\begin{align*}
			&\,	\mathbb{E}\Big[\|\bar X_u - \iota_{[\underline r]}\big(\bar X_{t_0:t_{[\underline r]}} \big)_u\|_{\mathbb{H}}^p\Big]
			\leq 2^{p-1}\Big(
			\mathbb{E}\|\bar X_u - \bar X_{t_\ell}\|_{\mathbb{H}}^p
			+
			\mathbb{E}\| \iota_{[\underline r]}\big(\bar X_{t_0:t_{[\underline r]}} \big)_u-\bar X_{t_\ell}\|_{\mathbb{H}}^p
			\Big) \\
			&\hspace{3.5cm}\leq 2^{p-1}\Big(
			\mathbb{E}\|\bar X_u - \bar X_{t_\ell}\|_{\mathbb{H}}^p
			+
			\mathbb{E}\| \bar X_{t_{\ell+1}}-\bar X_{t_\ell}\|_{\mathbb{H}}^p
			\Big) \leq 2^p 
			\sup_{u\in[t_\ell,t_{\ell+1}]} 
			\mathbb{E}\big[\|\bar X_u - \bar{X}_{\underline u}\|_{\mathbb{H}}^p\big]
		\end{align*}
		So that, by integrating, we have:
		\begin{align*}
			% \text{So that, integrating,}\;
			\Bigg(
			\int_{0}^{r} 
			\mathbb{E}\Big[\|\bar X_u &\,- \iota_{[\underline r]}\big(\bar X_{t_0:t_{[\underline r]}} \big)_u\|_{\mathbb{H}}^p\Big] \,\mu(du)
			\Bigg)^{\tfrac{1}{p}}  = \Bigg(
			\sum_{l=0}^{r-1} \int_{t_\ell}^{t_{\ell+1}} 
			\mathbb{E}\Big[\|\bar X_u - \iota_{[\underline r]}\big(\bar X_{t_0:t_{[\underline r]}} \big)_u\|_{\mathbb{H}}^p\Big] \,\mu(du)
			\Bigg)^{\tfrac{1}{p}} \\
			&\leq 
			2 \left(
			\sup_{0 \le u \le r}\mathbb{E}\big[\|\bar X_u - \bar{X}_{\underline u}\|_{\mathbb{H}}^p\big]
			\cdot \int_0^r \mu(dr)
			\right)^{\tfrac{1}{p}} = 2\,\mu([0,r])^{\tfrac{1}{p}}
			\sup_{0\le u\le r}\Big\|\|\bar X_u - \bar{X}_{\underline u}\|_{\mathbb{H}}^p\Big\|_p.
		\end{align*}
		\paragraph{Proof of Proposition~\ref{propXtilde} (Some Properties of the  continuous extension process of the genuine interpolated $K$-integrated Euler scheme)}
		%\medskip 
		%	\noindent {\bf Proof of Proposition~\ref{propXtilde} (Some Properties of the  continuous extension process of the interpolated $K$-integrated Euler scheme).}\\
		\noindent {\sc Property~{\bf 1}.} \label{pp:1} {\em $L^p$-integrability and pathwise continuity of $\bar X_t$, $p>0$}. 
		We will drop the superscript $h$ in $X^h$ and n in $t_k^n$ for simplicity.
		{First we prove that $\bar X_{t^n_k}\!\in L^p(\P)$ by induction on $k$.  If $\bar X_{t^n_{\ell}}\!\in L^p(\P)$ for $\ell=0, \ldots,k-1$, it follows easily from equation~\eqref{eq:discretescheme} that $\bar X_{t^n_k}\!\in L^p(\P)$ since both $b$ and $\sigma$ have linear growth on their space variable uniformly w.r.t. their time variable as a consequence of~\eqref{eq:coefficient_hoelder2}(or~\eqref{eq:HolLipbsig2})  and  $\bar X_{t^n_{\ell-1}}$ and the Gaussian Wiener integral $\displaystyle \int_{t^n_{\ell-1}}^{t^n_\ell}K_2(t^n_k,s)dW_s$ are independent. The induction relies on the Minkowski inequality when $p\ge 1$ and on its subadditive counterpart for $\|\cdot\|_p^p$ when $p\!\in (0,1)$. }
		
		For the continuous time   $K$-integrated Euler scheme~\eqref{eq:def_continuous_2}, one derives likewise  from~\eqref{eq:Eulergen2bis} and Lemma \ref{lem:bounds} that, as $\bar X_{t_\ell}\!\in L^p(\P)$ for every $\ell=0,\ldots,k$, one has $\sup_{t\in[t_k,t_{k+1}]}\|\bar X_t\|_p<+\infty$. As a consequence, $\sup_{t\in[0,T]}\|\bar X_t\|_p<+\infty$. 
		
		\smallskip
		\noindent 
		As for the pathwise continuity, first note that   the functions $\left(\int_{t^n_{\ell}}^{t^n_{\ell+1}}  K_1(t,s)ds\right)_{t\in[t^n_{\ell+1},T]}$, $\ell=0, \ldots,n-1$,  are continuous owing to~$({\cal K}^{cont}_{\theta})$.  So are       $\left(\int_{t^n_\ell}^t K_1(t,s)ds\right)_{t\in[t^n_\ell,t^n_{\ell+1}]}$, $\ell=0,\ldots,n-1$, owing to either~$(\widehat {\cal K}^{int}_{T,\widehat \theta})$ or~$({\cal K}^{int}_{\beta})$  and $({\cal K}^{cont}_{\theta})$. Applying the above Lemma~\ref{lem:bounds} with  $\widetilde H\equiv 1$ between $a=t^n_k$ and $b=t^n_{k+1}$ with $k\in\{0,\cdots,n-1\}$ and $p>\frac{1}{\theta\wedge\theta^*}$ yields that the processes $\left(\int_{t^n_k}^{t\wedge t^n_{k+1}} K_2(t,s)dW_s\right)_{t\in[t^n_k,T]}$ have a continuous modification (null at $t^n_k$) owing to  Kolmogorov's criterion (see~\cite[Theorem 2.1, p.26, $3^{rd}$ edition]{RevuzYor}). Then one concludes that the Interpolated $K$-integrated Euler scheme has a continuous modification.
		
		\medskip 
		\noindent {\sc Property~{\bf ~2}.} \label{pp:2} {\em Moment control of $\bar X_t$, $p\ge 2$}.
		Set \(\bar f_p(t) = \sup_{0 \le s \le t} \Big\| \|\bar X_s\|_{\mathbb{H}} \Big\|_p.\)
		We already know that $ \bar f_p(T) < \infty$, we now aim to derive an estimate that just depend on $\Big\| \|x_0 \|_{T} \Big\|_p$ and the number \(n\) of time-steps. 
		For all \( s \in [0,t] \), we have using the generalized Minkowski  for the deterministic term
		
		\begin{align*}
			&\ \Big\| \left\| \bar X_s
			\right\|_{\mathbb{H}} \Big\|_p
			\leq \Big\| \left\| x_0(s) \right\|_{\mathbb{H}} \Big\|_p +  \int_0^s K_1(s,r)  \Big\| \left\|b \Big(\underline r, \iota_{[\underline r]}\big(\bar X^{h}_{t_0:t_{[\underline r]}} \big) \Big)\right\|_{\mathbb{H}} \Big\|_p\,dr  \\
			&\qquad\qquad\qquad\qquad\qquad + \Big\| \big\| \int_0^s K_2(s,r)\sigma \Big(\underline r, \iota_{[\underline r]}\big(\bar X^{h}_{t_0:t_{[\underline r]}} \big) \Big)\,dW_r \big\|_{\mathbb{H}} \Big\|_p \leq \Big\| \left\| x_0(s) \right\|_{\mathbb{H}} \Big\|_p\\ &\qquad+  \int_0^s K_1(s,r)  \Big\| \left\|b \Big(\underline r, \iota_{[\underline r]}\big(\bar X^{h}_{t_0:t_{[\underline r]}} \big) \Big)\right\|_{\mathbb{H}} \Big\|_p\,dr + C_{p}^{BDG} \left[ \int_0^s K_2(s,r)^2 \Big\| \big\|\sigma \Big(\underline r, \iota_{[\underline r]}\big(\bar X^{h}_{t_0:t_{[\underline r]}} \big) \Big)\big\|_{\tilde{\mathbb{H}}} \Big\|_p^2 \, dr \right]^{\tfrac12}.
		\end{align*}
		where in the last inequality, we used the \(L^p\)-Burkholder--Davis--Gundy (BDG) inequality.

		\noindent Now, under Assumption~$({\cal LH}_{\gamma})$, $b$ and $\sigma $ have  linear growth on $x$ uniformly in $t\!\in \mathbb{T}$. It follows by applying Fubini Tonelli theorem that,
		\begin{align*}
			\Big\| \big\|b \Big(\underline r, \iota_{[\underline r]}\big(\bar X^{h}_{t_0:t_{[\underline r]}} \big) \Big)\big\|_{\mathbb{H}} \Big\|_p & \leq C_{b,\sigma,T} \left(1 + \Big( \int_0^{\underline r} \mathbb{E} \left[\|\iota_{[\underline r]}\big(\bar X^{h}_{t_0:t_{[\underline r]}} \big)_u\|^p_{\mathbb{H}} \right] \mu(du) \Big)^\frac1p\right)\\
			&\leq C_{b,\sigma,T}\,\left(1\vee C \mu([0,T])^\frac1p\right)\,\left(1+\sup_{0 \le k \le [\underline{r}]} \Big\| \| \bar{X}_{t_k} \|_{\mathbb H} \Big\|_p\right).
		\end{align*}
		where we used~\eqref{eq:boundIntegrator} of Lemma~\ref{lem:intHolcont} in the last inequality.
		As $\underline r \le r$ and by the very definition of $[\underline{r}]$,   we have: 
		\[ \sup_{0 \le k \le [\underline{r}]}\Big\| \| \bar{X}_{t_k} \|_{\mathbb H} \Big\|_p \le  \sup_{u \in [0,r]} \Big\| \| \bar{X}_{u} \|_{\mathbb H} \Big\|_p:=\bar f_p(r) \]
		Proceeding analogously for the diffusion coefficient and combining all the above estimates, we obtain the following by setting \(C_1=C_{b,\sigma,T}\,\,\left(1\vee C \mu([0,T])^\frac1p\right)\) and \(C_2=C_{p}^{BDG}\,C_{b,\sigma,T}\,\,\left(1\vee C \mu([0,T])^\frac1p\right)\):
		{\small
			\begin{align*}
				\Big\| \left\| \bar X_s\right\|_{\mathbb{H}} \Big\|_p
				&\leq \Big\| \left\| x_0(s) \right\|_{\mathbb{H}} \Big\|_p +  C_1 \left(\varphi_1(s) + \int_0^s K_1(s,r)  \bar f_p(r)\,dr\right) + C_2 \left(\psi_2(s) + \Big(\int_0^s K_2(s,r)^2 \bar f_p^2(r)\, dr\Big)^\frac12\right)\\
				&\leq \Big\| \left\| x_0(s) \right\|_{\mathbb{H}} \Big\|_p + C_1 \varphi_1(s)+ C_2 \psi_2(s) + \sum_{i \in \{1,2\}} C_i \; \chi_i(s) \left( 
				\frac{\rho}{A^{\frac1\rho}}\,\bar f_p(s) 
				+ (1-\rho)\,A^{\frac{1}{1-\rho}} \int_0^s \bar f_p (r)\,dr
				\right).
			\end{align*}
		}
		\noindent where we applied relation~\eqref{eq:VoltIntegral} from Lemma~\ref{Lem:BoundVoltIntegral} for all \( A > 0 \) for \(i=1\) and \(i=2\) as \(\bar f_p\) is non-decreasing with \(\rho \in (0,1)\), and \(\chi_1 = \varphi_{\frac{2\beta}{\beta+1}}\) and \(\chi_2 = \psi_{2\beta}\) as defined in equation ~\eqref{eq:notat2}.
		Now, passing to the supremun on \([0,t]\) while taking advantage of the fact that \(\bar f_p\) is non-decreasing, we derive that
		\begin{align*}
			&\, \sup_{0 \le s \le t} \Big\| \|\bar X_s\|_{\mathbb{H}} \Big\|_p:= \bar f_p(t) \leq  C^\prime + \sup_{0 \le s \le t} \Big\| \left\| x_0(s) \right\|_{\mathbb{H}} \Big\|_p + C\,\left( 
			\frac{\rho}{A^{\frac1\rho}}\,\bar f_p(t) 
			+ (1-\rho)\,A^{\frac{1}{1-\rho}} \int_0^t \bar f_p(r)\,dr
			\right).\\
			&\text{with}\; C^\prime = C_1 \varphi_1^*(s)+ C_2 \psi_2^*(s) <\infty \;\text{and}\;
			C = C_{K_1,K_2,p,\beta,b,\sigma,T} = C_1\,\varphi^{*}_{\frac{2\beta}{\beta+1}} 
			+ C_2\,\psi^{*}_{2\beta}<\infty \;\text{owing to }\; ({\cal K}^{int}_{\beta}).
		\end{align*}
		We now choose \(A\) large enough, namely \(A = (2\rho\, C)^{\rho}\), so that 
		\(1 - \rho\, C\,A^{-\frac1\rho} = \frac{1}{2} < 1\).
		Consequently, for every \(t \in [0,T]\),
		
		\centerline{$\bar f_p(t) \le 2\,\left(C^\prime+\Big\| \sup_{0 \le s \le t} \left\| x_0(s)\right\|_{\mathbb{H}} \Big\|_p\right)
			+ 2C\,(1-\rho)\,A^{\frac{1}{1-\rho}} \int_0^t \bar f_p(r)\,dr.$}
		\noindent One concludes by Gr\"onwall's lemma (See for example \cite[Lemma 7.2]{pages2018numerical}) that:
		\[
		\bar f_p(t) \le 2\,\left(C^\prime+\Big\| \left\| x_0 \right\|_{t} \Big\|_p\right) e^{K\,t},\;
		\text{with} \;
		K = K_{K_1,K_2,p,\beta,b,\sigma,T}
		= 2C(1 - \rho)\,A^{\frac1{1-\rho}}.
		\]
		To conclude, \(	\sup_{0 \le s \le T} \big\| \|\bar X_s\|_{\mathbb{H}} \big\|_p
		= \bar f_p(T) \le C^{(2)}\,\left(1+\big\| \| x_0 \|_{T} \big\|_p\right),\;
		C^{(2)}:= 2\,\max \big( 1,C^\prime)\, e^{K\,T} \ge 2.\) Note that by the
		same reasoning applied to the Volterra process, we also derive the result of Lemma~\ref{lem:supEbound} namely \(\sup_{0 \le s \le T} \Big\| \| X_s\|_{\mathbb{H}} \Big\|_p \le C^{(2)}\,\left(1+\big\| \| x_0 \|_{T} \big\|_p\right).\)
		
		\smallskip 
		\noindent {\sc Property~{\bf ~3}.}\label{pp:3}  {\em  Control of the increments $\left\| \|\bar X_t-\bar X_s\|_{\mathbb{H}} \right\|_p$ of the interpolated $K$-integrated Euler scheme (for $p\ge 2$)}. 
		{Our aim in this step  is to prove   that there exists a positive real constant  $C_{b,\sigma,p,T,K_1,K_2}$ not depending on $n$ such that 
			\[
			\forall\, s,\, t\!\in [0,T], \quad  \left\| \|\bar X_t-\bar X_s\|_{\mathbb{H}} \right\|_p\le C_{b,\sigma,p,T,K_1,K_2}(1+\left\| \|x_0 \|_{T} \right\|_p) |t-s|^{\delta \wedge \theta \wedge \theta^*}.
			\]
			We recall from the definition~\eqref{def:discretization_scheme}, the equation ~\eqref{eq:def_continuous_2}
			\begin{align*}
				&\ \bar X_t = \bar x_0(t) + \int_0^t K_1(t,s) b \Big(\underline s, \iota_{[\underline s]}\big(\bar X_{t_0:t_{[\underline s]}} \big) \Big)\, ds + \int_0^t K_2(t,s) \sigma \Big(\underline s, \iota_{[\underline s]}\big(\bar X_{t_0:t_{[\underline s]}} \big) \Big)\, d W_s, \quad t\in \mathbb{T}\\
				&\forall\;	0\leqslant s<t\leqslant T, \quad\; \bar X_t - \bar X_s =   (\bar x_0(t)-\bar x_0(s)) + \int_{0}^{t} K_1(t,u) H_u \, du - \int_{0}^{s} K_1(s,u) H_u \, du\\
				&\hspace{6cm}+ \int_{0}^{t} K_2(t,u) \widetilde H_u \, dW_u - \int_{0}^{s} K_2(s,u) \widetilde H_u \, dW_u
			\end{align*}
			where $H_s = b \Big(\underline s, \iota_{[\underline s]}\big(\bar X_{t_0:t_{[\underline s]}} \big)$ and $\widetilde H_s =  \sigma \Big(\underline s, \iota_{[\underline s]}\big(\bar X_{t_0:t_{[\underline s]}} \big)$. We note that, still under Assumption~$({\cal LH}_{\gamma})$, $b$ and $\sigma $ have  linear growth on $x$ uniformly in $t\!\in \mathbb{T}$. It follows by applying Fubini Tonelli theorem that,
			\begin{align*}
				&\,	\Big\| \big\|b \Big(\underline r, \iota_{[\underline r]}\big(\bar X^{h}_{t_0:t_{[\underline r]}} \big) \Big)\big\|_{\mathbb{H}} \Big\|_p \vee \Big\| \big\|b \Big(\underline r, \iota_{[\underline r]}\big(\bar X^{h}_{t_0:t_{[\underline r]}} \big) \Big)\big\|_{\mathbb{H}} \Big\|_p \leq C_{b,\sigma,T} \left(1 + \Big( \int_0^{\underline r} \mathbb{E} \left[\|\iota_{[\underline r]}\big(\bar X^{h}_{t_0:t_{[\underline r]}} \big)_u\|^p_{\mathbb{H}} \right] \mu(du) \Big)^\frac1p\right)\\
				&\leq C_{b,\sigma,T}\,\left(1\vee C \mu([0,T])^\frac1p\right)\,\big(1+\sup_{0 \le k \le [\underline{r}]} \Big\| \| \bar{X}_{t_k} \|_{\mathbb H} \Big\|_p\big) \leq C_{b,\sigma,T}\,\left(1\vee C \mu([0,T])^\frac1p\right)\,\big(1+\sup_{u \in [0,r]} \Big\| \| \bar{X}_{u} \|_{\mathbb H} \Big\|_p\big).
			\end{align*}
			where we used Equation~\eqref{eq:boundIntegrator} of Lemma~\ref{lem:intHolcont} in the penultimate inequality.	      	Consequently, calling upon Lemma~\ref{lem:bounds} and Property~{\bf 2} , it follows that when either $({\cal K}^{int}_{\beta})$  is in force or $\widehat{\cal K}^{cont}_{\widehat \theta}$ holds, we have:
			\[
			\begin{aligned}
				\left\| \|\bar X_t-\bar X_s\|_{\mathbb{H}} \right\|_p &\le \left\| \|\bar x_0(t)-\bar x_0(s) \|_{\mathbb{H}} \right\|_p \\
				&+  C_{p,T,K_1,K_2} \Big(\sup_{s\in [0,T]}\big\| \|b \Big(\underline s, \iota_{[\underline s]}\big(\bar X_{t_0:t_{[\underline s]}} \big) \|_{\tilde{\mathbb H}} \big\|_p   +\sup_{s\in [0,T]}\big\| \|\sigma \Big(\underline s, \iota_{[\underline s]}\big(\bar X_{t_0:t_{[\underline s]}} \big) \|_{\tilde{\mathbb H}} \big\|_p  \Big)(t-s)^{\theta \wedge\theta^*}\\
				&\overset{(\ref{assump:kernelVolterra} (iv))}{\operatorname*{\leq}} K_{T,p}\big(1+  \big\|\|\bar x_0\|_T\big\|_p\big)|t-s|^{\delta} 
				+ C_{p,T,K_1,K_2} \times 2 C_{b,\sigma, T}C^\prime\big(1+  \big\|\|\bar x_0\|_T\big\|_p\big)(t-s)^{\theta \wedge \theta^*}\\
				&\le C^{(3)}\,\big(1+  \big\|\|\bar x_0\|_T\big\|_p\big)(t-s)^{\delta \wedge \theta \wedge \theta^*},
			\end{aligned}
			\]
			where $C^{(3)} = K_{T,p} + 2C_{p,T,K_1,K_2}C_{b,\sigma, T}C^\prime$ with $ C_{p,T,K_1,K_2}$ from~\ref{lem:bounds} and \(C^\prime:=(1\vee C \mu([0,T])^\frac1p) (1+ C^{(2)})\).
		}
		
		\medskip
		\noindent{\sc Property~{\bf ~4}.} \label{pp:4}{\em % $L^p$-
			Rate of convergence of $\big\|\| X_t-\bar X_t\big\|_{\mathbb{H}}\|_p$  at fixed time $t$ for $p>p_{eu}:= \frac{1}{\theta}\vee \frac{1}{\theta^*}\vee\frac{1}{\delta}$}. For such values of $p$ we know from Proposition ~\ref{prop:pathkernelvolt2} that  Equation~\eqref{eq:pthvolterra} has  a unique  adapted  pathwise continuous solution $(X_t)_{t\in [0,T]}$.
		We recall from the definition~\eqref{def:discretization_scheme}, the equation ~\eqref{eq:def_continuous_2}
		\begin{align*}
			&\ \bar X_t = \bar x_0(t) + \int_0^t K_1(t,s) b \Big(\underline s, \iota_{[\underline s]}\big(\bar X_{t_0:t_{[\underline s]}} \big) \Big)\, ds + \int_0^t K_2(t,s) \sigma \Big(\underline s, \iota_{[\underline s]}\big(\bar X_{t_0:t_{[\underline s]}} \big) \Big)\, d W_s, \quad t\in \mathbb{T}.\\
			&\text{Then,}\; \forall t\!\geq0,\; \text{we have,}\; \P\mbox{-}a.s,\;  X_t - \bar{X}_t = x_0(t) - \bar{x}_0(t)
			+ \int_0^t K_1(t,s) \Big(b(s, X_{\cdot \wedge s}) - b \big(\underline{s}, \iota_{[\underline{s}]}\big(\bar{X}_{t_0:t_{[\underline{s}]}} \big)\big)\Big) ds \\
			&\hspace{7cm} + \int_0^t K_2(t,s) \Big(\sigma(s, X_{\cdot \wedge s}) - \sigma \big(\underline{s}, \iota_{[\underline{s}]}\big(\bar{X}_{t_0:t_{[\underline{s}]}} \big)\big)\Big) dW_s\\
			&= \int_0^t K_1(t,s) \Big(b(s, X_{\cdot \wedge s}) - b \big(\underline{s}, \iota_{[\underline{s}]}\big(\bar{X}_{t_0:t_{[\underline{s}]}} \big)\big)\Big) ds + \int_0^t K_2(t,s) \Big(\sigma(s, X_{\cdot \wedge s}) - \sigma \big(\underline{s}, \iota_{[\underline{s}]}\big(\bar{X}_{t_0:t_{[\underline{s}]}} \big)\big)\Big) dW_s.
		\end{align*}
		We denote by $ A_1(t)$ and $ A_2(t)$ the two terms of the sum  in the right-hand side  of the above equation and we set \(g_p(t) = \displaystyle \sup_{s\in [0,t]} \big\|\| X_s-\bar X_s\big\|_{\mathbb{H}}\|_p, \quad t\!\in \mathbb{T}.\)
		This non-decreasing function is finite owing to Proposition~\ref{prop:pathkernelvolt2} and Property~{\bf 1} since $x_0\!\in L^p(\P)$. We straightforwardly deduce that 
		\begin{equation}
			g_p(t)\le \sup_{s\le t}\big(\big\|\| A_1(s)\big\|_{\mathbb{H}}\|_p + \big\|\| A_2(s)\big\|_{\mathbb{H}}\|_p\big).\label{eq:majogpt1}
		\end{equation}
		Let \( [b]_{H,L} \) denote the mixed H\"older-Lipschitz coefficient for the function \( b \) as given by \(({\cal LH}_{\gamma})\). We have the following inequality for all \( s \in [0,t] \) and for any process \( X^s_\cdot \in \mathbb{X} \):
		\begin{align*}
			&\, \left\|\big\| b(s, X_{\cdot \wedge s}) - b \big(\underline{s}, \iota_{[\underline{s}]}\big(\bar{X}_{t_0:t_{[\underline{s}]}} \big)\big)\big\|_{\mathbb{H}}\right\|_p \le [b]_{H,L}\left( \left\|\Big( \int_{0}^s \| X_{u \wedge s}-\iota_{[\underline{s}]}\big(\bar{X}_{t_0 : t_{[\underline{s}]}}\big)_u\|_{\mathbb{H}}^p \mu (du) \Big)^{\frac1p}\right\|_p \right. \\
			&\hspace{2.3cm} + \left. \Big(1+\left\|\big( \int_{0}^{\underline{s}} \| \iota_{[\underline{s}]}\big(\bar{X}_{t_0 : t_{[\underline{s}]}}\big)_u\|_{\mathbb{H}}^p\, \mu (du) \big)^{\frac1p}\right\|_p+ \left\|\big( \int_{0}^s \| X_{u \wedge s}\|_{\mathbb{H}}^p\, \mu (du) \big)^{\frac1p}\right\|_p\Big)|s-\underline{s}|^{\gamma} \right)\;\text{where}\\
			% \end{align*}
		%\begin{align*}
		&\ \left\|\Big( \int_{0}^s \| X_{u \wedge s}-\iota_{[\underline{s}]}\big(\bar{X}_{t_0 : t_{[\underline{s}]}}\big)_u\|_{\mathbb{H}}^p \mu (du) \Big)^{\frac1p}\right\|_p = \Big( \int_{0}^s \mathbb{E} \left[\| X_{u \wedge s}-\iota_{[\underline{s}]}\big(\bar{X}_{t_0 : t_{[\underline{s}]}}\big)_u\|_{\mathbb{H}}^p\right] \mu (du) \Big)^{\frac1p}\\
		&\qquad \leq \Big( \int_{0}^s \mathbb{E} \left[\| X_{u \wedge s}-\bar X_{u \wedge s}\|_{\mathbb{H}}^p\right] \mu (du) \Big)^{\frac1p} + \Big( \int_{0}^s \mathbb{E} \left[\| \bar X_{u \wedge s}-\iota_{[\underline{s}]}\big(\bar{X}_{t_0 : t_{[\underline{s}]}}\big)_u\|_{\mathbb{H}}^p\right] \mu (du) \Big)^{\frac1p}\\
		&\quad\overset{~\eqref{eq:boundHolIntegrator}}{\operatorname*{\leq}} \mu([0,s])^\frac1p  \sup_{0 \le u \le s} \Big\| \|X_{u}- \bar{X}_{u} \|_{\mathbb H} \Big\|_p+ 2\,\mu([0,s])^{\frac{1}{p}}
		\sup_{0\le u\le s}\Big\|\|\bar X_u - \bar{X}_{\underline u}\|_{\mathbb{H}}^p\Big\|_p \; \text{and for the last two terms,}\\
		% 	\end{align*}
	%	\begin{align*}
		&\ \left\|\big( \int_{0}^{\underline{s}} \| \iota_{[\underline{s}]}\big(\bar{X}_{t_0 : t_{[\underline{s}]}}\big)_u\|_{\mathbb{H}}^p\, \mu (du) \big)^{\frac1p}\right\|_p + \left\|\big( \int_{0}^s \| X_{u \wedge s}\|_{\mathbb{H}}^p\, \mu (du) \big)^{\frac1p}\right\|_p = \Big( \int_{0}^{\underline{s}} \mathbb{E} \left[\| \iota_{[\underline{s}]}\big(\bar{X}_{t_0 : t_{[\underline{s}]}}\big)_u\|_{\mathbb{H}}^p\right] \mu (du) \Big)^{\frac1p}\\
		&\qquad\quad+ \Big( \int_{0}^{s} \mathbb{E} \left[\|X_{u\wedge s}\big)_u\|_{\mathbb{H}}^p\right] \mu (du) \Big)^{\frac1p} \overset{~\eqref{eq:boundIntegrator}}{\operatorname*{\leq}} C\,\mu([0,T])^\frac1p  \sup_{0 \le k \le [\underline{s}]} \Big\| \| \bar{X}_{t_k} \|_{\mathbb H} \Big\|_p + \mu([0,s])^\frac1p  \sup_{0 \le u \le s} \Big\| \| X_{u} \|_{\mathbb H} \Big\|_p \\
		&\leq (1\vee C)\,\mu([0,s])^\frac1p  \sup_{0 \le u \le s} \big(\big\| \| \bar{X}_{u} \|_{\mathbb H} \big\|_p + \big\| \| X_{u} \|_{\mathbb H} \big\|_p \big) \leq 2(1\vee C)\,\mu([0,t])^\frac1p  \sup_{ u \in [0,t]}\big(\big\| \| \bar{X}_{u} \|_{\mathbb H} \big\|_p \vee \big\| \| X_{u} \|_{\mathbb H} \big\|_p \big)
	\end{align*}
	Now, calling upon Lemma~\ref{lem:supEbound} and Property {\bf 2}, then, setting \([b]^{\prime}=[b]_{H,L}\left(1+2\,(1\vee C)\,\mu([0,T])^\frac1p C^{(2)}\right)\), we have the following bound
	\begin{equation}\label{eq:lipHolb}
		\big\|\| b(s, X_{\cdot \wedge s}) - b \big(\underline{s}, \iota_{[\underline{s}]}\big(\bar{X}_{t_0:t_{[\underline{s}]}} \big)\big)\|_{\mathbb{H}}\big\|_p \le [b]^{\prime} \Big(\big(1+ \big\| \| x_0 \|_{t} \big\|_p\big)h^\gamma + g_p(s) +
		\sup_{0\le u\le s}\big\|\|\bar X_u - \bar{X}_{\underline u}\|_{\mathbb{H}}^p\big\|_p \Big)
	\end{equation}
	Similarly, using the mixed H\"older-Lipschitz coefficient \( [\sigma]_{H,L} \) for the function \( \sigma \) as given by \(({\cal LH}_{\gamma})\) and proceeding likewise, we have the following where we define \([\sigma]^{\prime}=[\sigma]_{H,L}\left(1+2\,(1\vee C)\,\mu([0,T])^\frac1p C^{(2)}\right)\).
	\begin{equation}\label{eq:lipHolsigma}
		\big\|\| \sigma(s, X_{\cdot \wedge s}) - \sigma \big(\underline{s}, \iota_{[\underline{s}]}\big(\bar{X}_{t_0:t_{[\underline{s}]}} \big)\big)\|_{\tilde{\mathbb{H}}}\big\|_p \le [\sigma]^{\prime} \Big(\big(1+ \big\| \| x_0 \|_{t} \big\|_p\big)h^\gamma + g_p(s) +
		\sup_{0\le u\le s}\big\|\|\bar X_u - \bar{X}_{\underline u}\|_{\mathbb{H}}^p\big\|_p \Big)
	\end{equation}
	For the first term $A_1(t)$ in equation~\eqref{eq:majogpt1}, the   generalized Minkowski inequality implies the following 
	$
	\big\|\| b(s, X_{\cdot \wedge s}) - b \big(\underline{s}, \iota_{[\underline{s}]}\big(\bar{X}_{t_0:t_{[\underline{s}]}} \big)\big)\|_{\mathbb{H}}\big\|_p \le [b]^{\prime} \Big(\big(1+ \big\| \| x_0 \|_{T} \big\|_p\big)h^\gamma + g_p(s) +
	\sup_{0\le u\le s}\big\|\|\bar X_u - \bar{X}_{\underline u}\|_{\mathbb{H}}^p\big\|_p \Big)$
	\begin{align*}
		&\,\big\|\| A_1(t)\big\|_{\mathbb{H}}\|_p \le  \int_0^t    K_1(t,s)  \left\|\big\| b(s, X_{\cdot \wedge s}) - b \big(\underline{s}, \iota_{[\underline{s}]}\big(\bar{X}_{t_0:t_{[\underline{s}]}} \big)\big)\big\|_{\mathbb{H}}\right\|_p ds\\
		&\hspace{2cm} \overset{~\eqref{eq:lipHolb}}{\operatorname*{\leq}}  [b]^{\prime}\int_0^t  K_1(t,s)\Big(\big(1+ \big\| \| x_0 \|_{t} \big\|_p\big)h^\gamma + g_p(s) +
		\sup_{0\le u\le s}\big\|\|\bar X_u - \bar{X}_{\underline u}\|_{\mathbb{H}}^p\big\|_p \Big)ds\\
		&\hspace{1.5cm} \leq \varphi_1(t) [b]^{\prime}\big(1+ \big\| \| x_0 \|_{t} \big\|_p\big) h^{\gamma} + [b]^{\prime} \left(\int_0^t K_1(t,s) g_p(s)   ds + \int_0^t K_1(t,s) \sup_{0\le u\le s}\big\|\|\bar X_u - \bar{X}_{\underline u}\|_{\mathbb{H}}\big\|_p   ds  \right)  
	\end{align*}
	\smallskip
	\noindent For  the second term $ A_2(t)$, combining   the {\em BDG} inequality and both  regular and generalized Minkowski inequalities implies
	\begin{align*}
		&\,\big\| \|A_2(t)\|_{\mathbb{H}}\big\|_p^2 
		\le  (C^{BDG}_p)^2 \int_0^t K_2^2(t,s)\left\| \| \sigma(s, X_{\cdot \wedge s}) - \sigma \big(\underline{s}, \iota_{[\underline{s}]}\big(\bar{X}_{t_0:t_{[\underline{s}]}} \big)\big)\|_{\tilde{\mathbb{H}}} \right\|_p^2 \, ds \\
		&\hspace{2cm}\overset{~\eqref{eq:lipHolsigma}}{\operatorname*{\leq}} (C^{BDG}_p)^2 ([\sigma]^{\prime})^2\int_0^t K_2(t,s)^2 \Big(\big(1+ \big\| \| x_0 \|_{t} \big\|_p\big)h^\gamma + g_p(s) +
		\sup_{0\le u\le s}\big\|\|\bar X_u - \bar{X}_{\underline u}\|_{\mathbb{H}}^p\big\|_p \Big)^2 ds \\
		& \hspace{1.5cm}\leq 3\psi_2^2(t) (C^{BDG}_p)^2 ([\sigma]^{\prime})^2\big(1+ \big\| \| x_0 \|_{t} \big\|_p\big)^2 h^{2\gamma} + 3(C^{BDG}_p)^2 ([\sigma]^{\prime})^2 \left(\int_0^t K_2(t,s)^2 g_p^2(s) ds + L_2(t)  \right)  
	\end{align*}
	where $L_2(t) := \int_0^t K_2(t,s)^2 \sup_{0\le u\le s}\big\|\|\bar X_u - \bar{X}_{\underline u}\|_{\mathbb{H}}\big\|_p^2   ds  $.
	We derive using elementary computations that 
	\begin{align*}
		\big\| \|A_2(t)\|_{\mathbb{H}}\big\|_p &\le  C^{BDG}_p [\sigma]^{\prime} \Bigg[ \psi_2(t) \left( 1 + \big\| \|x_0\|_t \big\|_p \right) h^{\gamma} + \left( \int_0^t K_2(t,s)^2 g_p^2(s) \, ds \right)^{\frac{1}{2}} + L_2(t)^{\frac{1}{2}}  \Bigg].
	\end{align*}
	Combining the above inequalities yields:
	\begin{align*} \big\| &\,\|A_1(t)\|_{\mathbb{H}}\big\|_p +  \big\| \|A_2(t)\|_{\mathbb{H}}\big\|_p \leq  [b]^{\prime} \left(\int_0^t K_1(t,s) g_p(s) \, ds\right) + C^{BDG}_p[\sigma]^{\prime} \left( \int_0^t K_2(t,s)^2 g_p^2(s) \, ds \right)^{\frac{1}{2}}+ R(t)\\
		&\hspace{4cm}\leq \left([b]^{\prime}\varphi_{\frac{2\beta}{\beta+1}} + C^{BDG}_p[\sigma]^{\prime}\psi_{2\beta} \right)\left( 
		\frac{\rho}{A^{\frac1\rho}}\,g_p(t) 
		+ (1-\rho)\,A^{\frac{1}{1-\rho}} \int_0^t g_p(r)\,dr
		\right) + R(t).
	\end{align*} 
	where we applied relation~\eqref{eq:VoltIntegral} from Lemma~\ref{Lem:BoundVoltIntegral} for all \( A > 0 \) for \(i=1,2\) as \(g_p\) is non-decreasing with \(\rho \in (0,1)\), and setting \(C^{(5)}=C^{(5)}_{K_1,K_2, \beta, b,\sigma, p } = \displaystyle \max\big(\varphi_1^*(T) [b]^{\prime}+ C^{BDG}_p [\sigma]^{\prime} \psi_2^*(T), [b]^{\prime}, C^{BDG}_p  [\sigma]^{\prime}\big)\) we have:
	\begin{align*}
		R(t) &= C^{(5)} \left( \Big(1 + \big\| \|x_0 \|_{t} \big\|_{p}\Big)h^{\gamma} + \int_0^t K_1(t,s) \sup_{0\le u\le s}\big\|\|\bar X_u - \bar{X}_{\underline u}\|_{\mathbb{H}}\big\|_p   ds +L_2(t)^{\frac{1}{2}}  \right) \\
		&\leq  C^{(5)} \left( \Big(1 + \big\| \|x_0 \|_{t} \big\|_{p}\Big)h^{\gamma} + \sup_{s\in [0,t]}\big\|\|\bar X_s- \bar{X}_{\underline s}\|_{\mathbb{H}}\big\|_p \, \varphi_1(t) + \sup_{s\in[0,t] }\big\|\|\bar X_s - \bar{X}_{\underline s}\|_{\mathbb{H}}\big\|_p\,\psi_2(t) \right)
	\end{align*} 
	Now, passing to the supremun on \([0,t]\) as in~\eqref{eq:majogpt1} i.e. \(g_p(t) \leq
	\sup_{s\in[0, t]}(\big\| \|A_1(s)\|_{\mathbb{H}}\big\|_p +  \big\| \|A_2(s)\|_{\mathbb{H}}\big\|_p )\) while taking advantage of the fact that \(g_p\) is non-decreasing, we derive that
	\begin{align*}
		&\, \displaystyle \sup_{s\in [0,t]}\big\|\| X_s-\bar X_s\big\|_{\mathbb{H}}\|_p=:g_p(t) \leq C\,\left( 
		\frac{\rho}{A^{\frac1\rho}}\,g_p(t) 
		+ (1-\rho)\,A^{\frac{1}{1-\rho}} \int_0^t g_p(s)\,ds
		\right) + \sup_{s\in[0, t]}R(s).\\
		&\text{where}\;
		C = C_{K_1,K_2,p,\beta,b,\sigma,T} =  [b]^{\prime}\,\varphi^{*}_{\frac{2\beta}{\beta+1}} 
		+ C^{BDG}_p [\sigma]^{\prime}\,\psi^{*}_{2\beta}<+\infty \;\text{owing to }\; ({\cal K}^{int}_{\beta}).
	\end{align*}
	We now choose \(A\) large enough, namely \(A = (2\rho\, C)^{\rho}\), so that 
	\(1 - \rho\, C\,A^{-\frac1\rho} = \frac{1}{2} < 1\).
	Consequently, for every \(t \in [0,T]\),
	\[g_p(t) \le 2\sup_{s\in[0, t]}R(s)
	+ 2C\,(1-\rho)\,A^{\frac{1}{1-\rho}} \int_0^t g_p(s)\,ds\le 2\sup_{s\in[0, t]}R(s) e^{C^{(6)}\,t},
	\text{and} \;
	C^{(6)}_{K_1,K_2,p,\beta,b,\sigma,T}
	= 2C(1 - \rho)\,A^{\frac1{1-\rho}}.\]
	where we used Gr\"onwall's lemma (See for example \cite[Lemma 7.2]{pages2018numerical}) in the last inequality and in particular
	\[ \displaystyle \sup_{t\in [0,T]}\big\|\| X_t-\bar X_t\big\|_{\mathbb{H}}\|_p=:g_p(T) \leq 2\sup_{t\in[0, T]}R(t) e^{C^{(6)}\,T}. \]
	\noindent To conclude, coming back to $R(s)$, we have that, for every $t\in [0,T]$, 
	\[
	\sup_{s\le t} R(s) \le C^{(5)} \Big(1 + \big\| \|x_0 \|_{t} \big\|_{p}\Big)h^{\gamma} + C^{(5)}\big(\varphi^*_1(T)+\psi^*_{2}(T)\big) \sup_{s\in [0,t]}\big\|\|\bar X_s- \bar{X}_{\underline s}\|_{\mathbb{H}}\big\|_p.
	\]
	By combining these bounds with Property~{\bf 3},   one concludes  that, under $(\widehat {\cal K}^{cont}_{\widehat \theta})$
	\[ \displaystyle \sup_{t\in [0,T]}\big\|\| X_s-\bar X_s\big\|_{\mathbb{H}}\|_p=:g_p(T)
	\le C^{(7)}\Big(1 + \big\| \|x_0 \|_{T} \big\|_{p}\Big) \left(h^{\gamma}+h^{\theta\wedge\widehat\theta}\right)
	\]
	with $C^{(7)}= C^{(7)} _{K_1,K_2, \beta, b,\sigma, p,T } = 2\,e^{C^{(6)} T}C^{(5)}\max\left(1,(\varphi^*_1(T)+\psi^*_{2}(T)\big)\right)$.
	Note that if only $({\cal K}^{int}_{\beta})$ is in force, then the final bound holds {\em mutatis mutandis} with $\frac{\beta-1}{2\beta}$ instead of $\widehat \theta$. 
	As a conclusion to this step, we get:
	\[
	\forall\, t\!\in [0,T], \quad  \big\|\| X_s-\bar X_s\big\|_{\mathbb{H}}\|_p \le C^{(7)} \Big(1 + \big\| \|x_0 \|_{T} \big\|_{p}\Big) (h^{\gamma}+h^{\delta \wedge \theta^* }).
	\]
	\paragraph{Proof of Theorem~\ref{thm:Eulercvgce2} (convergence of the genuine interpolated $K$-integrated Euler scheme).}
	%  \noindent {\bf Proof of Theorem~\ref{thm:Eulercvgce2} (convergence of the $K$-integrated Euler scheme).}
	\medskip
	In this paragraph, we provide the proof of Theorem~\ref{thm:Eulercvgce2}, based on the properties of the  continuous extension process $(\bar X_t)_{t \ge 0}$  established in~\ref{propXtilde} and the splitting Lemma
	established in~\ref{lem:gap}.
	
	\medskip
	\noindent {\sc Step~1} ({\em Moment control, $p\!\in (0,2)$}). Let us denote 
	$X^{\xi}= (X^{\xi}_t)_{t\in [0,T]}$ and 
	$\bar X^{\xi}= (\bar X^{\xi}_t)_{t\in [0,T]}$ the 
	solutions to the Volterra equation~\eqref{eq:pthvolterra} and its 
	Euler scheme~\eqref{eq:def_continuous_2}  with starting function $x_0=\xi$ respectively.
	Property~{\bf 2} in Proposition~\ref{propXtilde} applied with $p=2$
	implies that there exists a real constant $C=C_{K_1, K_2, \beta, b,\sigma, T}>0$ (not depending on the time step $\frac Tn$ i.e. on $n$) such that,  if $x_0=\xi$, \( \sup_{t\in [0,T]}\big\|\|\bar X^{\xi}_t\|_{\mathbb{H}}\big\|_2  
	\le C (1 +\|\xi\|_T).\)
	Then it follows from Lemma~\ref{lem:gap} applied with $\bar p=2$ and $\Phi(x,y) =\sup_{t\in[0,T]}\|y(t)\|_{\mathbb{H}}$ that
	\[\forall\, p\!\in (0,2], \quad\sup_{t\in [0,T]}\big\|\|\bar X_t\|_{\mathbb{H}}\big\|_p  
	\le 2^{(1/p-1)^+}C (1+\|\|\xi\|_T\|_p).\]
%	\centerline{$\forall\, p\!\in (0,2], \quad\sup_{t\in [0,T]}\big\|\|\bar X_t\|_{\mathbb{H}}\big\|_p  
%		\le 2^{(1/p-1)^+}C (1+\|\|\xi\|_T\|_p).$}
	\medskip 
	\noindent {\sc Step~2} ({\em  $L^p$-control of the increments of the $K$-integrated Euler scheme for $p\!\in (0,2)$}). 
	If {$p\!\in (0,2)$}, we proceed likewise replacing Property~{\bf 2} in Proposition~\ref{propXtilde} by Property {\bf 3}, with the functional $\Phi(x,y) = \|y(t)-y(s)\|_{\mathbb{H}}$, $s,\, t\!\in [0,T]$ in order  to prove that
	\[
	\forall\, s,\, t\!\in [0,T], \quad  \left\| \|\bar X_t-\bar X_s\|_{\mathbb{H}} \right\|_p\le 2^{(1/p-1)^+}C_{2,T}\Big(1 + \big\| \|x_0 \|_{T} \big\|_{p}\Big) |t-s|^{\delta \wedge \theta \wedge \theta^*}.
	\]
	\noindent {\sc Step~3} ({\em $L^p$-rate of convergence for the marginals at fixed time $t$ {for $p>0$}}).  
	We fix $\bar p>p_{eu}$ and combine Property~{\bf 4} in Proposition~\ref{propXtilde} and Lemma~\ref{lem:gap} applied with  $\Phi(x,y) =  \|x(t)-y(t)\|_{\mathbb{H}}$ to $p\in (0,\bar p]$.

	\smallskip
	\noindent{\sc Step~4} ({\em $L^p$-rate of convergence for the sup-norm, $p$ large enough}). To switch from the $L^p$-convergence of marginals, namely $\big\|\| X_t-\bar X_t\big\|_{\mathbb{H}}\|_p$
	to this one (\(\big\|\sup_{ t \in \T}\| X_t-\bar X_t\big\|_{\mathbb{H}}\|_p\)), we rely on~\cite[Corollary 4.4]{RiTaYa2020} (see Theorem~\ref{thm:GRRtypeLemma} in Appendix~\ref{app:B}) which is a form close to Kolmogorov's $C$-tightness criterion of the Garsia-Rodemich-Rumsey lemma (see~\cite{GRR1070}).
	
%	\begin{Theorem}[GRR lemma]\label{thm:GRRtypeLemma} Let $(Y^n)_{n \ge 1}$ be a sequence of continuous processes where the processes $Y^n= (Y^n_t)_{t\in[0,T]}$ are defined on a probability space $(\Omega,{\cal A}, \P)$. Let $p\ge 1$. Assume  there exists  $a>1$, a sequence $(\delta_n)_{n\ge 1}$ of positive real numbers  converging to $0$ and  a real constant $C>0$ such that 
%		\begin{equation}\label{eq:GRR}
%			\forall\, n\ge 1,\; \forall\, s, t\!\in [0,T], \quad \E\, |Y^n_t-Y^n_s|^p \le C|t-s|^{a} \delta^p_n.
%		\end{equation}
%		Then 
%		there exists a real constant $C_{p,T}>0$ such that
%		\[
%		\forall\, n\ge 1,\quad \E\, \sup_{t\in [0,T]}|Y^n_t-Y^n_0|^{p} \le C_{p,T}\,\delta^p_n.
%		\]  
%	\end{Theorem}
	Let $\varepsilon\!\in (0,1)$. We aim at applying the above inequality to the sequence of processes $Y^n = X-\bar X$ to prove that the order of convergence of the Euler scheme derived at Property~{\bf 4} in Proposition~\ref{propXtilde} is preserved for the supremum over time up to multiplication by the factor $1-\varepsilon$.
	
	\smallskip
	Set $\theta^{*} =\delta\wedge\theta\wedge\widehat\theta $. We first deal with large values of $p$, namely {$p>p_{\varepsilon} =\frac{2}{(\delta\wedge\theta\wedge\theta^{*})\varepsilon}\vee p_{eu}$. Then let $\lambda := \frac{1}{p(\delta\wedge\theta\wedge\theta^{*})}+\frac{\varepsilon}{2}\!\in (0,\varepsilon)$}. For every $s$, $t\!\in [0,T]$, one has
	\begin{align*}
		\big\|\|Y^n_t-Y^n_s\big\|_{\mathbb{H}}\|_p &\le \big\|\|Y^n_t-Y^n_s\big\|_{\mathbb{H}}\|_p^{\lambda} \big\|\|Y^n_t-Y^n_s\big\|_{\mathbb{H}}\|_p^{1-\lambda}\\
		&\le  \big(\big\|\|X_t-X_s\big\|_{\mathbb{H}}\|_p + \big\|\| \bar X_t-\bar X_s \big\|_{\mathbb{H}}\|_p \big)^{\lambda} 
		\big(\big\|\|X_t-\bar X_t\big\|_{\mathbb{H}}\|_p + \big\|\|X_s-\bar X_s \|_{\mathbb{H}}\|_p \big)^{1-\lambda}.
	\end{align*}
	By Property~{\bf 4} in Proposition~\ref{propXtilde}, there exists a positive real constant \(C_1\) such that 
	\[
	\sup_{t\in [0,T]} \big\|\|X_t-\bar X_t\big\|_{\mathbb{H}}\|_p \le C_1  \Big(1 + \big\| \|x_0 \|_{T} \big\|_{p}\Big) \big( \tfrac Tn \big)^{\delta\wedge\theta\wedge\theta^{*}\wedge \gamma}.
	\]
	With Property~{\bf 3}, still from Proposition~\ref{propXtilde}, we deduce the existence of a positive real constant \(C_2\) such that 
	\[
	\forall\, s,\, t\!\in [0,T], \quad \big\|\| X_t-X_s\big\|_{\mathbb{H}}\|_p + \big\|\| \bar X_t-\bar X_s \|_{\mathbb{H}}\|_p \le C_2 \Big(1 + \big\| \|x_0 \|_{T} \big\|_{p}\Big)|t-s|^{\delta\wedge\theta\wedge\theta^{*}}.
	\]
	Hence,
	using that   $a:=\lambda (\delta\wedge\theta\wedge\theta^{*}) p=1+ \frac{\varepsilon}{2}\,(\delta\wedge\theta\wedge\theta^{*})\,p >1$,  we derive
	\[
	\E\, \|Y^n_t-Y^n_s\|_{\mathbb{H}}^p \le C^p_3  \Big(1 + \big\| \|x_0 \|_{T} \big\|_{p}\Big)^p|t-s|^{a} \big( \tfrac Tn \big)^{p(\delta\wedge\theta\wedge\theta^{*}\wedge \gamma)(1-\lambda)} 
	\]
	with  $C_3= (2C_1)^{1-\lambda}C_2^{\lambda}$. Hence, as $Y^n_0=0$ and $a>1$, it follows from the above {\em GRR} lemma~\ref{thm:GRRtypeLemma} % \cite[Corollary 4.4]{RiTaYa2020} 
	that there exists 
	a positive real constant $\kappa_{\varepsilon,p} =\kappa_{\varepsilon,p,\beta, \gamma, \theta, b,\sigma,T}$ such that, for every $n\ge 1$, 
	\[
	\Big\| \sup_{t\in [0,T]} \|X_t-\bar X_t\|_{\mathbb{H}}\Big\|_{p} = \Big\| \sup_{t\in [0,T]} \|Y^n_t-Y^n_0\|_{\mathbb{H}} \Big\|_{p}  \le \kappa_{\varepsilon,p}  \Big(1 + \big\| \|x_0 \|_{T} \big\|_{p}\Big) \big( \tfrac Tn \big)^{(\delta\wedge\theta\wedge\theta^{*}\wedge \gamma)(1-\lambda)} .
	\]
	As $\lambda \le \varepsilon$, it is clear that, up to an updating of the constant $\kappa_{\varepsilon,p}$ to take into account the values of $n\!\in\{1,\ldots,\lfloor T\rfloor\}$, one has for every $n\ge 1$, 
	\[
	\Big\| \sup_{t\in [0,T]} \|X_t-\bar X_t\|_{\mathbb{H}}\Big\|_{p}    \le \kappa_{\varepsilon,p}  \Big(1 + \big\| \|x_0 \|_{T} \big\|_{p}\Big) \big( \tfrac Tn \big)^{(\delta\wedge\theta\wedge\theta^{*}\wedge \gamma)(1-\varepsilon)} .
	\]
	Now we take advantage of the splitting Lemma
	established in~\ref{lem:gap}  to extend this result to the low integrability setting of $x_0$, i.e. $\|x_0\|_{T}\!\in L^p(\P)$, $p\in (0,p_\varepsilon]$. 
	
	\medskip
	\noindent {{\sc Step~5} ({\em $L^p$-rate of convergence of the supremum over time, $p\!\in(0, p_{\varepsilon})$})}. It follows from Lemma~\ref{lem:gap} {applied with $\bar p= p_{\varepsilon}+1$ and 
		$\Phi(x,y)= \sup_{t\in [0,T]}\|x(t)-y(t)\|_{\mathbb{H}}$  that 
		\begin{align*}
			\Big\| \sup_{t\in [0,T]} \|X_t-\bar X_t\|_{\mathbb{H}}\Big\|_p 
			&\le 2^{(1/p-1)^+} (\kappa_{\varepsilon,p_{\varepsilon}+1}) \Big(1 + \big\| \|x_0 \|_{T} \big\|_{p}\Big) \Big(\frac Tn\Big)^{(\delta\wedge\theta\wedge\theta^{*}\wedge \gamma)(1-\varepsilon)}.
	\end{align*}}
	\noindent This  completes the proof of this step and of the theorem and therefore, the result is established.~\hfill $\Box$
	\medskip
%\noindent {\bf Acknowledgements:} 
% We would like to express our sincere gratitude to [acknowledge individuals, organizations, or institutions] for their invaluable contributions to this research. We are also grateful to [mention any additional acknowledgements, such as technical assistance, data providers, or colleagues] for their support and assistance throughout the course of this work.

    \medskip
    \noindent\textbf{Declarations:} The authors declare no conflicts of interest; the first author was supported by ``École Doctorale Sciences Mathématiques de Paris Centre".

	\bibliographystyle{plainnat}
	\bibliography{PathDependentSVIE}
	\normalsize  
	
	\appendix
	\section{About the Simulation of the Gaussian  stochastic integrals terms in the discrete time interpolated semi-integrated Euler scheme for path-dependent stochastic Volterra integral Equations (SVIEs)}\label{app:A}
	Let us denote by $ G^{n,\ell} $, $\ell \in \{1, ..., n\}$ the $\ell$-th column of the matrix $ G^n $ without the zero-elements, that is the Gaussian (column) vector of size $n - \ell + 2 $:
	\begin{equation}\label{eq:Cell}
		G^{n,\ell} = \big( G^n_{\ell-1+k,\ell}\big)_{k=1:n-\ell+2}=\left( \left[\int_{t^n_{\ell}}^{t^n_{\ell+1}} K_2(t^n_k, s) \, dW_s \right]_{k=\ell,\ldots,n}, \Delta W_{t_\ell} \right) =\left( \left[I^{n,\ell}_k \right]_{k=\ell,\ldots,n},\Delta W_{t_\ell} \right).
	\end{equation}
	Note that \(G^{n,\ell},\; \ell= 1,\ldots,n\)	are $n$ independent Gaussian vectors that we consider and will simulate.

	The $(n-\ell +2)\times (n-\ell +2)$ symmetric covariance matrix of $ G^{n,\ell} $, denoted by $C^{n+1, \ell}$, is given by:
	$$C^{n+1, \ell} =
	\begin{pmatrix}
		\Sigma^{n,\ell} & \begin{array}{ccc} ^t C^{0,\ell} \end{array} \\[10pt]
		\begin{array}{c} C^{0,\ell} \end{array} & \frac{T}{n}  
	\end{pmatrix}, \;C^{0,\ell}  = \left[ Cov(\Delta W_{t_\ell}, I^{n,\ell}_{i}) \right]_{\ell\le i \le n} =  \left[ \int_0^{T/n} K_2(t^n_i, t^n_{\ell-1}+u) \, du \right]_{\ell\le k\le n}.
	$$
	where $\Sigma^{n,\ell}$ is an $(n-\ell+1)\times (n-\ell+1)$ positive definite symmetric matrix corresponding to the covariance matrix of $\left(\left[I^{n,l}_k \right]_{k=\ell,\ldots,n} \right)$, given by \(\Sigma^{n,\ell} = \left[ Cov(I^{n,\ell}_i, I^{n,\ell}_{j}) \right]_{i,j \in \llbracket \ell, n \rrbracket} \) i.e.: 
	{\small
	\begin{equation}\label{eq:VCV1}
		\Sigma^{n,\ell} = \left[\int_{t_{\ell-1}}^{t_\ell} K_2(t_i, u) K_2(t_{j}, u) du \right]_{i,j \in \llbracket \ell, n \rrbracket}= \left[ \int_0^{T/n} K_2(t_i, t_{\ell-1}+u) K_2(t_j, t_{\ell-1}+u) \, du \right]_{\ell \leq i,j \leq n}.
	\end{equation}
	}
	or reading equivalently \(\Sigma^{n,\ell}=(\Sigma^{n,\ell}_{ij})_{i,j=1:n-\ell+1} \in \mathbb{R}^{(n-\ell+1)\times (n-\ell+1)}\;\mbox{ such that}\;\)
	\begin{equation}\label{eq:VCV2}
	%	\Sigma^{n,\ell}=(\Sigma^{n,\ell}_{ij})_{i,j=1:n-\ell+1} \in \mathbb{R}^{(n-\ell+1)\times (n-\ell+1)}\;\mbox{ such that}\;
	 \Sigma^{n,\ell}_{ij}= \int_{t_{\ell-1}}^{t_\ell} K_2(t_{\ell+i-1},s)K_2(t_{\ell+j-1},s)  \, ds =\int_0^{T/n} K_2(t_i, t_{\ell-1}+u) K_2(t_j, t_{\ell-1}+u) \, du
	\end{equation}
	Finally, $C^{0,\ell}$ is the line  vector reading $C^{0,\ell}=\left(\displaystyle  \int_{t_{\ell - 1}}^{t_{\ell}} K_2(t_{t_k}, s) \, ds \right)_{k=\ell:n}\!\in \R^{1\times (n-\ell+1)}$ with nonnegative components.\\
	At this stage, we can compute any fixed matrix  $C^{n+1, \ell}$ by a cubature formula (such as Trapezoid, Midpoint, Simpson, higher-order Newton-Cote, or Gauss-Legendre integration formulas) and then perform a Cholesky transform.  $C^{n+1, \ell}$ being a  positive definite matrix has a (unique) Cholesky decomposition $C^{n+1, \ell} = L^{n+1,\ell}\,  ^t(L^{n+1,\ell})$ where $L^{n+1,\ell}$ is lower triangular with positive diagonal terms.

   However the  regular Cholesky procedure is not stable when some eigenvalues are too close to zero due to the propagation of  rounding errors and the presence of square roots in the procedure.
	We then apply a numerically stable extended version of the Cholesky decomposition method on each $C^{n+1, \ell}$, namely the $T\,D\,^tT$ \textit{decomposition} in the sense that, there exists a unique lower triangle matrix $T^{ (\ell)}$ with $T^{(\ell)}_{ii}=1$, $ i=1:n$ and a unique diagonal matrix $D^{(\ell)}$ with positive entries such that $C^{n+1, \ell} =T^{(\ell)}\,D^{(\ell)}\,^tT^{(\ell)}$. 
	Once this decomposition is computed, we recover the classical Cholesky decomposition $C^{n+1, \ell} =L^{ \ell}\,^tL^{(\ell)}$ taking the  (lower triangular) matrix $L^{(\ell)}=T^{(\ell)}\sqrt{D^{ \ell}}$. This algorithm turns out to be more stable than the regular Cholesky procedure.
	Finally,  we can simulate $G^{n,\ell}$ via the equality in distribution
	\begin{equation*}
		G^{n, \ell} \stackrel{\mathcal L}{=} L^{(\ell)} Z^{(\ell)}, \text{ with } Z^{(\ell)} \sim \mathcal{N}(0,\mathcal{I}_{n-\ell+1}). 
	\end{equation*}
	From an algorithmic point of view, we only have
	to compute one Cholesky decomposition, by considering the covariance matrix $C:=C^{n+1, 1}$, of $G^{n,1}$, symmetric and of size $(n+1) \times (n+1)$, given by:
	\begin{equation}\label{eq:covariance-matrix}
		C =
	\begin{pmatrix}
		\Sigma^{n,1} & \begin{array}{ccc} ^t C^{0,1} \end{array} \\[10pt]
		\begin{array}{c} C^{0,1} \end{array} &  \frac{T}{n}
	\end{pmatrix}, \;\Sigma^{n,1} = \left[ Cov(I^{n,1}_i, I^{n,1}_{j}) \right]_{i,j \in \llbracket 1, n \rrbracket},\;C^{0,1}  = \left[ Cov(\Delta W_{t_\ell}, I^{n,1}_{i}) \right]_{1\le i \le n}.
    \end{equation}
	\begin{Proposition}\label{prop:EfficientSimul} Let the number of discretization \(n\) be fixed so as the step size $\frac Tn$.
		\begin{enumerate}
		\item [$(a)$] Let $C^{n+1, 1}:=C\!\in \R^{(n+1)\times (n+1)}$ be the symmetric matrix with entries defined in equation~\eqref{eq:covariance-matrix}.
		Then all matrices $C^{n+1, \ell}$, $\ell \in \{1,\ldots,n\}$,  from~\eqref{eq:Cell} are sub-matrices of $C$ in the sense that $C^{n+1, \ell}= [C_{ij}]_{i,j= \ell:n+1}$.
		
	\item [$(b)$] The \textit{extended Cholesky decomposition} or $T\, D\, ^tT$ \textit{decomposition}  of the positive definite matrix $C = (C_{ij})_{1 \le i,j \le n+1}$ is as follows:
		\[ [C_{ij}]_{1\le i,j\le n+1}= T^{(n+1)}\,D^{(n+1)}\,^tT^{(n+1)},\;  T^{(n+1)} \text{ lower triangular with diagonal entries}\; T^{(n+1)}_{ii} = 1.
		\]
	  and $D^{(n+1)}$ is a diagonal matrix with non-negative entries. 
		Then, taking advantage of the telescopic feature and the structure of this Cholesky transform one has:	$$[C_{ij}]_{\ell\le i,j\le n+1}=  [T_{ij}^{(n+1)}]_{\ell\le i,j\le n+1}[D^{(n+1)}_{ij}]_{\ell\le i,j\le n+1} [^tT_{ij}^{(n+1)}]_{\ell\le i,j\le n+1},\ell= 1,\ldots,n.$$
		\noindent Finally, for each $\ell = 1, \ldots, n$, we can effectively simulate $G^{n,\ell}$ via the equality in distribution \((G^{n,\ell})_{\ell=1, \dots, n} \stackrel{\mathcal{L}}{=} (L^{(n+2-\ell)} Z^{(\ell)})_{\ell=1, \dots, n},\) where 
		\[
		Z^{(\ell)} \sim \mathcal{N}(0, I_{n-\ell+2}) \quad \text{and }\quad L^{(n+2-\ell)} = [T_{ij}^{(n+1)}]_{\ell\leq i,j \leq n +1} [\sqrt{D^{(n+1)}_{ij}}]_{\ell \leq i,j \leq n +1}.
		\]
	\end{enumerate}
	\end{Proposition}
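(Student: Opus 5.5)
The plan is to reduce both claims to two facts: \emph{shift invariance} of the covariance entries, and the \emph{hereditary property} of the $T\,D\,^tT$ factorization with respect to nested principal submatrices. For (a) I will assume, as in the fractional setting of Section~4, that $K_2(t,s)=K(t-s)$ is convolutive. This seems necessary: for a general $K_2(t,s)$, the entries $\int_{t_{\ell-1}}^{t_\ell}K_2(t_i,s)K_2(t_j,s)\,ds$ genuinely depend on $\ell$, and $C^{n+1,\ell}$ cannot be a submatrix of $C^{n+1,1}$. With the change of variable $s=t_{\ell-1}+u$, I would write
\[
\Sigma^{n,\ell}_{ij}=\int_0^{T/n}K\big((i-\ell+1)h-u\big)K\big((j-\ell+1)h-u\big)\,du ,
\qquad
C^{0,\ell}_i=\int_0^{T/n}K\big((i-\ell+1)h-u\big)\,du ,
\]
and the variance $\frac Tn$ of $\Delta W_{t_\ell}$ is independent of $\ell$. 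Hence every entry depends only on the shifted indices $i-\ell+1$, $j-\ell+1$, which proves (a). The block $C^{n+1,\ell}$ coincides with the block of $C$ indexed by the Brownian coordinate together with the first $n-\ell+1$ kernel coordinates. I will check carefully which block of $C$ this is. With the ordering used in \eqref{eq:covariance-matrix}, where $\Delta W$ sits last, the block is obtained by deleting coordinates $n-\ell+2,\dots,n$. It is not the trailing block $\ell{:}n+1$ as written. I would therefore reorder the coordinates, placing $\Delta W$ first and then the $I$'s, so that $C^{n+1,\ell}$ becomes the \emph{leading} principal block of size $n-\ell+2$.

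For (b), the key algebraic step is the following. If $C=T D\,^tT$ with $T$ unit lower triangular and $D$ diagonal, then for every $m$,
\[
C_{ij}=\sum_{r\le i\wedge j}T_{ir}D_{rr}T_{jr}, \qquad i,j\le m .
\]
This means $[C]_{1:m}=[T]_{1:m}[D]_{1:m}\,^t[T]_{1:m}$. By uniqueness of the $T\,D\,^tT$ factorization of a positive definite matrix, this is \emph{the} factorization of the leading block. So after one factorization of $C$ (cost $\mathcal O(n^3)$), all $G^{n,\ell}$ are simulated as $L^{(\cdot)}Z^{(\ell)}$ with $L=[T]_{1:m}\sqrt{[D]_{1:m}}$, with independent $Z^{(\ell)}$ across $\ell$ because the Brownian increments on disjoint intervals are independent. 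Positive definiteness of $C$ follows from linear independence of the functions $u\mapsto K(kh-u)$ and $1$ in $L^2([0,h])$. I would verify this for the fractional kernel directly.

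The main obstacle is the index-ordering issue. For a lower-triangular factor, only \emph{leading} blocks inherit the factorization: a trailing block $[C]_{\ell:n+1}$ picks up the contributions $r<\ell$ in the sum above. So the statement as written, with trailing blocks, needs either the reversed ordering just described or an upper-triangular $U D\,^tU$ factorization, for which trailing blocks are hereditary. I would first try the $U D\,^tU$ variant, since it keeps the paper's indexing literally. For it to work, the shift invariance must make $C^{n+1,\ell}$ a trailing block, and this again requires reversing the order of the $I$-coordinates. I would settle this by writing out the case $n=2$ explicitly before committing.
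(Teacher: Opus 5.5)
Your diagnosis is correct, and it is more careful than the paper's own argument. The paper proves (a) by substituting $s=t_{\ell-1}+u$ in $C^{n+1,\ell}_{ij}=\int_{t_{\ell-1}}^{t_\ell}K_2(t_i,s)K_2(t_j,s)\,ds$ and then declaring the result equal to $C_{ij}$ with the \emph{same} indices $i,j\in\{\ell,\dots,n+1\}$. It dismisses (b) as following ``straightforwardly'' from the preceding discussion.

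Both literal claims fail for $n\ge 2$.
\begin{itemize}
\item For (a), take $K_2=K_\alpha$ and $\ell=i=j=2$. Then $C^{n+1,2}_{22}=\frac{h^{2\alpha-1}}{(2\alpha-1)\Gamma(\alpha)^2}$, whereas $C_{22}=\frac{(2^{2\alpha-1}-1)\,h^{2\alpha-1}}{(2\alpha-1)\Gamma(\alpha)^2}$. These agree only when $\alpha=1$. The substitution yields an identity only after the shift $i\mapsto i-\ell+1$, and only for shift-invariant kernels. That is exactly your leading-block statement, and it is also how the paper itself writes $\Sigma^{n,\ell}$ in Section~4, indexed by $1\le i,j\le n-\ell+1$.
\item For (b), already at $\ell=n$ the $(1,1)$ entry of $[T]_{n:n+1}[D]_{n:n+1}[{}^tT]_{n:n+1}$ is $D_{nn}=C_{nn}-\sum_{r<n}T_{nr}^2D_{rr}$. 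This is strictly smaller than $C_{nn}$: $C_{nr}>0$ for positive kernels forces some $T_{nr}\neq0$, and $D_{rr}>0$.
\end{itemize}
So your route is the right repair, not merely an alternative. That route is: a convolutive $K_2$, coordinates ordered as $(\Delta W,I_1,\dots,I_n)$, and heredity of $T\,D\,{}^tT$ along leading principal blocks.

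Three small remarks.
\begin{itemize}
\item Uniqueness of the factorization is not needed. The entrywise identity $C_{ij}=\sum_{r\le i\wedge j}T_{ir}D_{rr}T_{jr}$ already gives $[C]_{1:m}=LL^t$ with $L=[T]_{1:m}[D]_{1:m}^{1/2}$, which is all the simulation claim requires.
\item This matters because positive definiteness fails inside the paper's range at $\alpha=1$. There every $u\mapsto K(kh-u)$ is constant and $C$ has rank one. For $\alpha\in(\frac12,\frac32)\setminus\{1\}$ your Gram-matrix argument does go through. Extend a vanishing linear combination from $(0,h)$ by real-analyticity, then let $u\uparrow kh$ for $k=1,2,\dots$ in turn. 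At each step $(kh-u)^{\alpha-1}$ or its derivative blows up while the remaining terms stay smooth, which kills the coefficients one at a time.
\item The $n=2$ check is unnecessary, because your two fixes coincide. Conjugating by the order-reversing permutation turns a unit upper triangular factor into a unit lower triangular one, and trailing blocks into leading ones.
\end{itemize}
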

	\noindent {\bf Proof:} The claim in $(b)$  follows straightforwardly from what has been stated before the proposition.
	For $(a)$ 
		Let $\ell \in \{1,...,n\}$ and and $i,j=\ell, \dots,n+1$. It follows from~\eqref{eq:Cell} that         
		\begin{align*}
			C^{n+1, \ell}_{ij} &=\int_{t_{\ell-1}}^{t_\ell} K_2(t_{i},s) K_2(t_{j},s)ds \stackrel{(s=u+t_{\ell-1})}{=} \int_{0}^{\frac{T}{n}} K(t_{i},t_{\ell-1}+u) K(t_{j},t_{\ell-1}+u)du =  C_{ij},\quad \text{etc.}
		\end{align*}
	\section{Technical Lemmata and Proofs}\label{app:B}
	\noindent {$\rhd$ {\em Preliminaries}:}
	We recall~\cite[Corollary 4.4]{RiTaYa2020}, which is a version close to Kolmogorov's $C$-tightness criterion of the Garsia-Rodemich-Rumsey lemma~\citep{GRR1070}.
	\begin{Theorem}[GRR lemma]\label{thm:GRRtypeLemma} Let $(Y^n)_{n \ge 1}$ be a sequence of continuous processes where the processes $Y^n= (Y^n_t)_{t\in[0,T]}$ are defined on a probability space $(\Omega,{\cal A}, \P)$. Let $p\ge 1$. Assume  there exists  $a>1$, a sequence $(\delta_n)_{n\ge 1}$ of positive real numbers  converging to $0$ and  a real constant $C>0$ such that 
		\begin{equation}\label{eq:GRR}
			\forall\, n\ge 1,\; \forall\, s, t\!\in [0,T], \quad \E\, |Y^n_t-Y^n_s|^p \le C|t-s|^{a} \delta^p_n.
		\end{equation}
		Then 
		there exists a real constant $C_{p,T}>0$ such that
		\[
		\forall\, n\ge 1,\quad \E\, \sup_{t\in [0,T]}|Y^n_t-Y^n_0|^{p} \le C_{p,T}\,\delta^p_n.
		\]  
	\end{Theorem}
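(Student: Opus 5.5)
The plan is to follow the classical Garsia--Rodemich--Rumsey route, applied to each $Y^n$ separately with constants tracked uniformly in $n$. First fix $p\ge 1$ and $a>1$ as in~\eqref{eq:GRR}. Choose an intermediate exponent $\eta\in\bigl(0,\frac{a-1}{p}\bigr)$ and apply the GRR inequality (in the form recalled in the remark after the proof of Theorem~\ref{prop:pathkernelvolt}) pathwise to the continuous function $t\mapsto Y^n_t(\omega)$. Use $\Psi(u)=|u|^p$ and gauge $\mathfrak p(u)=u^{\eta+2/p}$. This gives, for all $0\le s<t\le T$,
\[
|Y^n_t-Y^n_s|^p\le C_{p,\eta}\,|t-s|^{\eta p}\int_0^T\!\!\int_0^T\frac{|Y^n_u-Y^n_r|^p}{|u-r|^{\eta p+2}}\,dr\,du .
\]
The constant $C_{p,\eta}$ depends only on $p$ and $\eta$, and not on the process.

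Next, take $s=0$ and bound $|t|^{\eta p}\le T^{\eta p}$. Then take the supremum over $t\in[0,T]$ and expectations, and use Tonelli's theorem to move $\mathbb E$ inside the double integral. Plugging in the hypothesis~\eqref{eq:GRR} yields
\[
\mathbb E\sup_{t\in[0,T]}|Y^n_t-Y^n_0|^p\le C_{p,\eta}T^{\eta p}\,C\,\delta_n^p\int_0^T\!\!\int_0^T|u-r|^{a-\eta p-2}\,dr\,du .
\]
The double integral is finite exactly when $a-\eta p-2>-1$, that is, $\eta<\frac{a-1}{p}$. This is the reason for the choice of $\eta$, and it is where $a>1$ is used. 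Its value is explicit, of order $T^{a-\eta p}$, and independent of $n$. This gives $C_{p,T}$.

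The only delicate point is measurability and the use of continuity. The GRR bound needs continuous paths so that the supremum is a measurable random variable and the pointwise inequality holds for every pair $(s,t)$ simultaneously. This is exactly why continuity of each $Y^n$ is assumed. One also checks that the constant in GRR does not depend on the modulus of the path, only on $p$ and $\eta$, which guarantees uniformity in $n$. Alternatively, one can argue by dyadic chaining as in Kolmogorov's criterion: bound $\mathbb E\max_k|Y^n_{(k+1)T2^{-m}}-Y^n_{kT2^{-m}}|^p\le 2^m C (T2^{-m})^a\delta_n^p$, sum the resulting geometric series in $m$, which converges since $a>1$, and conclude by continuity. I would keep this as a backup if the GRR constants become awkward.
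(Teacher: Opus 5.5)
Your argument is correct: applying Garsia--Rodemich--Rumsey pathwise with $\Psi(u)=|u|^p$ and gauge $u^{\eta+2/p}$, $0<\eta<\frac{a-1}{p}$, followed by Tonelli and \eqref{eq:GRR}, gives a constant that depends only on $p,\eta,a,C,T$ and is therefore uniform in $n$. The paper does not reprove this statement (it quotes \cite[Corollary 4.4]{RiTaYa2020}), but your route is the same GRR computation the paper carries out, with the same gauge $|u|^{q+1/p}$, $q=\frac1p+a$, in the remark following the proof of Theorem~\ref{prop:pathkernelvolt}.
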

	\subsection{Proofs of Technical Lemma~\ref{lem:bounds}, Theorem \ref{thm:FreidlinLike} and Corollary \ref{Gronwall}}\label{app:B1}
	\subsubsection{Proof of Lemma~\ref{lem:bounds} }
%	\noindent {\bf Proof of Lemma~\ref{lem:bounds}:}
	With Equation \ref{eq:contKtilde}, i.e under Assumption $(\widehat {\cal K}^{cont}_{\widehat \theta})$, let $s,\, t\!\in [a, T]$. By using successively the triangle inequality in 
	\(\mathbb{H}\) for Bochner integrals, then the Burkholder-Davis-Gundy  and the elementary inequality \((a+b)^p \le 2^{p-1}\big(a^p + b^p\big)\) for \(a,b>0\), one gets:
	{\small
		\begin{align*}
			&\mathbb{E}\left[\left\|\int_{a}^{t\wedge b} K_2(t,u)\widetilde{H}_u dW_u - \int_{a}^{s\wedge b} K_2(s,u)\widetilde{H}_u dW_u\right\|_{\tilde{\mathbb{H}}}^p \right] \\
			&\phantom{ K_2(t,u)} \le 2^{p-1}C_p^{BDG} \left( \mathbb{E}\left[ \left(\int_{s\wedge b}^{t\wedge b}\|K_2(t,u)^2 \widetilde{H}_u^2 \|_{\tilde{\mathbb{H}}}du \right)^{\frac p 2}\right] + \mathbb{E}\left[ \left(\int_{a}^{s\wedge b}\|\big(K_2(t,u) - K_2(s,u)\big)^2 \widetilde{H}_u^2  \|_{\tilde{\mathbb{H}}}du \right)^{\frac p 2}\right] \right) \\
			&\phantom{ K_2(t,u)}\le 2^{p-1} C_p^{BDG} \left( \mathbb{E}\left[ \left(\int_{s}^{t} |K_2(t,u)|^2 \|\widetilde{H}_u\|_{\tilde{\mathbb{H}}}^2 du \right)^{\frac{p}{2}}\right] + \mathbb{E}\left[ \left(\int_{a}^{s} |(K_2(t,u) - K_2(s,u))|^2 \|\widetilde{H}_u\|_{\tilde{\mathbb{H}}}^2 du \right)^{\frac{p}{2}}\right] \right)
		\end{align*}
	}
	Jensen's inequality applied twice or rather H\"older inequality %  \((\frac{p}{p-2},\frac{p}{2})\) Jensen's inequality % or rather H\"older inequality with exponent \((\frac{p}{p-2},\frac{p}{2})\)
	\hyperref[fn:first]{\footnotemark[\getrefnumber{fn:first}]} with exponent \(\frac{p}{2}\geq1\) yields:
	\[\left(\int_{s}^{t} |K_2(t,u)|^2 \|\widetilde{H}_u\|_{\tilde{\mathbb{H}}}^2 du \right)^{\frac{p}{2}} \leq \left(\int_{s}^{t} |K_2(t,u)|^2 du \right)^{\frac{p}{2}-1} \int_{s}^{t} |K_2(t,u)|^2 \|\widetilde{H}_u\|_{\tilde{\mathbb{H}}}^p du  \]
	Taking expectation, we obtain
	\[\mathbb{E}\left[ \left(\int_{s}^{t} |K_2(t,u)|^2  \|\widetilde{H}_u\|_{\tilde{\mathbb{H}}}^2 du \right)^{\frac{p}{2}}\right] \leq \left(\int_{s}^{t} |K_2(t,u)|^2 du \right)^{\frac{p}{2}} \sup_{u \in [0,T]} \mathbb{E}\left[\|\widetilde{H}_u\|_{\tilde{\mathbb{H}}}^p\right] \]
	In a similar manner,
	\begin{align*} 
		&\, \mathbb{E}\left[ \left(\int_{a}^{s} |(K_2(t,u) - K_2(s,u))|^{2} \|\widetilde{H}_u\|_{\tilde{\mathbb{H}}}^2 du \right)^{\frac{p}{2}} \right] \leq \left(\int_{a}^{s} |(K_2(t,u) - K_2(s,u))|^2 du \right)^{\frac{p}{2}} \sup_{u \in [0,T]} \mathbb{E}\left[\|\widetilde{H}_u\|_{\tilde{\mathbb{H}}}^p\right] \\
		%	Consequently,
		&\, \text{Consequently,}\quad \mathbb{E}\left[\left\|\int_{a}^{t\wedge b} K_2(t,u)\widetilde{H}_u dW_u - \int_{a}^{s\wedge b} K_2(s,u)\widetilde{H}_u dW_u\right\|_{\tilde{\mathbb{H}}}^p \right] \\
		&\quad \le 2^{p-1} C_p^{BDG} \Bigg(\sup_{u \in [0,T]} \mathbb{E}\left[\|\widetilde{H}_u\|_{\tilde{\mathbb{H}}}^p\right] \Bigg)  \times \Bigg( \left( \int_{s}^{t} |K_2(t,u)|^{2} du\right)^{\frac p 2}  + \left( \int_a^s |(K_2(t,u) - K_2(s,u))|^2 du\right)^{\frac p 2} \Bigg)
	\end{align*}
	
	Hence owing to ~\eqref{eq:contKtilde} and  ~\eqref{eq:contK}, there exists a real constant $\kappa= \kappa_{p}$ only depending on $p$ such that
	\[
	\mathbb{E}\left[\left\|\int_{a}^{t\wedge b} K_2(t,u)\widetilde{H}_u dW_u - \int_{a}^{s\wedge b} K_2(s,u)\widetilde{H}_u dW_u\right\|_{\tilde{\mathbb{H}}}^p \right]  \le \kappa\,\left(\sup_{u \in [0,T]}\,\mathbb{E}\left[ |a_u|^{\frac{p}{2}} \right]\right) (t-s)^{p( \theta  \wedge \widehat \theta)}
	\]
	Likewise, with Equation \ref{eq:contKtilde}, i.e under Assumption $(\widehat {\cal K}^{cont}_{\widehat \theta})$, let $s,\, t\in \mathbb{T}_-$. Using the triangle inequality in 
	\(\mathbb{H}\) for Bochner integrals and the elementary inequality \((a+b)^p \le 2^{p-1}\big(a^p + b^p\big)\) for \(a,b>0\), 
	 one gets with $H_u := b(u, X_{\wedge u})$:
	\begin{align*}
		&\mathbb{E}\left[\left\|\int_{a}^{t\wedge b} K_1(t,u) H_u du - \int_{a}^{s\wedge b} K_1(s,u) H_u du \right\|_{\mathbb{H}}^p \right] \\
		&\le 2^{p-1} \left( \mathbb{E}\left[ \left(\int_{s}^{t} |K_1(t,u)| \|H_u\|_{\mathbb{H}} du \right)^{p}\right] + \mathbb{E}\left[ \left(\int_{a}^{s} |(K_1(t,u) - K_1(s,u))| \|H_u\|_{\mathbb{H}} du \right)^{p}\right] \right).
	\end{align*}
%	By using generalized Minkowski inequalities, we obtain By cauchy-schwarz inequality, then 
	By Jensen's inequality applied twice or rather H\"older inequality with exponent \((\frac{p}{p-1},p)\) , we obtain:
	
	\[\left(\int_{s}^{t} |K_1(t,u)| \|H_u\|_{\mathbb{H}} du \right)^{p} \leq  \left(\int_{s}^{t} |K_1(t,u)| du \right)^{p-1} \int_{s}^{t} |K_1(t,u)| \|H_u\|^p_{\mathbb{H}} du\]
	Taking expectation, we get:
	\[\mathbb{E}\left[ \left(\int_{s}^{t} |K_1(t,u)| \|H_u\|_{\mathbb{H}} du \right)^{p}\right] \leq  \left(\int_{s}^{t} |K_1(t,u)| du \right)^{p} \sup_{u \in [0,T]}\,\mathbb{E}\left[ \|H_u\|_{\mathbb{H}}^{p} \right] \]
	In a similar manner, ones shows that 
	\begin{align*}
		&\, \mathbb{E}\left[ \left(\int_{a}^{s} |(K_1(t,u) - K_1(s,u))| \|H_u\|_{\mathbb{H}} du \right)^{p} \right] \leq \left(\int_{a}^{s} |(K_1(t,u) - K_1(s,u))| du \right)^{p} \sup_{u \in [0,T]}\,\mathbb{E}\left[ \|H_u\|_{\mathbb{H}}^{p} \right]\\
		&\text{Consequently,} \quad \mathbb{E}\left[\left\|\int_{a}^{t\wedge b} K_1(t,u) H_u du - \int_{a}^{s\wedge b} K_1(s,u) H_u du \right\|_{\mathbb{H}}^p \right] \\
		&\qquad\qquad \le 2^{p-1} \sup_{u \in [0,T]} \mathbb{E}\left[ \|H_u\|_{\mathbb{H}}^{p} \right] \times \Bigg( \left( \int_{s}^{t} |K_1(t,u)| du\right)^p + \left( \int_a^s |(K_1(t,u) - K_1(s,u))| du\right)^p \Bigg)
	\end{align*}
	
	Hence, owing to ~\eqref{eq:contKtilde} and ~\eqref{eq:contK}, there exists a real constant $\kappa= \kappa_{p}$ only depending on $p$ such that
	\[
	\mathbb{E}\left[\left\|\int_{a}^{t\wedge b} K_1(t,u) H_u du - \int_{a}^{s\wedge b} K_1(s,u) H_u du \right\|_{\mathbb{H}}^p \right]  \le \kappa\sup_{u \in [0,T]}\,\mathbb{E}\left[ |b_u|^{p} \right] (t-s)^{p(\theta\wedge \widehat \theta)}
	\]
	If $(\widehat {\cal K}^{cont}_{\widehat \theta})$ does not hold true, but the kernels $K_i$ rather satisfy $({\cal K}^{int}_{\beta})$ for some $\beta>1$  one can replace  $\widehat \theta$ by $\frac{\beta-1}{2\beta}$ {\em mutatis mutandis} in the aboves claims (see \cite[Lemma C.1]{JouPag22}).
	\hfill$\Box$
	
	\subsubsection{Proof of Theorem \ref{Thrm:flotVolterra} on Path-dependent Volterra's flow. }
%	\noindent {\bf Proof of Theorem \ref{Thrm:flotVolterra}.}
%	\smallskip
	\noindent{\sc Step 1.}
	Let \( X \) be a pathwise continuous solution to the Volterra Equation~\eqref{eq:pthvolterra}.
	It follows from Theorem~\ref{Thm:pathExistenceUniquenes} that,  {for every $p> 2$}, the Path-dependent Volterra equation~\eqref{eq:pthvolterra} has a unique strong solution starting from any random function $x_0\!\in L^p(\P)$ which  can be proved (see Theorem~\ref{prop:pathkernelvolt}) to be pathwise continuous (and even $a$-H\"older pathwise continuous for some small enough $a>0$) and satisfying furthermore  $\E [\sup_{t\in [0,T]} \|X_t\|_{\mathbb{H}}^p] <+\infty$.
	
	Set \(x_0(t) := X_0 \varphi(t)\), \(y_0(t) := Y_0 \varphi(t)\) in \(\mathbb{X}\) and define \(f_p(t) = \sup_{0 \le s \le t} \Big\| \|X^{X_0}_s-X^{Y_0}_s\|_{\mathbb{H}} \Big\|_p.\)
	We already know that $ f_p(T) < \infty$, we now aim to derive an estimate that just depend on $\Big\| \sup_{t\in [0,T]} \|x_0(t) - y_0(t) \|_{\mathbb{H}} \Big\|_p=:\Big\| \|x_0 - y_0 \|_{T} \Big\|_p$. 
	For all \( s \in [0,t] \), we have using the generalized Minkowski  for the deterministic term
	\begin{align*}
		&\ \Big\| \left\| X_s^{X_0} - X_s^{Y_0} \right\|_{\mathbb{H}} \Big\|_p
		\leq \Big\| \left\| x_0(s) - y_0(s) \right\|_{\mathbb{H}} \Big\|_p +  \int_0^s K_1(s,r)  \Big\| \left\|b(r,X^r_\cdot) - b(r,Y^r_\cdot)\right\|_{\mathbb{H}} \Big\|_p\,dr  \\
		&\qquad\qquad\qquad\qquad\qquad + \Big\| \big\| \int_0^s K_2(s,r)(\sigma(r,X^r_\cdot) - \sigma(r,Y^r_\cdot))\,dW_r \big\|_{\mathbb{H}} \Big\|_p \leq \Big\| \left\| x_0(s) - y_0(s) \right\|_{\mathbb{H}} \Big\|_p\\ &\qquad+  \int_0^s K_1(s,r)  \Big\| \left\|b(r,X^r_\cdot) - b(r,Y^r_\cdot)\right\|_{\mathbb{H}} \Big\|_p\,dr + C_{p}^{BDG} \left[ \int_0^s K_2(s,r)^2 \Big\| \big\|\sigma(r,X^r_\cdot) - \sigma(r,Y^r_\cdot)\big\|_{\tilde{\mathbb{H}}} \Big\|_p^2 \, dr \right]^{\tfrac12}.
	\end{align*}
	where in the last inequality, we used the \(L^p\)-Burkholder--Davis--Gundy (BDG) inequality.
	
	\noindent But, now, using  the Lipschitz assumption\ref{assump:kernelVolterra} (iii) on \(b\) and \(\sigma\) and then applying Fubini Tonelli theorem,
	 {\small
	\[\Big\| \left\|b(r,X^r_\cdot) - b(r,Y^r_\cdot)\right\|_{\mathbb{H}} \Big\|_p \leq C_{b,\sigma,T} \left( \int_0^r \mathbb{E} \left[\|X_u^r - Y_u^r\|^p_{\mathbb{H}} \right] \mu(du) \right)^\frac1p \leq C_{b,\sigma,T}\,\mu([0,T])^\frac1p\,\sup_{0 \le u \le r} \Big\| \|X^{r}_u-X^{r}_u\|_{\mathbb{H}} \Big\|_p. \]
	}
	\noindent Likewise for the diffusion term, we get setting \(C_1=C_{b,\sigma,T}\,\mu([0,T])^\frac1p\) and \(C_2=C_{p}^{BDG}\,C_{b,\sigma,T}\,\mu([0,T])^\frac1p\)
	\begin{align*}
		\Big\| \left\| X_s^{X_0} - X_s^{Y_0} \right\|_{\mathbb{H}} \Big\|_p
		&\leq \Big\| \left\| x_0(s) - y_0(s) \right\|_{\mathbb{H}} \Big\|_p +  C_1 \int_0^s K_1(s,r)  f_p(r)\,dr + C_2 \left(\int_0^s K_2(s,r)^2f_p^2(r)\, dr\right)^\frac12\\
		&\leq \Big\| \left\| x_0(s) - y_0(s) \right\|_{\mathbb{H}} \Big\|_p + \sum_{i \in \{1,2\}} C_i \; \chi_i(s) \left( 
		\frac{\rho}{A^{\frac1\rho}}\,f_p(s) 
		+ (1-\rho)\,A^{\frac{1}{1-\rho}} \int_0^s f_p(r)\,dr
		\right).
	\end{align*}
	where we Applied relation~\eqref{eq:VoltIntegral} from Lemma~\ref{Lem:BoundVoltIntegral} for all \( A > 0 \) for \(i=1\) and \(i=2\) as \(f_p\) is non-decreasing with \(\rho \in (0,1)\), and \(\chi_1 = \varphi_{\frac{2\beta}{\beta+1}}\) and \(\chi_2 = \psi_{2\beta}\) as defined in equation ~\eqref{eq:notat2}.
	Now, passing to the supremun on \([0,t]\) while taking advantage of the fact that \(f_p\) is non-decreasing, we derive that
	 {\small
	\begin{align*}
		&\,\sup_{0 \le s \le t} \Big\| \|X^{X_0}_s-X^{Y_0}_s\|_{\mathbb{H}} \Big\|_p:= f_p(t) \leq  \sup_{0 \le s \le t} \Big\| \left\| x_0(s) - y_0(s) \right\|_{\mathbb{H}} \Big\|_p + C^\prime\,\left( 
		\frac{\rho}{A^{\frac1\rho}}\,f_p(t) 
		+ (1-\rho)\,A^{\frac{1}{1-\rho}} \int_0^t f_p(r)\,dr
		\right).\\
		&\text{where }\; C^\prime = C_{K_1,K_2,p,\beta,b,\sigma,T} = C_1\,\varphi^{*}_{\frac{2\beta}{\beta+1}} 
		+ C_2\,\psi^{*}_{2\beta}\leq +\infty \;\text{owing to }\; ({\cal K}^{int}_{\beta}).
	\end{align*}
	}
	We now choose \(A\) large enough, namely \(A = (2\rho\, C^\prime)^{\rho}\), so that 
	\(1 - \rho\, C^\prime\,A^{-\frac1\rho} = \frac{1}{2} < 1\).
	Consequently, for every \(t \in [0,T]\),
	
	\centerline{$f_p(t) \le 2\,\Big\| \sup_{0 \le s \le t} \left\| x_0(s) - y_0(s) \right\|_{\mathbb{H}} \Big\|_p
		+ 2C^\prime\,(1-\rho)\,A^{\frac{1}{1-\rho}} \int_0^t f_p (r)\,dr.$}
	\noindent One concludes by Gr\"onwall's lemma (See for example \cite[Lemma 7.2]{pages2018numerical}) that:
	\[
	f_p(t) \le 2\,\Big\| \left\| x_0 - y_0 \right\|_{t} \Big\|_p e^{K\,t},\;
	\text{with} \;
	K = K_{K_1,K_2,p,\beta,b,\sigma,T}
	= 2C^\prime(1 - \rho)\,A^{\frac1{1-\rho}}.
	\]
	As a conclusion, \(	\sup_{0 \le s \le T} \Big\| \|X^{X_0}_s-X^{Y_0}_s\|_{\mathbb{H}} \Big\|_p
	= f_p(T) \le C\Big\| \left\| x_0 - y_0 \right\|_{T} \Big\|_p,\;\text{where} \;
	C = 2\, e^{K\,T} \ge 2.\)
	
	\smallskip
	\noindent{\sc Step 2.} (Continuity of the flow.) In particular, by assuming \(X_0 = x \in\mathbb{H}\) and \(Y_0 =y \in\mathbb{H}\), the above implies that there exists a positive real constant $C_{p,T}$ such that:
	\[\forall \,x,y\in\mathbb{H},\;\sup_{t\in [0,T]}\;\Big\| \|X^{x}_t-X^{y}_t\|_{\mathbb{H}} \Big\|_p \le C_{p,T} \|x\varphi - y \varphi\|_{T} ,
	\; \text{where} \; C_{p,T}= 2\,e^{KT}.\]
	Moreover by equation~\eqref{eq:Lpincrements2} (or rather equation~\eqref{eq:Lpincrements2} in Proposition\ref{prop:pathkernelvolt2}  with Lemma\ref{lm:Uniformgrowth}), we have:
	$ \forall p>p_{eu}$ , $\forall\, t,\, t^\prime\!\in [0,T],\quad \mathbb{E}\left[\left\|X_{t^\prime} - X_t\right\|_\mathbb{H}^p\right] \le C_{p,T} {\left( 1 + \| x \varphi \|_{T}^{p} \right)}|t^\prime-t|^{p(\delta \wedge\theta\wedge \frac{\beta-1}{2\beta})}.$
	i.e.
	\[
	\forall\, s,\, t\!\in [0,T], \, \forall\, x\!\in \mathbb{H}, \quad  \| \|X^x_t-X^x_s\|_{\mathbb{H}}\big\|_p \le C'_{p,T}(1+\| x \varphi \|_{T})|t-s|^{ \tilde \theta}.
	\]
	where we set $\tilde \theta = \delta \wedge\theta\wedge \frac{\beta-1}{2\beta}$.
	Now, let \( \lambda \in (0,1) \) be fixed. One has
	{\small
		\begin{align*}
			&\big\| \| X^x_t - X^y_t - (X^x_s - X^y_s) \|_{\mathbb{H}} \big\|_p \le \big( \| \| X^x_t - X^y_t \|_{\mathbb{H}} \|_p + \| \| X^x_s - X^y_s \|_{\mathbb{H}} \|_p \big)^\lambda %\\ &\quad \times
			\big( \| \| X^x_t - X^x_s \|_{\mathbb{H}} \|_p + \| \| X^y_t - X^y_s \|_{\mathbb{H}} \|_p \big)^{1-\lambda} \\
			&\hspace{4cm}\le 2 C_{p,T}^\lambda (C'_{p,T})^{1-\lambda} \|x\varphi-y\varphi\|_{T}^\lambda (1 + \| x \varphi \|_{T} + \| y \varphi \|_{T})^{1-\lambda} |t-s|^{\tilde \theta(1-\lambda)}
		\end{align*}
	}
	\noindent or, equivalently, $\forall\, s,t \in [0,T], \, \forall\, x \in \mathbb{H},$
	\[
	\mathbb{E} [ \| X^x_t - X^y_t - (X^x_s - X^y_s) \|_{\mathbb{H}}^p] \le \tilde{C}_{p,T} (1 + \|x\varphi\|_{T} +  \|y\varphi\|_{T})^{p(1-\lambda)} \|x\varphi-y\varphi\|_{T}^{p\lambda} |t-s|^{p \tilde{\theta}(1-\lambda)}.
	\]
	Let us assume from now on that \( p > p^{**} = p_{eu} \vee \frac{1}{ \tilde \theta(1-\lambda)} \). It follows from the proof of Kolmogorov's criterion~\cite[Theorem 2.1]{RevuzYor} in time that, for any $a\!\in\big(0,(1-\lambda) \tilde \theta-\frac 1p\big)$, there exists a positive real constant $C_{a,p,T}$ such that $\forall\, x,y\!\in \mathbb{H}$:
	\[
	\E \big[ \sup_{s,t\in [0,T]} \left(\frac{ \| X^x_t - X^y_t - (X^x_s - X^y_s) \|_{\mathbb{H}}}{|t-s|^{a}}\right)^p \big] \le   C_{a,p,T}\,(1 + \|x\varphi\|_{T} + \|y\varphi\|_{T})^{p(1-\lambda)} \|x\varphi-y\varphi\|_{T}^{p\lambda}.
	\]
	where tracking the constants ensures that the final constant has linear growth in the constant which appears in the hypothesis.
	Using that 
	\[\| X^x_t - X^y_t \|_{\mathbb{H}}^p=\| X^x_t - X^y_t - (X^x_s - X^y_s) + (X^x_s - X^y_s) \|_{\mathbb{H}}^p\leq 2^{p-1}(\| X^x_t - X^y_t - (X^x_s - X^y_s) \|_{\mathbb{H}}^p + \| X^x_s - X^y_s \|_{\mathbb{H}}^p)\]
	% $\| X^x_t - X^y_t \|_{\mathbb{H}}^p=\| X^x_t - X^y_t - (X^x_s - X^y_s) + (X^x_s - X^y_s) \|_{\mathbb{H}}^p\leq 2^{p-1}(\| X^x_t - X^y_t - (X^x_s - X^y_s) \|_{\mathbb{H}}^p + \| X^x_s - X^y_s \|_{\mathbb{H}}^p)$
	 then taking  $s=0$ and owing to the rough inequality $\|x\varphi-y\varphi\|_{T}^{p(1-\lambda)} \le (1 + \|x\varphi\|_{T} + \|y\varphi\|_{T})^{p(1-\lambda)}$ $\forall\, x,y\!\in \mathbb{H}$ directly yields:
	\begin{align*}
		\E\sup_{t\in [0,T]} \| X^x_t - X^y_t \|_{\mathbb{H}}^p& \le  2^{p-1}\left(\| x\varphi - y \varphi\|_{T}^p+ T^{pa} C_{a,p,T}\, (1 + \|x\varphi\|_{T} + \|y\varphi\|_{T})^{p(1-\lambda)} \|x\varphi-y\varphi\|_{T}^{p\lambda}\right)\\
		& \le C'_{a,p,T, \phi} (1 + \|x\varphi\|_{T} + \|y\varphi\|_{T})^{p(1-\lambda)} \|x\varphi-y\varphi\|_{T}^{p\lambda}.
	\end{align*}
	where the constant $C'_{a,p,T, \phi} $ is given by: $C'_{a,p,T, \phi}= 2^{p-1}\left(1+ T^{pa} C_{a,p,T}\right)$.
	
	\smallskip Let \( r \in \mathbb{N} \) be fixed. For each \( x \in B(0,r) \), we define a pathwise continuous modification \( (X^x_t)_{t \in [0,T]} \), such that the map \( B(0,r) \ni x \mapsto (X^x_t)_{t \in [0,T]} \) takes values in the Polish space \( \mathcal{C}(\mathbb{T}, \mathbb{H}) \), equipped with the distance \( \rho_T \) induced by the \( \sup \)-norm \(\|\cdot\|_T\) on \( \mathbb{T} \). For any \( x, y \in B(0,r) \), we obtain the following estimate:
	\begin{align}\label{eq:flowsup}
		\forall\, x,\, y \in B(0,r), \quad \E\sup_{t\in [0,T]} \| X^x_t - X^y_t \|_{\mathbb{H}}^p 
		& \le C'_{a,p,T,\phi} (1 + 2r)^{p(1-\lambda)} \|x\varphi-y\varphi\|_{T}^{p\lambda}.
	\end{align}
	Assume now that $p>p^{*}= p^{**}\vee \frac{\text{dim}(\mathbb{H})}{\lambda}$, 
	% Given that \( p\lambda > \dim \mathbb{H} \), 
	a  subsequent application of Kolmogorov's criterion in space \cite[Theorem 2.1]{RevuzYor} guarantees the existence of a \( \mathbb{P} \)-modification \( (\widetilde{X}^{(r),x})_{x \in B(0,r)} \) of the \( \mathcal{C}(\mathbb{T}, \mathbb{H}) \)-valued process \( (X^x)_{x \in B(0,r)} \), where, \( \mathbb{P} \)-almost surely, the map \( x \mapsto \widetilde{X}^{(r),x} \) is continuous with respect to \( \rho_T \), and more specifically, it is \( a' \)-H\"older-continuous for \( a' \in \left( 0, \lambda - \frac{\text{dim}(\mathbb{H})}{p} \right) \). 
	For $n \in \mathbb{N}$, it is straightforward that the restriction of \( (\widetilde{X}^{(r+n),x})_{x \in B(0,r+n)} \) to \( B(0,r) \) is also a \( \mathbb{P} \)-modification with continuous "paths" in \( x \). Consequently, they are \( \mathbb{P} \)-indistinguishable. Therefore, there exists a \( \mathbb{P} \)-modification \( (\widetilde{X}^x)_{x \in \mathbb{H}} \) of \( (X^x)_{x \in \mathbb{H}} \) such that \( \mathbb{P}(d\omega) \)-almost surely, the map \( x \mapsto (\widetilde{X}^x_t)_{t \in [0,T]} \) is continuous from \( \mathbb{H} \) to \( \mathcal{C}(\mathbb{T}, \mathbb{H}) \), and \( (\widetilde{X}^x_t)_{t \in [0,T]} \) is a solution to~\eqref{eq:pthvolterra} for \( X_0 = x \).
	
	\medskip
	\noindent {\bf Remark:}
	If $N_0 \subset \Omega$ is the $\P$-negligible set where no continuous extension is possible,  one may assume that  continuity of $x\mapsto \widetilde X^x(\omega)$ holds for every $\omega \!\in \Omega\setminus N_0$.
	\subsubsection{Proof of Blagove$\check{\rm \bf s}\check{\rm \bf c}$enkii-Freidlin like theorem~\ref{thm:FreidlinLike} }
	%\medskip
	 %   \noindent {\bf Proof of Blagove$\check{\rm \bf s}\check{\rm \bf c}$enkii-Freidlin like theorem.} 
	In the spirit of \cite[Theorem~D.2]{JouPag22}, we adapt to path dependent Volterra SDEs the classical proof of Blagove$\check{\rm \bf s}\check{\rm \bf c}$enkii-Freidlin's theorem 
	(see, e.g., \cite[Theorem~10.4]{RogersWilliamsII}), originally written for standard Brownian diffusion processes and extended in \cite[Theorem~D.2]{JouPag22} to Volterra SDEs. 
	
	\smallskip
	\noindent  {\sc Step~1} ({\em The functional $F$)}. We only take advantage of the strong existence and uniqueness of pathwise continuous strong solutions given in Theorem \ref{prop:pathkernelvolt2}
	for every $\nu$-distributed starting random vector $X_0$ having a finite $L^p$-moment to exhibit a functional 
	$\tilde{F}_\nu :  \mathbb{H}\times {\cal C}_0(\mathbb{T}, \bar{\mathbb{H}})\to \mathbb{X}$ 
	(adapted with respect to the canonical filtrations of both spaces) such that, for
	any strong solution to~\eqref{eq:pthvolterra} driven by a $\bar{\mathbb{H}}$-valued $(\mathcal{F}_t)$-Brownian motion 
	$B = (B_t)_{t \in [0,T]}$ on a filtered probability space 
	$(\Omega, \mathcal{A}, (\mathcal{F}_t)_{t\in [0,T]}, \mathbb{P})$ and any $\mathcal{F}_0$-measurable starting random vector 
	$X_0$ with distribution $\nu$,
	\[
	\mathbb{P}\text{-a.s.} \quad X = \tilde{F}_\nu(X_0, B).
	\]
	Consider the canonical space $ \widetilde \Omega = \mathbb{H}\times{\cal C}_0(\mathbb{T}, \bar{\mathbb{H}})$ equipped with the product $\sigma$-field 
	{${\cal B}or(\mathbb{H})\otimes {\cal B}or({\cal C}_0(\mathbb{T}, \bar{\mathbb{H}}))$} and the product probability measure $\widetilde\P= \nu\otimes \Q_{W}$ where $\nu$ has a finite $p$-th moment for some $p>p_{eu}$ and \(\mathbb{Q}_{W}\) the \(\bar{\mathbb{H}}\)-valued wiener measure. Let $(X_0,B)$ denote the canonical process on $ \widetilde \Omega$ defined by $(X_0, B)(x,w)= (x,w)$. As $p>p_{eu}$, the path-dependent Volterra 
	SDE~\eqref{eq:pthvolterra} has a unique pathwise strong solution  $(\xi_t)_{t\in [0,T]}$ defined on  $ \widetilde \Omega$ (and adapted to the $\widetilde\P$-completed natural filtration  $\widetilde\cF_t = \sigma({\cal N}_{\widetilde\P}, X_0, B_s,\,0\le s\le t), t\!\in \mathbb{T}$, etc). We set 
	$$
	\widetilde F_\nu(x,w) = \xi(x,w).
	$$
	The functional $\widetilde F_{\nu}: \mathbb{H}\times   {\cal C}_0(\mathbb{T}, \bar{\mathbb{H}})\to  {\cal C}(\mathbb{T}, \mathbb{H})$ is adapted w.r.t. to the canonical filtrations of both spaces.
	Moreover, for a \(\bar{\mathbb{H}}\)-valued $(\mathcal{F}_t)$-Brownian motion $W$ on a filtered probability space $(\Omega, \mathcal{A}, (\mathcal{F}_t)_{t\in[0,T]}, \mathbb{P})$ and an $\mathcal{F}_0$-measurable $\mathbb{H}$-valued random vector $X_0$, the process $\widetilde{F}_\nu(X_0, W)$ is a strong solution to the Path-dependent Volterra equation~\eqref{eq:pthvolterra} driven by $W$. By strong uniqueness, it is $\mathbb{P}$-indistinguishable from the "natural" strong solution $X$ with starting random function $X_0$. Consequently, the functional $\widetilde{F}_\nu$, defined on the canonical space $\widetilde{\Omega}$, is independent of the chosen stochastic basis (nor the Brownian motion under consideration).
	In particular, for every $x \in \mathbb{H}$, define
	\[
	F(x, w) := \widetilde{F}_{\delta_x}(x, w), \quad (x, w) \in \mathbb{H} \times \mathcal{C}_0(\mathbb{T}, \bar{\mathbb{H}}).
	\]
	Then, owing to the fact that the starting value $x$ is deterministic and by uniqueness of strong solutions to the Path-dependent Volterra equation~\eqref{eq:pthvolterra}, we have $\tilde{X}^x = F(x, W_\cdot)$ $\mathbb{P}$-a.s. \footnote{$\tilde{X}^x$ is the \( \mathbb{P} \)-modification defined in the end of the proof of Theorem~\ref{Thrm:flotVolterra}~$(b)$.} Moreover,
	\[
	\int_{\mathbb{H}} \rho_T \big(F(x, w), F(y, w)\big)^p \, \mathbb{Q}_W(dw)
	= \mathbb{E}\big[ \rho_T(\tilde{X}^x, \tilde{X}^y)^p \big],
	\]
	so that, we show, as for $\tilde{X}^x$, that $x \mapsto F(x, w)$ admits an $a'$-Hölder $\mathbb{Q}_W$-modification 
	since $p > p^*$ (in Theorem \ref{Thrm:flotVolterra} (b)). Hence $x \mapsto F(x, W)$ and $(\tilde{X}^x)_{x \in \mathbb{H}}$ are 
	$\mathbb{P}$-indistinguishable.
	Note that since $F$ is continuous in $x$ and measurable in $w$, this functional is bi-measurable according to Lemma 4.51~\cite{AliBord}. The proof of the Kolmogorov's lemma (\cite[Lemma 44.4, Section IV.44, p.100]{RogersWilliamsII}) (Kolmogorov continuity criterion or see also ~\cite[Theorem 2.1, p. 26, 3rd edition]{RevuzYor}) makes it clear that the function F can be defined in a way independent of the particular set-up of the probability space.
	
	\smallskip
	\noindent  {\sc Step~2} ({\em Representation for $X_0\!\in L^p(\P)$, $p$ large i.e. $p>p^*$ for $p^*$ defined in Theorem \ref{Thrm:flotVolterra}  }).
	
	Consider solving equation~\eqref{eq:pthvolterra} with initial condition \(X_0\) for some set-up of the probability space.  
	According to the Proposition~\ref{prop:pathkernelvolt2}, the path-dependent SVIE starting from a random value $X_0\in L^p(\P)$, $p>p^*$, independent of $W$ has a
	pathwise continuous solution. We want to show that $X=F(X_0,W)$.
	By Equation ~\eqref{eq:boundflow1},
	if $X'$ denotes the pathwise continuous solution starting from $X'_0\in L^p(\P)$ independent of $W$,
	\begin{equation}\label{eq:flotgeneral}
		\sup_{t\in [0,T]} \;\Big\| \|X_t-X'_t\|_{\mathbb{H}} \Big\|_p \le  C_{p,T}\Big\| \|x_0-x'_0 \|_{T} \Big\|_p ,
		\; \text{where} \; C_{p,T}= 2\,e^{KT}.
	\end{equation}
	Let \(\mathbb{D} = \{j2^{-k}: j,k \in \Z\}\) the set of dyadic rational in \(\R\). We have $  \mathbb{D}^{\dim(\mathbb{H})}\subset\mathbb{H} $.
	Let's consider the measurable map
	$\varphi_k:\mathbb{H}\to \mathbb{D}^{\dim(\mathbb{H})} \subset\mathbb{H} $
	with $\|\varphi_k(a)-a\|_{\mathbb{H}} \leq 2^{-k}$
	for every $a\!\in \mathbb{H}$. 
	We have that \(\sup_{ x \in \mathbb{H}}\|\varphi_k(x)-x\|_{\mathbb{H}} \to 0\) as \(k\to +\infty\). Now, set  $X^{(k)}_0=\varphi_k(X_0)$
	and $N_x = \{\omega\!\in \Omega: \widetilde X^x(\omega)\neq F (x,W(\omega)) \}\cup N_0$\footnote{$N_0 \subset \Omega$ is the $\P$-negligible set where no continuous extension is possible}.  By construction the set $N=  \cup_{x\in \mathbb{D}^{\dim(\mathbb{H})}}N_x$  is $\P$-negligible.
	As 
	$X^{(k)}_0$ is a $\mathbb{D}^{\dim(\mathbb{H})}$-valued, $\cF_0$-measurable random vector, independent of $W$, clearly $X^{(k)} =F(X^{(k)}_0,W)$ is a pathwise continuous solution~\eqref{eq:pthvolterra} so that, by~\eqref{eq:flotgeneral}, setting $x_0:=X_0\varphi$ and $x^{(k)}_0:=X^{(k)}_0\varphi$
	$$ 
	\sup_{t\in [0,T]} \Big\| \|X_t-X^{(k)}_t \|_{\mathbb{H}} \Big\|_p \le C_{p,T} \Big\| \|x_0-x^{(k)}_0\|_{T} \Big\|_p \to 0\mbox{ as }k\to +\infty.
	$$
	With the triangle inequality, one deduces that
	\[
	\sup_{t\in [0,T]}\Big\| \|X_t-F(X_0,W)(t) \|_{\mathbb{H}} \Big\|_p \leq \sup_{t\in [0,T]} \Big\| \|X_t-X^{(k)}_t \|_{\mathbb{H}} \Big\|_p  + \Big\| \sup_{t\in [0,T]} \|X^{(k)}_t-F(X_0,W)(t) \|_{\mathbb{H}} \Big\|_p 
	\]
	so that    
	\begin{align*}
		\limsup_{k\to\infty}\sup_{t\in [0,T]} \Big\| \|X_t-F(X_0,W_\cdot)(t) \|_{\mathbb{H}} \Big\|_p &\le \limsup_{k\to\infty}\Big\| \rho_T\Big(F(X_0^{(k)},W), F(X_0,W)\Big) \Big\|_p .
	\end{align*}
	Using that $X_0$ and $W$ are independent and the equation ~\eqref{eq:flowsup}, we get: 
	\begin{align*}
		\E \,\Big(\rho_T\big(F(X_0^{(k)},W) \,&,\,F(X_0,W)\big)\Big) ^p 
		= \int_{\mathbb{H}} \P_{X_0}(d\xi)\, \E \,\Big(\rho_T\big(F(\varphi_k(\xi),W),F(\xi,W) \big)\Big)^p\\
		& = \int_{\mathbb{H}}\P_{X_0}(d\xi)\, \E \,\big[\rho_T\big(X^{\varphi_k(\xi)},X^{\xi} \big)^p\big] 
		= \sum_{r\in\N^*}\int_{A(0, r-1, r)}\P_{X_0}(d\xi)\, \E \,\big[\rho_T\big(X^{\varphi_k(\xi)},X^{\xi} \big)^p\big]\\
		& \leq  C'_{a,p,T}\sum_{r\in\N^*}(1 + 2r)^{p(1-\lambda)}
		\int_{A(0, r-1, r)}\P_{X_0}(d\xi)\, \|\varphi_k(\xi)\varphi-\xi\varphi \|_{T}^{p\lambda}\\
		&\le  C'_{a,p,T}\sum_{r\in\N^*}(1 + 2r)^{p(1-\lambda)}
		\E \Big[\|\varphi_k(X_0)\varphi-X_0\varphi \|_{T}^{p\lambda} 
		\mathbf{1}_{\{r-1\le \|X_0\|_{\mathbb{H}}\le r\}}\Big]
	\end{align*}
	We then have, and owing to Markov's inequality  in the last line
	\begin{align*}
		\E \,\Big(\rho_T\big(F(X_0^{(k)},W) \,,\,F(X_0,W)\big)\Big) ^p &\le  C'_{a,p,T}\Big( 3^{p(1-\lambda)}\E \Big[\|\varphi_k(X_0)\varphi-X_0\varphi \|_{T}^{p\lambda} \mbox{\bf 1}_{\{ \|X_0\|_{\mathbb{H}}\le 1\}}\Big]\\
		&\qquad+ \sum_{r\ge 2}(1+2r)^{p(1-\lambda)}\E \Big[\|\varphi_k(X_0)\varphi- X_0\varphi \|_{T}^{p\lambda}\mbox{\bf 1}_{\{r-1\le \|X_0\|_{\mathbb{H}}\le r\}}\Big]\Big)\\  
		&\le  C_{a,p,T, X_0}\Big(1+\sum_{r\ge 2}\frac{(1+2r)^{p(1-\lambda)}}{(r-1)^p}\Big)\E \Big[\|\varphi_k(X_0) \varphi-X_0\varphi \|_{T}^{p\lambda} \|X_0\|_{\mathbb{H}}^p \Big]
	\end{align*}
	We have \(\frac{(1+2r)^{p(1-\lambda)}}{(r-1)^p} \underset{+\infty}{\sim} \frac{1}{r^{p\lambda}} \), so that the series is converging (and then bounded by a certain positive constant \(K^{\prime}\)) since it is equivalent to the Riemann serie of general term $\frac{1}{r^{p\lambda}}$ which is converging as $p\lambda>  \text{dim}(\mathbb{H}) \ge 1$. 
	Consequently, there exists \(K^{\prime\prime}=K^{\prime\prime}_{a,p,T, X_0,\lambda,\varphi}:=C_{a,p,T, X_0} K^{\prime} \|\varphi\|_{T}^{p\lambda} \) such that
	\[
	\E \,\Big(\rho_T\big(F(X_0^{(k)},W) \,,\,F(X_0,W)\big)\Big) ^p \le K^{\prime\prime}  \E \Big[\|\varphi_k(X_0)-X_0 \|_{T}^{p\lambda} \|X_0\|_{\mathbb{H}}^p \Big]\le K^{\prime\prime} 2^{-p\lambda k} \E\|X_0\|_{\mathbb{H}}^p\underset{k\to +\infty}{\to} 0 
	\]
	Then, $\limsup_{k\to\infty}\sup_{t\in [0,T]}\Big\|  \|X_t-F(X_0,W)(t) \|_{\mathbb{H}} \Big\|_p=0$.
	We conclude that, for every $t\!\in [0,T]$, $X_t = F(X_0,W)(t)$ $\P$-$a.s.$. Owing to the pathwise continuity of both processes, it follows $X= F(X_0,W)$ $\P$-a.s.
	
	\smallskip
	\noindent  {\sc Step~4} ({\em Representation for $X_0\!\in L^0(\P)$}). Let $X_0\!\in L^0(\P)$.   We consider the sequence of  truncated starting values $X^{(k)}_0= X_0\mbox{\bf 1}_{A_k}$ with $A_k=\{\|X_0\|_{\mathbb{H}}< k\}$, $k\ge 1$ and the resulting pathwise continuous solution to~\eqref{eq:pthvolterra} still denoted $X^{(k)}$. As $X^{(k)}_0\!\in L^p(\P)$ for $p>p^*$, then $\P$-$a.s.$, \(X^{(k)}= F(X_0^{(k)},W),\quad k\ge 1.\)
	Hence, under the convention $A_0=\emptyset$,
	\begin{align*}
		\hskip3.25cm X=\sum_{k\ge 1}X^{(k)}\mbox{\bf 1}_{A_k\setminus A_{k-1}}=\sum_{k\ge 1}F(X_0^{(k)},W)\mbox{\bf 1}_{A_k\setminus A_{k-1}}=F(X_0,W).
	\end{align*}
	Therefore, the result is established, and we are done.~\hfill $\Box$
	\vspace{-.28cm}
	\subsection{Proofs of Lemma~\ref{lem:gap} and~\ref{Lem:BoundVoltIntegral},  Corollary \ref{Gronwall} and Proposition~\ref{lem:K-intF} }\label{app:B2}
	\subsubsection{Proofs of Lemma~\ref{Lem:BoundVoltIntegral} and Corollary \ref{Gronwall}}
	
	\noindent{\bf Proof of Lemma~\ref{Lem:BoundVoltIntegral}:} Set $\rho =  \frac 12 \left(1+ \frac {1}{\beta}\right) = \frac{\beta+1}{2\beta} \in (0,1)$ since $\beta >1$. As \( f \) is non-decreasing, one derives from the first term
	\begin{align*}
		\nonumber \int_0^t K_1(t,s) f(s) \, ds & \leq f^{\rho}(t) \int_0^t K_1(t,s) f(s)^{1-\rho} \, ds \\
		\nonumber   & \leq f^{\rho}(t) \varphi_{\frac{2\beta}{\beta+1}}(t) \left( \int_0^t f(s)^{\frac{2\beta}{\beta-1}(1-\rho)} \, ds \right)^{\frac{\beta-1}{2\beta}} = \varphi_{\frac{2\beta}{\beta+1}}(t) \frac{f^{\rho}(t)}{a} \cdot a \left( \int_0^t f(s) \, ds \right)^{1-\rho}.
	\end{align*}
	where we used H\"older's inequality in the second line with conjugate exponents \(\left(\frac{2\beta}{\beta + 1}, \frac{2\beta}{\beta - 1}\right) = \left(\frac{1}{\rho}, \frac{1}{1-\rho}\right)\).  Applying now Young's inequality\footnote{For all real numbers \(x \geq 0\) and \(y \geq 0\), and for all real numbers \(p > 0\) and \(q > 0\) such that \( \frac{1}{p} + \frac{1}{q} = 1 \) (often referred to as conjugate exponents), the following inequalities hold: 
		\( xy \leqslant \frac{x^p}{p} + \frac{y^q}{q} \) and \( xy \leqslant \frac{1}{(q\varepsilon)^{p/q}} \frac{x^p}{p} + \varepsilon y^q \quad \forall \varepsilon \geq 0 \).}
	with the same conjugate exponents yields
	\begin{equation*}
		\int_0^t K_1(t,s) f(s) \, ds \leq \varphi_{\frac{2\beta}{\beta+1}}(t) \left( \frac{\rho}{a^{1/\rho}} f(t) + (1-\rho) a^{1/(1-\rho)} \int_0^t f(s) \, ds \right).
	\end{equation*}
	We proceed likewise with the second term, \( f \) being non-decreasing. It follows, still with \(\rho = \frac{1}{2} \left(1 + \frac{1}{\beta}\right)\),
	\begin{align*}
		\left( \int_0^t K_2(t,s)^2 f^2(s) \, ds \right)^{\frac{1}{2}} \leq f^{\rho}(t) \left( \int_0^t K_2(t,s)^2 f(s)^{2(1-\rho)} \, ds \right)^{\frac{1}{2}} \leq \psi_{2\beta}(t) f^{\rho}(t) \left( \int_0^t f(s)^{\frac{2(1-\rho)\beta}{\beta-1}} \, ds \right)^{\frac{\beta-1}{2\beta}}.
	\end{align*}
	where we applied H\"older's inequality with conjugate exponents \((\beta, \frac{\beta}{\beta-1})\). Noting that \(\frac{2(1-\rho)\beta}{\beta-1} = 1\), applying Young's inequality with conjugate exponents \(\left(\frac{1}{\rho}, \frac{1}{1-\rho}\right)\) and introducing \(a > 0\) like for the drift term, we obtain
	\begin{equation*}
		\left( \int_0^t K_2(t,s)^2 f(t)^2(s) \, ds \right)^{\frac{1}{2}} \leq \psi_{2\beta}(t) \left( \rho \frac{f(t)}{a^{1/\rho}} + (1-\rho) a^{1/(1-\rho)} \left( \int_0^t f(s) \, ds \right) \right).
	\end{equation*}
	\noindent {\bf Proof of Corollary \ref{Gronwall}.}
	Applying relation~\eqref{eq:VoltIntegral} from Lemma~\ref{Lem:BoundVoltIntegral} for \(i=1\) and \(i=2\), and combining the resulting inequalities before Plugging them into relation~\eqref{eq:VoltGronwall} yields
	\begin{align}
		& \ f(t)  \leq \psi(t) + A \int_0^t K_1(t,s) f(s) \, ds + B \left( \int_0^t K_2(t,s)^2 f^2(s) \, ds \right)^{\frac{1}{2}} \leq \psi(t) + \nonumber \\
		& \qquad A \varphi_{\frac{2\beta}{\beta+1}}(t) \left( \frac{\rho}{a^{1/\rho}} f(t) + (1-\rho) a^{1/(1-\rho)} \int_0^t f(s) \, ds \right)  + B \psi_{2\beta}(t) \left( \rho \frac{f(t)}{a^{1/\rho}} + (1-\rho) a^{1/(1-\rho)}  \int_0^t f(s) \, ds  \right)  \nonumber \\
		& \qquad\leq \psi(t) +  \left(A \varphi_{\frac{2\beta}{\beta+1}}(t) + B \psi_{2\beta}(t) \right) \left( \frac{\rho}{a^{1/\rho}} f(t) + (1-\rho) a^{1/(1-\rho)} \int_0^t f(s) \, ds \right) \nonumber\\
		& \leq \psi(t) +  \left( A \sup_{t \in [0,T]} \varphi_{\frac{2\beta}{\beta+1}}(t) + B \sup_{t \in [0,T]} \psi_{2\beta}(t) \right) \left( \frac{\rho}{a^{\frac1\rho}} f(t) + (1-\rho) a^{\frac{1}{1-\rho}} \int_0^t f(s) \, ds \right)  \leq \psi(t) +\nonumber \\
		&  \left( A \varphi_{\frac{2\beta}{\beta+1}}^* + B \psi_{2\beta}^* \right) \left( \frac{\rho}{a^{\frac1\rho}} f(t) + (1-\rho) a^{\frac{1}{1-\rho}} \int_0^t f(s) \, ds \right) \leq \psi(t) +  K \left( \frac{\rho}{a^{\frac1\rho}} f(t) + (1-\rho) a^{\frac{1}{1-\rho}} \int_0^t f(s) \, ds \right) \nonumber
	\end{align}
	We choose now $a$ large enough, namely $ a= (2 \rho\, K  )^{\rho}$, so that $1- \rho \,K a^{-1/\rho}= \frac 12<1$. Consequently, for every $t\!\in [0,T]$, \(	f(t)\le 2\psi(t) +  2 (1-\rho) a^{1/(1-\rho)} \int_0^t f(s) \, ds.\) 
	One concludes by Gr\"onwall's lemma (See for example \cite[Lemma 7.2]{pages2018numerical}) that:  \(f(t)\le 2\psi(t) e^{K^{\prime}  t}\)
	with $K^{\prime}= K^{\prime}_{A,B,K_1,K_2,T,\beta,\rho} =  2 K   (1-\rho) a^{1/(1-\rho)} = 2^{\frac{1}{1-\rho}}(1-\rho)\rho^{\frac{\rho}{1-\rho}}\left( A \varphi_{\frac{2\beta}{\beta+1}}^* + B \psi_{2\beta}^* \right)^{\frac{1}{1-\rho}}$.
	
	 \subsubsection{Proofs of Proposition~\ref{lem:K-intF} and Lemma~\ref{lem:gap} }
	 \noindent {\bf Proof of Proposition~\ref{lem:K-intF}.} As for the interpolated $K$-integrated scheme~\eqref{eq:Eulergen2bis}, it amounts to prove that, for every $k\in\{0,\cdots,n-1\}$, the process
	 \[
	 [t^n_k,T]     \ni t\mapsto \int_{t^n_k}^{t\wedge t^n_{k+1}} K_2(t,u)dW_u
	 \]
	 can be written as a functional of $W$. 
	 We denote by
	 \(\mathbb{Q}_{W}\) the \(\bar{\mathbb{H}}\)-valued wiener measure on \({\cal C}([0,T], \bar{\mathbb{H}})\).
	 First we can assume temporarily  that $W$ is defined on the canonical space $\Omega_0= {\cal C}_0([0,T], \bar{\mathbb{H}} )$~--~ i.e. $W_t(w)= w(t)$, for every $w\!\in \Omega_0$~--~equipped with its Borel $\sigma$-field {${\cal B}or({\cal C}_{\bar{\mathbb{H}}, 0})$} induced by the $\sup$-norm topology and the Wiener measure $\Q_{_W}$. For every $t\!\in [t^n_k,T]$,
	 $\displaystyle  \int_{t^n_k}^{t\wedge t^n_{k+1}} K_2(t,u)dW_u$ is $\sigma\big({\cal N}_{\P},W_{u}-W_{t^n_k},\, t^n_k\le u\le t\wedge t^n_{k+1}\big)$-measurable hence $\sigma\big({\cal N}_{\P},W_u,\, 0\le u\le T\big)$-measurable  so that 
	 \begin{equation}\label{eq:Wienerrepresentation}  \int_{t^n_k}^{t\wedge t^n_{k+1}} K_2(t,u)dW_u = F^{k}(t,W).
	 \end{equation}
	 (A more precise result involving adaptation to the canonical filtration can be obtained likewise but it is not useful for our purpose here.) 
	 The pathwise continuity of $\left(\int_{t^n_k}^{t\wedge t^n_{k+1}} K_2(t,u)dW_u\right)_{t\in[t^n_k,T]}$ follows from Lemma \ref{lem:bounds} applied with $\widetilde H\equiv 1$ and implies that,  for  $\Q_{_W}(dw)$-almost every $w\!\in  \Omega_0$,  $[t^n_k,T]\ni t\mapsto F^{k}(t,w)\!\in {\cal C}_0([0,T],\mathbb{H})$.
	 Then for any standard Brownian motion $W$ on a stochastic basis $(  \Omega, {\cal A}, (\cF_t)_{t\in [0,T]}, \P)$ the representation~\eqref{eq:Wienerrepresentation} holds true (with the same functional) since the Gaussian law of $\left(W,\int_{t^n_k}^{t\wedge t^n_{k+1}} K_2(t,u)dW_u\right)$ does not depend on the choice of $W$.    
	 
	 This being done, one checks by induction on the discretization times  using~\eqref{eq:Eulergen2bis} that  the $K$-integrated Euler scheme starting from any  (finite) $\mathbb{H}$-valued starting random vector $X_0$ admits a representation 
	 \[
	 \bar X= \bar F_n (X_0,W),
	 \]
	 where {$\bar F_n : \big(\mathbb{H}\times \Omega_0, {\cal B}or(\mathbb{H})\otimes {\cal B}or({\cal C}_{\bar{\mathbb{H}}, 0}))\big) \to \big({{\cal C}([0,T],\mathbb{H}), {\cal B}or({\cal C}_{\mathbb{H}}))}\big)$ is  measurable.}~\hfill $\Box$ 
	 
	 \bigskip
	 \noindent {\bf Proof of Lemma~\ref{lem:gap}:}
	 According to Theorem~\ref{thm:FreidlinLike}
	 below (whose proof is an adaptation  to path dependent Volterra equations of the proof of Blagove$\check{\rm   s}\check{\rm  c}$enkii-Freidlin's theorem~\cite[Theorem 13.1, Section V.12-13, p.136]{RogersWilliamsII}),
	 there exists  a bi-measurable functional $F: \mathbb{H}\times {\cal C}_0(\mathbb{T}, \bar{\mathbb{H}})\to \mathbb{X}$ such that  the processes $X^{\xi}$ reads
	 \[
	 X^{\xi} = F\big(\xi,(W_t
	 )_{t\in [0,T]}\big) \quad \text{i.e.} \quad X^{\xi}_t = F\big(\xi,W\big)(t), \quad t \ge 0,%= F\big(x_0,W_{\cdot\wedge t}\big), \quad t \ge 0,
	 \]
	 and such that, for any starting random value $X_0\!\in L^p(\P)$, $p>0$,  $F\big(X_0,(W_t)_{t\in [0,T]}\big)$ solves the path-dependent stochastic Volterra equation~\eqref{eq:pthvolterra} starting from $x_0:=X_0\varphi$.  
	 
	 The same holds true for a $K$-integrated Euler schemes denoted   $\bar X$ :  there exists  a measurable functional $\bar F_n: \mathbb{H}\times {\cal C}_0(\mathbb{T}, \bar{\mathbb{H}})\to \mathbb{X}$ such that 
	 \[
	 \bar X = \bar F_n\big(X_0,(W_t)_{t\in [0,T]}\big),
	 \]
	 {(see Proposition~\ref{lem:K-intF} further on for the representation of the $K$-integrated scheme, the continuity of this scheme, being established in Proposition~\ref{propXtilde}.)} 
	 
	 \smallskip
	 \noindent This entails that the distribution $\P_{(X,\bar X)}$ on $\mathbb{X}^2$   of
	 $(X,\bar X)= \big(F(X_0,W), \bar F_n(X_0,W)\big)$ satisfies
	 \[
	 \P_{(X,\bar X)}(dx, d\bar x) = \int_{\mathbb{H}} \P_{X_0}(d\xi) \P_{(X^{\xi},\bar X^{\xi})}(dx, d \bar x).
	 \]
	 Consequently, using   the monotonicity of probabilistic $L^r(\P)$-norms and pseudo-norms (in the third line) and the elementary inequality $(u+v)^{p}\le 2^{(p-1)^+} (u^{p}+v^{p})$, for $u$, $v \ge 0$, we derive 
	 \begin{align*}
	 	\|\Phi(X,\bar X)\|_p^p = \E \left[|\Phi(X^{X_0},\bar X^{X_0})|^p\right] &= \E_{\P_{X_0}} \left[\,\E_{\P_{(X^{\xi},\bar X^{\xi})}}\left[|\Phi(X^{\xi},\bar X^{\xi})|^p\right]\, | X_0 = \xi\right] \\
	 	&= \int_{\mathbb{H}}\P_{X_0}(d\xi)\E\left[|\Phi(X^{\xi},\bar X^{\xi})|^p\right]\le  \int_{\mathbb{H}}\P_{X_0}(d\xi)\Big(\E\left[|\Phi(X^{\xi},\bar X^{\xi})|^{\bar p}\right]\Big)^{\frac{p}{\bar p}} \\
	 	&\le  \int_{\mathbb{H}}\P_{X_0}(d\xi)\left(C^{\bar p}\Big( 1 + \| \xi\varphi \|_{T} \Big)^{\bar p}\right)^{p/\bar p}\le  C^p \int_{\mathbb{H}} \P_{X_0}(d\xi)\left( 1 + \| \xi\varphi \|_{T} \right)^p \\
	 	&\le  C^p  2^{(p-1)^+}\int_{\mathbb{H}} \P_{X_0}(d\xi)\left(1+\| \xi \varphi \|_{T}^p\right)   = 2^{(p-1)^+} C^p (1+ \E\,\left[ \| x_0 \|_{T}^p\right])
	 \end{align*} 
	 We then have finally:
	 \begin{equation}\label{boundgap}	
	 	\|\Phi(X,\bar X)\|_p\le 2^{(1-\frac1p)^+} C 2^{(\frac1p-1)^+}C\left( 1 + \big\| \,\| x_0  \|_{T} \,\big\|_p\right) = 2^{|1-\frac1p|}C\left( 1 + \big\| \,\| x_0  \|_{T} \,\big\|_p\right).
	 \end{equation}
	 This end the proof and we are done. \hfill$\Box$ 
\end{document}